\newtheorem{theorem}{Theorem}[subsection]
\newtheorem{theoremX}{Theorem}
\newtheorem*{theorem*}{Theorem}
\newtheorem{corollary}[theorem]{Corollary}
\newtheorem{corollaryX}{Corollary}
\newtheorem*{corollary*}{Corollary}
\newtheorem{proposition}[theorem]{Proposition}
\newtheorem*{proposition*}{Proposition}
\newtheorem{lemma}[theorem]{Lemma}
\newtheorem*{conjecture*}{Conjecture}
\newtheorem{assumption}[theorem]{Assumption}
\theoremstyle{definition}
\newtheorem{definition}[theorem]{Definition}
\newtheorem{definitionX}{Definition}
\newtheorem*{definition*}{Definition}
\newtheorem*{variant*}{Variant}
\newtheorem{example}[theorem]{Example}
\newtheorem{exampleX}{Example}
\newtheorem*{example*}{Example}
\newtheorem{remark}[theorem]{Remark}
\newtheorem*{remark*}{Remark}
\newtheorem{notation}[theorem]{Notation}
\newtheorem*{notation*}{Notation}
\newtheorem*{question*}{Question}
\newtheorem*{acknowledgements}{Acknowledgements}
\let\oldtocsection=\tocsection
\let\oldtocsubsection=\tocsubsection
\let\oldtocsubsubsection=\tocsubsubsection
\renewcommand{\tocsection}[2]{\hspace{0em}\oldtocsection{#1}{#2}}
\renewcommand{\tocsubsection}[2]{\hspace{1em}\oldtocsubsection{#1}{#2}}
\renewcommand{\tocsubsubsection}[2]{\hspace{2em}\oldtocsubsubsection{#1}{#2}}
\def\CAlg{\mathrm{CAlg}}
\def\Pic{\mathrm{Pic}}
\def\fC{\mathfrak{C}}
\def\BM{\mathrm{BM}}
\def\bbD{\mathbb{D}}
\def\X{\mathcal{X}}
\def\red{\mathrm{red}}
\def\ch{\mathrm{ch}}
\def\Kz{\mathrm{Kz}}
\def\Hd{\mathrm{Hd}}
\def\triv{\mathrm{triv}}
\def\ev{\mathrm{ev}}
\def\Ker{\mathrm{Ker}}
\def\Image{\mathrm{Im}}
\def\Perf{\mathrm{Perf}}
\def\loc{\mathrm{loc}}
\def\sp{\mathrm{sp}}
\def\SZ{\Zero^{\symp}}
\def\lag{\mathrm{lag}}
\def\rank{\mathrm{rank}}
\def\ori{\mathrm{or}}
\def\scD{\mathscr{D}}
\def\diag{\mathrm{diag}}
\def\doub{\mathrm{doub}}
\def\Def{\mathrm{Def}}
\def\Sp{\mathrm{Sp}}
\def\symp{\mathrm{symp}}
\def\E{\mathsf{E}}
\def\F{\mathsf{F}}
\def\sfD{\mathsf{D}}
\def\q{\mathfrak{q}}
\def\scS{\mathscr{S}}
\def\scZ{\mathscr{Z}}
\def\pr{\mathrm{pr}}
\def\id{\mathrm{id}}
\def\GG{\mathbb{G}}
\def\cart{\ar@{}[rd]|{\Box}}
\def\Spec{\mathrm{Spec}}
\def\tot{\mathrm{Tot}}
\def\Tot{\mathrm{Tot}}
\def\Cech{\mathrm{Cech}}
\def\coCech{\mathrm{coCech}}
\def\Span{\mathrm{Span}}
\def\pSymp{\mathrm{pSymp}}
\def\Symp{\mathrm{Symp}}
\def\Lag{\mathrm{Lag}}
\def\Crit{\mathrm{Crit}}
\def\dual{^{\vee}}
\def\isot{\mathrm{isot}}
\def\AA{\mathbb{A}}
\def\PP{\mathbb{P}}
\def\Z{\mathbb{Z}}
\def\Q{\mathbb{Q}}
\def\C{\mathbb{C}}
\def\O{\mathcal{O}} 
\def\I{\mathcal{I}} 
\def\LL{\mathbb{L}}
\def\TT{\mathbb{T}}
\def\T{\mathrm{T}}
\def\dSt{\mathrm{dSt}}
\def\dAff{\mathrm{dAff}}
\def\dPSt{\mathrm{dPSt}}
\def\QCAlg{\mathrm{QCAlg}}
\def\QCoh{\mathrm{QCoh}}
\def\Grpd{\mathrm{Grpd}}
\def\Cat{\mathrm{Cat}}
\def\Fun{\mathrm{Fun}}
\def\Adj{\mathrm{Adj}}
\def\Set{\mathrm{Set}}
\def\sSet{\mathrm{sSet}}
\def\map{\mathrm{Map}}
\def\Map{\mathrm{Map}}
\def\uMap{\underline{\Map}}
\def\fib{\mathrm{fib}}
\def\cof{\mathrm{cof}}
\def\Path{\mathrm{Path}}
\def\op{\mathrm{op}}
\def\fil{\mathrm{fil}}
\def\gr{\mathrm{gr}}
\def\forget{\mathrm{forget}}
\def\DR{\mathrm{DR}}
\def\hDR{\widehat{\DR}}
\def\Rees{\mathrm{Rees}}
\def\Gr{\mathrm{Gr}}
\def\Fil{\mathrm{Fil}}
\def\Sym{\mathrm{Sym}}
\def\textin{\quad\textup{in}\quad}
\def\and{\quad\textup{and}\quad}
\def\where{\quad\text{where}\quad }
\def\under{\quad\text{under}\quad}
\def\for{\quad\text{for }}
\def\sA{\mathscr{A}}
\def\lc{\mathrm{lc}}
\def\cl{\mathrm{cl}}
\def\ex{\mathrm{ex}}
\def\D{\mathrm{D}}
\def\DD{\mathrm{DD}}
\def\rmD{\mathrm{D}}
\def\hatD{\widehat{\rmD}}
\def\Zero{\mathrm{Z}}
\def\OT{\mathrm{OT}}
\begin{document}

\title{Shifted symplectic pushforwards}

\author[H.~Park]{Hyeonjun Park}
\address{June E Huh Center for Mathematical Challenges, Korea Institute for Advanced Study, 85 Hoegiro, Dongdaemun-gu, Seoul 02455, Republic of Korea}
\email{hyeonjunpark@kias.re.kr}

\date{June 27th, 2024}

\maketitle

\begin{abstract}
We introduce how to pushforward shifted symplectic fibrations along base changes.
This is achieved by considering symplectic forms that are closed in a stronger sense.
Examples include: symplectic zero loci and symplectic quotients.
Observing that twisted cotangent bundles are symplectic pushforwards, we obtain an equivalence between symplectic fibrations and Lagrangians to critical loci.

We provide two local structure theorems for symplectic fibrations:
a smooth local structure theorem for higher stacks via symplectic zero loci and twisted cotangents,
and an {\'e}tale local structure theorem for $1$-stacks with reductive stabilizers via symplectic quotients of the smooth local models.

We resolve deformation invariance issue in Donaldson-Thomas theory of Calabi-Yau $4$-folds.
Abstractly, we associate virtual Lagrangian cycles for oriented $(-2)$-symplectic fibrations as unique functorial bivariant classes over the exact loci.
For moduli of perfect complexes, we show that the exact loci consist of deformations for which the $(0,4)$-Hodge pieces of the second Chern characters remain zero.
\end{abstract}

\addtocontents{toc}{\protect\setcounter{tocdepth}{1}}

\section*{Introduction}

This paper aims to study {\em shifted symplectic fibrations}, that is, families of shifted symplectic derived Artin stacks, introduced in \cite{PTVV}.
We consider the symplectic categories $\Symp_{B,d}$ consisting of $d$-shifted symplectic fibrations $g:M \to B$ and their Lagrangian correspondences, constructed in \cite{Cal1,Hau}.

One fundamental question is the behavior of the symplectic categories under the base change.
Given a base change $p: U \to B$, there is an obvious pullback $p^*:\Symp_{B,d} \to \Symp_{U,d}$, but it is not obvious how to {\em pushforward} symplectic fibrations.

The main result in this paper is the existence of pushforwards in certain variants of symplectic categories.
We introduce {\em locked forms} (Definition \ref{Def_A:LF}) as stronger versions of closed forms and consider the $w$-locked versions of symplectic categories $\Symp_{B,d}^w$ for shifted functions $w:B \to \AA^1[d+2]$. 
(See \S\ref{ss:SympCat} for the precise definition.)

\begin{theoremX}[Symplectic pushforwards, Thm.~\ref{Thm:SP}]\label{Thm_A:SP}
Let $p:U \to B$ be a finitely presented morphism of derived stacks 
and $w:B \to \AA^1[d+2]$ be a $(d+2)$-shifted function.
Then there exists a right adjoint
\[p_* : \Symp_{U,d}^{w|_U} \to \Symp_{B,d}^w\]
of the pullback functor $p^*:\Symp_{B,d}^w \to  \Symp_{U,d}^{w|_U}$.
\end{theoremX}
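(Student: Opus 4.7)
The plan is to construct the right adjoint $p_*$ by unpacking the $w$-locked structure and showing it supplies exactly the extra data required to cross the base change $p: U \to B$. The underlying principle is that ordinary $d$-shifted symplectic data on $N/U$ is insufficient to produce such data on $N/B$---since $\LL_{N/U}$ and $\LL_{N/B}$ differ by a contribution from $f^*\LL_{U/B}$---while the rigidification provided by the locking pins down precisely the missing components via the reference function $w$ defined on all of $B$.

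First I would unpack Definition \ref{Def_A:LF} and reformulate a $w$-locked closed $2$-form on $f: N \to U$ as a closed form on $N/U$ together with a chosen compatibility with $f^*(w|_U) = (p \circ f)^*w$ at the level of the appropriate (filtered/shifted) de Rham complex. This reformulation clarifies what a locked form really is relative to $\DR(N/U)$ versus $\DR(N/B)$, and makes transparent why the ambient function $w$, defined on all of $B$, is essential rather than only its restriction $w|_U$.

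Next I would define $p_*$ on objects by a construction whose underlying derived geometry is built from $N$ together with the auxiliary directions coming from $f^*\LL_{U/B}$, assembled using the locking data. On morphisms, a Lagrangian correspondence $L \subset N_1 \times_U N_2$ with $w|_U$-locked Lagrangian structure should transfer canonically to a Lagrangian correspondence between $p_* N_1$ and $p_* N_2$ over $B$. The adjunction is then verified via an identification of the form
\[
p^*M \times_U N \;\simeq\; M \times_B p_*N,
\]
under which $w|_U$-locked Lagrangian structures on the left correspond to $w$-locked Lagrangian structures on the right; the unit and counit are the tautological Lagrangian correspondences obtained by specialising this identification to $N = p^*M$ and $M = p_*N$ respectively.

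The principal obstacle is the object-level construction: $p_*N$ must be defined so that the locking is not merely preserved but \emph{created}, so that (for instance) taking $N$ trivial recovers the twisted cotangent bundles announced in the abstract. This will require working systematically with the filtered/shifted de Rham complexes of $N/U$ and $N/B$, so that the $w|_U$-locking is organised into the very definition of the pushforward. Once the object-level construction is in hand and its compatibility with Lagrangian correspondences is checked, the triangle identities for the unit and counit should reduce to formal manipulations with the comparison between $\DR(-/U)$ and $\DR(-/B)$ provided by the locking.
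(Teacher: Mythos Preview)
Your proposal has the right high-level shape but is missing the key construction that makes the argument work, and your road map for the non-degeneracy check is too optimistic.

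The paper's approach is structured differently and more concretely. First, the adjunction is established at the \emph{presymplectic} level (objects are derived stacks with a $w$-locked closed $2$-form, no non-degeneracy or geometricity imposed) by abstract span-category manipulations: the correspondence $\sA^{2,\lc}_B[d]^w \leftarrow \sA^{2,\lc}_B[d]^w|_U \rightarrow \sA^{2,\lc}_U[d]^{w|_U}$ has an adjoint in the $(\infty,2)$-category of spans, and applying the Yoneda functor converts this into the desired adjunction of categories. This step is essentially formal and handles all coherence (unit, counit, triangle identities) at once.

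The substantive work, and the point where your proposal is too vague, is the object-level description of $p_*N$. The paper shows that there is a \emph{universal moment map} $\mu: \sA^{2,\lc}_U[d]^{w|_U} \to \T^*_{U/B}[d+1]$, arising from the identification of $\T^*_{U/B}[d+1]$ with the stack of \emph{exact isotropic fibrations} (a fiber square comparing $\sA^{2,\ex}_B[d+1]|_U$ and $\sA^{2,\ex}_U[d+1]$). Consequently $p_*N \simeq \mu_N^{-1}(0)$ is the \emph{Lagrangian intersection} of the zero section with the moment map $\mu_N: N \to \T^*_{U/B}[d+1]$. This immediately gives geometricity, and the symplectic structure on $p_*N$ comes from the Lagrangian intersection theorem. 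Your description (``$N$ together with auxiliary directions from $f^*\LL_{U/B}$, assembled using the locking data'') gestures at this but does not identify the cotangent bundle or the moment map, which are the organising objects.

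Finally, your claim that the non-degeneracy and triangle identities ``reduce to formal manipulations with the comparison between $\DR(-/U)$ and $\DR(-/B)$'' is where a genuine argument is needed. The paper introduces the notion of a \emph{Lagrangian correspondence fibration} (a square $M \to N$ over $B \leftarrow U$ with compatible symplectic structures and a cartesian square of tangent/cotangent complexes) and proves that for such a square, an isotropic $C \to M$ is Lagrangian over $B$ if and only if $C \to N$ is Lagrangian over $U$. The three statements (P1), (P2), (P3) that together say the presymplectic adjunction restricts to the symplectic categories are each proved by exhibiting a specific Lagrangian correspondence fibration (built from the Lagrangian fibration $\T^*_{U/B}[d+1] \to U$, from triple Lagrangian intersections, etc.) and invoking this equivalence-of-non-degeneracy lemma. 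This is geometric input, not a formal consequence of de Rham comparison.
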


The symplectic pushforwards can also be described explicitly via the zero loci of canonical {\em moment maps} (Proposition \ref{Prop:UnivMoment}).
Given a $w|_U$-locked symplectic fibration $h:N \to U$,
there exists a canonical Lagrangian $\mu_{N} : N \to \T^*_{U/B}[d+1]$ such that 
\[\xymatrix@R-.4pc{
p_*(N)\ar[r] \ar[d] \cart & U \ar[d]^{0} \\
N \ar[r]^-{\mu_N} & \T^*_{U/B}[d+1]
}\]
is a Lagrangian intersection diagram.

There are two main applications: 
\begin{enumerate}
\item (Symplectic) We provide an {\'e}tale local structure theorem for symplectic fibrations (Theorem \ref{Thm_B:LST}). This generalizes the Darboux theorem \cite{BBJ,BG} in the style of the local structure theorem for classical $1$-Artin stacks in \cite{AHR}.
\item (Enumerative) We construct virtual Lagrangian cycles for $(-2)$-symplectic fibrations (Theorem \ref{Thm_C:VLC}). This extends the virtual cycles for DT4 invariants in \cite{BJ,OT} to the relative setting and provides their characteristic properties.
\end{enumerate}

We can apply these results to moduli of perfect complexes on families of Calabi-Yau varieties (Theorem \ref{Thm_D:Perf}).
Especially for Calabi-Yau $4$-folds, we find interesting connections to the Hodge theory of surface classes.

\subsection*{Locked forms}

The concept of locked forms is motivated by a geometric description of closed forms.
Given a morphism of derived stacks $g:M \to B$, we can form a {\em deformation space}  \cite{HKR} as a family $\D_{M/B} \to \AA^1$ whose general fibers are the base $B$ and the special fiber is the normal bundle $\T_{M/B}[1]$.
Then closed forms are equivalent to $\GG_m$-equivariant formal functions on $\D_{M/B}$ (Proposition \ref{Prop:DRviaDefSp}),
\[\sA^{p,\cl }(M/B,d) \simeq \Map^{\GG_m}(\widehat{\D}_{M/B}, \AA^1(p)[p+d]),\]
where $\widehat{\D}_{M/B}$ is the formal completion of $\D_{M/B}$ at the special fiber $\T_{M/B}[1]$ and $\AA^1(p)$ is the weight $(-p)$ representation of $\GG_m$.
Thus we can view the closed forms as {\em formal} deformations of the ordinary differential forms (i.e. functions on $\T_{M/B}[1]$).

Our proposal is to consider the {\em global} functions on the deformation space.
\begin{definitionX}[Locked forms, Def.~\ref{Def:LF}, Prop.~\ref{Prop:DRviaDefSp}]\label{Def_A:LF}	
Let $g:M \to B$ be a morphism of derived stacks.
We define the space of $d$-shifted {\em locked $p$-forms} as:
\[\sA^{p,\lc}(M/B,d) := \Map^{\GG_m}(\D_{M/B}, \AA^1(p)[p+d]).\footnote{Alternatively, locked forms can be defined as sections of Hodge filtrations on {\em non-completed} de Rham complexes, 
while closed forms are given by their completions (see \S\ref{ss:DR}).
}\]
\end{definitionX}
Thus locked forms are {\em algebraic} deformations of differential forms to functions on the bases.
The {\em $w$-locked forms} are given by fixing the underlying functions, 
\[ \sA^{p,\lc }(M/B,d)^w:=\fib\left(\sA^{p,\lc }(M/B,d) \xrightarrow{\mathrm{gen}} \sA^0(B,p+d),w\right).\]
In this perspective, the {\em exact forms} (in \cite[\S5.1]{Toen}) can be viewed as $0$-locked forms.



\subsection*{Basic examples}

There are three basic examples of symplectic pushforwards.

\subsubsection*{Symplectic quotients}
Given a symplectic fibration $M\to B$ with a symplectic action of a group stack $G \to B$,
the quotient stack $M/G$ is usually not symplectic over the base $B$, but symplectic over the classifying stack $BG$.
To obtain a symplectic version of a quotient stack, we apply the pushforward along the projection $BG \to B$.

\begin{exampleX}[Symplectic quotients, Def.~\ref{Def:SQ}] \label{Ex_A}
Let $g:M \to B$ be a $w$-locked symplectic fibration for $w\in \sA^0(B,d+2)$ with a $w$-locked symplectic action of a smooth group stack $G \to B$.\footnote{Equivalently, $M/G \to BG$ is a $w|_{BG}$-locked symplectic fibration.}
We define the {\em  symplectic quotient} of $M$ by $G$ as:
\[M/\!/G:=(BG \to B)_*(M/G) \in \Symp^w_{B,d}.\]
\end{exampleX}

This is compatible with the Hamiltonian reduction in \cite{Cal1,Saf16}. 
There are two advantages of considering locked forms instead of closed forms:
\begin{enumerate}
\item (Existence/Uniqueness) 
The locked symplectic actions are already {\em Hamiltonian} in the sense that the symplectic quotients can be constructed without any additional data.
In the closed version, moment maps should be given as additional data whose existence or uniqueness is not guaranteed in general.
\item (Functoriality) Since the symplectic quotients are defined as symplectic pushforwards, various functorial properties follow immediately.
For instance, given an exact sequence of smooth group stacks $K \to G \to H$, we have
\[M/\!/G \simeq (M/\!/K)/\!/H \textin \Symp_{B,d}^w,\]
for an induced $w$-locked symplectic action of $H$ on $M{/\!/}K$ (Proposition \ref{Prop:Changeofgroups}).
\end{enumerate}

\subsubsection*{Symplectic zero loci}

The second example 
is the {\em symplectic zero locus} of a section of a symmetric complex.
It is a rather new example motivated to understand the local model in \cite{BBJ}.
Given a $(-2)$-symplectic scheme $M$, its {\em classical truncation} is locally the zero locus 
\[\xymatrix{
& E \ar[d] \\
M_\cl\simeq \Zero(s)_\cl \ar@{^{(}->}[r] & U, \ar@/_.3cm/[u]_s
}\]
of an isotropic section $s$ of an orthogonal bundle $E$ over a smooth scheme $U$.
However, the derived structure of $M$ is {\em not} given this way.
The usual derived zero locus $\Zero(s)$ is {\em not} $(-2)$-symplectic (unless the dimension is zero) since the cotangent complex $\LL_{\Zero(s)}\simeq \cof(E|_{\Zero(s)}\dual \to \Omega_U|_{\Zero(s)})$ is not $(-2)$-shifted symmetric.
It is natural to ask:
\[\textit{Is there a natural derived structure on $M_\cl$ that is $(-2)$-shifted symplectic?}	\]
In \cite[Ex.~5.12]{BBJ}, such derived structure is given for affine $U$ with explicit cdga representatives,
but our desire is to find an ``intrinsic'' construction without choosing cdga representatives.
A key observation is that $\Zero(s)$ is {\em relatively} $(-2)$-symplectic over $U$.
Our proposal is to apply the pushforward along the projection $U \to \Spec(\C)$.

\begin{exampleX}[Symplectic zero loci, Def.~\ref{Def:SympZero}] \label{Ex_B}
Let $p:U \to B$ be a finitely presented morphism of derived stacks, 
$E$ be a $(d+2)$-shifted symmetric complex on $U$,\footnote{Equivalently, $E$ is a perfect complex on $U$  with a symmetric $2$-form $\beta: \O_U \to \Sym^2(E\dual)[d+2]$ such that the induce map $\beta^\#:E \xrightarrow{} E\dual[d+2]$ is an equivalence.}
and $s : \O_U \to E$ be a section with $s^2 \simeq w|_U \in \sA^0(U,d+2)$ for some $w\in \sA^0(B,d+2)$.
We define the {\em symplectic zero locus} of $s$ in $U$ (over $B$) as:
\[\SZ_{U/B}(E,s):=(U \xrightarrow{p} B)_{*}(\Zero(s)) \in \Symp^{w}_{B,d},\]
where $\Zero(s) \in \Symp_{U,d}^{s^2}$ by Proposition \ref{Prop:LFZero}.
\end{exampleX}

The moment map description shows that the classical truncation remains the same, i.e. $\Zero^{\symp}_{U/B}(s)_\cl \simeq \Zero(s)_\cl$, if $p:U \to B$ is smooth and $d\leq -2$.
See \cite{AY} for the explicit comparison of $\SZ_{U/B}(E,s)$ with \cite[Ex.~5.12]{BBJ} for the smooth affine case.

\subsubsection*{Twisted cotangents}

The third example 
is a well-known one---the {\em twisted cotangent}.
It can be realized as the symplectic pushforward of the identity $\id_U:U \to U$.
Given a locked $1$-form $\alpha \in \sA^{1,\lc}(U/B,d+1)^w$, there is a canonical $w|_U$-locked symplectic form $0_\alpha$ on the identity map $\id_U$,\footnote{There is a unique symplectic form on $\id_U$, but it has several $w|_U$-locking structures, see \eqref{Eq:LSFidentity}.} such that the $\alpha$-twisted cotangent bundle is:
\[ \T^*_{U/B,\alpha}[d]:=U\times_{0,\T^*_{U/B}[d+1],\alpha}U \simeq p_*(U,0_\alpha) \textin \Symp_{B,d}^w.\]
In particular, the {\em critical locus} $\Crit_{U/B}(v):=\T^*_{U/B,d_{\DR }v}[d]$ of a shifted function $v:U \to \AA^1[d+1]$ can also be realized as a symplectic pushforward. This implies:

\begin{corollaryX}[Lagrangian factorizations, Cor.~\ref{Cor:LagFact}] \label{Cor_A:LagFact}
Let $p:U \to B$ be a finitely presented morphism of derived stacks.
Given a $v$-locked symplectic fibration $h: N \to U$ for $v:U \to \AA^1[d+2]$, there exists a canonical factorization
\[\xymatrix{
& \Crit_{U/B}(v) \ar[d] \\
N \ar[r]_-{h} \ar@{.>}[ru]^{\mu_N} & U,
}\]
by an exact Lagrangian $\mu_N$.
Moreover, this induces an equivalence of spaces
\begin{multline*}
\mu_{(-)}:\{\text{$d$-shifted $v$-locked symplectic fibrations over $U$}\} \\ 
\xrightarrow{\simeq} \{\text{$(d+1)$-shifted exact Lagrangians on $\Crit_{U/B}(v)$ (over $B$)}\}	.
\end{multline*}
\end{corollaryX}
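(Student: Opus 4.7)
The plan is to realize $\Crit_{U/B}(v)$ itself as a symplectic pushforward, and then read off both the factorization and the equivalence from the universal property of $p_*$. The key input, noted just before the statement, is the identification
\[
\Crit_{U/B}(v) \;\simeq\; p_*(U,\, 0_{d_{\DR} v}),
\]
where $(U, 0_{d_{\DR} v})$ denotes the identity $\id_U : U \to U$ equipped with its canonical $v$-locked symplectic structure induced by $d_{\DR} v$.

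To construct $\mu_N$ for a $v$-locked $d$-shifted symplectic fibration $h : N \to U$, I would apply the universal moment map of Proposition~\ref{Prop:UnivMoment} to produce a canonical Lagrangian from $N$ into the relative shifted cotangent of $U/B$. The $v$-locking data on $N$ supplies, in addition, a canonical homotopy between this moment map and $d_{\DR} v \circ h$, and these two pieces are together precisely the data of a lift through the Lagrangian intersection diagram cutting out $\Crit_{U/B}(v)$. The Lagrangian structure on $\mu_N$ is inherited from the moment map, and its exactness structure is extracted from the $v$-locking, which by Definition~\ref{Def_A:LF} is a global algebraic trivialization on the deformation space, and as such supplies the required primitive.

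For the equivalence, I would invoke the $p^* \dashv p_*$ adjunction of Theorem~\ref{Thm_A:SP}. Using the identification of $\Crit_{U/B}(v)$ as a pushforward, maps in the appropriate locked symplectic category over $B$ from $N$ (viewed over $B$ via $h$) into $\Crit_{U/B}(v)$ correspond to maps in the locked symplectic category over $U$ from $p^*N$ into $(U, 0_{d_{\DR} v})$. After restricting to the exact Lagrangian component, the latter mapping space unwinds to exactly the space of $v$-locked symplectic fibration structures on $h$. An inverse to $\mu_{(-)}$ is therefore given by $L \mapsto \pi \circ L$, with $\pi : \Crit_{U/B}(v) \to U$ the structure projection, carrying the symplectic form and locking data extracted from the exact Lagrangian $L$.

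The main obstacle will be matching the $v$-locking data on $N$ with the exactness primitive on $\mu_N$ coherently through the adjunction. Both are governed by the same deformation-space picture of Definition~\ref{Def_A:LF}, but enter the proof in different guises, and showing that the two assignments are genuinely mutually inverse as maps of spaces (not merely on objects) will require careful tracking of locking structures through Proposition~\ref{Prop:UnivMoment} and Theorem~\ref{Thm_A:SP}. Once this dictionary is set up, however, the equivalence should follow formally from the universal property of $p_*$.
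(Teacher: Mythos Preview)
Your strategy is right in spirit---use the identification $\Crit_{U/B}(v)\simeq p_*(U_{d_{\DR}v})$ and the adjunction $p^*\dashv p_*$---but the way you set up the adjunction is off. An exact Lagrangian $N\to\Crit_{U/B}(v)$ over $B$ is not a morphism out of $N$ in any symplectic category: $N$ is not an object of $\Symp_{B,d+1}^0$. It is instead a morphism \emph{out of the unit object $B$} in $\Symp_{B,d+1}^0$, namely the Lagrangian correspondence $B\leftarrow N\rightarrow \Crit_{U/B}(v)$. So the adjunction should read
\[
\Lag^0_{\Crit_{U/B}(v)/B,\,d+1}=\Map_{\Symp_{B,d+1}^0}\bigl(B,\,p_*U_{d_{\DR}v}\bigr)\;\simeq\;\Map_{\Symp_{U,d+1}^0}\bigl(p^*B,\,U_{d_{\DR}v}\bigr)=\Map_{\Symp_{U,d+1}^0}\bigl(U,\,U_{d_{\DR}v}\bigr),
\]
with $p^*B=U$, not $p^*N$. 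This is exactly what the paper does. From here the right-hand mapping space unwinds in one step: an exact Lagrangian correspondence between the two copies of $\id_U$ (carrying the exact structures $0$ and $0_{d_{\DR}v}$) is a finitely presented $L\to U$ together with a path from $0$ to $0_{v|_L}$ in $\sA^{2,\ex}(L/U,d+1)$, and the non-degeneracy condition forces $\TT_{L/U}\simeq\LL_{L/U}[d]$. By the twisted-exact identification \eqref{Eq:TwistedExact}, this is precisely the datum of a $d$-shifted $v$-locked symplectic fibration $L\to U$. That is the whole proof; the factorization $\mu_N$ drops out as the image of $N$ under this equivalence.

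Your detour through Proposition~\ref{Prop:UnivMoment} is both unnecessary and not directly applicable: that proposition is stated for $w|_U$-locked forms where $w$ lives on the base $B$, not for a general $v\in\sA^0(U,d+2)$. (Indeed the paper remarks afterward that the moment map is the \emph{special case} of the Lagrangian factor when $v$ happens to be a pullback, so you have the dependency reversed.) Once you run the adjunction with the correct source, the ``main obstacle'' you anticipate---matching the $v$-locking with the exactness primitive---dissolves: both sides are literally $\Path_{0,0_v}\sA^{2,\ex}_U[d+1]$ by \eqref{Eq:TwistedExact}, so no separate bookkeeping is required.
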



\subsubsection*{Functoriality}
Since these examples are considered coherently as symplectic pushforwards, various functorial properties follow immediately. For instance, we have:
\[\SZ_{U/B}\left(E,s\right)/\!/G\simeq \SZ_{(U/G)/B}\left(E/G,s/G\right), \quad
\T^*_{V/B,\alpha}[d]/\!/G  \simeq \T^*_{(V/G)/B,\alpha/G}[d]\]
when  $(U,E,s)$ and $(V,\alpha)$ are given $G$-equivariant structures (see \S\ref{ss:SQ} for the definitions of equivariant structures).
In particular, this recovers \cite[Thm.~A]{AC}.

\subsection*{Local structures}

Our main application is an {\'e}tale local structure theorem for symplectic fibrations via the three basic examples of symplectic pushforwards.

\begin{theoremX}[{\'E}tale local structure, 
Cor.~\ref{Cor:ELS}]\label{Thm_B:LST}
Let $g:M \to B$ be a $d$-shifted $w$-locked symplectic fibration for $w\in \sA^0(B,d+2)$
such that $B_\cl$ is an algebraic space of finite type over $\C$,
$M_\cl$ is a quasi-separated $1$-Artin stack with affine stabilizers,
and $d< 0$. 
Let $m\in M(\C)$ be a point with linearly reductive stabilizer $\underline{\mathrm{Aut}}_{M}(m)$ and
$G:=\underline{\mathrm{Aut}}_{M}(m)\times B$.
\begin{enumerate}
\item 
If $d\equiv 2 \in \Z/4$ (resp. $d\equiv 0 \in \Z/4$), then there exist 
\begin{itemize}
\item a derived affine scheme $U$ of finite presentation over $B$ with a $G$-action 
such that $\LL_{U/B}$ is of tor-amplitude $\geq \frac d2+1$,
\item a $G$-equivariant orthogonal (resp. symplectic) bundle $E$ over $U$,
\item a $G$-invariant section $s : \O_U \to E[\tfrac d2 +1]$ with a $G$-equivalence 
$s^2 \simeq w|_{U}$, 
\item and a pointed {\'e}tale symplecto-morphism
\[ \left(\SZ_{U/B}\left(E\left[\tfrac d2+1\right],s\right)/\!/G,u\right) \longrightarrow \left(M,m\right).\]
\end{itemize}
\item If $d$ is odd, then there exist
\begin{itemize}
\item a derived affine scheme $V$ of finite presentation over $B$ with a $G$-action 
such that $\LL_{V/B}$ is of tor-amplitude $\geq \frac d2$,
\item a $G$-invariant $(d+1)$-shifted $w$-locked $1$-form $\alpha$ on $V$, 
\item  and a pointed {\'e}tale symplecto-morphism
\[\left(\T^*_{V/B,\alpha}[d]/\!/G,v\right) \longrightarrow \left(M,m\right).\]
\end{itemize}
\end{enumerate}
\end{theoremX}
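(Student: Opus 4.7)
The plan is to combine the \'etale slice theorem of Alper--Hall--Rydh for classical $1$-Artin stacks with linearly reductive stabilizers with the preceding smooth local structure theorem for shifted symplectic fibrations, and then pass to the quotient via the symplectic quotient construction of Example~\ref{Ex_A}.

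\smallskip

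\textbf{Step 1 (\'Etale slice).} Apply the AHR slice theorem to $M_\cl$ at $m$, using that $M_\cl$ is a quasi-separated $1$-Artin stack with affine stabilizers and that $\underline{\Aut}_M(m)$ is linearly reductive. Combined with the $G$-equivariant derived deformation theory controlled by $\LL_{M/B}$, this yields a derived affine scheme $\tilde U$ with a $G$-action and a pointed $G$-equivariant \'etale morphism $\phi : [\tilde U / G] \to M$ over $B$ sending a fixed point $\tilde u$ to $m$.

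\smallskip

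\textbf{Step 2 (Pull back).} Since $\phi$ is \'etale, the $w$-locked symplectic structure on $M \to B$ pulls back to a $w$-locked symplectic fibration structure on $[\tilde U / G] \to B$. Equivalently, $\tilde U \to B$ acquires a $G$-equivariant $w$-locked symplectic fibration structure whose symplectic quotient by $G$, in the sense of Example~\ref{Ex_A}, recovers $[\tilde U / G] \to B$.

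\smallskip

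\textbf{Step 3 (Equivariant smooth local model).} Apply the smooth local structure theorem for shifted symplectic fibrations to $\tilde U \to B$ at $\tilde u$. Nonequivariantly, this produces, depending on the residue of $d$ modulo $4$, either a derived affine $U$ of finite presentation over $B$ with $\LL_{U/B}$ of tor-amplitude $\geq \tfrac d2 + 1$, an orthogonal or symplectic bundle $E$, and an isotropic section $s$ with $s^2 \simeq w|_U$ together with an \'etale symplecto-morphism $\SZ_{U/B}(E[\tfrac d2 + 1],s) \to \tilde U$ near $\tilde u$ (for $d$ even); or a derived affine $V$ with a $(d+1)$-shifted $w$-locked $1$-form $\alpha$ and an \'etale symplecto-morphism $\T^*_{V/B,\alpha}[d] \to \tilde U$ near $\tilde u$ (for $d$ odd). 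Since $G$ is linearly reductive and $\tilde u$ is $G$-fixed, all the data (the slice in $\tilde U$, the bundle $E$, the section $s$, or the $1$-form $\alpha$) can be made $G$-equivariantly by an averaging/Luna-type argument, yielding a $G$-equivariant \'etale symplecto-morphism into $\tilde U$ over $B$.

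\smallskip

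\textbf{Step 4 (Symplectic quotient).} Apply the functor $-\,/\!/\,G$. By the functorial identities recalled in the ``Functoriality'' subsection,
\[\SZ_{U/B}(E,s)/\!/G \simeq \SZ_{(U/G)/B}(E/G, s/G), \qquad \T^*_{V/B,\alpha}[d]/\!/G \simeq \T^*_{(V/G)/B,\alpha/G}[d].\]
Composing with $\phi$ from Step 1 produces the desired pointed \'etale symplecto-morphism from the quotient model to $M$, and the tor-amplitude bound on $\LL_{U/B}$ (resp. $\LL_{V/B}$) is preserved.

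\smallskip

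\textbf{Main obstacle.} The principal difficulty is Step 3: upgrading the smooth local structure theorem to the $G$-equivariant setting. This requires that the Darboux-type choices in the nonequivariant construction can be made compatibly with the $G$-action at a fixed point, which uses linear reductivity crucially to split $G$-invariants as direct summands (for the bundle or the $1$-form) and a Luna-type slice argument to descend to a $G$-invariant \'etale neighborhood. The derived $G$-equivariant enhancement of the AHR chart in Step 1 is comparatively standard given reductivity, but must be carried out compatibly with $\LL_{M/B}$.
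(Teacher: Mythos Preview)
There is a genuine gap in Step~2/3. The pulled-back $2$-form on $\tilde U \to B$ is \emph{not} non-degenerate: the smooth surjection $\tilde U \to [\tilde U/G]$ adds the $G$-orbit directions $\mathfrak g$ to $\TT_{\tilde U/B}$, and the form pulled back from $[\tilde U/G]$ vanishes along them. So $\tilde U \to B$ is not a $w$-locked symplectic fibration, and no Darboux-type theorem applies to it. Relatedly, the claim that the symplectic quotient $\tilde U/\!/G$ recovers $[\tilde U/G]$ is false: by definition $\tilde U/\!/G = (BG\to B)_*(\tilde U/G)$ is the zero locus of a moment map $\mu:\tilde U \to \mathfrak g^\vee[d]$, which has no reason to be all of $\tilde U$. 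You are conflating the ordinary quotient with the symplectic quotient, and the degenerate pulled-back form with a genuine symplectic structure.

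The paper proceeds quite differently. It never tries to make the affine slice symplectic over $B$. Instead it builds (Lemma~\ref{Lem:ZLT}) a $G$-equivariant zero-locus tower $L_{(\bullet)}$ of derived affine schemes of \emph{minimal} dimension at the fixed point, sets $M_{(k)}:=L_{(k)}/G$, and ``bends'' the tower in the middle so that each vertical map $M_{(k)}\to M_{(-d-1-k)}$ is a genuine locked symplectic fibration. The locked $2$-form is lifted step by step using an amplitude estimate on $\Fil^2\DR$ (Lemma~\ref{Lem:LiftingForms}), and non-degeneracy of each lift is forced by the minimality condition (Lemma~\ref{Lem:LiftingNondeg}); this yields the symplectic pushforward tower of Theorem~\ref{Thm:SPT}. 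Only at the very last step $M_{(-1)}=BG\times B \to M_{(-2)}=B$ does the symplectic quotient appear, and the ``middle'' term is then identified with a symplectic zero section (via Lemma~\ref{Lem:LiftingLockedformsZeroLoci}) or with an identity map, giving the two cases. The equivariance is built in from the start rather than obtained by averaging an a posteriori non-equivariant model.
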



Theorem \ref{Thm_B:LST} extends the derived Darboux theorem \cite{BBJ,BG} in three perspectives:
\begin{enumerate}
\item (Relative) The main new feature in the relative setting is the role of the underlying function $w$.
In the absolute case (i.e. $B=\Spec(\C)$), all negatively shifted closed forms are exact, that is, $0$-locked (Remark \ref{Rem:Absolute}). 
However, there are many non-exact closed forms in the relative setting; even the canonical symplectic forms on moduli spaces can be non-exact (Remark \ref{Rem:nonexactmoduli}).
Therefore, the local models become the symplectic zero loci of sections that are {\em not necessarily isotropic}, 
or the twisted cotangents instead of critical loci.
\item (Stacky) Theorem \ref{Thm_B:LST} can be viewed as a combination of the {\em {\'e}tale} local structure theorems for symplectic derived schemes in \cite{BBJ,BG} and classical $1$-Artin stacks in \cite{AHR}. In particular, this refines the {\em smooth} local structure theorem for symplectic $1$-Artin stacks in \cite{BBBJ}.
\item (Coordinate-independence) Even for symplectic schemes in the absolute case, the local models in Theorem \ref{Thm_B:LST} are {\em intrinsic} in the sense that choices of quasi-free cdga representatives are not required.
\end{enumerate}

Theorem \ref{Thm_B:LST} can be applied to derived $1$-Artin stacks with good moduli spaces \cite{Alp} (Remark \ref{Rem:goodmoduli}).
Theorem \ref{Thm_B:LST} recovers the Lagrangian neighborhood theorem \cite{JS} since Lagrangians are locally equivalent to symplectic fibrations by Corollary \ref{Cor_A:LagFact} (Remark \ref{Rem:LNT}).
There is also a $0$-shifted version of Theorem \ref{Thm_B:LST} (Remark \ref{Rem:0-symp}).

\subsubsection*{Symplectic pushforward towers}
We sketch how Theorem \ref{Thm_B:LST} is obtained.
Based on the classical local structure theorem \cite{AHR}, 
we can find a quotient stack presentation:
\[M \simeq L/G\quad\text{for a derived affine scheme $L$ with a $G$-action.}\]
The inductive description of derived affine schemes in \cite[Thm.~7.4.3.18]{LurHA} can be extended to the $G$-equivariant setting using the reductivity of $G$; we have a $G$-equivariant sequence of derived affine schemes
\[ L:=L_{(-d+1)} \hookrightarrow L_{(-d)} \hookrightarrow  \cdots \hookrightarrow L_{(1)} \hookrightarrow L_{(0)} \xrightarrow{\mathrm{sm}} L_{(-1)}:=B,\]
where $L_{(k+1)}\hookrightarrow L_{(k)}$ are the zero loci of sections of $(-k)$-shifted vector bundles for $k\geq0$ and $L_{(0)} \to L_{(-1)}$ is smooth.
Bending the sequence in the middle, we can form
\[\xymatrix@C-.7pc{
M\simeq M_{(-d+1)} \ar[r] \ar[d]^{}& M_{(-d)} \ar[r]\ar[d]^{} & M_{(-d-1)} \ar[r]\ar[d]^{} & \cdots \ar[r] & M_{\left(\lceil \frac {-d-1}{2} \rceil +1\right)} \ar[r] \ar[d]^{} & M_{\left(\lceil \frac {-d-1}{2} \rceil \right)} \ar[d]^{}  \\
B\simeq M_{(-2)} &  BG\simeq M_{(-1)} \ar[l] &  M_{(0)} \ar[l] & \cdots \ar[l] & M_{\left(\lfloor \frac {-d-1}{2} \rfloor -1 \right)} \ar[l] & M_{\left(\lfloor \frac {-d-1}{2} \rfloor \right) },\ar[l] 
}\]
where $M_{(k)}:=L_{(k)}/G$, $M_{(-2)}:=B$, and the last vertial arrow is identity for odd $d$.\footnote{Here $\lceil \tfrac {-d-1}{2} \rceil:=\min\{i \in \Z :  i\geq \tfrac {-d-1}{2}\}$ and $\lfloor\tfrac {-d-1}{2}\rfloor:=\max\{i\in \Z : i \leq \tfrac{-d-1}{2}\}$.}
By choosing $L_{(\bullet)}$ of minimal dimensions (as in \cite[Thm.~4.1]{BBJ}),
we can inductively lift the locked symplectic form on $M \to B$ to $M_{(k)} \to M_{(-d-1-k)}$ such that 
 \[M_{(k)} \cong \left(M_{(-d-k)} \to M_{(-d-1-k)}\right)_*\left(M_{(k-1)}\right) \textin \Symp_{M_{(-d-1-k)},d}\]
(Theorem \ref{Thm:SPT}).
Then the local structure theorem follows by analyzing the locked symplectic structure on the last vertical arrow $M_{\left(\lceil \frac {-d-1}{2} \rceil \right)} \to M_{\left(\lfloor \frac {-d-1}{2} \rfloor \right) }$, which is either the zero locus of a section of a shifted vector bundle or the identity map. 

\subsubsection*{Smooth local structures}

As a variant of Theorem \ref{Thm_B:LST}, we also provide a {\em smooth} local structure theorem. 
Since there is no smooth symplecto-morphism (unless it is {\'e}tale or the shift is positive), we will use certain forms of Lagrangian correspondences as symplectic charts.
We say that a morphism $C:W \dashrightarrow M$ in $\Symp_{B,d}^w$ is a {\em smooth symplectic cover} if the corresponding Lagrangian correspondence
\[ \xymatrix@R-0.1pc{
& C \ar@{_{(}->}[ld]_{\mathrm{cl.eq}}^{} \ar@{->>}[rd]^{\mathrm{sm.surj}}_{} & \\
W && M,
}\]
consists of a smooth surjective map $C \to M$ and a map $C\to W$ whose classical truncation is an equivalence.

\begin{theorem*}[Smooth local structure, Cor.~\ref{Cor:SLS}]
Let $g:M \to B$ be a $d$-shifted $w$-locked symplectic fibration for $w\in \sA^0(B,d+2)$. 
Assume that $M$ and $B$ are derived Artin stacks whose classical truncations are of finite type over $\C$ and $d<0$.
\begin{enumerate}
\item If $d\equiv 2 \in \Z/4$ (resp. $d\equiv 0 \in \Z/4$), 
then there exist 
\begin{itemize}
\item a finitely presented morphism $p:U \to B$ from a derived scheme $U$ such that $\LL_{U/B}$ is of tor-amplitude $\geq \frac d2 +1$,
\item an orthogonal (resp. symplectic) bundle $E$ on $U$, 
\item a section $s: \O_U \to E [\tfrac d2+1]$ with $s^2 \cong w|_U$, 
\item and a smooth symplectic cover
\[C:\SZ_{U/B}(E\left[\tfrac d2+1\right],s) \dashrightarrow M.\]
\end{itemize}

\item If $d\not\equiv2 \in \Z/4$ or ($d\equiv 2 \in \Z/4$ and $\rank(\TT_{M/B})$ is even), then there exist 
\begin{itemize}
\item  a finitely presented morphism $q:V \to B$ from a derived scheme $V$ such that $\LL_{V/B}$ is of tor-amplitude $\geq \frac d2$, 
\item a locked $1$-form $\alpha \in \sA^{1,\lc }(V/B,d+1)^w$, 
\item and a smooth symplectic cover
\[C: \T^*_{V/B,\alpha}[d] \dashrightarrow M.\]
\end{itemize}
\end{enumerate}
\end{theorem*}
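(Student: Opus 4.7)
The plan is to follow the symplectic pushforward tower strategy sketched in the introduction for the étale local structure theorem (Theorem~\ref{Thm_B:LST}), simplified by the fact that a smooth (rather than étale) cover is sought: any smooth atlas of $M$ over $B$ suffices, and no $G$-equivariant good moduli presentation from \cite{AHR} is required.

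First, choose a smooth surjection $L \to M$ from a derived scheme $L$ of finite presentation over $B$, which exists because $M$ is a finite-type derived Artin stack. Applying the inductive presentation \cite[Thm.~7.4.3.18]{LurHA} to the composite $L \to B$ yields a tower
\[
L = L_{(-d+1)} \hookrightarrow L_{(-d)} \hookrightarrow \cdots \hookrightarrow L_{(1)} \hookrightarrow L_{(0)} \xrightarrow{\mathrm{sm}} L_{(-1)} := B,
\]
where each $L_{(k+1)} \hookrightarrow L_{(k)}$ is the derived zero locus of a section of a $(-k)$-shifted vector bundle on $L_{(k)}$ for $k \geq 0$, and $L_{(0)} \to B$ is smooth. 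Following \cite[Thm.~4.1]{BBJ}, we may take these bundles of minimal rank at each stage.

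Bending this tower in the middle and iteratively invoking Theorem~\ref{Thm_A:SP} together with the moment map description of Proposition~\ref{Prop:UnivMoment}, we lift the $w$-locked symplectic structure on $M \to B$ to $w$-locked structures on each intermediate level, realizing each pair as a symplectic pushforward of the one above it. The bottom of the bent tower is then either a derived zero locus of a section of a shifted vector bundle (for even $d$) or the identity map (for odd $d$, and also for $d \equiv 2 \pmod 4$ with $\rank(\TT_{M/B})$ even, where a rank-parity argument converts the bottom even-rank orthogonal bundle into a Lagrangian pair). By Example~\ref{Ex_B} and Corollary~\ref{Cor_A:LagFact}, these two cases are identified respectively with $\SZ_{U/B}(E[\tfrac d 2 + 1], s)$ and $\T^*_{V/B,\alpha}[d]$, with the claimed tor-amplitude on $\LL_{U/B}$ or $\LL_{V/B}$ controlled by the location of the bending point.

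Finally, the top-to-bottom composite $L \to W$ is a closed immersion whose classical truncation is an equivalence, since every intermediate zero locus is formed along a strictly negatively shifted section; paired with the smooth surjection $L \to M$, this furnishes the Lagrangian correspondence giving the desired smooth symplectic cover $C : W \dashrightarrow M$. The main obstacle is the coherent inductive lifting of the locked symplectic structure up the tower: at each step one must check that the symplectic pushforward produced by Theorem~\ref{Thm_A:SP} agrees with the geometric map in the tower via an appropriate moment map, and that at the terminal level the resulting structure specializes to the expected section of an orthogonal/symplectic bundle or locked $1$-form on the designated base.
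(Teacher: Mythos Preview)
There is a genuine gap in your non-degeneracy argument. In the \'etale setting of Theorem~\ref{Thm:SPT} the top of the tower $M_{(-d+1)} \to B$ is itself a $w$-locked \emph{symplectic} fibration, and this is what allows you to run the inductive symplectic pushforward tower via Theorem~\ref{Thm_A:SP}. In your smooth setting the chart $L \to M$ is only smooth, so the pulled-back locked $2$-form on $L \to B$ is \emph{degenerate}: $\TT_{L/B}$ acquires an extra summand $\TT_{L/M}$ with no dual partner. Theorem~\ref{Thm_A:SP} takes symplectic fibrations to symplectic fibrations; it does not apply to the degenerate form on $L$, and there is no mechanism in your sketch to produce a symplectic structure at the level $L_{(-d)}$ from the degenerate one on $L$.

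The paper handles this with two ingredients you are missing. First, the smooth chart must be chosen of \emph{minimal dimension} (Lemma~\ref{Lem:MinChart}): one arranges $H^i(\TT_{L/B}|_l)\simeq H^i(\TT_{M/B}|_m)$ for $i\ge 0$, which is strictly stronger than taking the tower bundles of minimal rank as in \cite{BBJ}. Second, instead of symplectic pushforwards, one lifts the degenerate locked $2$-form from $L$ to $L_{(-d)}$ by the connectivity estimate of Lemma~\ref{Lem:LiftingForms}, and then proves non-degeneracy on $L_{(-d)}$ via a correspondence version of the non-degeneracy transfer, Lemma~\ref{Lem:LiftingNondeg2}, applied to $L_{(-d)} \leftarrow L \to M$; both minimality conditions are used to verify its hypotheses (A1), (A2). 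Only after this does one have a genuine symplectic fibration $N \to U$ of schemes, to which the scheme-level local structure (Remark~\ref{Rem:ZLS}) and the functoriality of symplectic pushforwards apply. A secondary omission is the base reduction (Lemma~\ref{Lem:RedBase}): the tower construction from \cite{LurHA} needs $B$ to be affine, so one first replaces $B$ by a smooth scheme chart and compensates via a symplectic pushforward along that chart.
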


This is a generalization of the smooth local structure theorem for symplectic $1$-Artin stacks in \cite{BBBJ} to symplectic fibrations of (higher) Artin stacks.
Unlike Theorem \ref{Thm_B:LST}, here the twisted cotangent bundles can also be used for even $d$.
Since orthogonal (resp. symplectic) bundles over {\em schemes}, {\'e}tale locally have maximal isotropic (resp. Lagrangian) subbundles,
the symplectic zero loci become twisted cotangent bundles (by Proposition \ref{Prop:ChangeofSymmCplx}), except the case when $d\equiv 2$ and  $\rank(\TT_{M/B})$ is odd.

\subsection*{Virtual Lagrangian cycles}

Our main enumerative application is the existence of unique functorial bivariant classes for $(-2)$-symplectic fibrations over the exact loci.

\begin{theoremX}[Virtual Lagrangian cycles, Thm.~\ref{Thm:VLC}]\label{Thm_C:VLC}
Let $g:M \to B$ be an oriented $(-2)$-shifted $w$-locked symplectic fibration.
Assume that $M_\cl$ is a quasi-projective scheme and $B_\cl$ is an $1$-Artin stack with affine stabilizers.
Then there exists a map
\[[M/B]^\lag : A_*(\Zero_B(w)) \to A_{*+\frac12\rank(\TT_{M/B})}(M),\] 
where $\Zero_B(w)$ is the zero locus of $w:B \to \AA^1$ in $B$, satisfying the following properties:
\begin{enumerate}
\item (Bivariance) $[M/B]^\lag$ commutes with projective pushforwards and quasi-smooth (quasi-projective) pullbacks \cite{BF,Man}.
\item (Functoriality) Consider an oriented Lagrangian correspondence 
\[\xymatrix{
& L\ar[ld]_g \ar[rd]^f \\
M && N
}\]
of oriented $w$-locked $(-2)$-symplectic fibrations over $B$ such that $M_\cl$ and $N_\cl$ are quasi-projective schemes.
Then we have:
\[\qquad\text{$f:$ quasi-smooth \& $g_\cl:$  isomorphism} \implies [M/B]^\lag = f^! \circ [N/B]^\lag.\]
\end{enumerate}
Moreover, the maps $[M/B]^\lag$ are uniquely determined by the above two properties.
\end{theoremX}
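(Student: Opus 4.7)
The plan is to construct $[M/B]^\lag$ by descending a $G$-equivariant Oh--Thomas square root Euler class \cite{OT} through the étale local structure theorem, and then to extract both the two characteristic properties and uniqueness from the behaviour of that construction under Lagrangian correspondences. For existence, I would first apply Theorem~\ref{Thm_B:LST}(1) with $d=-2$ (so $d \equiv 2 \bmod 4$) to present $M$ étale locally as $\SZ_{U/B}(E,s)/\!/G$, with $U\to B$ a $G$-equivariant derived affine scheme satisfying $\LL_{U/B}$ of tor-amplitude $\geq 0$, $E$ a $G$-equivariant orthogonal bundle on $U$, and $s:\O_U\to E$ a $G$-invariant section with $s^2 \simeq w|_U$. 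Over the exact locus $\Zero_B(w)$ the section $s$ becomes genuinely isotropic, and the orientation on $M$ translates into a $G$-equivariant square root of $\det E$; the $G$-equivariant Oh--Thomas construction then produces a local bivariant map $A_*(\Zero_B(w)) \to A_{*+\frac12\rank(\TT_{M/B})}(M)$ on each chart, and étale descent glues these into a global $[M/B]^\lag$ of the required degree.

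The two characteristic properties are then checked chart by chart. Bivariance with respect to projective pushforward and quasi-smooth pullback is inherited directly from the Oh--Thomas class on each local model and survives étale descent. For the Lagrangian functoriality, given an oriented correspondence $M \xleftarrow{g} L \xrightarrow{f} N$ with $f$ quasi-smooth and $g_\cl$ an isomorphism, I would use Corollary~\ref{Cor_A:LagFact} to realize $L$ étale locally as an exact Lagrangian over the local orthogonal model $\SZ_{U/B}(E,s)/\!/G$ of $N$, so that the identity $[M/B]^\lag = f^! \circ [N/B]^\lag$ becomes the isotropic reduction formula for the square root Euler class, in the style of \cite{OT}, now carried out in the relative $G$-equivariant $w$-locked setting.

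Uniqueness is then a bootstrap. Any assignment $\Phi$ satisfying (1) and (2) must agree with the constructed $[M/B]^\lag$ on a local model $\SZ_{U/B}(E,s)/\!/G$, because an explicit oriented Lagrangian correspondence from this local model down to a simpler reference (e.g.\ the quotient of the classical zero locus $Z(s)_\cl$ viewed as a smooth symplectic fibration where bivariance alone pins down the value) forces the equality via property~(2); property~(1) and étale descent then propagate the agreement from local models to the global $M$. The main obstacle I expect is the Lagrangian functoriality step: establishing the isotropic reduction formula for the square root Euler class in the full generality needed here---where the two sides of the correspondence may involve different reductive groups $G$ acting on different local orthogonal models, all compatibly with the $w$-locking structure and with orientations---requires packaging together the equivariance, the locking, and the orientation data in a careful way, and I anticipate that most of the technical content of the proof will concentrate at this step, while the remaining arguments reduce to standard étale descent and excess intersection formulas.
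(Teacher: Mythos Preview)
Your approach diverges from the paper's in a fundamental way, and the divergence creates a genuine gap in the uniqueness argument.

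The paper does \emph{not} use the \'etale local structure theorem (Theorem~\ref{Thm_B:LST}) to construct or characterize $[M/B]^\lag$. Instead, the key device is the \emph{symplectic deformation to the normal bundle} (Corollary~\ref{Cor:SD}): the locked symplectic form on $g:M\to B$ deforms along $M\times\AA^1 \to \D_{M/B}$ to the symplectic zero section $0:M \to \T_{M/B}[1]$. Bivariance (property~(1)) applied to this single global family yields
\[
[M/B]^\lag = [M/\T_{M/B}[1]]^\lag \circ \sp^{\loc}_{M/B},
\]
reducing uniqueness to the zero section of the symmetric complex $E=\T_{M/B}[1]$. A global symmetric resolution $D:E \dashrightarrow F$ by an orthogonal bundle $F$ then gives a Lagrangian correspondence of the shape required in property~(2), reducing further to $\sqrt{e}(F|_F,\tau)$, whose uniqueness is elementary (Proposition~\ref{Prop:VLC-Local}). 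Existence is the same formula read backwards. No \'etale charts, no gluing.

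Your uniqueness argument, by contrast, cannot close. The two properties you are allowed to use relate $[M/B]^\lag$ to $[p^*M/U]^\lag$ for base changes $p:U\to B$, and to $[N/B]^\lag$ through Lagrangian correspondences where one leg is a classical isomorphism. An \'etale symplecto-morphism $W\to M$ from a local model is a Lagrangian correspondence with $f:W\to M$ \'etale and $g=\id_W$; property~(2) then gives $[W/B]^\lag = f^!\circ[M/B]^\lag$, which determines the value on $W$ \emph{from} the value on $M$, not the other way around. To go back you would need an \'etale descent statement for $[-/B]^\lag$, but that is not among the axioms, and you cannot manufacture it from (1) and (2) alone. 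The same issue obstructs your existence argument: gluing Oh--Thomas classes across \'etale symplectic charts with possibly different $(U,E,s,G)$ is exactly the kind of Kuranishi-style patching that the global deformation-to-normal-bundle approach is designed to avoid. (There is also a hypothesis mismatch: Theorem~\ref{Thm_B:LST} needs $B$ to be an algebraic space, whereas here $B_\cl$ is a $1$-Artin stack.)
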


Theorem \ref{Thm_C:VLC} extends the virtual Lagrangian cycles \cite{BJ,OT} and their functoriality \cite{Park} in three perspectives:

\begin{enumerate}
\item (Relative) 
Theorem \ref{Thm_C:VLC} provides a cycle-theoretical {\em deformation invariance} of the virtual Lagrangian cycles.
A crucial feature is that it is {\em necessary} to restrict the bases to the loci where the symplectic forms are {\em exact} (see Remark \ref{Rem:ExactNecessary} for a counterexample without the exactness).
In terms of the local structures (Theorem \ref{Thm_B:LST}), the exactness of symplectic forms is equivalent to the sections of orthogonal bundles being isotropic.
This feature is overlooked in  \cite{BJ,OT} (see Remark \ref{Rem:BJ/OT}); their constructions are valid in the absolute case, 
but not in the relative case where symplectic forms can be non-exact.

\item (Derived) 
Theorem \ref{Thm_C:VLC} gives a derived interpretation of its classical shadow studied via the language of obstructions theories in \cite{Park}.
The necessary technical conditions in the classical perspective become natural in the derived perspective (Remark \ref{Rem:SOT}).
Especially, an artificial compatibility condition \cite[Def.~2.1]{Park} to obtain the functoriality is automatic for Lagrangian correspondences---nothing but the morphisms in the symplectic categories.\footnote{This is analogous to the fact that the compatibility condition in \cite{Man} to obtain the functoriality of the virtual fundamental cycles in \cite{LT,BF} holds automatically for quasi-smooth morphisms of quasi-smooth derived schemes.}
\item (Uniqueness)
Extending the virtual Lagrangian cycles to the relative setting allows us to characterize them uniquely by the bivariance and functoriality.
\end{enumerate}

We sketch how the uniqueness part of Theorem \ref{Thm_C:VLC} is obtained.
The key ingredient is {\em symplectic deformations} to normal bundles (Corollary \ref{Cor:SD}).
More precisely, given a locked symplectic fibration $g:M \to B$, there exists a  locked symplectic form on 
\[M\times \AA^1 \to \D_{M/B}.\]
This can be achieved by observing that the double deformation space $\D_{M\times\AA^1/\D_{M/B}}$ is the ``doubling'' of the ordinary deformation space $D_{M/B}$ (Lemma \ref{Lem:DD}).
By the bivariance, we can replace $g:M \to B$ with the zero section
$0:M \to \T_{M/B}[1]$.
By the functoriality, we can replace the symmetric complex $\T_{M/B}[1]$ with an orthogonal bundle $E$. 
The zero section can be described as the symplectic zero locus
\[\xymatrix{
& E|_E \ar[d] \\
M\simeq \SZ_{E/E}(E|_E,\tau) \ar@{^{(}->}[r]^-{0_E} & E, \ar@/_.3cm/[u]_\tau
}\]
of the tautological section $\tau$.
In this simplest case, we can consider the map
\[[M/E]^\lag:=\sqrt{e}(E|_{E},\tau) : A_*(QE:=\Zero(\tau^2)) \to A_*(M\simeq \Zero(\tau)),\]
constructed in \cite{OT}, as a localization of the characteristic class $\sqrt{e}\in A^*(BSO(2r))$ in \cite{EG}.
The bivariance, funtoriality, and uniqueness of this map is shown in \cite{KP}.

\subsection*{Moduli of perfect complexes}

Our main example of a locked symplectic fibration is the moduli of perfect complexes for a family of Calabi-Yau varieties.

\begin{theoremX}[Moduli of perfect complexes, Cor.~\ref{Cor:Map}, Ex.~\ref{Ex:Perf}]\label{Thm_D:Perf}
Let $f: X \to B$ be a smooth projective Calabi-Yau morphism of classical schemes of dimension $n \geq 4$ 
and $\ch_* \in H^{2*}_{\DR}(X/B)^{\nabla}$ be a horizontal section with respect to the Gauss-Manin connection $\nabla$.
Let $\Perf(X/B,\ch_*)$ be the moduli stack of perfect complexes on the fibers of $f$ whose topological Chern characters are the fibers of $\ch_*$.
Then we have
\[\Perf(X/B,\ch_*) \in \Symp_{B,2-n}^{W}, \quad\text{where}\quad W:=\int_{X/B} \ch_2\cup \Omega,\]
and $\Omega  \in H^0(X, \Omega^n_{X/B})=\Fil^n_{\Hd}H^n_{\DR}(X/B) \subseteq H^n_{\DR}(X/B)$ is the Calabi-Yau $n$-form. 
\end{theoremX}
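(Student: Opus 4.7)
The plan is to realize Theorem~\ref{Thm_D:Perf} as an instance of a locked variant of the AKSZ/PTVV principle for relative mapping stacks---presumably what Corollary~\ref{Cor:Map} provides---applied to the particular target $\Perf$. First I would express $\Perf(X/B, \ch_*)$ as the union of connected components of the relative internal mapping stack $\uMap_B(X, \Perf\times B)$ on which the topological Chern character of the universal perfect complex agrees with $\ch_*$ fiberwise along $f:X\to B$. The target $\Perf\times B$ carries the PTVV $2$-shifted symplectic form over $B$, trivially locked because the target side requires no fiber integration. Integration along $f$ against a locked orientation of degree $n$ should then produce the desired $(2-n)$-shifted $W$-locked symplectic fibration over $B$.

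The essential ingredient is therefore a locked orientation on the Calabi-Yau family $f:X\to B$. In the closed setting, PTVV only requires a cohomological trivialization $[\Omega]\in H^n_{\DR}(X/B)$ inducing Poincar{\'e} duality. In the locked setting one instead needs an honest section of the Hodge filtration piece $\Fil^n_{\Hd} H^n_{\DR}(X/B) = f_*\Omega^n_{X/B}$ of the \emph{non-completed} de~Rham complex, and this is exactly what the hypothesis that $\Omega$ is an algebraic Calabi-Yau $n$-form supplies. The horizontal section $\ch_*$ of $H^{2*}_{\DR}(X/B)^\nabla$ then furnishes a flat rigidification on the source; because it is $\nabla$-horizontal it is preserved by the Gauss-Manin connection and in particular respects the Hodge filtration piecewise, providing precisely the compatibility needed to glue local locked data across $B$.

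Tracing through the construction, the underlying $2$-form of the resulting locked symplectic structure should be (up to normalization) the $2$-form component of $\int_{X/B}\ch(\mathcal{E})\cup\Omega$, namely $\int_{X/B}\ch_2(\mathcal{E})\cup\Omega$ for $\mathcal{E}$ the universal complex, while the underlying function on $B$ is the $0$-form component of the same expression evaluated against the pinned $\ch_*$. Pinning the topological Chern character of $\mathcal{E}$ to $\ch_*$ identifies this $0$-form with $W=\int_{X/B}\ch_2\cup\Omega$ computed for the horizontal section, exactly as claimed. A short degree check is consistent: $\ch_2$ has cohomological degree $4$, $\Omega$ has degree $n$, and $\int_{X/B}$ shifts by $-2n$, giving $W\in \sA^0(B,4-n)$, which matches $p+d=2+(2-n)=4-n$ for $p=2$, $d=2-n$.

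The main obstacle will be setting up the locked AKSZ/PTVV theorem for mapping stacks itself (Corollary~\ref{Cor:Map}): one must promote the standard argument---which takes place on the completed Hodge-filtered de~Rham complex---to the non-completed complex. The two features identified above, namely that $\Omega$ genuinely lifts into $\Fil^n_{\Hd}$ and that $\ch_*$ is $\nabla$-horizontal, are exactly what is needed to define the requisite $\GG_m$-equivariant map from the full deformation space $\D_{\Perf(X/B,\ch_*)/B}$ to $\AA^1$, as opposed to only from its formal completion. Once this upgraded AKSZ statement is in place, the identification of $W$ reduces to a routine Chern character computation.
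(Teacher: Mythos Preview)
Your approach diverges from the paper's, and the first step contains a genuine gap. The claim that the $2$-shifted symplectic form $\theta$ on $\Perf\times B$ is ``trivially locked'' is false: the de~Rham class of $\theta$ is $\ch_2$ of the universal complex, which is nonzero in $H^4_{\DR}(\Perf)$, and this is precisely the obstruction to locking. No function on $B$ can absorb it, since $\theta$ is pulled back from $\Spec(\C)$ and is independent of $X$ or $\ch_*$. Moreover, even granting a locked input, the filtered integration map $\DR(M\times_BX/B)\to\DR(M/B)[-n]$ restricted to $\Fil^0\simeq\O_B$ is the composite $\O_B\to f_*\O_X\xrightarrow{\Omega}\O_B[-n]$, which is null for $n>0$ over a classical base; so a direct ``locked AKSZ'' would output an \emph{exact} form on $M$, incompatible with Remark~\ref{Rem:nonexactmoduli}.

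The paper proceeds indirectly and never locks the target. It takes the ordinary closed PTVV form on $M$ and invokes Proposition~\ref{Prop:Locking}: for $p+d\le 0$ (here $p=2$, $d=2-n$, $n\ge 4$), a closed form admits a locking structure if and only if its underlying \emph{formal} function on $\prod_{b}\prod_{\pi_0(M_b)}\widehat B_b$ descends to a global function on $B$, and then the locking is unique. This reduction rests on Bhatt's comparison of derived and classical de~Rham cohomology, which your outline does not invoke. Proposition~\ref{Prop:Map} then computes that formal function over a local Artinian base as $\int^{\DR}_{X/B,\Omega}\widetilde{m^*[\theta]}$ on the component through $m:X_b\to\Perf$. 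Restricting to the open substack where $m^*[\theta]=(\ch_2)_b$ forces this to equal the global function $W=\int_{X/B}\ch_2\cup\Omega$; horizontality of $\ch_*$ is used exactly here, to match the fiberwise class $m^*[\theta]$ with the globally defined $\ch_2$, not as a ``rigidification'' feeding into a locked orientation.
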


There is a general result for locking $d$-shifted closed $p$-forms for $d\leq -p$ over classical bases (Proposition \ref{Prop:Locking}):
\begin{itemize}
\item If $d<-p$, then all closed forms have unique exact ($0$-locking) structures.
\item If $d=-p$, then closed forms have unique locking structures formally locally on the bases (on each connected components).
Moreover, global locking structures exist if and only if the underlying (formal) functions of the unique locking structures on the formal neighborhoods converge to global functions.
\end{itemize}
These are consequences of the equivalence of the derived de Rham cohomology and the classical algebraic de Rham cohomology \cite{Har}, shown in \cite{Bhatt}.
Then Theorem \ref{Thm_D:Perf} follows by computing the underlying formal functions, which is $\int_{X/B} \ch_2\cup \Omega$.

If the base $B$ is reduced, then $\ch_2$ is a Hodge class by the global invariant cycle theorem \cite[Thm.~4.1.1]{Del71} (see \cite[Prop.~11.3.5]{CS}),
and the underlying function $W$ is zero. 
This happens because having a global horizontal class over an algebraic base is quite a strong condition;
if we do not fix the Chern characters by a horizontal class, 
then the symplectic form can be non-exact even for reduced bases (Remark \ref{Rem:nonexactmoduli}).

\subsubsection*{Vanishing cycles on Hodge loci}

We finally combine our results to Calabi-Yau $4$-folds and discuss a connection to the Hodge theory.
Here we will assume that everything in this paper extends to the analytic setting,
which provides a simpler heuristic picture.

Let $X$ be a smooth projective Calabi-Yau $4$-fold. Then $X$ has a local universal deformation $f:\X \to \bbD$ over a polydisc $\bbD \subseteq H^1(X,T_X)$ by \cite{Bog,Tian,Tod}.
Consider any moduli space $M$ of perfect complexes on the fibers of $f$ whose Chern characters are pullbacks of a horizontal section $\ch_* \in H^{2*}_{\DR}(\X/\bbD)^\nabla \simeq H^{2*}_{\DR}(X)$.

An analytic version of Theorem \ref{Thm_D:Perf} will give us a $(-2)$-shifted locked symplectic form on $M \to \bbD$ whose underlying function is: 
$W:= \int_{\X/\bbD} \ch_2 \cup \Omega : \bbD \to \C.$\footnote{This function $W$ is called the Gukov-Vafa-Witten superpotential \cite{GVW} in the physics literature.}
The zero/critical locus of this function $W$ has a Hodge-theoretical description:
\begin{align*}
\mathrm{Zero}(W) &= \left\{d\in \bbD : (\ch_2)_d \in \Fil^1_{\Hd}H_{\DR}^4(\X_d)\right\}	,\\
\Crit(W) &= \left\{d\in \bbD : (\ch_2)_d \in \Fil^2_{\Hd}H_{\DR}^4(\X_d)\right\}. 	
\end{align*}
In particular, $\Crit(W)$ is the Hodge locus of $\ch_2$ and expected to be $(-1)$-symplectic.

An analytic version of Theorem \ref{Thm_C:VLC} will give us a canonical map 
\[[M/\bbD]^\lag : H^{\BM}_*(\mathrm{Zero}(W) ,\Q) \to H^{\BM}_*(M,\Q)\]
in the Borel-Moore homology.
This means that the DT4 invariants are invariant under deformations of $X$ for which the $(0,4)$-Hodge pieces of $\ch_2$ remain zero.

An analytic version of Corollary \ref{Cor_A:LagFact} will imply that the  factor $M \to \Crit(W)$ is a $(-1)$-shifted Lagrangian.  
Then an analytic version of the Joyce conjecture \cite[Conj.~1.1]{JS} will give us a canonical map
\[[M/\bbD]^{\mathrm{crit}} : H^*(\Crit(W),\phi) \to H^{\BM}_*(M,\Q),\]
where $\phi:=\phi_W(\Q_{\bbD}[\dim \bbD] )$ is the perverse sheaf of vanishing cycles.
This map can be viewed as a refined DT4 invariant in terms of the singularities of the Hodge loci.

\subsection*{Further applications}

The results in this paper can be used in various contexts:

\begin{enumerate}
\item (Variational Hodge conjecture) Theorem \ref{Thm_D:Perf} ensures that the reduced virtual cycles for counting surfaces (constructed in \cite[Thm.~1.6]{BKP}) detect the variational Hodge conjecture (as stated in \cite[Thm.~1.13]{BKP}).
\item (Cohomological Hall algebras) The {\'e}tale local structure theorem (Theorem \ref{Thm_B:LST}) for $(-1)$-symplectic Artin stacks is used in \cite{KPS} to construct cohomological Hall algebras for $3$-Calabi-Yau categories.
\item (Cosection localization) In a forthcoming paper with Young-Hoon Kiem \cite{KP2}, we use the locked forms (Definition \ref{Def_A:LF}) and symplectic deformations (Corollary \ref{Cor:SD}) to provide an intrinsic description of the cosection-localized virtual cycles in \cite{KL} via the virtual Lagrangian cycles (Theorem \ref{Thm_C:VLC}).
\item (Symplectic rigidifications) In a forthcoming paper with Jemin You \cite{PY}, we use symplectic pushforwards (Theorem \ref{Thm_A:SP}) to construct a symplectic version of rigidification of $\Perf(X)$ by the action of $B\GG_m$ (for a Calabi-Yau $X$).
\end{enumerate}
Theorem \ref{Thm_B:LST}, Theorem \ref{Thm_C:VLC}, and Theorem \ref{Thm_D:Perf} are also used for studying quasi-maps to critical loci \cite{CZ} and the degeneration formula for local Calabi-Yau $4$-folds \cite{CZZ}.

\addtocontents{toc}{\protect\setcounter{tocdepth}{2}}

\begin{acknowledgements}
Special thanks to Richard Thomas for crucial contributions to this project.
He shared his expectations on relative moduli of sheaves that the $(0,4)$-Hodge pieces of second Chern characters are the obstructions to exactness and those moduli spaces are Lagrangians of the Hodge loci.
Most of the main results were achieved to realize these expectations.

The author is thankful to Younghan Bae and Martijn Kool for the previous collaboration on counting surfaces which was the main motivation of this project. Much of the contents in this paper was discussed together.

The author particularly thanks 
Dominic Joyce and Jeongseok Oh for discussions to resolve the deformation invariance issue in DT4 theory;
Tasuki Kinjo for a suggestion to extend the Darboux theorem to stacks;
Chris Brav for a suggestion to consider de Rham complexes as filtered complexes;
Benjamin Hennion for a suggestion to prove the uniqueness of virtual Lagrangian cycles;
Adeel Khan for teaching him the derived deformation spaces;
Pavel Safronov for an explanation of exact structures on $(-1)$-shifted closed $2$-forms;
Yalong Cao for careful reading of preliminary versions of the paper and various suggestions.

The author is  grateful to
Qingyuan Jiang, Joost Nuiten, Marco Robalo, Michail Savvas, Bertrand To{\"e}n, and Jemin You for various helpful discussions on derived algebraic geometry and shifted symplectic structures;
Young-Hoon Kiem, Woonam Lim, Davesh Maulik, Yukinobu Toda, and Gufang Zhao for discussions on possible applications to enumerative geometry;
Arkadij Bojko and Nachiketa Adhikari for sharing their drafts on related works.

The author was supported by Korea Institute for Advanced Study (SG089201).
\end{acknowledgements}

\subsection*{Notation and conventions}

We use the language of $\infty$-categories in \cite{LurHTT,LurHA}.
Denote by $\Grpd$ (resp. $\Cat$) the $\infty$-category of $\infty$-groupoids (resp. $\infty$-categories).
By abuse of notation, we usually suppress the symbol $\infty$; 
all categories, functors, limits, etc. are considered in the $\infty$-categorial sense, unless stated otherwise.

We work over the field of complex numbers $\C$.
Denote by 
\begin{itemize}
\item $\CAlg$ the category of commutative algebra spectra over $\C$,
\item $\CAlg^{\leq0}\subseteq \CAlg$ the full subcategory of connective objects,
\item $\dAff:=(\CAlg^{\leq0})^\op$ the category of {\em derived affine schemes},
\item $\dPSt:=\Fun(\dAff^\op,\Grpd)$ the category of {\em derived prestacks},
\item $\dSt\subseteq \dPSt$ the full subcategory of {\em derived stacks}, i.e., {\'e}tale sheaves on $\dAff$.
\end{itemize}
A {\em derived Artin stack} is a derived stack that is $n$-geometric for some $n$ in the sense of \cite{TV-HAG2} or \cite{LurDAG}.
A {\em derived scheme} (resp. {\em derived algebraic space}, 
{\em derived $1$-Artin stack}) $M$ is a derived Artin stack whose classical truncation $M_\cl$ is a scheme (resp. algebraic space, 
$1$-Artin stack).

We use the following conventions on a morphism of derived stacks $g:M \to B$:
\begin{itemize}
\item $g$ is {\em geometric} if $g$ is relatively representable by derived Artin stacks,
\item $g$ is {\em of finite type} if $g$ is geometric and $g_\cl$ is locally of finite type,
\item $g$ is {\em of finite presentation} if $g$ is geometric and locally of finite presentation.
\end{itemize}
For derived affine schemes, we follow the definition of finite type/presentation in \cite[Chap.~4]{LurSAG}.
For derived stacks, we drop the term ``locally'' for the simplicity of notation;
we are not assuming the quasi-compact/quasi-separated conditions.

Given a derived stack $B$, denote by:
\begin{itemize}
\item $\QCoh_B$ the symmetric monoidal category of quasi-coherent sheaves, 
\item $\QCAlg_B:=\CAlg(\QCoh_B)$ the category of quasi-coherent algebras,
\item $\QCAlg_B^\fil:= \QCAlg_{B\times \AA^1/\GG_m}$ the category of filtered algebras,
\item $\QCAlg_B^\gr:= \QCAlg_{B\times B\GG_m}$ the category of graded algebras,
\item $\Gr^*:\QCAlg_B^\fil \to \QCAlg_B^\gr$ the functor of associated graded complexes, 
\item $\Fil^* :\QCAlg_B^\fil \to \QCAlg_B^\gr$ the functor of forgetting filtrations, 
\item $\QCAlg_B^{\fil,\fil}:= \QCAlg_{B\times \AA^1/\GG_m\times \AA^1/\GG_m}$ the category of doubly filtered algebras.
\end{itemize}
We always use the cohomological degrees for complexes 
and decreasing filtrations.
Note that $\QCoh_{B\times \AA^1/\GG_m}\simeq\Fun((\Z,\geq),\QCoh_B)$ and $\QCoh_{B\times B\GG_m}\simeq\Fun(\Z,\QCoh_B)$
by \cite{Mou} (and \cite[Cor.~5.7]{BP}).
We use the basic facts on filtered complexes in \cite{GP}.
The {\em completion} $\widehat{C} \in \QCoh_B^\fil$ of  $C\in \QCoh_B^\fil$ is given by $\Fil^p \widehat{C}:=\varprojlim_{q\geq p}\Fil^pC/\Fil^q C$.

The {\em total space} of a perfect complex $E$ is $\Tot(E):=\Spec(\Sym(E\dual))$.
By abuse of notation, we sometimes use the same letter $E$ to denote the associated total space.

Given a morphism of derived stacks $p:U \to B$, denote by:
\begin{itemize}
\item $p^*:=(-)\times_B U :\dSt_B:=\dSt_{/B} \to \dSt_U:=\dSt_{/U}$ the pullback functor,
\item $p_!:\dSt_U \to \dSt_B$ the forgetful functor (which is a left adjoint of $p^*$),
\item $p_*:\dSt_U \to \dSt_B$ the {\em Weil restriction} functor (i.e. right adjoint of $p^*$).
\end{itemize}

Given morphisms of derived stacks $M \to U \to B$, denote by
\begin{itemize}
\item $(-)|_M:\sA^{p,\lc}(U/B,d) \to \sA^{p,\lc}(M/B,d)$ the pullback map,
\item $(-)_{/U} :\sA^{p,\lc}(M/B,d) \to \sA^{p,\lc}(M/U,d)$ the restriction map.
\end{itemize}
We use the same symbols for pullbacks/restrictions of closed, exact, ordinary forms.


\tableofcontents
\addtocontents{toc}{\protect\setcounter{tocdepth}{2}}

\section{Locked forms}\label{Sec:LF}

In this preliminary section, we introduce our main objects (Definition \ref{Def_A:LF}): the {\em locked forms} on derived stacks.
We provide two equivalent descriptions of them:
\begin{enumerate}
\item sections of Hodge filtrations on {\em non-completed} de Rham complexes (\S\ref{ss:DR});
\item $\GG_m$-equivariant functions on deformations to normal bundles (\S\ref{ss:DefSp}).
\end{enumerate}

Throughout this section, 
let $M$ be a derived stack over a base derived stack $B$. 

\subsection{De Rham complexes}\label{ss:DR}


In this subsection, we define the locked forms via the de Rham complexes.
The usual closed forms in \cite{PTVV,CPTVV} can be obtained with the de Rham complexes replaced by their completions.

The functor of {\em de Rham complexes} (together with Hodge filtrations)
\[\DR_B : \dSt_B^\op \to \QCAlg_B^\fil\]
can be constructed through the following steps:
\begin{enumerate}
\item [(S1)] Let $\Rees_B : \QCAlg_B \to \QCAlg_B^\fil$ be the left adjoint of $\Gr^0$.
\item [(S2)] When $B$ is affine, define $\DR_B$ as the right Kan extension of the restriction of $\Rees_B$ to the connective objects.
\item [(S3)] In general, define $\DR_B:=\varprojlim	_{b:T \to B} b_* \circ \DR_T \circ (-\times_BT)$, where the limit is taken over all morphisms $b:T \to B$ from derived affine schemes $T$.
\end{enumerate}
When $M \to B$ is geometric with cotangent complex $\LL_{M/B}$,
the de Rham complex $\DR(M/B):=\DR_B(M)$ has the following associated graded/underlying objects:
\begin{equation}\label{Eq:DRun/gr}
\Gr^p\DR(M/B) \simeq (M \to B)_*(\Lambda^p\LL_{M/B}[-p]),\quad \Fil^0\DR(M/B)\simeq \O_B.
\end{equation}
The left equivalence follows from the universal property and descent of cotangent complexes\footnote{Since $\Gr\circ\Rees_B:\QCAlg_B \to \QCAlg_B^\gr$ is the left adjoint of the square-zero extension functor $C\mapsto \Gr^0C \oplus \Gr^1C[1]$, it is equivalent to $\Sym(\LL_{-/B}[-1])$ for connective objects, see \cite[Thm.~5.3.6]{Rak}. For the descent of (the wedge powers of) cotangent complexes, see \cite[Lem.~1.15]{PTVV}.}
and the right equivalence follows from the descent of structure sheaves.

We define the locked forms via sections of the Hodge filtrations.

\begin{definition}[Locked forms]\label{Def:LF}
The space of $d$-shifted {\em locked $p$-forms} is:
\[\sA^{p,\lc}(M/B,d):= \map_{\QCoh_B}(\O_B, \Fil^p \DR(M/B)[p+d]).\]
\end{definition}

The locked forms induce {\em underlying forms/functions};
we have canonical maps
\[\xymatrix{
 & \sA^{p,\lc}(M/B,d) \ar[ld]_{(-)^p} \ar[rd]^-{[-]} & \\
 \sA^{p}(M/B,d) && \sA^{0}(B,p+d)
}\]
induced by $\Gr^p \leftarrow \Fil^p \rightarrow \Fil^0$, 
where 
\begin{itemize}
\item $\sA^{p}(M/B,d):=\map(\O_B,\Gr^p\DR(M/B)[p+d])$ consists of $d$-shifted {\em $p$-forms},
\item $\sA^0(B,p+d):=\map(\O_B,\O_B[p+d])$ consists of $(p+d)$-shifted {\em functions}.
\end{itemize}
Since the underlying functions will play the crucial role, we use the following notation:

\begin{notation}[$w$-locked forms] \label{Notation:w-locked}
Let $w\in \sA^0(B,p+d)$.
Denote by
\[\sA^{p,\lc}(M/B,d)^w:= \fib\left([-]:\sA^{p,\lc}(M/B,d) \xrightarrow{}\sA^0(B,p+d),w\right).\]
\end{notation}

We compare the locked forms with the usual {\em closed} forms (in \cite{PTVV,CPTVV}).
Let $\hDR(M/B) \in \QCAlg_B^\fil$ be the completion of $\DR(M/B)$.\footnote{The pushforward of the completed de Rham complex $\hDR(M/B)$ along $B \to \Spec(\C)$ is equivalent to the de Rham complex defined in \cite[Def.~1.3.9, 2.3.1, 2.4.2]{CPTVV}, under the canonical equivalence of completed filtered complexes and graded-mixed complexes (e.g. \cite[Prop.~1.3.1]{TV20}).
}
Denote by
\begin{itemize}
\item $\sA^{p,\cl}(M/B,d):=\map(\O_B,\Fil^p\hDR(M/B)[p+d])$,
\item $\sA^{\DR}(M/B,d):=\map(\O_B,\Fil^0\hDR(M/B)[d])$, 
\item $\sA^{p,\ex}(M/B,d):=\fib(\sA^{p,\cl}(M/B,d) \to \sA^{\DR}(M/B,p+d))$,
\end{itemize}
the spaces of {\em closed}, {\em de Rham}, {\em exact} forms.
From the usual perspective, the locked forms can be viewed as intermediate notions between closed forms and exact forms;
we have a cartesian diagram
\begin{equation}\label{Eq:ex/cl/DR}
\xymatrix{
\sA^{p,\ex}(M/B,d) \ar[r] \ar[d] \cart & \sA^{p,\lc}(M/B,d) \ar[r]^-{\widehat{(-)}} \ar[d]^{[-]} \cart & \sA^{p,\cl}(M/B,d) \ar[d]^{} \\
\star \ar[r]^-{0} & \sA^{0}(B,p+d) \ar[r] & \sA^{\DR}(M/B,p+d)
}
\end{equation}
since $\Gr^*\hDR(M/B)\simeq \Gr^*\DR(M/B)$, 
where $\star$ is a contractible space.
\begin{itemize}
\item At the one hand, the locked forms are stronger versions of closed forms in the sense that $\hDR$ is replaced with $\DR$;
the locked forms are closed forms whose associated de Rham classes come from functions on the bases.
\item On the other hand, the locked forms are twisted versions of exact forms;
\begin{equation}\label{Eq:TwistedExact}
\quad\sA^{p,\lc}(M/B,d)^w \simeq \Path_{0,0_w|_M} \sA^{p,\ex}(M/B,d+1) \quad \text{for $w\in \sA^0(B,p+d)$},
\end{equation}
where $0_w \in \sA^{p,\ex}(B/B,d+1) \simeq \sA^0(B,p+d)$ is the induced exact form.
\end{itemize}



\subsection{Deformation spaces}\label{ss:DefSp}


In this subsection, we provide geometric descriptions of locked forms (and closed forms) via deformations to normal bundles.

The {\em deformation space} \cite{KhanRydh,Hekking,HKR} of $M\in \dSt_B$ is the mapping stack
\begin{equation}\label{Eq:DefSp}
\D_{M/B}:=\uMap_{B\times\AA^1}(B\times \{0\},M \times \AA^1).
\end{equation}
When $M \to B$ is geometric, then the special/generic fiber can be computed as:
\[\xymatrix{
\T_{M/B}[1] \ar@{^{(}->}[r] \ar[d] \cart & \rmD_{M/B} \ar[d]^{} \cart & B\times\GG_m \ar@{_{(}->}[l] \ar@{=}[d]\\
B \ar@{^{(}->}[r]^-{}  & B\times  \AA^1 & B\times\GG_m, \ar@{_{(}->}[l]_-{ }
}\]	
since $\T_{M/B}[1]:=\Spec(\Sym(\LL_{M/B}[-1])) \simeq \uMap_{B}(B\times\AA^1[-1],M)$ by the universal property of cotantent complexes.
Moreover, the deformation space $\rmD_{M/B}$ has a canonical $\GG_m$-action induced by the weight $1$ action on $\AA^1$ whose quotient stack is:
\[\D_{M/B}/\GG_m\simeq \uMap_{B\times \AA^1/\GG_m}(B\times B\GG_m,M \times \AA^1/\GG_m).\]
In particular, the equivariant functions on $\D_{M/B}$ form a filtered algebra
\[\Gamma^\fil_B(\rmD_{M/B}/\GG_m):=(\rmD_{M/B}/\GG_m \to B\times \AA^1/\GG_m)_*\O_{\rmD_{M/B}/\GG_m} \in \QCAlg_{B}^\fil.\]

We can recover the de Rham complex from the deformation space.

\begin{proposition}[De Rham via deformation]\label{Prop:DRviaDefSp}
We have a canonical equivalence 
\[\DR(M/B) \simeq \Gamma^\fil_B(\rmD_{M/B}/\GG_m) \textin  \QCAlg_{B}^\fil.\]
\end{proposition}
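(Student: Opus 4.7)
The strategy is to identify both functors $M \mapsto \DR(M/B)$ and $M \mapsto \Gamma^{\fil}_B(\D_{M/B}/\GG_m)$ on $\dSt_B^{\op}$ as the right Kan extension, along $\dAff_B^{\op} \hookrightarrow \dSt_B^{\op}$, of their common restriction to derived affine $M$. One then verifies the equivalence on affines via the universal property $\Rees_B \dashv \Gr^{0}$.

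\textbf{Reductions.} Both sides are compatible with pullback along $f : B' \to B$: the LHS by construction (S3), and the RHS because the deformation space is itself a mapping stack, giving $\D_{f^*M/B'} \simeq f^*\D_{M/B}$, with pushforward of structure sheaves commuting with such base change. We may thus assume $B = \Spec R$ is derived affine. The LHS is then a right Kan extension from $\dAff_B$ by (S2). For the RHS, one shows that $M \mapsto \Gamma^{\fil}_B(\D_{M/B}/\GG_m)$ satisfies étale (hyper)descent in $M$, using that an étale map $\widetilde{M} \to M$ induces an étale map $\D_{\widetilde{M}/B} \to \D_{M/B}$ together with sheaf descent for $\Gamma$; the same property then reduces the comparison to derived affine $M$.

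\textbf{Affine case.} For $M = \Spec A$ with $A$ a connective $R$-algebra, the LHS equals $\Rees_R(A)$ by (S2). For the RHS, given a test $C \in \QCAlg_R^{\fil}$, the adjunction between relative $\Spec$ and $\Gamma$ over $B\times\AA^1/\GG_m$ gives
\[
\Map_{\QCAlg_R^{\fil}}(\Gamma^{\fil}_R(\D_{M/B}/\GG_m),\,C) \simeq \Map_{\dSt_{R\times\AA^1/\GG_m}}(\Spec(C),\,\D_{M/B}/\GG_m).
\]
Writing $T := \Spec(C)$ viewed as a $\GG_m$-equivariant derived scheme over $R\times\AA^1$ and applying the universal property $\D_{M/B}(T) \simeq \Map_B(T_0, M)$, the special fiber $T_0 = T\times_{R\times\AA^1}(B\times\{0\})$ is identified with $\Spec(\Gr^{\bullet}C)$ carrying its $\GG_m$-grading. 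Since $\GG_m$ acts trivially on $M = \Spec(A)$, a $\GG_m$-equivariant $R$-algebra map $A \to \Gr^{\bullet}C$ factors through the weight-zero piece $\Gr^{0}C$, and the $\Rees_R \dashv \Gr^{0}$ adjunction identifies this mapping space with $\Map(\Rees_R(A), C)$. Yoneda then yields $\Gamma^{\fil}_R(\D_{M/B}/\GG_m) \simeq \Rees_R(A) \simeq \DR(M/B)$, and the equivalences are manifestly natural in $M$.

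\textbf{Main obstacle.} The most delicate ingredient is the descent/Kan-extension property for the right-hand side: one must verify that $\Gamma^{\fil}_B(\D_{M/B}/\GG_m)$ is determined by its restriction to derived affines, since mapping stacks do not generally preserve arbitrary colimits in their target. This requires a careful argument combining the behavior of the Weil restriction $\D_{-/B}$ under étale localization with the descent properties of $\Gamma$. Once this reduction is in place, the affine case is essentially a formal manipulation of adjunctions and the universal properties of the Rees construction and the deformation space.
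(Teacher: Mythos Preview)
There is a genuine gap in the affine case. The displayed equivalence
\[
\Map_{\QCAlg_R^{\fil}}(\Gamma^{\fil}_R(\D_{M/B}/\GG_m),\,C) \simeq \Map_{\dSt_{R\times\AA^1/\GG_m}}(\Spec(C),\,\D_{M/B}/\GG_m)
\]
is \emph{not} the $\Gamma \dashv \Spec$ adjunction: that adjunction gives $\Map_{\QCAlg}(C,\Gamma X)\simeq\Map_{\dSt}(X,\Spec C)$, the opposite variance on both sides. Your version holds only when $X=\D_{M/B}/\GG_m$ is already known to be affine with $\Gamma^{\fil}\Spec^{\fil}\simeq\id$ on its ring of functions. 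What your subsequent computation actually proves is that the functor of points of $\D_{M/B}/\GG_m$ on $\Spec^{\fil}C$ is $\Map_{\QCAlg^{\fil}_R}(\Rees_R A,C)$; this identifies $\D_{M/B}/\GG_m$ with $\Spec^{\fil}(\Rees_R A)$ as a derived stack, but does \emph{not} yet give $\Gamma^{\fil}\Spec^{\fil}(\Rees_R A)\simeq\Rees_R A$.

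This last step is the real content, because $\Rees_R(A)$ is typically non-connective (its graded pieces $\Lambda^p\LL_{A/R}[-p]$ may live in positive cohomological degrees), so the unit $\Rees_R A\to\Gamma^{\fil}\Spec^{\fil}\Rees_R A$ is not formal. The paper isolates exactly this as its Step~2: using conservativity of $(\Gr,\Fil^{-\infty})$ on $\QCAlg^{\fil}$ to reduce to showing $\Sym(L)\xrightarrow{\simeq}\Gamma^{\gr}\Spec^{\gr}\Sym(L)$ for $L=\LL_{A/R}[-1]$, which is the separate Lemma on equivariant functions on abelian cones, proved by a nontrivial cosimplicial-homotopy argument controlling totalizations of symmetric powers. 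Your Yoneda argument bypasses this and is therefore incomplete.

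A secondary point: your reduction to affines via ``\'etale descent of $\Gamma^{\fil}(\D_{-/B}/\GG_m)$'' is vague. The paper's argument is cleaner and avoids descent questions entirely: write $M\simeq\varinjlim_i\Spec A_i$ as a colimit in derived \emph{prestacks}; the mapping-stack definition of $\D_{-/B}$ turns this into a limit, and $\Gamma^{\fil}$ preserves limits, matching the right-Kan-extension definition of $\DR_B$ on the nose.
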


Proposition \ref{Prop:DRviaDefSp} says that the locked $p$-forms can be viewed as weight $p$ functions on the deformation space and their underlying $p$-forms/functions are the restrictions to the special/generic fiber;
we have a canonical commutative diagram
\[\xymatrix{
\sA^{p}(M/B,d) \ar[d] & \ar[l]_-{(-)^p}  \sA^{p,\lc}(M/B,d) \ar[r]^-{[-]} \ar[d] & \sA^0(B,p+d)\ar[d] \\
\sA^0(\T_{M/B}[1],p+d) & \ar[l]_-{\mathrm{sp}} \sA^0(\rmD_{M/B},p+d) \ar[r]^-{\mathrm{gen}} & \sA^0(B\times \GG_m,p+d),
}\]
where the upper row is the weight $p$ version of the lower row (see Lemma \ref{Lem:FtnsLinear} for the weight $p$ functions on $\T_{M/B}[1]$).
Moreover, by taking completions, we also have
\[\hDR(M/B) \simeq \Gamma^\fil_B(\hatD_{M/B}/\GG_m) \textin  \QCAlg_{B}^\fil, \]
where $\hatD_{M/B}:=\rmD_{M/B}\times_{\AA^1}\widehat{\AA}^1\simeq  \varinjlim_{n\to \infty} \rmD_{M/B}\times_{\AA^1}\Zero_{\AA^1}(T^n)$ is the formal completion of the deformation space along the special fiber.\footnote{For any $D \in \dSt_{B\times\AA^1/\GG_m}$, we have $\Gamma^\fil_B(\widehat{D}/\GG_m) \simeq \widehat{\Gamma_B^\fil(D/\GG_m)} \in \QCAlg_B^\fil$.
Indeed, if $D$ and $B$ are affine, the equivalence follows from the base change \cite[Prop.~3.10]{BFN} along $\Zero(T^n)/\GG_m \hookrightarrow \AA^1/\GG_m$.
In general, the equivalence follows from descent (as in Step 1 of Proposition \ref{Prop:DRviaDefSp} below).
}
Thus closed forms can be viewed as formal functions on the deformation space near the special fiber.


\begin{proof}[Proof of Proposition \ref{Prop:DRviaDefSp}]
Observe that there is an analogy between the construction of the de Rham complexes and the deformation spaces:
\begin{enumerate}
\item (De Rham) The functor $\Rees_B$ in \S\ref{ss:DR} is the left adjoint of
\[\QCAlg_B^\fil:= \QCAlg_{B\times\AA^1/\GG_m} \xrightarrow{\Gr\simeq 0^*} \QCAlg_B^\gr := \QCAlg_{B\times B\GG_m} \xrightarrow{\Gr^0\simeq (\pr_1)_*} \QCAlg_B.\]
\item (Deformation) The functor $\D_{-/B}/\GG_m$ in \eqref{Eq:DefSp} is the right adjoint of
\[\dSt_B^\fil:=\dSt_{B\times\AA^1/\GG_m} \xrightarrow{(-)^\gr:=0^*} \dSt_B^\gr:=\dSt_{B\times B\GG_m} \xrightarrow{\forget:=(\pr_1)_!} \dSt_B.\]
\end{enumerate}
Moreover the two functors are connected by the canonical adjunction
\[\xymatrix{
\Gamma_B :\dSt_B\ar@<.4ex>[r] & \ar@<.4ex>[l] \QCAlg_B^\op : \Spec_B,\quad\text{where $\Gamma_B:M \mapsto(M\to B)_*\O_M$.}
}\]	
Let $\Gamma_B^{\fil}:=\Gamma_{B\times\AA^1/\GG_m}$, $\Spec_B^{\fil}:=\Spec_{B\times\AA^1/\GG_m}$, $\Gamma_B^{\gr}:=\Gamma_{B\times B\GG_m}$, $\Spec_B^{\gr}:=\Spec_{B\times B\GG_m}$.

\medskip

\noindent{\em Step 1: Deformation spaces for affine schemes.}
We claim that there is an equivalence
\begin{equation}\label{Eq:DRdef1}
(\D_{-/B}/\GG_m) \circ \Spec_B \cong \Spec_B^\fil \circ \Rees_B: \QCAlg_B^\op \to \dSt^\fil_B
\end{equation}
if $B$ is affine.
By considering the left adjoints of the both sides, 
it suffices to show
\[\Gamma_B \circ (-)^\gr \simeq \Gr^0 \circ \Gamma_B^\fil : \dSt^\fil_B \to \QCAlg_B.\]
For $M \in \dSt_B^\fil$, this is the weight $0$ part of the base change for the fiber square
\[\xymatrix{
M^\gr \ar@{^{(}->}[r] \ar[d] \cart & M \ar[d] \\
B\times B\GG_m \ar@{^{(}->}[r]^-{0} & B\times \AA^1/\GG_m.
}\]
Since $0^* : \QCoh_B^\fil \to \QCoh_B^\gr$ preserves limits, we may assume that $M$ is affine.
Since $M \to B\times\AA^1/\GG_m$ is affine, the base change follows from \cite[Prop.~3.10]{BFN}.

\medskip

\noindent{\em Step 2: Proposition \ref{Prop:DRviaDefSp} for affine schemes.}
Take the global sections of \eqref{Eq:DRdef1}, then 
\[\Gamma_B^\fil \circ (\D_{-/B}/\GG_m) \circ \Spec_B \cong \Gamma_B^\fil\circ \Spec_B^\fil \circ 
\Rees_B: \QCAlg_B \to \QCAlg_B^\fil.\]
It remains to show that the unit map of $\Gamma_B^\fil\dashv \Spec_B^\fil$ for connective objects,
\[\Rees_B \to \Gamma_B^\fil\circ \Spec_B^\fil \circ \Rees_B : \QCAlg_B^{\leq 0}\subseteq \QCAlg_B \to \QCAlg_B^\fil\]
is an equivalence.\footnote{The Rees algebra $\Rees_B(A) \in\QCAlg_B^\fil$ of a connective algebra $A\in\QCAlg_B^{\leq 0}$ is usually non-connective and thus it is a priori not obvious that the unit map is an equivalence.}
Since
$(\Gr,F^{-\infty}) : \QCAlg^\fil_B \to \QCAlg_B^\gr \times \QCAlg_B$
is conservative and we have \eqref{Eq:DRun/gr}, it suffices to show that the two unit maps
\begin{align*}
\Sym_B \to \Gamma_B^\gr\circ \Spec_B^\gr \circ \Sym_B &: \QCoh_B^{\leq 1} \to \QCAlg_B^\gr\\
\O_B \to \Gamma_B\circ \Spec_B(\O_B) & \textin \QCAlg_B,
\end{align*}
are equivalences.
Here we used that an affine scheme $M$ has a connective cotangent complex $\LL_{M/B}$ and the projection $\D_{M/B} \to B\times\AA^1/\GG_m$ has the base change property by \cite[Cor.~1.4.5(i)]{DG}.
Since the unit maps for abelian cones are equivalences (by Lemma \ref{Lem:FtnsLinear} below), we have Proposition \ref{Prop:DRviaDefSp} for affine $M$ and $B$.

\medskip

\noindent{\em Step 3: Proposition \ref{Prop:DRviaDefSp} for general derived stacks.}
Firstly, assume that $B$ is affine.
Any derived stack $M \in \dSt_B$ can be written as the colimit
$\varinjlim_i \Spec(A_i) \simeq M$
in the category of derived {\em prestacks}, where $A_i \in \QCAlg_B^{\leq0}$.
Then the induced map
\[\varinjlim_i \D_{\Spec(A_i)/B} \xrightarrow{\simeq} \D_{M/B}\]
is an equivalence of derived stacks (with $\GG_m$-actions) by the definition (in \eqref{Eq:DefSp}).
Since $\Gamma_B^\fil$ preserves limits, we obtain the desired equivalence since $\DR_B$ is the right Kan extension of its restriction to affine objects.

Secondly, consider a general derived stack $B$.
It is clear that the deformation spaces are compatible with a base change $T \to B$; we have:
$\D_{M/B}\times_B T\simeq \D_{M\times_B T/T}.$
In particular, the colimit over $\dAff_{/B}$ recovers the given deformation space;
we have
\[\D_{M/B} \simeq \varinjlim_{b:T \to B} \D_{M/B}\times_B T\simeq \varinjlim_{b:T \to B} \D_{M\times_BT/T}\]
since base changes preserve colimits in topoi.
Since $\Gamma^\fil_B$ preserves limits, we have the desired equivalence.
\end{proof}

We need the following lemma to complete the proof of Proposition \ref{Prop:DRviaDefSp}.
Let
\[C: \QCoh_B^{\op} \to \dSt_B^\gr : L \mapsto \Spec^\gr (\Sym(L))\]
be the functor of {\em abelian cones} (together with the scaling $\GG_m$-actions).

\begin{lemma}[Equivariant functions on abelian cones]\label{Lem:FtnsLinear}
Let $L \in \QCoh_B$ be a $n$-connective complex for some $n \in \Z$.
Then we have a canonical equivalence
\[\Gamma^\gr_B  C(L) \simeq \Sym(L) \textin \QCAlg_B^\gr,\]
induced by the adjunction $\Gamma_B^\gr \dashv \Spec_B^\gr$.
\end{lemma}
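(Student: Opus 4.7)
The plan is to verify that the canonical map induced by the adjunction,
\[\eta:\Sym(L)\longrightarrow\Gamma^\gr_B C(L)=\Gamma^\gr_B\Spec^\gr_B(\Sym(L)),\]
is an equivalence in $\QCAlg^\gr_B$. Since the forgetful functor to $\QCoh^\gr_B$ is conservative, I would test this weight-by-weight, checking that $\Sym^p(L)\to\Gamma^\gr_B(C(L))_p$ is an equivalence in $\QCoh_B$ for every $p\ge 0$. The affine base change for $\Spec^\gr$ used in Step~1 of the proof of Proposition~\ref{Prop:DRviaDefSp}, together with $\QCoh$-descent, lets me reduce to $B=\Spec(A)$ affine.

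When $L$ is connective (in particular whenever $n\le 0$), the algebra $\Sym(L)$ lies in $\QCAlg^{\gr,\le 0}_B$, so $C(L)$ is a genuine graded affine derived scheme over $B$ and $\eta$ is an equivalence by the standard fully-faithfulness of $\Spec^\gr_B$ on connective graded algebras. The substantive content is the general $n>0$ case: the weight-$p$ part of $\Sym(L)$ then lives in cohomological degrees up to $pn$, so $\Sym(L)$ itself is non-connective and this argument breaks down.

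For general $n$-connective $L$, my plan is a cellular reduction. Locally on $B$, $L$ can be resolved by a Postnikov tower whose successive cofiber cells are shifts $\O_B[k]$ of the structure sheaf. Direct sums behave favourably on both sides---one has $\Sym(L_1\oplus L_2)\simeq\Sym(L_1)\otimes_{\O_B}\Sym(L_2)$ and $C(L_1\oplus L_2)\simeq C(L_1)\times_B C(L_2)$---so the equivalence is multiplicative over cells. For the atomic case $L=\O_B[k]$, a Koszul computation in characteristic zero describes $\Sym(\O_B[k])$ explicitly (a shifted polynomial ring on a weight-one generator for even $k$, a shifted square-zero extension for odd $k$), and the corresponding shifted cone $C(\O_B[k])$ has graded functions that match these by direct inspection.

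The main obstacle is that $\Sym$ does not preserve cofiber sequences; the Postnikov tower of $L$ induces only a PBW-type filtration on $\Sym(L)$, whose associated graded is the tensor product of the symmetric algebras of the successive cofiber cells. Transferring this filtration through the right adjoint $\Spec^\gr_B$ and the left adjoint $\Gamma^\gr_B$, and verifying that the equivalence on associated gradeds lifts to a filtered equivalence, is the technical heart of the argument and is where the non-connectivity of $\Sym(L)$ most sharply bites.
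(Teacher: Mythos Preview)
Your high-level plan (reduce to affine $B$, treat the connective case via full faithfulness of $\Spec^\gr_B$, and then induct on the connectivity degree $n$) agrees with the paper. The gap is in the inductive step, where your two proposed ingredients both fall short.

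First, the ``atomic'' case $L=\O_B[k]$ with $k<0$ is not a direct inspection. The stack $C(\O_B[k])$ is the shifted affine line $\AA^1_B[-k]$, i.e.\ an iterated classifying stack $B^{-k}\GG_a$ over $B$; computing its $\GG_m$-equivariant functions already requires a cosimplicial/Koszul argument of exactly the type you are trying to avoid. There is no base case here that is genuinely simpler than the general statement. Second, the PBW-filtration gluing is not just unwritten but unclear in principle: you need a filtration on $\Gamma^\gr_B C(L)$, not on $\Sym(L)$, and $\Gamma^\gr_B$ is a limit, so there is no reason it should commute with the $\Spec^\gr$ of a Rees construction. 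The natural way to access $\Gamma^\gr_B C(L)$ from $\Gamma^\gr_B C(K)$ is to observe that $C(K)\to C(L)$ is a $C(P)$-torsor, so $\Gamma^\gr_B C(L)$ is the $C(P)$-invariants in $\Gamma^\gr_B C(K)$; but computing those invariants is a totalization, which brings you back to a simplicial argument rather than a filtration argument.

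The paper's inductive step is organized differently. For $n<0$ one writes $L\simeq\fib(K\to P)$ with $K$ $(n{+}1)$-connective and $P[-n-1]$ free, and uses that $C(K)\to C(L)$ is an effective epimorphism to get \v{C}ech descent: $\Gamma^\gr_B C(L)\simeq \tot\,\Gamma^\gr_B\bigl(\Cech_\bullet(C(K)/C(L))\bigr)\simeq\tot\,\Sym(\coCech_\bullet(L\to K))$, the last identification by induction since $\coCech_k(L\to K)\simeq K\oplus P^{\oplus k}$ is $(n{+}1)$-connective. The remaining point is that $\Sym(L)\to\tot\,\Sym(\coCech_\bullet(L\to K))$ is an equivalence; this does \emph{not} follow from $L\simeq\tot(\coCech_\bullet(L\to K))$ because $\Sym$ does not commute with totalizations. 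The paper's trick is to exploit that $0\to P$ admits a retraction, so $\coCech_\bullet(0\to P)$ is cosimplicially null-homotopic; tensoring and taking symmetric powers preserves cosimplicial homotopy equivalences, which forces the relevant graded pieces to vanish after totalization. This cosimplicial-homotopy device is precisely what replaces your missing PBW argument.
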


Lemma \ref{Lem:FtnsLinear} is shown in \cite{Mon} for perfect complexes (or the weight $p=1$ part).
These assumptions are required in the arguments in \cite{Mon} since the totalizations (i.e. limits of cosimplicial diagrams) may not commute with symmetric powers.
However this can be resolved by considering the {\em cosimplicial homotopies} \cite{Meyer}:
\begin{itemize}
\item For any cosimplicial complex $N \in \QCoh_B^\Delta$ and a simplicial set $K \in \sSet:=\Set^{\Delta^{\op}}$, denote by $N^K \in \QCoh_B^\Delta$ the composition
\[N^K : \Delta \xrightarrow{\mathrm{diag}}\Delta \times\Delta \xrightarrow{K\times N} \mathrm{Set}^{\op} \times \QCoh_B \xrightarrow{(S,E) \mapsto \prod_S E}\QCoh_B.\]
Then we have $\tot(N^K) \simeq \prod_{\pi_0(K)}\tot(N) $ since $\Delta$ is sifted \cite[Lem.~5.5.8.4]{LurHTT}.
\item A {\em cosimplicial homotopy} between two morphisms $f,g: N \to M$ in $\QCoh_B^\Delta$ is a map
$h : N \to M^{\Delta^1} \textin \QCoh_B^\Delta$
together with equivalences $h \circ s \simeq f$ and $h \circ t \simeq g$ where $s,t :M^{\Delta^1} \to M$ are induced by the two vertices $0,1\in \Delta^1$.
\item A {\em cosimplicial homotopy equivalence} $f: N \to M$ is a morphism in $\QCoh_B^{\Delta}$ such that there is another map $f' : M \to N$ where $f \circ f'$ and $f' \circ f$ have cosimplicial homotopies to the identity maps. 
\item A cosimplicial homotopy equivalence $f: N \to M$ induces an equivalence between the totalizations
$\tot(f) : \tot(N) \xrightarrow{\simeq} \tot(M).$
\end{itemize}

\begin{proof}[Proof of Lemma \ref{Lem:FtnsLinear}]
By descent, we may assume that $B$ is affine.
Indeed, if we write $B\simeq \varinjlim_i B_i$ with affine $B_i$, then $C(L) \simeq \varinjlim_i C(L)\times_B B_i \simeq \varinjlim_i C(L|_{B_i})$,
and hence $\Gamma_B^\gr C(L) \simeq \varprojlim_i (b_i)_* \Gamma^\gr_{B_i} C(L|_{B_i})$ for $b_i:B_i \to B$.
On the other hand, $\Sym(L) \simeq \varprojlim_i (b_i)_*\Sym(L)|_{B_i}\simeq \varprojlim_i (b_i)_*\Sym(L|_{B_i})$ since $\QCoh^\gr_B \simeq \varprojlim_i \QCoh^\gr_{B_i}$.

We will use an induction on $n$.
Since the statement is obvious for the connective case, we may assume that $n<0$.
Then we may write
\[L \simeq \fib (K \to P) \in \QCoh_B\]
for some $K,P\in \QCoh_B$ such that $P[-n-1]$ is free and and $K$ is $(n+1)$-connective.
Then we can form a commutative square
\begin{equation}\label{Eq:AbCone1}
\xymatrix{ 
\Sym(L) \ar[r] \ar[d] & \Gamma^\gr_{B} C(L)\ar[d] \\
\Sym (\coCech(L \to K) ) \ar[r] & \Gamma^\gr_B (\Cech(C(K) \to C(L))),
}	
\end{equation}
in $(\QCoh_B^\gr)^{\Delta}$, 
where $\Cech(-)$ (resp. $\coCech(-)$) denotes the Cech (resp. coCech) nerve, 
and the upper two objects are regarded as constant cosimplicial objects.
To show that the top horizontal arrow in \eqref{Eq:AbCone1} is an equivalence, it suffices to show that the totalizations 
of the other three arrows are equivalences.

Firstly, the lower horizontal arrow in \eqref{Eq:AbCone1} is an equivalence by induction since
$\coCech_k(L \to K)\simeq K \oplus P^{\oplus k}$
is $(n+1)$-connective and $C$ preserves limits.
Secondly, the totalization of the right vertical arrow in \eqref{Eq:AbCone1} is an equivalence since
$C(K) \to C(L)$
is an effective epimorphism (in the sense of \cite[\S6.2.3]{LurHTT}) by \cite[Lem.~2.7]{Mon}.


Finally, we consider the left vertical arrow in \eqref{Eq:AbCone1}. 
Note that $\coCech(0 \to P)$ has a cosimplicial homotopy equivalence to zero since $0 \to P$ has a retract, see \cite[Prop.~2.2]{Bhatt} and \cite[14.~28.~5]{Sta}.
Hence $Q\otimes \Sym^w(\coCech(0 \to P))$ for any $Q\in \QCoh_B$ also has a cosimplicial homotopy equivalence to zero. In particular, the totalization vanishes,
\begin{equation}\label{Eq:AbCone2}
\varprojlim_{k\in\Delta} Q\otimes \Sym^w(\coCech_k(0 \to P))\simeq 0.
\end{equation}
From the canonical cofiber sequence
\[\xymatrix{
L \ar[r]& \coCech(L \to K) \ar[r] & \coCech (0\to P )
} \textin  \QCoh_B^{\Delta},\]
we can form an induced bounded filtration on $\Sym^w(\coCech(L \to K))$ such that
\[\Gr^i\Sym^w(\coCech(L \to K)) \simeq \Sym^i(L)\otimes \Sym^{w-i}(\coCech (0\to P )).\]
By \eqref{Eq:AbCone2}, we have the desired equivalence.
\end{proof}

There is also a classical analog of Proposition \ref{Prop:DRviaDefSp}.

\begin{remark}[Classical deformation spaces]\label{Rem:ClDefSp}
Let $g:M \to B$ be a morphism of derived Artin stacks.
The classical deformation space $M^\circ_{M/B}:=M^\circ_{M_\cl/B_\cl}$ in \cite{Ful,AP}
 is a flat deformation of the base $B_\cl$ to the intrinsic normal cone $\fC_{M/B}:=\fC_{M_\cl/B_\cl}$. 

The induced complete filtered algebra $\hDR^{\inf}(M/B) :=\widehat{\Gamma^\fil_B(M^\circ_{M/B}/\GG_m)}$ is equivalent to the de Rham complex with the infinitesimal Hodge filtration \cite{Har} (see also \cite[Const.~4.7]{Bhatt}).
This can be shown by the descent of $\hDR^{\inf}$ as \eqref{Eq:Descent} in \S\ref{ss:Obs}.

\end{remark}

\section{Symplectic pushforwards}\label{Sec:SP}

In this section, we establish our main theorem (Theorem \ref{Thm_A:SP}): the existence of {\em pushforwards} in the locked version of symplectic categories.
Our strategy is as follows:
\begin{enumerate}
\item We first observe that the {\em presymplectic} version of the main theorem follows from general properties of categories of correspondences (\S\ref{ss:preSP}).
\item Then the main theorem can be reduced to showing that the presymplectic pushforwards factor through the {\em symplectic categories} (\S\ref{ss:SympCat}).
\item We describe the presymplectic pushforwards as the zero loci of {\em moment maps}, which implies that they preserve the geometricity of derived stacks (\S\ref{ss:Moment}).
\item We finally compare the non-degeneracy in the presymplectic adjunction using the notion of {\em Lagrangian correspondence fibrations} (\S\ref{ss:LCF}).
\end{enumerate}

Throughout this section, let $p: U \to B$ be a finitely presented morphism of derived stacks and  $w\in \sA^0(B,d+2)$ be a $(d+2)$-shifted function.

\subsection{Presymplectic pushforwards}\label{ss:preSP}

In this subsection, we provide pushforwards in the presymplectic categories consisting of derived stacks with locked $2$-forms, without the geometricity nor the non-degeneracy.

The $w$-locked {\em presymplectic category} is the category of spans \cite{Hau} 
\[\pSymp^w_{B,d}:=\Span\left(\dSt_{\sA^{2,\lc}_B[d]^w}\right),\]
where $\sA^{2,\lc}_B[d]^w:=\sA^{2,\lc}(-/B,d)^w  \in \dSt_B$ is the derived stack\footnote{The prestack $\sA^{2,\lc}(-/B,d)^w$ satisfies the {\'e}tale descent since we have \eqref{Eq:TwistedExact}. On the other hand, the prestack $\sA^{2,\lc}(-/B,d)$ is not a derived stack since constant prestacks are not derived stacks.} of $w$-locked forms (Notation \ref{Notation:w-locked}).
More explicitly, the presymplectic category $\pSymp^w_{B,d}$ consists of:
\begin{itemize}
\item [(D1)] The objects are derived stacks $M\in \dSt_B$ together with $\theta_M\in \sA^{2,\lc}(M/B,d)^w$.
\item [(D2)] The morphisms, denoted by $C:(M,\theta_M) \dashrightarrow (N,\theta_N)$, are correspondences 
\[ \xymatrix@-1pc{
& C \ar[ld]_{} \ar[rd]^{} & \\
M && N,
}\]
in $\dSt_B$, together with equivalences $\gamma_C:\theta_M|_C \xrightarrow{\simeq} \theta_N|_C$ in $\sA^{2,\lc }(C/B,d)^w$.
\item [(D3)]The composition of $C: M \dashrightarrow N$ and $D:N \dashrightarrow L$ is the fiber product
\[ {\xymatrix@-1pc{
&& E\ar[ld]\ar[rd] \ar@{}[dd]|-{\Box} \\
& C \ar[ld]_{} \ar[rd]^{} & &D \ar[ld] \ar[rd] \\
M && N && L,
}}\]
together with $\gamma_E:=\gamma_D|_E \circ \gamma_C|_E : \theta_M|_E \xrightarrow{\simeq} \theta_N|_E \xrightarrow{\simeq} \theta_L|_E$ in $\sA^{2,\lc  }(E/B,d)^w.$
\end{itemize}

\begin{proposition}[Presymplectic adjunctions]\label{Prop:pSympPush}
There is a canonical adjunction
\[\xymatrix{
p^* :\pSymp_{B,d}^w\ar@<.4ex>[r] & \ar@<.4ex>[l] \pSymp_{U,d}^{w|_U}:p_*.
}\]	
\end{proposition}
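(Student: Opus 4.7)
The plan is to prove Proposition~\ref{Prop:pSympPush} by combining two standard constructions: a slice adjunction induced from the existing Weil restriction adjunction $p^{*}\dashv p_{*}$ on derived stacks, and the $2$-functoriality of the $\Span$ construction on pullback-preserving functors.

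First, the Notation records the adjunction $p^{*}\dashv p_{*}:\dSt_{B}\rightleftarrows \dSt_{U}$, and both functors preserve pullbacks: $p^{*}$ because it is right adjoint to $p_{!}$, and $p_{*}$ because it is itself a right adjoint. The pullback of $w$-locked $2$-forms along $M\times_{B}U\to M$, functorial in $M\in \dSt_{B}$, is classified by Yoneda by a morphism
\[\psi : p^{*}\sA^{2,\lc}_{B}[d]^{w}\to \sA^{2,\lc}_{U}[d]^{w|_{U}} \textin \dSt_{U},\]
equivalently a morphism $\eta : \sA^{2,\lc}_{B}[d]^{w}\to p_{*}\sA^{2,\lc}_{U}[d]^{w|_{U}}$ in $\dSt_{B}$.

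Next, invoke the standard slice adjunction formula: given an adjunction of categories with pullbacks whose adjoints preserve pullbacks, together with a morphism between fixed base objects in the two categories, one obtains an adjunction on the corresponding slice categories. Applied to $p^{*}\dashv p_{*}$ and $\psi$, this produces
\[\bar{p^{*}} : (\dSt_{B})_{/\sA^{2,\lc}_{B}[d]^{w}}\rightleftarrows (\dSt_{U})_{/\sA^{2,\lc}_{U}[d]^{w|_{U}}} : \bar{p_{*}},\]
where $\bar{p^{*}}$ sends $(M\to \sA^{2,\lc}_{B}[d]^{w})$ to $(M\times_{B}U\to \sA^{2,\lc}_{U}[d]^{w|_{U}})$ by post-composing with $\psi$, and $\bar{p_{*}}$ sends $(N\to \sA^{2,\lc}_{U}[d]^{w|_{U}})$ to $(p_{*}N\times_{p_{*}\sA^{2,\lc}_{U}[d]^{w|_{U}}}\sA^{2,\lc}_{B}[d]^{w}\to \sA^{2,\lc}_{B}[d]^{w})$ by pulling back along $\eta$. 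Both functors preserve pullbacks, since slice pullbacks are computed in the ambient category and both $p^{*}$ and $p_{*}$ do so.

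Finally, apply the $2$-functoriality of $\Span$: a pullback-preserving functor induces an $(\infty,1)$-functor between span categories, and an adjunction of pullback-preserving functors (with both adjoints preserving pullbacks) descends to an adjunction of the associated span categories. Identifying $\pSymp^{w}_{B,d}\simeq \Span((\dSt_{B})_{/\sA^{2,\lc}_{B}[d]^{w}})$ and $\pSymp^{w|_{U}}_{U,d}\simeq \Span((\dSt_{U})_{/\sA^{2,\lc}_{U}[d]^{w|_{U}}})$, this yields the desired adjunction.

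The main technical obstacle is showing that $\Span$ sends adjunctions to adjunctions. One can either appeal to Haugseng's universal characterization of $\Span$ as the free $(\infty,1)$-category in which every morphism from the ambient category becomes ambidextrous, or verify the hom-space equivalence directly: a span $\bar{p^{*}}(M,\theta_{M})\leftarrow C\to (N,\theta_{N})$ in $\dSt_{U}$ with matching locked forms corresponds, via the Weil restriction adjunction and the defining fibre product of $\bar{p_{*}}$, to a span $(M,\theta_{M})\leftarrow C'\to \bar{p_{*}}(N,\theta_{N})$ in $\dSt_{B}$ with matching locked forms (mediated by $\eta$), naturally in both variables; this naturality, together with the fact that units and counits of $p^{*}\dashv p_{*}$ are themselves forward spans, gives the triangle identities in $\pSymp$.
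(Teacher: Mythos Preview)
There is a genuine gap: your proposed right adjoint $\bar{p_*}$, built from the Weil restriction, is not the right adjoint to $p^*$ on the span categories. The paper's objectwise description (immediately preceding the proof) gives $p_*N=s_!t^*N$, i.e.\ the pullback $N\times_{\sA^{2,\lc}_U[d]^{w|_U}}(\sA^{2,\lc}_B[d]^w\times_BU)$ viewed over $\sA^{2,\lc}_B[d]^w$ via the projection $s$; this is later identified with the moment-map zero locus $\mu_N^{-1}(0)$ in Proposition~\ref{Prop:UnivMoment} and involves no Weil restriction at all. A clean test case exposes the discrepancy: take $p$ to be the fold map $B\sqcup B\to B$ and drop the form data (so the slices are just $\dSt_{/B}$ and $\dSt_{/U}$). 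Then an object $N$ of $\dSt_{/U}$ is a pair $(N_1,N_2)$, and $\Map_{\Span}(p^*M,N)$ is the groupoid of \emph{pairs} of spans $M\leftarrow C_i\to N_i$ with independent apices $C_1,C_2$. This agrees with $\Map_{\Span}(M,\,N_1\sqcup N_2)$ but \emph{not} with $\Map_{\Span}(M,\,N_1\times_B N_2)$, since the latter forces a single apex. Hence the right adjoint sends $N$ to $p_!N=N_1\sqcup N_2$, not to your Weil restriction $p_*N=N_1\times_B N_2$.

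The underlying error is the claim that $\Span$ carries the adjunction $p^*\dashv p_*$ to an adjunction of span categories; Haugseng's universal property concerns adjoints to \emph{morphisms} inside a single $\Span(\mathcal{C})$, not adjunctions between functors of span categories. Your direct hom-space verification breaks at the step ``$C\to N$ over $U$ yields $p_!C\to p_*N$ over $B$'': by the Weil-restriction adjunction this would require a map $C\times_BU\to N$ over $U$, which does not arise from $C\to N$. The paper proceeds by a different mechanism: it regards the correspondence \eqref{Eq:Corr} as a $1$-morphism in the $(\infty,2)$-category $\Span_2(\dSt)$, where every span is adjoint to its transpose \cite[Lem.~12.3]{Hau}, and then applies the $(\infty,2)$-functor $h_\bullet=\Map_{\Span_2}(\bullet,-):\Span_2(\dSt)\to\Cat_2$ to transport this \emph{internal} adjunction to the desired adjunction of presymplectic categories. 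The adjunction thus comes from the ambidexterity of spans, not from an adjunction of slice categories.
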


Objectwise, there is a simple description of the above presymplectic adjunction.
Consider the canonical correspondence of derived stacks
\begin{equation}\label{Eq:Corr}
\xymatrix{
&\sA^{2,\lc }_B[d]^w|_U \ar[ld]_{s:=\pr_1} \ar[rd]^{t:=(-)_{/U}} & \\
\sA^{2,\lc }_B[d]^w && \sA^{2,\lc  }_U[d]^{w|_U}
}
\end{equation}
where $\sA^{2,\lc }_B[d]^w|_U:=\sA^{2,\lc }_B[d]^w\times_B U$ and $t$ is the restriction map.\footnote{For the precise construction of the map $t: \sA^{2,\lc}_B[d]^w|_U \to \sA^{2,\lc}_U[d]^{w|_U}$, see \cite[Rem.~B.12.6]{CHS}.}
We may consider:
\begin{itemize}
\item $M \in \pSymp_{B,d}^w \mapsto p^*M:=t_!s^*M \in \pSymp_{U,d}^{w|_U}$,
\item $N \in \pSymp_{U,d}^{w|_U} \mapsto p_*N:=s_!t^*N \in \pSymp_{B,d}^w $,
\end{itemize}
where $s_!,t_!$ denote the forgetful functors and $s^*,t^*$ denote the pullback functors.
Then for any $M \in \pSymp_{B,d}^w$ and $N \in \pSymp_{U,d}^{w|_U}$, the desired equivalence of spaces
\[\map_{\pSymp_{U,d}^{w|_U}}(p^* M,N) \simeq \map_{\pSymp_{B,d}^w}(M,p_*N) \]
can be induced from the canonical equivalences of derived stacks
\[t_!s^*M \times_{\sA^{2,\lc }_U[d]^{w|_U}} N \simeq s^*M \times_{\sA^{2,\lc }_B[d]^w|_U} t^*N \simeq M\times_{\sA^{2,\lc }_B[d]^w} s_!t^*N.\]

For the precise proof of Proposition \ref{Prop:pSympPush}, we use the $(\infty,2)$-category of spans.

\begin{proof}[Proof of Proposition \ref{Prop:pSympPush}]
Denote by
$\Span_2(\dSt)$ the $(\infty,2)$-category of spans in \cite[\S5]{Hau}.
Applying the $(\infty,2)$-categorial Yoneda lemma \cite{Hin} 
to the point $\bullet:=\Spec(\C) \in \Span_2(\dSt)^\op$,  
we have an $(\infty,2)$-functor
\[h_\bullet : \Span_2(\dSt) \to \Cat_2: A\in \dSt \mapsto \Span(\dSt_{\bullet\times A})\simeq \Span(\dSt_A)\]
where $\Cat_2$ is the $(\infty,2)$-category of $(\infty,1)$-categories and the mapping categories are given by \cite[Thm.~1.2(iii)]{Hau}.\footnote{The $(\infty,2)$-categories used in \cite{Hau}---complete $2$-fold Segal spaces \cite{Bar1}---are equivalent to the $(\infty,2)$-categories used in \cite{Hin}---categories enriched over $\Cat$ \cite{GH}---by \cite{Hau1,Mac}.}
The correspondence \eqref{Eq:Corr}, considered as an $1$-morphism in $ \Span_2(\dSt)$, has an adjoint by \cite[Lem.~12.3]{Hau}.
In other words, we have an $(\infty,2)$-functor
$c:\Adj \to \Span_2(\dSt),$
where $\Adj$ denotes the $2$-category of adjunctions \cite{RV}.
Then the composition
$h_\bullet \circ c:\Adj \to \Span_2(\dSt) \to \Cat_2$
gives us the desired adjunction.\footnote{An $(\infty,2)$-functor $\Adj \to \Cat_2$ is equivalent to an adjunction in \cite[\S5.2]{LurHTT} by \cite{RV1,RV}.} 
\end{proof}

\subsection{Symplectic categories}\label{ss:SympCat}

In this subsection, we present our main theorem.

We first define the locked versions of symplectic categories.
This is completely analogous to the usual symplectic categories \cite{Cal1,Hau} with $\sA^{2,\cl}$ replaced by $\sA^{2,\lc}$.
The $w$-locked {\em symplectic category} is the subcategory
\[\Symp_{B,d}^w \subseteq \pSymp_{B,d}^w,\]
 consisting of the following data:
\begin{enumerate}
\item [(D1)] (Objects) $w$-locked {\em symplectic fibrations}, i.e. $(M,\theta_M)\in \pSymp_{B,d}^w$ such that 
\begin{enumerate}
\item (Geometricity) $M \to B$ are (geometric and) finitely presented,
\item (Non-degeneracy) $\TT_{M/B} \xrightarrow{\theta_M} \LL_{M/B}[d]$ induced by  $\theta_M$ are equivalences.
\end{enumerate}
\item [(D2)] (Morphisms) $w$-locked {\em Lagrangian correspondences}, i.e. morphisms $(C,\gamma_C):(M,\theta_M) \dashrightarrow (N,\theta_N)$ in $\pSymp_{B,d}^w$ such that
\begin{enumerate}
\item (Geometricity) $C\to B$ are (geometric and) finitely presented,
\item (Non-degeneracy) the commutative squares
\begin{equation}\label{Eq:LagCorr}
\xymatrix{
\TT_{C/B} \ar[r] \ar[d] \cart & \TT_{N/B}|_C \cong \LL_{N/B}[d]|_C \ar[d] \\
\TT_{M/B}|_C \cong \LL_{M/B}[d]|_C \ar[r] & \LL_{C/B}[d].
}
\end{equation}
induced by $\gamma_C$ are pullback squares.
\end{enumerate}
\end{enumerate}
It is straightforward to show that the compositions in $\pSymp_{B,d}^w$ descends to $\Symp_{B,d}^w$, see \cite[Thm.~4.4]{Cal1} or \cite[Prop.~14.12]{Hau}.


\begin{theorem}[Symplectic adjunctions]\label{Thm:SP}
The presymplectic adjunction $p^*\dashv p_*$ (Proposition \ref{Prop:pSympPush}) factors through an adjunction between the symplectic categories
\[\xymatrix{
p^* :\Symp_{B,d}^w\ar@<.4ex>[r] & \ar@<.4ex>[l] \Symp_{U,d}^{w|_U}:p_*.
}\]	
\end{theorem}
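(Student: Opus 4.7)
The plan is to verify the two structural conditions defining $\Symp \subseteq \pSymp$---geometricity and non-degeneracy---separately for $p^*$ and $p_*$, at the level of both objects (symplectic fibrations) and morphisms (Lagrangian correspondences). The presymplectic adjunction of Proposition \ref{Prop:pSympPush} already provides the categorical framework; the content of Theorem \ref{Thm:SP} is that the two subcategories $\Symp \subseteq \pSymp$ are stable under both functors.

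The pullback $p^*$ is essentially formal. For a $w$-locked symplectic fibration $(M,\theta_M) \in \Symp_{B,d}^w$, the base change $p^*M = M \times_B U \to U$ is finitely presented, and $\LL_{p^*M/U} \simeq \LL_{M/B}|_{p^*M}$, so non-degeneracy of $\theta_M$ is preserved. The Lagrangian correspondence square \eqref{Eq:LagCorr} is likewise stable under base change along $p$, so $p^*$ restricts to a functor $\Symp_{B,d}^w \to \Symp_{U,d}^{w|_U}$.

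For $p_*$, my strategy is to use the moment map description promised by Proposition \ref{Prop:UnivMoment}. Given $h: N \to U$ in $\Symp_{U,d}^{w|_U}$, I would construct a canonical Lagrangian
\[\mu_N : N \longrightarrow \T^*_{U/B}[d+1]\]
into the relative shifted cotangent bundle (which is $(d+1)$-shifted symplectic over $B$) and identify the Weil restriction as a Lagrangian intersection $p_*(N) \simeq N \times_{\T^*_{U/B}[d+1]} U$ against the zero section. The relative Lagrangian intersection theorem in the style of \cite{PTVV} then endows $p_*N$ with a $d$-shifted symplectic structure over $B$, and compatibility of $\mu_N$ with the $w|_U$-locking together with the tautological $w$-locking on the zero section supplies the $w$-locked upgrade. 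Geometricity of $p_*N \to B$ follows since it is a finitely presented fiber product over a finitely presented target. For a $w|_U$-locked Lagrangian correspondence $C: N_1 \dashrightarrow N_2$ in $\Symp_{U,d}^{w|_U}$, I would iterate this picture: the moment maps for $N_1$, $N_2$ and the Lagrangian structure on $C$ assemble into a Lagrangian in $p_*N_1 \times_B p_*N_2$. The cleanest packaging is the universal statement that a Lagrangian correspondence relative to $U$ between fibrations over $U$ corresponds, under the presymplectic adjunction $p^* \dashv p_*$, to a Lagrangian correspondence over $B$.

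The main obstacle is precisely this non-degeneracy transfer for morphisms. For objects, non-degeneracy can be reduced to an explicit cotangent complex computation using the equivalence $p_*\LL_{N/U}[1] \simeq \LL_{p_*N/B}$ coming from the Weil restriction together with the moment map identification. For a correspondence $C$ equipped with the compatibility data $\gamma_C$, however, the analogous pullback square \eqref{Eq:LagCorr} for $p_*C$ becomes a higher-dimensional diagram chase that does not follow directly from the object case. To handle it uniformly I would introduce an auxiliary notion of Lagrangian correspondence fibration in which non-degeneracy on fibers forces non-degeneracy on total spaces, thereby reducing the morphism case to the object case already settled. The preservation of the $w$-locking data along the adjunction is then formal, inherited from the analogous data on $\T^*_{U/B}[d+1]$ and its zero section.
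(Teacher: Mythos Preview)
Your roadmap matches the paper's: use the moment-map description of $p_*N$ as a Lagrangian intersection (Proposition~\ref{Prop:UnivMoment}) for geometricity, then introduce Lagrangian correspondence fibrations to transfer non-degeneracy. The paper organizes this into three claims (P1)--(P3) and proves each via the LCF machinery of \S\ref{ss:LCF}.

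There is one genuine gap in your object-level argument. You assert that $\mu_N : N \to \T^*_{U/B}[d+1]$ is a Lagrangian and then invoke the Lagrangian intersection theorem, but Proposition~\ref{Prop:UnivMoment} only produces $\mu_N$ as a map together with an isotropic structure---its non-degeneracy is precisely the point at issue. Your proposed substitute, the formula $p_*\LL_{N/U}[1] \simeq \LL_{p_*N/B}$ ``coming from the Weil restriction,'' is not correct: the symplectic pushforward $p_*N$ is the fiber product $\mu_N^{-1}(0)$, not the stack-theoretic Weil restriction of $N$, and its cotangent complex over $B$ involves $\LL_{U/B}$ as well as $\LL_{N/U}$. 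The paper closes this gap by deploying the LCF device already at the object level: the projection $\T^*_{U/B}[d+1] \to U$ is a Lagrangian fibration over $B$, hence an LCF over $U \to B$ (Example~\ref{Ex:LagFib}), and Lemma~\ref{Lem:nd} then converts the symplectic non-degeneracy of $N \to U$ into the Lagrangian non-degeneracy of $\mu_N$. So the auxiliary notion you reserve for morphisms is in fact the engine for objects too; once you move it earlier, your outline coincides with the paper's proof, including the use of Lemma~\ref{Lem:LagIntLCF} and the triple Lagrangian intersection theorem for (P2) and (P3).
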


More explicitly, we will show the following three statements:
\begin{enumerate}
\item [(P1)] If $N \to U$ is a $w|_U$-locked symplectic fibration, then $p_*(N) \to B$ is a $w$-locked symplectic fibration.
\item [(P2)]If $C: N \dashrightarrow L$ is a $w|_U$-locked Lagrangian correspondence over $U$, then \[p_*(C) : p_*(N) \dashrightarrow p_*(L)\] is a $w$-locked Lagrangian correspondence over $B$.
\item [(P3)] If $M\to B$ is a $w$-locked symplectic fibration, $N\to U$ is a $w|_U$-locked symplectic fibration, and $D: p^*(M) \dashrightarrow N$ is a morphism in $\pSymp_{U,d}^{w|_U}$, then
\begin{align*}
\qquad \qquad & D : p^*(M) \dashrightarrow N \text{ is a $w|_U$-locked Lagrangian correspondence over $U$}\\
&\iff D : M \dashrightarrow p_*(N)\text{ is a $w$-locked Lagrangian correspondence over $B$}.
\end{align*}
\end{enumerate}

Before proving Theorem \ref{Thm:SP},
we state basic functorial properties of the symplectic pushforwards that will be useful in the subsequent sections.

\begin{lemma}[Base change/Functoriality]\label{Lem:SP_Functoriality}\
\begin{enumerate}
\item Given a fiber square of derived stacks
\[\xymatrix{
U' \ar[r]^{p'} \ar[d]^{q'} \cart & B'\ar[d]^{q}\\
U \ar[r]^p & B,
}\] 
we have
\[\qquad q^*\circ p_* \simeq (p')_*\circ (q')^* : \Symp_{U,d}^{w|_U} \to \Symp_{B',d}^{w|_{B'}}.\]
\item Given a finitely presented morphism of derived stacks $r: V \to U$, we have
\[p_* \circ r_* \simeq (p \circ r)_* : \Symp_{V,d}^{w|_V} \to \Symp_{B,d}^w.\]
\end{enumerate}
\end{lemma}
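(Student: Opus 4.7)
My plan for this lemma is to exploit the uniqueness of right adjoints for part (2) and the explicit moment map description of the pushforward for part (1).

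For part (2), I would observe that the pullback functors satisfy $(p\circ r)^* \simeq r^* \circ p^*$ on the presymplectic categories, which follows tautologically from the universal property of fiber products of derived stacks and the span construction of Proposition \ref{Prop:pSympPush}. Since each of $p_*$, $r_*$, and $(p\circ r)_*$ is, by Theorem \ref{Thm:SP}, the right adjoint to the corresponding pullback functor on the symplectic categories, and right adjoints are essentially unique, the composite $p_* \circ r_*$ must be canonically equivalent to $(p\circ r)_*$. This argument is formal once we grant the existence of the adjunctions.

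For part (1), my plan is to use the moment map description from Proposition \ref{Prop:UnivMoment} as the main geometric input. Given $N \in \Symp_{U,d}^{w|_U}$, the pushforward $p_*(N)$ is characterized as the Lagrangian intersection fitting into the pullback square
\[
\xymatrix@R-.4pc{
p_*(N)\ar[r] \ar[d] & U \ar[d]^{0} \\
N \ar[r]^-{\mu_N} & \T^*_{U/B}[d+1],
}
\]
where $\mu_N$ is the canonical moment map. I would apply the base change $q^*$ to this square: since pullback preserves fiber products and since $(q')^{*}\T^*_{U/B}[d+1] \simeq \T^*_{U'/B'}[d+1]$ (which follows from $U'\simeq U\times_B B'$ and hence $\LL_{U'/B'} \simeq (q')^{*}\LL_{U/B}$), the resulting square realizes $q^*p_*(N)$ as a Lagrangian intersection over $B'$ with corner $(q')^*N$ and comparison map $(q')^*\mu_N$. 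Matching this with the defining square for $(p')_*((q')^{*}N)$ requires identifying $(q')^{*}\mu_N$ with $\mu_{(q')^{*}N}$.

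The main obstacle is this naturality of moment maps under base change. I would address it by constructing the Beck--Chevalley transformation $q^* \circ p_* \to (p')_* \circ (q')^*$ using the unit of $(p')^* \dashv (p')_*$ together with the counit of $p^* \dashv p_*$ and the tautological equivalence $(p')^* q^* \simeq (q')^* p^*$, then verifying that this transformation is an equivalence by tracing through the explicit span description used in the proof of Proposition \ref{Prop:pSympPush}. Since the span construction is compatible with pullback in each variable---fiber products commute with pullback---the Beck--Chevalley map will be an equivalence on underlying derived stacks. The remaining bookkeeping is to check that it respects the locked symplectic data at the level of Lagrangian correspondences, which amounts to a naturality check on the $(\infty,2)$-categorial adjunction supplied by \cite[Lem.~12.3]{Hau}; I expect this to be routine but notationally heavy.
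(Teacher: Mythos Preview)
Your proposal is correct and follows essentially the same route as the paper: part~(2) via uniqueness of right adjoints, and part~(1) via the moment map description of Proposition~\ref{Prop:UnivMoment}. The paper is considerably terser on~(1)---it simply observes that the universal fiber diagram of Proposition~\ref{Prop:UnivMoment} is built from objects ($\T^*_{U/B}[d+1]$, $\sA^{2,\lc}_U[d]^{w|_U}$, $\sA^{2,\ex}_B[d+1]$) that are manifestly stable under base change, so the naturality $(q')^*\mu_N \simeq \mu_{(q')^*N}$ comes for free and the Beck--Chevalley verification you flag as ``notationally heavy'' is unnecessary.
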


Lemma \ref{Lem:SP_Functoriality}(1) will follow from the moment map description of symplectic pushforwards (in \S\ref{ss:Moment})
and Lemma \ref{Lem:SP_Functoriality}(2) follows from the uniqueness of right adjoints.




\subsection{Universal moment maps}\label{ss:Moment}

In this subsection, we construct the {\em universal moment maps} by studying the symplectic geometry of cotangent bundles
and use them to describe the presymplectic pushforwards.
In particular, this ensures that the presymplectic pushforwards preserve the geometricity.

Denote by $\T^*_{U/B}[d]:=\tot(\LL_{U/B}[d])$ the $d$-shifted {\em cotangent bundle} of $p:U \to B$.

\begin{proposition}[Universal moment maps]\label{Prop:UnivMoment}
There exists a canonical map
\[\mu : \sA^{2,\lc }_U[d]^{w|_U} \to \T^*_{U/B}[d+1] \]
that fits into the fiber diagram of derived stacks
\[\xymatrix{
\sA^{2,\lc}_B[d]^w|_U \ar[r]^-{(-)_{/U}} \ar[d] \cart & \sA^{2,\lc }_U[d]^{w|_U} \ar@{.>}[d]^{\mu} \ar[r] \cart & U \ar[d]^-{0_w|_U}\\
U \ar[r]^-{0} & \T^*_{U/B}[d+1] \ar[r]
& \sA^{2,\ex}_B[d+1]|_U,
}\]
where 
$0_w \in \sA^{2,\ex}(B/B,d+1) \xleftarrow{\simeq} \sA^0(B,d+2)$ is the exact $2$-form induced by $w$.
\end{proposition}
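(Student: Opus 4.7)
My plan is to construct $\mu$ using a Koszul-type filtration on the de Rham complex of $V/B$ induced by a map $g: V\to U$ over $B$, and then verify the two Cartesian squares by unwinding what the data of a $w|_U$-locked $2$-form on $V/U$ amounts to in terms of this filtered structure.

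First, I would construct $\mu$ as follows. For each $V\to U$ over $B$, the cofiber sequence $p^*\LL_{U/B}|_V \to \LL_{V/B} \to \LL_{V/U}$ induces a natural Koszul filtration on $\Fil^2\DR(V/B)$ whose associated cofiber sequence of filtered complexes on $V$ takes the schematic form
\[
p^*\LL_{U/B}|_V[-1] \otimes \Fil^1\DR(V/B)\;\longrightarrow\;\Fil^2\DR(V/B)|_V\;\longrightarrow\;\Fil^2\DR(V/U)|_V.
\]
Taking $[d+2]$-cohomology and mapping out of $\O_V$, the connecting boundary of this cofiber sequence sends a $w|_U$-locked $2$-form $\theta$ on $V/U$ to a canonical class $\mu(\theta)\in\map(\O_V, p^*\LL_{U/B}|_V[d+1])$, i.e., a $U$-morphism $V\to \T^*_{U/B}[d+1]$. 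Naturality in $V\to U$ globalizes this to a map of derived stacks $\mu:\sA^{2,\lc}_U[d]^{w|_U}\to \T^*_{U/B}[d+1]$.

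Second, to verify the right Cartesian square, I would unwind definitions using the twist-exact identification \eqref{Eq:TwistedExact}: a point of $\sA^{2,\lc}_U[d]^{w|_U}$ over $V\to U$ is a nullhomotopy path from $0$ to $0_{w|_V}$ in $\sA^{2,\ex}(V/U, d+1)$. Pushing this path forward along the canonical map $\sA^{2,\ex}(V/U,d+1) \to \sA^{2,\ex}(V/B,d+1)$, the same cofiber sequence identifies the image with a section $\sigma := \mu(\theta)$ of $\T^*_{U/B}[d+1]$ over $V$ together with a nullhomotopy $d_{\DR}\sigma \simeq 0_{w|_V}$ in $\sA^{2,\ex}(V/B, d+1)$. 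This is exactly a point of $U\times_{\sA^{2,\ex}_B[d+1]|_U} \T^*_{U/B}[d+1]$, yielding the equivalence.

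Third, to verify the left Cartesian square, I would apply the long fiber sequence of the cofiber sequence from step one. The fiber of $\mu$ at the zero section classifies $w|_U$-locked $2$-forms on $V/U$ whose obstruction class in $p^*\LL_{U/B}|_V[d+1]$ vanishes canonically, which by the long exact sequence is the space of lifts to $w$-locked $2$-forms on $V/B$, i.e., $\sA^{2,\lc}_B[d]^w|_U(V\to U)$. Naturality in $V$ assembles this into the claimed pullback square.

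The main obstacle I expect is the first step: carefully setting up the Koszul--Hodge bifiltration on $\DR(V/B)$ so that the connecting map indeed lands in $p^*\LL_{U/B}|_V[d+1]$ in the correct cohomological shift and is functorial in $V\to U$. A cleaner alternative would be to use the deformation-space description of locked forms (Proposition \ref{Prop:DRviaDefSp}), producing $\mu$ geometrically from the induced map $\rmD_{V/B}\to \rmD_{U/B}$ of $\GG_m$-equivariant deformation spaces together with a relative analysis of $\GG_m$-equivariant functions along the special fiber $\T_{U/B}[1]$; this bypasses the combinatorics of the Hodge filtration at the cost of working with nonaffine deformation spaces.
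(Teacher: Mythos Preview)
Your overall strategy---exploit the cotangent fiber sequence for $V\to U\to B$ to relate relative de Rham complexes and produce a map to $\T^*_{U/B}[d+1]$---is in the same spirit as the paper's, but the execution in steps 1 and 2 has genuine gaps. The cofiber sequence you write in step 1 is not well-formed: $\Fil^2\DR(V/B)$ lies in $\QCoh_B$ while $\Fil^2\DR(V/U)$ lies in $\QCoh_U$, so the restriction notation ``$|_V$'' is unclear, and your first term mixes an object over $V$ with one over $B$. Even at the level of associated gradeds, the Koszul filtration on $\Lambda^2\LL_{V/B}$ has three pieces rather than two, so a two-term sequence cannot make the connecting map land in $g^*\LL_{U/B}[d+1]$ alone. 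In step 2 you invoke a map $\sA^{2,\ex}(V/U,d+1)\to\sA^{2,\ex}(V/B,d+1)$, but the canonical restriction $(-)_{/U}$ goes in the opposite direction; there is no natural map the way you need it.

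The paper avoids both problems by working with exact $2$-forms (that is, with $\Fil^0/\Fil^2$) rather than with $\Fil^2$ directly. The key step, isolated as Lemma~\ref{Lem:ExIsotFib}, identifies $\T^*_{U/B}[d+1]$ as the \emph{fiber} of the restriction map $\sA^{2,\ex}_B[d+1]|_U\to\sA^{2,\ex}_U[d+1]$. This is obtained by combining two elementary fiber squares: one from the cotangent sequence applied to $1$-forms (giving $\T^*_{U/B}[d+1]$ as the fiber of $\sA^1_B[d+1]|_U\to\sA^1_U[d+1]$), and one from the short sequence $\Gr^1\to\Fil^0/\Fil^2\to\Gr^0$ together with the base-independence of $\Gr^0$. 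Because $\Fil^0/\Fil^2$ has only two graded pieces, no Koszul bifiltration is needed. Once this fiber square is in hand, the entire diagram of the proposition---both the map $\mu$ and the two Cartesian squares simultaneously---drops out by taking fibers over the single point $0_w|_U\in\sA^{2,\ex}(U/B,d+1)$, with \eqref{Eq:TwistedExact} identifying the resulting path spaces as the stacks of locked forms. Your deformation-space alternative runs into a parallel obstacle: a $w|_U$-locked form is a function on $\D_{V/U}$, not on $\D_{V/B}$, so the map $\D_{V/B}\to\D_{U/B}$ does not directly see it.
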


Consequently, the presymplectic pushforward of $N\in \pSymp_{U,d}^{w|_U}$ is
\[p_*(N) \simeq \mu_N^{-1}(0):=N \times_{\mu_N,\T_{U/B}[d+1],0}U,\] 
the zero locus of the {\em moment map} $\mu_N : N \to \sA^{2,\lc}_U[d]^{w|_U} \xrightarrow{\mu} \T^*_{U/B}[d+1]$.
In particular, if $N \to U$ is geometric (of finite presentation), then so is $p_*(N) \to B$.
This proves the geometricity part of Theorem \ref{Thm:SP} and Lemma \ref{Lem:SP_Functoriality}(1).

Proposition \ref{Prop:UnivMoment} will follow from the {\em exact} symplectic geometry of cotangent bundles.
Without the exact structures, the symplectic geometry is well-known;
recall from \cite{PTVV,Cal} that we have the following structures.
\begin{itemize}
\item $\T^*_{U/B}[d+1] \to B$ is a $(d+1)$-shifted symplectic fibration;
\item $\T^*_{U/B}[d+1] \to U$ is a Lagrangian fibration (see Example \ref{Ex:LagFib});
\item $\alpha \in \sA^{1,\cl}(U/B,d+1)$ induces a Lagrangian $\Gamma_\alpha : U \to \T^*_{U/B}[d+1]$.
\end{itemize}
If we additionally consider the exact structures,
not only the extensions of the above three results (see \S\ref{ss:TwCot}),
but also an alternative description of the cotangent bundles exist; 
the cotangent bundles are the derived stacks of {\em exact isotropic fibrations}.

\begin{lemma}[Exact isotropic fibrations]\label{Lem:ExIsotFib}
There exists a canonical fiber square
\begin{equation}\label{Eq:104}
\xymatrix{
\T^*_{U/B}[d+1] \ar[r]
\ar[d]^{} \cart & \sA^{2,\ex}_B[d+1]|_U \ar[d]^-{(-)_{/U}}\\
U \ar[r]^-{0} & \sA_U^{2,\ex}[d+1].
}	
\end{equation}
\end{lemma}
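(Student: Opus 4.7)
I would prove the lemma by first constructing the top horizontal map $\T^*_{U/B}[d+1]\to \sA^{2,\ex}_B[d+1]|_U$ and then verifying that the square is Cartesian by computing fibers at test points. Given a $T$-point of $\T^*_{U/B}[d+1]$, that is, a pair $(u\colon T\to U,\, \alpha\colon \O_T\to u^*\LL_{U/B}[d+1])$, I send it to the exact $2$-form on $T/B$ whose underlying closed form is $d_{\DR}\alpha$ with tautological primitive $\alpha$, where $\alpha$ is viewed as a $(d+1)$-shifted $1$-form on $T/B$ via the pullback $u^*\LL_{U/B}\to \LL_{T/B}$.  The canonical nullhomotopy of the composition $u^*\LL_{U/B}\to \LL_{T/B}\to \LL_{T/U}$ furnished by the fundamental cofiber sequence of cotangent complexes then gives a canonical trivialization of the restricted exact $2$-form on $T/U$, implementing commutativity of the square against the zero section.

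\textbf{Fiber computation.}  To verify Cartesianness, I would identify the fiber at the zero section of $\sA^{2,\ex}_B[d+1]|_U\to \sA^{2,\ex}_U[d+1]$ over a $T$-point $u\colon T\to U$ with the space of sections of $u^*\LL_{U/B}[d+1]$. Using $\sA^{p,\ex}:=\fib(\sA^{p,\cl}\to \sA^{\DR})$ and the cofiber sequence $\Fil^2\hDR\hookrightarrow \Fil^0\hDR\twoheadrightarrow (\Fil^0/\Fil^2)\hDR$ together with $\fib=\Omega\cof$, one obtains
\[\sA^{2,\ex}(T/B,d+1) \;\simeq\; \map\bigl(\O_T,\,(\Fil^0/\Fil^2)\hDR(T/B)[d+2]\bigr),\]
and likewise for $T/U$. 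Hence the relevant fiber is $\map(\O_T, F[d+2])$, where
\[F \;:=\; \fib\bigl((\Fil^0/\Fil^2)\hDR(T/B)\to (\Fil^0/\Fil^2)\hDR(T/U)\bigr).\]
The complex $F$ is naturally $2$-step filtered, with graded pieces determined by $\Gr^p\hDR\simeq \Lambda^p\LL[-p]$ and functoriality:
\[\Gr^0 F \;=\; \fib(\O_T\to \O_T) \;=\; 0, \qquad \Gr^1 F \;=\; \fib(\LL_{T/B}[-1]\to \LL_{T/U}[-1]) \;\simeq\; u^*\LL_{U/B}[-1],\]
the second equivalence coming from the fundamental cofiber sequence $u^*\LL_{U/B}\to \LL_{T/B}\to \LL_{T/U}$. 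Vanishing of $\Gr^0 F$ forces $F\simeq u^*\LL_{U/B}[-1]$, so the fiber is $\map(\O_T, u^*\LL_{U/B}[d+1])$, which is precisely the space of $T$-points of $\T^*_{U/B}[d+1]=\Spec_U(\Sym(\TT_{U/B}[-d-1]))$ over $u$. A direct inspection shows that the map built in the first step implements this identification.

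\textbf{Main obstacle.} The main technical ingredient is the functoriality of the filtered de Rham complex along the composition $T\to U\to B$: one needs the comparison map $\hDR(T/B)\to \hDR(T/U)$ in $\QCAlg^{\fil}$ and that its induced map on $\Gr^p$ is $\Lambda^p$ applied to $u^*\LL_{U/B}\to \LL_{T/B}\to \LL_{T/U}$. Both follow from the universal property of $\DR_B$ and descent of cotangent complexes (steps (S1)--(S3) in \S\ref{ss:DR}), passing to completions. Once this is in place, the remainder is a routine computation with $2$-step filtered complexes, and globalizing across test points $T\to U$ gives the asserted fiber square of derived stacks.
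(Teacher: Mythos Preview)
Your proof is correct and follows essentially the same approach as the paper's. The paper factors the square through an intermediate stack $\sA^1_B[d+1]|_U$ (giving two fiber squares: one from the cotangent cofiber sequence, one from $\Gr^1\to\Fil^0/\Fil^2\to\Gr^0$ together with $\Gr^0\DR(T/B)\simeq\Gr^0\DR(T/U)$), whereas you compute the fiber directly via the $2$-step filtration on $F$; but your $\Gr^0 F=0$ and $\Gr^1 F\simeq u^*\LL_{U/B}[-1]$ are exactly the content of those two squares, so the arguments coincide.
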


\begin{proof}
The desired fiber square follows by combining the two fiber squares:
\begin{equation}\label{Eq:107}
\xymatrix{
\T^*_{U/B}[d+1] \ar[r]
\ar[d] \cart & \sA^1_B[d+1]|_U  \ar[d]^-{(-)_{/U}} 
\\
U \ar[r]^-{0}  & \sA^1_U[d+1]
}\qquad
\xymatrix{
\sA^1_B[d+1]|_U \ar[r]^-{} \ar[d]^-{(-)_{/U}} \cart & \sA^{2,\ex}_B[d+1]|_U \ar[d]^-{(-)_{/U}}
\\
\sA^1_U[d+1] \ar[r]^-{} & \sA^{2,\ex}_U[d+1].
}	
\end{equation}
The left fiber square follows from 
the canonical fiber sequence of cotangent complexes
\[\xymatrix{\LL_{U/B}|_M \ar[r] & \LL_{M/B} \ar[r] & \LL_{M/U}}\]
for any geometric morphism $M\to U$ (we can use the descent of $\sA^1(M/U,d+1)$ and $\sA^1(M/B,d+1)$ along $B$ to replace $B$ with a derived affine scheme). 
The right fiber square follows from the canonical fiber sequence 
\[ \xymatrix{\Gr^1\DR(M/B) \ar[r] & \Fil^0/\Fil^2\DR(M/B) \ar[r] & \Gr^0\DR(M/B)}\]
together with the equivalence $(-)_{/U}:\Gr^0\DR(M/B) \xrightarrow{\cong} p_*\Gr^0\DR(M/U)$.
\end{proof}

\begin{proof}[Proof of Proposition \ref{Prop:UnivMoment}]
Take the fibers of the fiber square \eqref{Eq:104} over the exact $2$-form $0_w|_U \in \sA^{2,\ex}(U/B,d+1)$, then we have a fiber diagram
\[\xymatrix{
\sA^{2,\lc}_B[d]^w|_U \ar[r]^-{(-)_{/U}} \ar[d] \cart & \sA^{2,\lc}_U[d]^{w|_U} \ar[d]^{\mu} \ar[r] \cart & U \ar[d]^-{0_w|_U}\\
U \ar[r]^-{0} & \T^*_{U/B}[d+1] \ar[r]
\ar[d] \cart & \sA^{2,\ex}_B[d+1]|_U \ar[d]^-{(-)_{/U}}\\
& U \ar[r]^-{0} & \sA_U^{2,\ex}[d+1],
}\]
since locked forms are equivalent to twisted exact forms; see \eqref{Eq:TwistedExact}.
\end{proof}

\begin{remark}[Closed version]\label{Rem:ClosedSympPush}
Given a derived stack $N$ over $U$ and a symplectic $\theta_N\in \sA^{2,\cl}(N/U,d)$,
we can still define {\em a moment map} as a Lagrangian
\[\mu: N \to \T^*_{U/B}[d+1]\]
together with an equivalence between $\theta_N$ and the induced symplectic form on  $N \to \T^*_{U/B}[d+1] \xrightarrow{\pi} U$ via the Lagrangian fibration structure on $\pi$.\footnote{The composition of a Lagrangian and a Lagrangian fibration is a symplectic fibration, see \cite[Prop.~1.10]{Saf} or Lemma \ref{Lem:nd}.}
Then we can also define the {\em symplectic pushforward with respect to $\mu$} as the Lagrangian intersection
\[p_*^\mu(N) := \mu^{-1}(0):=N\times_{\mu,\T^*_{U/B}[d+1],0}U.\]
However the existence of a moment map is not guaranteed since $ \T^*_{U/B}[d+1]$ is {\em not} the stack of isotropic fibrations;
the canonical commutative square
\[\xymatrix{
\T^*_{U/B}[d+1] \ar[r]
\ar[d]^{}  & \sA^{2,\cl}_B[d+1]|_U \ar[d]^-{(-)_{/U}}\\
U \ar[r]^-{0} & \sA_U^{2,\cl}[d+1]
}	\]
is not a fiber square.
Put differently, we have a canonical map
\[\T^*_{U/B}[d+1]^{\isot}:=\fib (\T^*_{U/B}[d+1] \xrightarrow{d_{\DR}\lambda} \sA^{2,\cl}_B[d+1]|_U) \to \sA^{2,\cl}_U[d]\]
that is not an equivalence.
Finding a moment map is equivalent to find a lift of $N$ under the above map.
Moreover even when the moment map exists, it is not unique and the (closed) symplectic pushforward {\em depends} on the choice of a moment map.
See Remark \ref{Rem:ClosedTwCot} for an example.
\end{remark}

\subsection{Lagrangian correspondence fibrations}\label{ss:LCF}
In this subsection, we introduce {\em Lagrangian correspondence fibrations} as generalizations of Lagrangian fibrations.
This notion is designed to compare the non-degeneracy of isotropic morphisms to symplectic fibrations over {\em different} bases.
We provide three canonical Lagrangian correspondence fibrations which immediately show that the presymplectic adjunctions induce the desired symplectic adjunctions.
Since we are focusing on the non-degeneracy here, we will work with {\em closed} forms, instead of locked forms.

\begin{definition}[Lagrangian correspondence fibrations]\label{Def:LagCorrFib}
A commutative diagram
\begin{equation}\label{Eq:LCF}
\xymatrix{
M \ar[r]^{r} \ar[d]^g & N \ar[d]^h \\
B & U , \ar[l]_{p}
}	
\end{equation}
of derived stacks and finitely presented morphisms is called a $d$-shifted {\em Lagrangian correspondence fibration} if it is equipped with the following data:
\begin{itemize}
\item [(D1)] $d$-shifted symplectic structures $\theta_{g} \in \sA^{2,\cl}(M/B,d)$, $\theta_h \in \sA^{2,\cl}(N/U,d)$, and
\item [(D2)] an equivalence $(\theta_{g})_{/U} \cong (\theta_h)|_M$ in $\sA^{2,\cl }(M/U,d)$ that induces a fiber square
\[\xymatrix{
\TT_{M/U} \ar[r] \ar[d] \cart &\TT_{N/U}|_M \cong \LL_{N/U}[d]|_M \ar[d] \\
\TT_{M/B} \cong \LL_{M/B}[d] \ar[r] & \LL_{M/U}[d].
}\]
\end{itemize}
In short, we say that $M \xrightarrow{r} N$ is a Lagrangian correspondence fibration over $U \xrightarrow{p} B$.
\end{definition}

We note that
a commutative diagram \eqref{Eq:LCF} 
for symplectic fibrations $g:M \to B$ and $h:N \to U$
is a Lagrangian correspondence fibration if and only if 
\[\xymatrix{
 & M \ar[ld]_{(\id,h\circ r)} \ar[rd]^{r} & \\
 M\times_B U && N
}\]	
is a Lagrangian correspondence (over $U$).
In particular, if $B=\Spec(\C)$, then all fibers of $N \to U$ have Lagrangian correspondences to $M$.	

We present two basic examples.
The simplest example is a Lagrangian fibration.

\begin{example}[Lagrangian fibration]\label{Ex:LagFib}
A {\em Lagrangian fibration} $r:E \to U$ for a symplectic fibration $E \to B$ is a Lagrangian correspondence fibration of the form
\[\xymatrix{
E \ar[r]^{r} \ar[d] & U \ar@{=}[d] \\
B & U. \ar[l]_{}
}	\]
In short, we say that $E \xrightarrow{r} U$ is a Lagrangian fibration over $B$.
\end{example}

Another example is a Lagrangian intersection on a Lagrangian fibration. 

\begin{lemma}[Lagrangian intersection]\label{Lem:LagIntLCF}
Let $r:E \to U$ be a $d$-shifted Lagrangian fibration over $B$.
Given Lagrangians $n:N \to E$ and $l:L \to E$,
then
\[N\times_{E} L \to N\times_U L\]
is a $(d-1)$-shifted Lagrangian correspondence fibration (over $U \to B$).

\end{lemma}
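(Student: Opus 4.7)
The plan is to realize $N\times_E L \to N\times_U L$ as a $(d-1)$-shifted Lagrangian correspondence fibration over $U\to B$ by constructing the two symplectic forms required by Definition~\ref{Def:LagCorrFib} and then checking their compatibility and non-degeneracy. First, for the left foot $N\times_E L\to B$: by the Lagrangian intersection theorem of \cite{PTVV} (see also \cite[Thm.~4.4]{Cal1}), the two Lagrangian structures on $n:N\to E$ and $l:L\to E$ induce a canonical $(d-1)$-shifted symplectic form $\theta_\ell\in \sA^{2,\cl}((N\times_E L)/B,d-1)$, obtained as the loop of the null-homotopies of $\omega|_N$ and $\omega|_L$ inside $\sA^{2,\cl}(-/B,d)$, where $\omega\in \sA^{2,\cl}(E/B,d)$ is the symplectic form on the Lagrangian fibration $r$.

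Next, for the right foot $N\times_U L\to U$: by the composition principle that sending a Lagrangian through a Lagrangian fibration produces a symplectic fibration (\cite[Prop.~1.10]{Saf}), the composites $r\circ n:N\to U$ and $r\circ l:L\to U$ acquire canonical $(d-1)$-shifted symplectic structures $\theta_N\in \sA^{2,\cl}(N/U,d-1)$ and $\theta'_L\in \sA^{2,\cl}(L/U,d-1)$. On tangent complexes this is reflected by the identification $\TT_{N/U}\simeq \LL_{N/U}[d-1]$, coming from the fiber sequence $\TT_{N/E}\to \TT_{N/U}\to \TT_{E/U}|_N$ combined with the Lagrangian identification $\TT_{N/E}\simeq \LL_{N/E}[d-1]$ and the Lagrangian fibration identification $\TT_{E/U}\simeq \LL_{U/B}[d]|_E$. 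Taking the product of these two forms then equips $N\times_U L\to U$ with a $(d-1)$-shifted symplectic structure $\theta_r := \theta_N \oplus (-\theta'_L)\in \sA^{2,\cl}((N\times_U L)/U,d-1)$.

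It remains to construct an equivalence $(\theta_\ell)_{/U}\simeq \theta_r|_{N\times_E L}$ in $\sA^{2,\cl}((N\times_E L)/U,d-1)$ and to verify that the induced square on relative tangent complexes is a pullback. Both forms descend from the single closed $2$-form $\omega$: $\theta_\ell$ loops the two null-homotopies of $\omega|_N$ and $\omega|_L$ inside $\sA^{2,\cl}(-/B,d)$, while $\theta_r$ first pushes each null-homotopy through $r$ (producing $\theta_N$ and $\theta'_L$ over $U$) and then takes their difference. Their identification can be organized as a diagram chase of the defining cosimplicial constructions, or more conceptually via Proposition~\ref{Prop:DRviaDefSp}, in which both $(d-1)$-forms translate into $\GG_m$-equivariant functions on iterated fiber products of the associated deformation spaces; once this is in place, the non-degeneracy of the induced square follows from combining the defining pullback squares for the Lagrangians $n,l$ and for the Lagrangian fibration $r$. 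The main obstacle is precisely this coherent identification of the two constructions of the $(d-1)$-shifted form, since tracking the homotopies across the two routes requires some care at the level of closed forms rather than merely their underlying $2$-forms.
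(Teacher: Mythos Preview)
Your outline is sound in principle, but you have correctly identified the crux and then not resolved it: the coherent identification $(\theta_\ell)_{/U}\simeq \theta_r|_{N\times_E L}$ of closed $(d-1)$-forms is the entire content of the lemma, and saying it ``can be organized as a diagram chase'' or appealing to deformation spaces does not constitute a proof. The difficulty is exactly that both sides are built from the Lagrangian null-homotopies $\gamma_N,\gamma_L$ of $\omega$ together with the Lagrangian-fibration null-homotopy $\eta:\omega_{/U}\simeq 0$, and one must exhibit a canonical homotopy between the two ways of combining them; this is doable (since $\sA^{2,\cl}$ is an infinite loop space) but requires an actual argument.

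The paper takes a different and much cleaner route that sidesteps this coherence bookkeeping entirely. It works over $U$: set $M:=E\times_B U$ and observe that $L_1:=N\times_B U$, $L_2:=L\times_B U$, and $L_3:=E$ (via $(\id,r)$) are three Lagrangians in $M$ over $U$. The triple Lagrangian intersection theorem \cite[Thm.~3.1]{Ben} then gives directly that
\[
L_1\times_M L_2\times_M L_3 \;\longrightarrow\; (L_1\times_M L_2)\times_U(L_2\times_M L_3)\times_U(L_3\times_M L_1)
\]
is Lagrangian. Computing the fiber products yields $L_1\times_M L_2\times_M L_3\simeq N\times_E L$, $L_1\times_M L_2\simeq (N\times_E L)\times_B U$, and $L_2\times_M L_3\simeq L$, $L_3\times_M L_1\simeq N$, so one obtains the Lagrangian correspondence $N\times_E L:(N\times_E L)\times_B U \dashrightarrow N\times_U L$ over $U$, which is exactly the Lagrangian correspondence fibration structure by the remark following Definition~\ref{Def:LagCorrFib}. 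The advantage is that all the homotopy coherence you would have to track by hand is already packaged inside \cite{Ben}.
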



\begin{proof} 
Note that we have three Lagrangians (over $U$)
\[\xymatrix@C+1pc{
&  L_2:=L\times_B U \ar[d]^{l\times \id}\\
L_1:=N\times_B U \ar[r]^-{n\times \id} & M:=E \times_B U & E=:L_3 . \ar[l]_-{(\id,r)}
}\]
By the triple Lagrangian intersection theorem \cite[Thm.~3.1]{Ben}, the canonical map
\[L_1\times_M L_2\times_M L_3 \to (L_1\times_M L_2) \times_U (L_2\times_M L_3)\times_U (L_3\times_M L_1)\]
is Lagrangian.
This induces a canonical Lagrangian correspondence
\[N\times_E L:(N\times_E L)\times_B U  \dashrightarrow  N \times_U L,\]
equivalent to the desired Lagrangian correspondence fibration.
\end{proof}

The key property 
is the equivalence of the non-degeneracy of isotropic morphisms. 

\begin{lemma}[Equivalence of non-degeneracy]\label{Lem:nd}
Let $r:M \to N$ 
be a Lagrangian correspondence fibration over $p:U \to B$.
For an isotropic morphism $c:C \to M$,
\[c:C \to M\text{ is a Lagrangian (over $B$)} \iff r\circ c:C \to N \text{ is a Lagrangian (over $U$)} .\]	
\end{lemma}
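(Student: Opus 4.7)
The plan is to reduce both non-degeneracy conditions to the invertibility of certain \emph{isotropic comparison maps} on $C$, then relate them via a morphism of fiber sequences sharing a canonical third term.

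\textbf{Step 1 (Tangent of $r$ via LCF).} From the LCF fiber square
\[\xymatrix{
\TT_{M/U} \ar[r] \ar[d] \cart & \TT_{N/U}|_M \cong \LL_{N/U}[d]|_M \ar[d] \\
\TT_{M/B} \cong \LL_{M/B}[d] \ar[r] & \LL_{M/U}[d],
}\]
both horizontal fibers agree, and $\fib(\LL_{M/B}[d] \to \LL_{M/U}[d]) \simeq \LL_{U/B}[d]|_M$ via the cotangent sequence of $M \to U \to B$ yields $\fib(\TT_{M/U} \to \TT_{N/U}|_M) \simeq \LL_{U/B}[d]|_M$. Comparing with the tangent fiber sequence $\TT_{M/N} \to \TT_{M/U} \to \TT_{N/U}|_M$ for $M \to N \to U$ identifies $\TT_{M/N} \simeq \LL_{U/B}[d]|_M$. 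Dualizing, using that $\LL_{U/B}$ is perfect by the finite presentation of $p$, gives $\LL_{M/N} \simeq \TT_{U/B}[-d]|_M$.

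\textbf{Step 2 (Isotropic complexes).} Set $\Phi := \fib(\TT_{M/B}|_C \to \LL_{C/B}[d])$ and $\Psi := \fib(\TT_{N/U}|_C \to \LL_{C/U}[d])$, where the maps factor through $\theta_g^\#|_C$ and $\theta_h^\#|_C$. The isotropy of $c$ over $B$ provides a null-homotopy of $\TT_{C/B} \to \TT_{M/B}|_C \to \LL_{C/B}[d]$, yielding a canonical comparison map $\TT_{C/B} \to \Phi$; its invertibility is precisely the Lagrangian condition for $c$ over $B$. Applying $(-)_{/U}$ to $c^*\theta_g \simeq 0$ and using the LCF datum $(\theta_g)_{/U} \simeq (\theta_h)|_M$ shows $r \circ c$ is isotropic over $U$, with analogous comparison map $\TT_{C/U} \to \Psi$ that is equivalent iff $r \circ c$ is Lagrangian over $U$. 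The standard cotangent cofiber sequences for $c$ and $r \circ c$ simplify $\Phi \simeq \LL_{C/M}[d-1]$ and $\Psi \simeq \LL_{C/N}[d-1]$.

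\textbf{Step 3 (Morphism of fiber sequences).} The cotangent cofiber sequence $\LL_{M/N}|_C \to \LL_{C/N} \to \LL_{C/M}$ for $C \to M \to N$, shifted by $[d-1]$, together with Step 1's identification $\LL_{M/N}|_C[d] \simeq \TT_{U/B}|_C$, produces a fiber sequence $\Psi \to \Phi \to \TT_{U/B}|_C$. Combined with the tangent fiber sequence for $C \to U \to B$ and the comparison maps of Step 2, this assembles into a morphism of fiber sequences in $\QCoh(C)$:
\[\xymatrix{
\TT_{C/U} \ar[r] \ar[d]^{\alpha} & \TT_{C/B} \ar[r] \ar[d]^{\beta} & \TT_{U/B}|_C \ar@{=}[d] \\
\Psi \ar[r] & \Phi \ar[r] & \TT_{U/B}|_C.
}\]
Commutativity of the left square reflects the naturality of the Puppe construction applied to the LCF-compatible isotropies, and commutativity of the right square reduces, via Step 1 and the LCF fiber square, to the identification of the LCF-derived boundary $\Phi \to \TT_{U/B}|_C$ with the natural composition $\Phi \to \TT_{M/B}|_C \to \TT_{U/B}|_C$ through the tangent sequence of $M \to U \to B$. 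The $\infty$-categorical five-lemma in the stable category $\QCoh(C)$ then forces $\alpha$ to be an equivalence iff $\beta$ is, yielding the claim.

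\textbf{Main obstacle.} The technical heart is producing the homotopy-coherent $\infty$-categorical witnesses for the two squares in Step 3. While each piece is natural individually, stitching them into a single coherent diagram requires carefully tracking the interplay of $\theta_g$, $\theta_h$, the LCF compatibility $(\theta_g)_{/U} \simeq (\theta_h)|_M$, and the cotangent/tangent functoriality along the chain $C \to M \to N \to U \to B$; the agreement of the two descriptions of the boundary $\Phi \to \TT_{U/B}|_C$ is the most delicate point.
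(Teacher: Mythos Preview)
Your argument is correct, and is in fact a repackaging of the paper's proof: both hinge on the same underlying $3\times 3$ grid, but the paper works with it directly while you pass to fibers and invoke the five-lemma. Concretely, the paper assembles the single commutative diagram
\[\xymatrix{
\TT_{C/U} \ar[r] \ar[d] \cart & \TT_{M/U}|_C \ar[r] \ar[d] \cart &\TT_{N/U}|_C \cong \LL_{N/U}[d]|_C \ar[d] \\
\TT_{C/B} \ar[r] \ar[d] \ar@{}[rd]|{\gamma} & \TT_{M/B}|_C \cong \LL_{M/B}[d]|_C \ar[r] \ar[d] \cart & \LL_{M/U}[d]|_C \ar[d]\\
0 \ar[r]  & \LL_{C/B}[d] \ar[r] & \LL_{C/U}[d],
}\]
observes that three of the four small squares are cartesian (tangent sequences for the upper-left, the LCF datum for the upper-right, cotangent sequences for the lower-right), and concludes by the pasting law that the lower-left square is cartesian iff the outer square is. Your $\Phi$ and $\Psi$ are precisely the vertical fibers of the middle and right columns, and your morphism of fiber sequences is what one obtains by taking those fibers; the five-lemma then plays the role of the pasting law.

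The paper's organization buys you two things. First, your Step~1 becomes unnecessary: the identification $\TT_{M/N}\simeq \LL_{U/B}[d]|_M$ is implicit in the grid but never needs to be extracted. Second, and more importantly, the ``main obstacle'' you flag---coherently matching the two descriptions of the boundary map $\Phi\to \TT_{U/B}|_C$---evaporates, because all the required homotopies are packaged into the single commutative $3\times 3$ diagram, built once from the tangent/cotangent functoriality and the LCF square. Your route is not wrong, but you have created coherence debt by decomposing the grid into pieces and then reassembling; the paper avoids this by never leaving the grid.
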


\begin{proof}
Form a commutative diagram 
\[\xymatrix{
\TT_{C/U} \ar[r] \ar[d] \cart & \TT_{M/U}|_C \ar[r] \ar[d] \cart &\TT_{N/U}|_C \cong \LL_{N/U}[d]|_C \ar[d] \\
\TT_{C/B} \ar[r] \ar[d] \ar@{}[rd]|{\gamma} & \TT_{M/B}|_C \cong \LL_{M/B}[d]|_C \ar[r] \ar[d] \cart & \LL_{M/U}[d]|_C \ar[d]\\
0 \ar[r]  & \LL_{C/B}[d] \ar[r] & \LL_{C/U}[d],
}\]
where $\gamma$ is the isotropic structure of $C \to M$.
Then the total  square is cartesian if and only if the left lower square is cartesian, since the three other squares are cartesian.
Equivalently, $C \to M$ is Lagrangian if and only if $C \to N$ is Lagrangian.
\end{proof}

We are now ready to prove our main theorem.

\begin{proof}[Proof of Theorem  \ref{Thm:SP}]
We will prove the three statements (P1), (P2), (P3) in \S\ref{ss:SympCat}.
Since we already have all the structures (by Proposition \ref{Prop:pSympPush}) and the geometricity (by Proposition \ref{Prop:UnivMoment}), it suffices to prove the non-degeneracy of the given maps.

(P1) The shifted cotangent bundle \[E:=\T^*_{U/B}[d+1] \to U\] is a Lagrangian fibration (over $B$) by \cite[Thm.~2.4(3)]{Cal} and hence is a Lagrangian correspondence fibration (over $U \to B$) as explained in Example \ref{Ex:LagFib}.
Therefore, the moment map $\mu_N : N \to E$ is Lagrangian by the equivalence of non-degeneracy (Lemma \ref{Lem:nd}) 
and hence the presymplectic pushforward $p_*(N):=\mu^{-1}_N(0)$ is symplectic by the Lagrangian intersection theorem \cite[Thm.~2.9]{PTVV}.

(P2) Since the moment maps $\mu_N :N \to E$, $\mu_L :L \to E$ are Lagrangian by (P1), 
\[N\times_E L:=N \times_{\mu_N,E,\mu_L} L \to N \times_U L\]
is a Lagrangian correspondence fibration by Lemma \ref{Lem:LagIntLCF}.
Hence the canonical map
\[C \to N \times_{E} L\]
is Lagrangian by Lemma \ref{Lem:nd}.
The pushforward $p_*(C)$ is a Lagrangian correspondence since it is the composition of Lagrangian correspondence \cite[Thm.~4.4]{Cal1}
\[\xymatrix@C+3pc{
p_*(C): p_*(N)\times_B p_*(L) \ar@{-->}[r]^-{p_*(N\times_E L)} 
&  N \times_{E}L \ar@{-->}[r]^-{C} & B,
}\]
where the first one is Lagrangian by the triple Lagrangian intersection theorem \cite[Thm.~3.1]{Ben} for the three Lagrangians $\mu_N:N\to E$,  $\mu_L:L \to E$, $0_E:U \to E$.\footnote{Alternatively, $p_*(C)$  is the ``horizontal composition'' of the $2$-fold Lagrangian correspondence $C: N \leadsto L : B \dashrightarrow E $ with $\id_{U} : U \leadsto U : E \dashrightarrow B$, given by the zero section $0_E:U \to E$, in the $(\infty,2)$-category of symplectic fibrations \cite{CHS}. Here the curly arrows $\leadsto$ are $2$-morphisms.}

(P3) We have a canonical Lagrangian correspondence fibration 
\[M\times_Bp_*N \simeq p^*M \times_{\mu_{p^*M},E,\mu_N} N \to p^*M \times_U N\]
by Lemma \ref{Lem:LagIntLCF}, where the equivalence is given by the fiber diagram
\[\xymatrix{
 \ar@{.>}[r] \ar@{.>}[d] \cart & p_*N \ar[r] \ar[d] \cart & N \ar[d]^{\mu_N} \\
p^*M \ar[r] \ar[d] \cart & U \ar[r]^{0} \ar[d]^{p} & E \\
M \ar[r] & B
}\]
since the (underlying morphism) of the moment map $\mu_{p^*M} : p^*M \to E$ factors through the zero section $0_E:U \to E$.
Moreover, this equivalence preserves the symplectic forms (over $B$).
Lemma \ref{Lem:nd} then completes the proof.
\end{proof}

\begin{remark}[Locked version: symplectic pushforward squares]\label{Rem:bexLagCorFib}
There is a straightforward generalization of Lagrangian correspondence fibrations (Definition \ref{Def:LagCorrFib}) to locked forms.
However it is not necessary to introduce this additional notion since we have better notions---the symplectic pushforwards---in the locked setting.
Indeed, we have canonical equivalences between the following structures:
\begin{itemize}
\item $w$-locked Lagrangian correspondence fibrations $M \to N$ over $U \to B$;
\item $w|_U$-locked Lagrangian correspondences $M : p^*M \dashrightarrow N$ over $U$;
\item $w$-locked Lagrangian correspondences $M: M \dashrightarrow p_*N$  over $B$;
\item $w$-locked {\'e}tale symplecto-morphisms $M \to p_*N$ over $B$.
\end{itemize}

We will say that a commutative square
\[\xymatrix{
M \ar[r]^{r} \ar[d]^g & N \ar[d]^h \\
B & U , \ar[l]_{p}
}\]
is a {\em symplectic pushforward square} if it is the $w$-locked version of the Lagrangian correspondence fibration such that the induced map $M \to p_*N$ is an equivalence.
\end{remark}

\section{Three basic examples}\label{Sec:Examples}

In this section, we describe three basic examples of symplectic pushforwards:
 twisted cotangent bundles (\S\ref{ss:TwCot}), symplectic zero loci (\S\ref{ss:SZ}), and symplectic quotients (\S\ref{ss:SQ}).
We will see in the subsequent section (\S\ref{Sec:LST}) that these basic examples are the local models for general locked symplectic fibrations.

\subsection{Twisted cotangents}\label{ss:TwCot}

In this subsection, we study the symplectic geometry of {\em twisted cotangent bundles}.
Realizing the twisted cotangent bundles as symplectic pushforwards, various properties and structures follow naturally.

Throughout this subsection, we fix a finitely presented morphism of derived stacks $p:U \to B$ and a $(d+2)$-shifted function $w: B \to \AA^1[d+2]$.

We first consider the most trivial case---the {\em identity map} $\id_U : U \to U$.
There is a unique symplectic form on the identity map (since $\sA^{2,\cl}(U/U,d)$ is contractible), 
but it has several $w$-locking structures.\footnote{Actually, there is also a unique locked symplectic form (without fixing the underlying functions).}
Giving a $w$-locked symplectic form is equivalent to giving a {\em $w$-locked $1$-form};
we have canonical equivalences
\begin{equation}\label{Eq:LSFidentity}
\sA^{1,\lc}(U/B,d+1)^w\xrightarrow[(-)_{/U}]{\simeq}\sA^{1,\lc}(U/U,d+1)^{w|_U} \xleftarrow[\Fil^2 \to \Fil^1]{\simeq} \sA^{2,\lc}(U/U,d)^{w|_U}.
\end{equation}
Given a locked $1$-form $\alpha\in \sA^{1,\lc}(U/B,d+1)^w$, denote by $0_\alpha \in \sA^{2,\lc}(U/U,d)^{w|_U}$ the image of the above equivalence and
\[U_\alpha:=\left(\id_U:U \to U, 0_\alpha \in \sA^{2,\lc}(U/U,d)\right) \in \Symp_{U,d}^{w|_U}\]
the induced locked symplectic fibration.

We observe that the symplectic pushforwards of the above almost trivial objects are important non-trivial objects---the {\em twisted cotangent bundles}.
\begin{definition}[Twisted cotangent bundles]\label{Def:TwCot}
For a $(d+1)$-shifted $w$-locked $1$-form $\alpha \in \sA^{1,\lc}(U/B,d+1)^w$, the $d$-shifted {\em $\alpha$-twisted cotangent bundle} is:
\[\T^*_{U/B,\alpha}[d]:=(p:U\to B)_*(U_\alpha) \in \Symp_{B,d}^{w}.\]
\end{definition}

The moment map description (Proposition \ref{Prop:UnivMoment}) shows that Definition \ref{Def:TwCot} is compatible with the usual definition.
Indeed, we have a Lagrangian intersection
\begin{equation}\label{Eq:1}
\xymatrix{
\T^*_{U/B,\alpha}[d] \ar[r] \ar[d] \cart & U \ar[d]^{\mu_{\alpha}} \\
U \ar[r]^-{0} & \T^*_{U/B}[d+1],
}
\end{equation}
where $\mu_\alpha$ is the moment map associated to $U_\alpha$.
It is straightforward to check that the underlying morphism of $\mu_\alpha$ corresponds to the underlying $1$-form of $\alpha$
and the Lagrangian structure of $\mu_\alpha$ corresponds to the closing structure of $\alpha$. 

In the {\em exact} case (i.e. $w\simeq 0$),  we obtain the {\em critical loci}.

\begin{example} [Critical loci]
The {\em critical locus} of a function  $v \in \sA^{0}(U,d+1)$ is:
\[\Crit_{U/B}(v):=\T^*_{U/B,d_{\DR}(v)}[d] \in \Symp_{B,d}^{0}.\]
\end{example}

From the universal property of symplectic pushforwards, we can observe that locked symplectic fibrations are equivalent to (exact) Lagrangians on critical loci.

\begin{corollary}[Lagrangian factorizations]\label{Cor:LagFact}
For any $v\in \sA^0(U,d+2)$, 
we have
\[
\Lag_{\Crit_{U/B}(v)/B,d+1}^0 \simeq \left(\Symp_{U,d}^v\right)_0
\]
where $\Lag_{\Crit_{U/B}(v)/B,d+1}^0 :=\Map_{\Symp_{B,d+1}^0}(B,\Crit_{U/B}(v))$ is the space of exact Lagrangians and $(-)_0:\Cat \to \Grpd$ is the functor of underlying spaces of objects.
\end{corollary}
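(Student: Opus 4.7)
The plan is to combine the universal property of the symplectic pushforward (Theorem \ref{Thm:SP}) with an explicit identification of Lagrangian correspondences between ``shifted identity'' objects. By Definition \ref{Def:TwCot}, $\Crit_{U/B}(v) = p_*(U_{d_{\DR}v})$, where $U_{d_{\DR}v} \in \Symp^0_{U,d+1}$ is obtained from the exact $(d+2)$-shifted $1$-form $d_{\DR}v \in \sA^{1,\lc}(U/B,d+2)^0$ via the equivalence \eqref{Eq:LSFidentity}. Applying the $p^* \dashv p_*$ adjunction of Theorem \ref{Thm:SP} gives
\[
\Lag^0_{\Crit_{U/B}(v)/B,d+1} = \Map_{\Symp^0_{B,d+1}}(B, p_*(U_{d_{\DR}v})) \simeq \Map_{\Symp^0_{U,d+1}}(U_0, U_{d_{\DR}v}),
\]
since $p^*B$ pulls back to the identity $\id_U$ with the zero locked form, i.e.\ $U_0$. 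The statement thus reduces to exhibiting a natural equivalence between morphisms $U_0 \dashrightarrow U_{d_{\DR}v}$ in $\Symp^0_{U,d+1}$ and the underlying space of $\Symp^v_{U,d}$.

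I would then unpack a Lagrangian correspondence $(C,\gamma_C): U_0 \dashrightarrow U_{d_{\DR}v}$. Because both source and target carry the identity structure morphism to $U$, the two legs of the span collapse to the single structure map $c: C \to U$, and the non-degeneracy diagram \eqref{Eq:LagCorr} --- using $\TT_{U/U} = 0$ --- reduces to the cartesian square forcing $\TT_{C/U} \simeq \LL_{C/U}[d]$. The remaining datum $\gamma_C: 0|_C \simeq 0_{d_{\DR}v}|_C$ in $\sA^{2,\lc}(C/U,d+1)^0$ supplies the closed $2$-form. Via the path-space dictionary \eqref{Eq:TwistedExact}, the space $\sA^{2,\lc}(C/U,d+1)^0$ is identified with $\sA^{2,\ex}(C/U,d+1)$ (as a loop space at $0$), while $\sA^{2,\lc}(C/U,d)^v$ is identified with the space of paths $0 \simeq 0_v|_C$ in $\sA^{2,\ex}(C/U,d+1)$. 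Once one verifies $0_{d_{\DR}v}|_C \simeq 0_v|_C$ under this identification, the datum $\gamma_C$ is precisely a $v$-locked $d$-shifted $2$-form on $C/U$, and combined with the non-degeneracy yields an object of $\Symp^v_{U,d}$. The inverse construction runs the dictionary backwards, recovering the factorization $\mu_N: N \to \Crit_{U/B}(v)$ via the moment map of Proposition \ref{Prop:UnivMoment} applied to $(N,\theta_N) \in \Symp^v_{U,d}$.

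The main obstacle is verifying the comparison $0_{d_{\DR}v}|_C \simeq 0_v|_C$ in $\sA^{2,\ex}(C/U,d+1)$, which asserts the compatibility of two ways of producing an exact $(d+1)$-shifted $2$-form from $v \in \sA^0(U,d+2)$: either directly through the canonical identification $\sA^0(U,d+2) \simeq \sA^{2,\ex}(U/U,d+1)$ and then pulling back along $c$, or first forming the exact $1$-form $d_{\DR}v$ on $U/B$, then passing to a locked $(d+1)$-shifted $2$-form on $\id_U$ via \eqref{Eq:LSFidentity}, and finally invoking the path-space equivalence. I would prove this equivalence by a diagram chase in the cartesian square \eqref{Eq:ex/cl/DR} specialized to $\id_U$, using that the de Rham differential is the connecting map induced by the Hodge filtration; the statement then propagates to $C$ by functoriality of pullback along $c: C \to U$.
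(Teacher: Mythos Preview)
Your proposal is correct and follows essentially the same approach as the paper: both apply the adjunction $p^*\dashv p_*$ to reduce to $\Map_{\Symp^0_{U,d+1}}(U_0,U_{d_{\DR}v})$ and then invoke the path-space equivalence \eqref{Eq:TwistedExact}. You spell out details the paper leaves implicit---in particular the non-degeneracy check and the identification $0_{d_{\DR}v}\simeq 0_v$, which is indeed the tautology underlying the paper's one-line appeal to $\sA_U^{2,\lc}[d]^v \simeq \Path_{0,0_v}\sA^{2,\ex}_U[d+1]$.
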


\begin{proof}
Let $H:=\Crit_{U/B}(v)$.
By the adjunction $p^*\dashv p_*$, we have 
\[\Lag_{H/B,d+1}^0:=\Map_{\Symp_{B,d+1}^0}(B,H) \simeq \Map_{\Symp_{U,d+1}^0}(U,U_{d_{\DR}(v)})\simeq \left(\Symp_{U,d}^v\right)_0,\]
where the last equivalence comes from $\sA_U^{2,\lc}[d]^v \simeq \Path_{0,0_v}\sA^{2,\ex}_U[d+1]$ in \eqref{Eq:TwistedExact}.
\end{proof}

Corollary \ref{Cor:LagFact} can be rephrased as follows: for any locked symplectic fibration $h: N \to U$, 
there exists a canonical factorization
\[\xymatrix{
& \Crit_{U/B}(v) \ar[d] \\
N \ar[r]_-{h} \ar@{.>}[ru]^{\mu_N} & U,
}\]
by an exact Lagrangian $\mu_N$ and the Lagrangian fibration $\Crit_{U/B}(v) \to U$,
where $v:U \to \AA^1[d+2]$ is the underlying function of $h$.
Not only the underlying morphism of $h: N \to U$, but also its locked symplectic form can be recovered from $\mu_N$.
Moreover, such exact Lagrangian $\mu_N$ is uniquely determined by the above property.

\begin{remark}[Moment maps are Lagrangian factors]
The moments map (Proposition \ref{Prop:UnivMoment}) is a special case of the Lagrangian factor (Corollary \ref{Cor:LagFact}) when the function $v:U \to \AA^1[d+2]$ is the pullback of a function on the base $B \to \AA^1[d+2]$.	
\end{remark}

We collect various Lagrangian structures that follows immediately from the definition of twisted cotangents and the functoriality of symplectic pushforwards.

\begin{remark}[Functoriality]\label{Rem:TwCotFunct}
Let $\alpha\in \sA^{1,\lc}(U/B,d+1)^w$ be a $w$-locked $1$-form.
\begin{enumerate}
\item [(a)] (Lagrangian fibration, \cite[Thm.~3.5]{Gra}) The projection map \[\T^*_{U/B,\alpha}[d] \to U\] is a Lagrangian fibration (Example \ref{Ex:LagFib}) by Remark \ref{Rem:bexLagCorFib}.
	Indeed, the projection $\T^*_{U/B,\alpha}[d] \to U_\alpha$ is a Lagrangian correspondence fibration (Definition \ref{Def:LagCorrFib}) and $U_\alpha \simeq U$ without the locking structure.
\item [(b)] (Twisted cotangent correspondences I)
For a morphism $U \to V$ of finitely presented derived stacks over $B$, we have a canonical Lagrangian correspondence
\begin{equation}\label{Eq:TwCotCorr1}	
\T^*_{U/B,\alpha}[d] : \T^*_{U/B,\alpha}[d]\times_B V  \dashrightarrow \T^*_{U/V,\alpha_{/V}}[d].
\end{equation}
Indeed, we have $\T^*_{U/B,\alpha}[d] \simeq (V \xrightarrow{q} B)_*(\T^*_{U/V,\alpha_{/V}})$ by the functoriality of symplectic pushforwards (Lemma \ref{Lem:SP_Functoriality}) and thus \eqref{Eq:TwCotCorr1} follows from the unit map of the  adjunction $q^*\dashv q_*$.
\item [(c)] (Twisted cotangent correspondences II)
 For a morphism $s:W \to U$ of finitely presented derived stacks over $B$,
we have a canonical Lagrangian correspondence
\begin{equation}\label{Eq:TwCotCorr2}
 \T^*_{U/B,\alpha}[d]\times_U W :
\T^*_{U/B,\alpha}[d] \dashrightarrow \T^*_{W/B,\alpha|_W}[d].
\end{equation}
Indeed, we have a Lagrangian correspondence $W:U_\alpha \dashrightarrow \T^*_{W/U,\alpha|_{W/U}}[d]$ given by the unit map of $s^*\dashv s_*$.
Apply $p_*$, then we get \eqref{Eq:TwCotCorr2}.
\item [(d)] (Twisted conormal Lagrangian, cf.~\cite[Thm.~2.13]{Cal}) 
In the situation of Remark \ref{Rem:TwCotFunct}(b), we have a canonical Lagrangian
\[\mu : \T^*_{U/V,\alpha_{/V}}[d] \to \T^*_{V/B}[d+1]\]
together with a Lagrangian intersection diagram
\[\xymatrix{
\T^*_{U/B,\alpha}[d] \ar[r] \ar[d] \cart & \T^*_{U/V,\alpha_{/V}}[d] \ar[d]^{\mu} \\
V \ar[r]^-{0} & \T^*_{V/B}[d+1].
}\]
This follows from the moment map description of
$\T^*_{U/B,\alpha}[d] \simeq q_*(\T^*_{U/V,\alpha_{/V}}[d])$.
\end{enumerate}
\end{remark}

\begin{remark}[Closed version]\label{Rem:ClosedTwCot}
In the literatures, the twisted cotangent bundles are defined for closed forms, instead of locked forms, via the fiber square \eqref{Eq:1}.
They can be explained via the closed version of symplectic pushforwards (in Remark \ref{Rem:ClosedSympPush}).
Indeed, there is a unique symplectic form $0 \in \sA^{2,\cl}(U/U,d)$ on $\id_U:U \to U$.
The moment map (in the sense of Remark \ref{Rem:ClosedSympPush}) exists, but {\em not} unique.
Giving a moment map is equivalent to give a closed $1$-form; 
we have 
\[ \sA^{1,\cl }(U/B,d+1)\xrightarrow{\simeq} \left\{\text{moment maps } \mu : U \to \T^*_{U/B}[d+1]\right\}.\]
Given $\alpha \in \sA^{1,\cl }(U/B,d+1)$, 
the symplectic pushforward (in the sense of Remark \ref{Rem:ClosedSympPush}) with respect to the induced moment map $\mu_\alpha$ is:
\[(U\to B)_*^{\mu_\alpha}(U):= \mu_\alpha^{-1}(0) := U\times_{\mu_\alpha,\T^*_{U/B}[d+1],0}U,\]
the usual twisted cotangent bundle given by the Lagrangian intersection \eqref{Eq:1}.
\end{remark}

\subsection{Symplectic zero loci}\label{ss:SZ}
In this subsection, we introduce {\em symplectic zero loci} of sections of symmetric complexes.
They are local models for even-shifted symplectic fibrations (in \S\ref{Sec:LST}).

We first observe that symmetric forms on perfect complexes give rise to locked forms on the zero sections.
To be precise, let us fix some notations:
\begin{itemize}
\item Given a perfect complex $E$ on a derived stack $U$, we consider the zero section 
\[0_E: U \to \E:=\Tot(E).\]
\item The space of $d$-shifted {\em symmetric $p$-forms} on $\E$ is: 
\[\scS^p(\E,d):=\Map_{\QCoh_U}(\O_U,\Sym^p(E\dual)[d]).\]
\item The symmetric $p$-forms on $\E$ are equivalent to weight $p$ equivariant functions on $\E$ (by Lemma \ref{Lem:FtnsLinear}) and thus we have the forgetful map
\begin{equation}\label{Eq:SymForms1}
\scS^p(\E,d)\to \sA^{0}(\E,d).
\end{equation}
\item The symmetric $p$-forms on $\E$ are also equivalent to the $p$-forms on the zero section $0_E$ since $\LL_{U/\E} \simeq E\dual[1]$;
we have a canonical equivalence
\begin{equation}\label{Eq:SymForms2}
	\scS^p(\E,p+d)\xrightarrow{\simeq} \sA^{p}(U/\E,d).
\end{equation}
\end{itemize}

\begin{proposition}[Locked forms on zero sections]\label{Prop:LFZero}
Let $E$ be a perfect complex on a derived stack $U$.
Then there exists a canonical map of spaces
\[\scZ:\scS^p(\E,p+d) \to \sA^{p,\lc}(U/\E,d) \]
satisfying the following properties:
\begin{enumerate}
\item $(-)^p \circ \scZ :\scS^p(\E,p+d) \xrightarrow{} \sA^p(U/\E,d)$ is equivalent to \eqref{Eq:SymForms2}.
\item $[-]\circ \scZ :\scS^p(\E,p+d) \to \sA^0(\E,p+d)$ is equivalent to \eqref{Eq:SymForms1}.
\end{enumerate}
\end{proposition}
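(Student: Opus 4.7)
The plan is to identify the filtered de Rham complex $\DR(U/\E)$ with a manifest filtered model coming from the symmetric algebra $\Sym(E^\vee) = \O_\E$, and then read off $\scZ$ from this comparison. Since $\LL_{U/\E} \simeq E^\vee[1]$, the Hodge graded pieces are $\Gr^p\DR(U/\E) \simeq (0_E)_*\Sym^p(E^\vee)$, and I would show that there is a canonical equivalence of filtered $\O_\E$-algebras
\[\DR(U/\E) \;\simeq\; \bigl(\Sym(E^\vee),\; \Fil^p := \Sym^{\geq p}(E^\vee)\bigr),\]
where the right-hand side is $\O_\E$ equipped with the filtration by total symmetric degree. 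For affine $U$ this falls out of the universal property of the Rees construction (Step~1 of the proof of Proposition~\ref{Prop:DRviaDefSp}): the augmentation $\Sym(E^\vee) \to \O_U$ in $\QCAlg_\E$ determines by adjunction a filtered-algebra comparison, and both sides have matching associated gradeds and $\Fil^0$ pieces. For a general derived stack $U$ the identification propagates through the descent (or right Kan extension) used in Steps~2--3 of that proof.

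Under this identification, $\sA^{p,\lc}(U/\E, d) \simeq \Map_{\QCoh_\E}(\O_\E, \Sym^{\geq p}(E^\vee)[p+d])$, so I would define $\scZ$ concretely by sending a symmetric form $\beta \in \Gamma(U, \Sym^p(E^\vee))[p+d]$ to the $\O_\E$-module map $\O_\E \to \Sym^{\geq p}(E^\vee)[p+d]$ given by multiplication by $\beta$, where $\beta$ is seen inside $\Sym^{\geq p}(E^\vee)$ via its lowest-degree summand. Property~(1) is then immediate: $(-)^p \circ \scZ$ corresponds to composing with the quotient $\Sym^{\geq p} \twoheadrightarrow \Sym^{\geq p}/\Sym^{\geq p+1} \simeq \Sym^p(E^\vee)$, which retracts the inclusion used to define $\scZ$ and recovers \eqref{Eq:SymForms2}. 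Property~(2) is likewise clear: $[-] \circ \scZ$ corresponds to composing with the inclusion $\Sym^{\geq p}(E^\vee) \hookrightarrow \Sym(E^\vee) = \O_\E = \Fil^0\DR(U/\E)$, which sends $\beta$ to its realization as a weight-$p$ function on $\E$ in the sense of Lemma~\ref{Lem:FtnsLinear}, giving \eqref{Eq:SymForms1}.

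The principal technical point is the filtered equivalence $\DR(U/\E) \simeq \Sym(E^\vee)$ beyond the affine case. On affine $U$ it is immediate from the Rees adjunction, but for a general derived stack $U$ one must check that the identification assembles coherently through the descent defining $\DR$. An equivalent geometric route via Proposition~\ref{Prop:DRviaDefSp} would realize both sides as weight-$p$ $\GG_m$-equivariant functions and produce $\scZ$ as precomposition with a canonical $\GG_m$-equivariant map $\D_{U/\E} \to \E$; this map arises by extending the scaling $\GG_m$-action on $\E$ to an $\AA^1$-family degenerating to the zero section at $t = 0$, and properties~(1) and~(2) then amount to the restrictions of this map to the special and generic fibers of the deformation space.
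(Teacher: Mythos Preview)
Your approach is correct; in fact the paper takes your ``alternative geometric route'' as its actual proof. It first computes the deformation space explicitly as $\D_{U/\E}\simeq\E\times\AA^1$ with the diagonal $\GG_m$-action of weights $(-1,1)$ (this is Lemma~\ref{Lem:D_U/E}, proved by a short fiber-square argument), and then defines $\scZ$ as the pullback of weight-$p$ functions along the first projection $\pr_1\colon\E\times\AA^1\to\E$, invoking Proposition~\ref{Prop:DRviaDefSp} and Lemma~\ref{Lem:FtnsLinear} to translate equivariant functions back into locked and symmetric forms. Your filtered identification $\DR(U/\E)\simeq(\O_\E,\Sym^{\geq\bullet})$ is exactly the algebraic content of Lemma~\ref{Lem:D_U/E} read through Proposition~\ref{Prop:DRviaDefSp}, and your ``multiplication by $\beta$'' is $\pr_1^*$ under that dictionary. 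What your direct route gains is avoiding the passage through the deformation space; what the paper's route gains is a clean self-contained computation of $\D_{U/\E}$ that is reused later (see Lemma~\ref{Lem:DD} and the proof of Proposition~\ref{Prop:DefLF}).

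One imprecision in your final paragraph: the relevant map $\D_{U/\E}\to\E$ is the first projection $\pr_1$ after the trivialization, \emph{not} the scaling map $(e,t)\mapsto te$. The latter is (the $\E$-component of) the structure map to the base $\E\times\AA^1$; it collapses to the zero section at $t=0$, so using it would break property~(1), as restriction to the special fiber would send $\beta$ to its value at the origin rather than to $\beta$ itself.
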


Recall Proposition \ref{Prop:DRviaDefSp} that the locked forms on the zero section $0_\E$ can be viewed as functions on the associated deformation space $\D_{U/\E}$.
Thus Proposition \ref{Prop:LFZero} can be shown by computing the deformation space $\D_{U/\E}$.

\begin{lemma}[Deformation spaces of zero sections]\label{Lem:D_U/E}
Let $E$ be a perfect complex on a derived stack $U$.
Then there exists a canonical equivalence of derived stacks
\[\D_{U/\E} \simeq \E \times \AA^1.\]
Moreover, the $\GG_m$-action on the deformation space $\D_{U/E}$ is equivalent to the diagonal $\GG_m$-action on $\E\times\AA^1$ with weights $(-1)$ on $\E$ and $1$ on $\AA^1$.
\end{lemma}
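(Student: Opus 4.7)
The plan is to compute the functor of points of $\D_{U/\E}$ directly from the universal property \eqref{Eq:DefSp} and recognize the resulting formula as $\E\times\AA^1$. Fix a derived affine test scheme $T$ together with a morphism $(s,t):T \to \E\times\AA^1$; by definition a lift of $(s,t)$ to a $T$-point of $\D_{U/\E}$ is a factorization of the restricted map $s|_{T_0}:T_0 \to \E$ through the zero section $0_E:U\hookrightarrow \E$, where $T_0:=T\times_{\AA^1}\{0\}$. Since $\E=\Spec_U(\Sym(E\dual))$ is relatively affine, the map $s$ corresponds (via adjunction and Lemma \ref{Lem:FtnsLinear}) to a section $\sigma\in\Gamma(T,E|_T)$, and a factorization of $s|_{T_0}$ through $0_E$ is the data of a null-homotopy of $\sigma|_{T_0}\in\Gamma(T_0,E|_{T_0})$.

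Next I would convert that null-homotopy into the scaling data $t\tau\simeq\sigma$. The cofiber sequence $\O_T\xrightarrow{t}\O_T\to\O_{T_0}$ in $\QCoh_T$ yields, after $\Map_{\O_T}(-,E|_T)$, an equivalence
\[
\mathrm{fib}\!\left(\Gamma(T,E|_T)\xrightarrow{\;t\;}\Gamma(T,E|_T)\right) \;\simeq\; \Map_{\O_T}(\O_{T_0},E|_T),
\]
so a null-homotopy of $\sigma|_{T_0}$ is canonically the data of $\tau\in\Gamma(T,E|_T)$ together with an equivalence $t\tau\simeq\sigma$. Consequently the triple $(t,\sigma,\tau,\,t\tau\simeq\sigma)$ is naturally equivalent to the pair $(t,\tau)$, which is exactly a $T$-point of $\AA^1\times\E$. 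This computation is functorial in $T$ and in the structure morphism to $\E\times\AA^1$: the assignment $(t,\tau)\mapsto(t\tau,t)$ recovers the canonical projection $\D_{U/\E}\to\E\times\AA^1$, so one obtains a canonical equivalence $\D_{U/\E}\simeq\AA^1\times\E$ of derived stacks over $\E\times\AA^1$.

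Finally, for the $\GG_m$-action, recall that the action on $\D_{U/\E}$ is induced by the scaling action on the $\AA^1$-factor of $B\times\AA^1$, under which the coordinate $t$ has weight $1$. Since the datum $\sigma$ comes from the composition $\D_{U/\E}\to\E\times\AA^1\to\E$, and the $\GG_m$-action on $\E$ regarded as the base is trivial, $\sigma=t\tau$ must be $\GG_m$-invariant; combined with $t$ having weight $1$ this forces $\tau$ to have weight $-1$, which is the claimed diagonal action. The main obstacle I anticipate is the bookkeeping for the null-homotopy step in the derived setting — making precise that ``$\sigma|_{T_0}\simeq 0$'' is the same space of data as a choice of $\tau$ with a coherent equivalence $t\tau\simeq\sigma$ — but this is handled uniformly by the fiber sequence above together with the relative-affineness of $\E\to U$ and the description of points of abelian cones from Lemma \ref{Lem:FtnsLinear}.
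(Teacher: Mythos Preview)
Your approach is correct and runs parallel to the paper's argument. Both hinge on the same cofiber sequence: yours is $\O_T\xrightarrow{t}\O_T\to\O_{T_0}$ pointwise for each test scheme, while the paper's is the global version $E\boxtimes\O_{\AA^1}\xrightarrow{T}E\boxtimes\O_{\AA^1}\to i_*i^*(E\boxtimes\O_{\AA^1})$ over $U\times\AA^1$. The paper packages the computation into a fiber square of mapping stacks relating $\D_{U/\E}$ to $\D_{U/U}\simeq U\times\AA^1$ and $\D_{\E/U}\simeq\Tot(i_*i^*(E\boxtimes\O_{\AA^1}))$, then reads off the answer from that cofiber sequence of perfect complexes. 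Evaluated on $T$-points, the paper's fiber square unwinds to exactly your functor-of-points computation, so the two proofs are essentially the same argument at different levels of abstraction.

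One imprecision to fix: the displayed equivalence you obtain by applying $\Map_{\O_T}(-,E|_T)$ contravariantly identifies the fiber of $t$ \emph{over $0$} with $\Map_{\O_T}(i_*\O_{T_0},E|_T)\simeq\Gamma(T_0,i^!E|_T)$, which is not the space in which $\sigma|_{T_0}$ lives and does not by itself yield the conclusion about null-homotopies of $\sigma|_{T_0}$. What you actually want is the covariant fiber sequence
\[
\Gamma(T,E|_T)\xrightarrow{\;t\;}\Gamma(T,E|_T)\xrightarrow{\;r\;}\Gamma(T_0,E|_{T_0}),
\]
obtained by tensoring the cofiber sequence with $E|_T$ and applying $\Gamma(T,-)$. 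Then the standard fact that for a fiber sequence $X\xrightarrow{f}Y\xrightarrow{g}Z$ the space of null-homotopies of $g(\sigma)$ is canonically the fiber of $f$ over $\sigma$ gives exactly your conclusion: null-homotopies of $\sigma|_{T_0}=r(\sigma)$ are pairs $(\tau,h:t\tau\simeq\sigma)$. This is precisely the bookkeeping you anticipated; once stated with the right sequence it is immediate, and the $\GG_m$-weight argument you give is correct as written.
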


\begin{proof} 
Note that we have a canonical fiber square
\begin{equation}\label{Eq:D_U/E}
\xymatrix{
\D_{U/\E} \simeq \uMap_{\E \times \AA^1}(\E \times 0,U\times \AA^1) \ar[r] \ar[d] \cart & \uMap_{U \times \AA^1}(U \times 0,U\times \AA^1) \ar[d]^{0_E}\simeq  \D_{U/U}
\\
\E \times \AA^1 \ar[r]^-{\mathrm{constant}} & \uMap_{U \times \AA^1}(U \times 0,\E\times \AA^1) \simeq \D_{\E/U}.
}\end{equation}
Moreover, we also have canonical equivalences of derived stacks
\[\uMap_{U \times \AA^1}(U \times 0,U\times \AA^1) \simeq  U \times \AA^1, \quad \uMap_{U \times \AA^1}(U \times 0,\E\times \AA^1) \simeq \Tot(i_*i^*(E\boxtimes \O_{\AA^1}))\]
for $i:=(\id,0): U \to U\times \AA^1$.
Hence, the canonical cofiber sequence
\[\xymatrix{
E\boxtimes \O_{\AA^1} \ar[r]^-{T} & E\boxtimes \O_{\AA^1} \ar[r] & i_*i^*(E\boxtimes \O_{\AA^1})},\]
for the coordinate function $T\in \Gamma(\AA^1,\O_{\AA^1})$, gives us the desired equivalence.
\end{proof}


\begin{proof}[Proof of Proposition \ref{Prop:LFZero}]
We define the desired map $\scZ$  as the pullback of weight $p$ functions along the projection map
\[\D_{U/\E} \simeq \E \times \AA^1 \xrightarrow{\pr_1} \E.\]
Here equivariant functions on $\D_{U/\E}$ are identified to locked forms on $0_E$ via Proposition \ref{Prop:DRviaDefSp}
and equivariant functions on $\E$ are identified to symmetric forms on $E$ via Lemma \ref{Lem:FtnsLinear}.
\end{proof}


We are now ready to define the {\em symplectic zero loci} for sections of symmetric complexes.
We use the following notations:
\begin{itemize}
\item A $d$-shifted {\em symmetric complex} $E$ on $U$ is a perfect complex equipped with a symmetric $2$-form $\beta_E \in\scS^2(\E,d)$ which induces an equivalence $E\xrightarrow{\simeq}E\dual[d]$.
\item 
Given a $(d+2)$-shifted symmetric complex $E$, the {\em symplectic zero section} is:
\[0^\symp_E:=(0_E:U \to \E,\scZ(\beta_E)) \in \Symp_{\E,d}^{\q_E},\]
where $\q_E\in \sA^0(\E,d+2)$ is induced by $\beta_E\in \scS^2(E,d+2)$ under \eqref{Eq:SymForms1}.
\item Given a section $s:\O_U \to E$, the {\em zero locus} is the fiber product
\[\xymatrix{
\Zero(s) \ar[r]^{} \ar[d]^{} \cart & U \ar[d]^{s} \\
U \ar[r]^{0_E} & \E.
}\]
\end{itemize}

\begin{definition}[Symplectic zero loci]\label{Def:SympZero}
Let $p:U \to B$ be a finitely presented morphism of derived stacks and $w\in \sA^0(B,d+2)$.
Let $E$ be a $(d+2)$-shifted symmetric complex on $U$ and $s:\O_U \to E$ be a section equipped with an equivalence $s^2\simeq w|_U \in \sA^0(U,d+2)$.
The {\em symplectic zero locus} is:
\[\Zero^\symp_{U/B}(E,s):=(U \xrightarrow{p}B)_*(U \xrightarrow{s}\E)^*(0_E^\symp) \in \Symp_{B,d}^w.\]
\end{definition}

We now discuss the {\em functoriality} of the symplectic zero loci.
We say that
\[\xymatrix{
& D\ar[ld]_a \ar[rd]^b &\\
E && F
}\]
is a {\em maximal isotropic correspondence} 
of $d$-shifted symmetric complexes $E$, $F$ 
if it is equipped with an equivalence $\beta_E|_D\simeq \beta_F|_D$ in $\scS^2(D,d)$ which induces a fiber square
\begin{equation}\label{Eq:MaxIsoCorr}
\xymatrix{
D \ar[r]^-b \ar[d]_-a \cart & F \simeq F\dual[d] \ar[d]^{b\dual[d]}\\
E \simeq E\dual[d] \ar[r]^-{a\dual[d]} & D\dual[d].
}
\end{equation}
In short, we denote by $D:E \dashrightarrow F$ for a maximal isotropic correspondence.

The symplectic zero loci are stable under the change of symmetric complexes via maximal isotropic correspondences.

\begin{proposition}[Change of symmetric complexes]\label{Prop:ChangeofSymmCplx}
In the situation of Definition \ref{Def:SympZero}, 
if we are additionally given a maximal isotropic correspondence $D:E \dashrightarrow F$, then there exists an induced section $s_F : \O_V \to F|_V$ on $V:=U\times_{s,\E,a}\D$ such that
\[\Zero^\symp_{U/B}(E,s) \simeq \Zero^\symp_{V/B}(F|_V,s_F) \textin \Symp_{B,d}^w.\]
\end{proposition}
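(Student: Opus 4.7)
The plan is to construct $s_F$, then exhibit an equivalence $a_* b^*(0^\symp_F) \simeq 0^\symp_E$ in the symplectic category over $\E$, and finally conclude using the base change and composition properties of symplectic pushforwards (Lemma \ref{Lem:SP_Functoriality}).

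First, I would construct the section $s_F: \O_V \to F|_V$. The cartesian square defining $V$ provides a projection $V \to \D$; composing with $b: \D \to \F$ and viewing the result as a map over $U$ gives the desired section $s_F$. To verify $s_F^2 \simeq w|_V$, note that the underlying function of $\beta_F$ is $\q_F \in \sA^0(\F, d+2)$, and the maximal isotropy $\beta_E|_D \simeq \beta_F|_D$ from \eqref{Eq:MaxIsoCorr}, via the forgetful map \eqref{Eq:SymForms1} used in Proposition \ref{Prop:LFZero}, gives $\q_E|_\D \simeq \q_F|_\D$ in $\sA^0(\D, d+2)$. Restricting along the cartesian square and using $s^2 \simeq w|_U$ then yields
\[
s_F^2 \simeq (b \circ (V \to \D))^* \q_F \simeq (V \to \D)^* (\q_E|_\D) \simeq f^* s^* \q_E \simeq f^* (w|_U) \simeq w|_V.
\]

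Next, I would construct a $\q_E|_\D$-locked Lagrangian correspondence $W: a^*(0^\symp_E) \dashrightarrow b^*(0^\symp_F)$ over $\D$ with underlying stack $U$. Since $a: \D \to \E$ and $b: \D \to \F$ are total-space maps of perfect complexes, the zero section $0_\D: U \to \D$ satisfies $a \circ 0_\D = 0_E$ and $b \circ 0_\D = 0_F$, hence lifts to maps $U \to U \times_\E \D$ and $U \to U \times_\F \D$. The isotropy structure (equivalence of locked $2$-forms pulled back to $U$) is provided by naturality of $\scZ$ in Proposition \ref{Prop:LFZero} applied to $\beta_E|_D \simeq \beta_F|_D$. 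The non-degeneracy condition \eqref{Eq:LagCorr} at the correspondence $U$ reduces, under the identifications $\LL_{U/\E} \simeq E^\vee[1]$, $\LL_{U/\D} \simeq D^\vee[1]$, $\LL_{U/\F} \simeq F^\vee[1]$, to the $[-1]$-shift of the maximal isotropic fiber square \eqref{Eq:MaxIsoCorr}, which holds by hypothesis. Under the adjunction $a^* \dashv a_*$ of Theorem \ref{Thm:SP}, this correspondence yields a morphism $0^\symp_E \to a_* b^*(0^\symp_F)$ in $\Symp_{\E, d}^{\q_E}$; to see it is an equivalence, I would verify it defines a symplectic pushforward square in the sense of Remark \ref{Rem:bexLagCorFib}, using the moment-map description of $a_*$ from Proposition \ref{Prop:UnivMoment} to identify the underlying stack $U$ on both sides.

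Finally, I would chain the equivalences
\[
\Zero^\symp_{U/B}(E,s) = p_* s^*(0^\symp_E) \simeq p_* s^* a_* b^*(0^\symp_F) \simeq p_* f_* (V \to \D)^* b^*(0^\symp_F) \simeq (p\circ f)_* s_F^*(0^\symp_{F|_V}),
\]
where the second step uses the equivalence from the previous paragraph, the third uses base change (Lemma \ref{Lem:SP_Functoriality}(1)) for the cartesian square defining $V$, and the fourth combines composition of pushforwards (Lemma \ref{Lem:SP_Functoriality}(2)) with the factorization $s_F = b \circ (V \to \D)$ viewed as a section of $F|_V$. The rightmost term is $\Zero^\symp_{V/B}(F|_V, s_F)$, yielding the desired equivalence in $\Symp_{B,d}^w$. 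The main technical obstacle is the second paragraph: showing that the constructed Lagrangian correspondence induces an \emph{equivalence}, not merely a morphism, in $\Symp_{\E,d}^{\q_E}$. This amounts to matching both classical truncations and derived/locked symplectic structures of $0^\symp_E$ and $a_* b^*(0^\symp_F)$, where the maximal isotropy ensures that the moment map $b^*(0^\symp_F) \to \T^*_{\D/\E}[d+1]$ vanishes exactly along the lifted zero section.
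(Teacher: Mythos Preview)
Your proposal is correct and follows essentially the same approach as the paper: the paper isolates your second paragraph as a separate lemma (Lemma~\ref{Lem:FunctZeroSection}), establishing the equivalence $0_\E^\symp \simeq a_*b^*0_\F^\symp$ via exactly the Lagrangian correspondence \eqref{Eq:LagCorrSympZeroSections} you describe, with non-degeneracy reduced to the maximal isotropic square \eqref{Eq:MaxIsoCorr} and the equivalence checked via the moment-map description; it then deduces the proposition by the same base-change and composition chain you wrote. The paper's only extra detail at the equivalence step is noting that the moment-map description makes $a_*b^*0_\F^\symp$ a total space of a perfect complex over $U$, so the induced {\'e}tale symplecto-morphism from $U$ (a closed embedding) is forced to be an equivalence.
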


As a special case, if $E$ admits a {\em maximal isotropic complex} $M$ (i.e. a maximal isotropic correspondence of the form $M : E \dashrightarrow 0$), then the symplectic zero locus is the twisted cotangent bundle (Definition \ref{Def:TwCot}); we have
\begin{equation}\label{Eq:SZbecometwcot}
\Zero^\symp_{U/B}(E,s) \simeq \T^*_{V/B,\alpha_s}[d] \textin \Symp_{B,d}^w,
\end{equation}
for a canonically induced locked $1$-form $\alpha_s \in \sA^{1,\lc}(V/B,d+1)^{w}$.


Proposition \ref{Prop:ChangeofSymmCplx} follows from the functoriality of the symplectic zero sections. 

\begin{lemma}[Functoriality of symplectic zero sections]\label{Lem:FunctZeroSection}
Let $D:E \dashrightarrow F$ be a maximal isotropic correspondence of $(d+2)$-shifted symmetric complexes on a derived stack $U$.
Then we have a canonical equivalence
\[0_\E^\symp \simeq (\sfD \xrightarrow{a} \E)_* (\sfD \xrightarrow{b} \F)^*(0_\F^\symp) \textin \Symp_{\E,d}^{q_E},\]
where $\E:=\Tot(E)$, $\sfD:=\Tot(D)$, and $\F:=\Tot(F)$ are the associated total spaces.	
\end{lemma}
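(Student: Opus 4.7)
The plan is to show that both sides are the same locked symplectic fibration over $\E$ by checking agreement on underlying morphisms and on locked $2$-forms. The proof combines the moment-map description of symplectic pushforwards (Proposition~\ref{Prop:UnivMoment}), the deformation-space description of locked forms (Proposition~\ref{Prop:DRviaDefSp}, Proposition~\ref{Prop:LFZero}, Lemma~\ref{Lem:D_U/E}), and the two inputs of the maximal isotropic correspondence---that~\eqref{Eq:MaxIsoCorr} is cartesian in perfect complexes and that $\beta_E|_D \simeq \beta_F|_D$ as symmetric forms.

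First I would handle the underlying morphisms. The functor $\Tot$ preserves finite limits of perfect complexes, since on $T$-points $\Tot(E)(T) \simeq \map_T(\O_T, E|_T)$. Hence \eqref{Eq:MaxIsoCorr} yields a cartesian square $\sfD \simeq \E \times_{\Tot(D\dual[d])} \F$ of derived stacks over $U$, and $b^*(0_\F^\symp)$ has underlying morphism $\sfD \times_\F U \to \sfD$. Using the cartesian square, $\sfD \times_\F U \simeq \E \times_{\Tot(D\dual[d])} U$---the fiber over zero of the map $\E \to \Tot(D\dual[d])$ induced by $a\dual[d]\circ \beta_E^\#$. Applying $a_*$ and Proposition~\ref{Prop:UnivMoment}, the underlying morphism is the fiber over zero of the moment map $\mu : \sfD \times_\F U \to \Tot(\LL_{\sfD/\E}[d+1])$; using $\LL_{\sfD/\E} \simeq \fib(a)\dual|_\sfD$ and tracing the universal construction through the cartesian square, this fiber is identified with $U$ sitting inside $\E$ via $0_E$.

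Second I would compare locked $2$-forms through deformation spaces. By Lemma~\ref{Lem:D_U/E} and Proposition~\ref{Prop:LFZero}, $\scZ(\beta_E) \in \sA^{2,\lc}(U/\E, d)$ corresponds to the weight-$2$ function $\beta_E : \O_U \to \Sym^2(E\dual)[d+2]$ on $\D_{U/\E} \simeq \E\times\AA^1$. The same recipe on the right-hand side produces a weight-$2$ function on $\E$, obtained by pulling back the weight-$2$ function $\beta_F$ on $\F$ along $b$ and then pushing along $a$ in the filtered deformation-space picture; the equivalence $\beta_E|_D \simeq \beta_F|_D$, together with cartesianness of~\eqref{Eq:MaxIsoCorr}, forces this to equal $\beta_E$ on $\E$.

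The hardest step will be justifying this ``pull-push'' of weight-$2$ functions in the locked setting, since the symplectic pushforward is a priori only a right adjoint in the symplectic (rather than presymplectic) category and non-degeneracy conditions must be tracked via Lagrangian correspondence fibrations (Lemma~\ref{Lem:nd}). A cleaner route is to bypass explicit computation: construct by hand a Lagrangian correspondence $a^*(0_\E^\symp) \dashrightarrow b^*(0_\F^\symp)$ in $\Symp_{\sfD,d}^{q_E|_\sfD}$ whose source and target have underlying stacks $\Tot(\fib(b))$ and $\Tot(\fib(a))$ respectively, and then invoke the adjunction $a^* \dashv a_*$ of Theorem~\ref{Thm:SP} to obtain the desired equivalence in $\Symp_{\E,d}^{q_E}$, with the equivalence verified on underlying stacks by the moment-map computation above.
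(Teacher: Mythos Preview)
Your final paragraph is the paper's approach, and it is the right one; the first two computational paragraphs can be dropped. But the execution of that final paragraph has two genuine gaps.

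First, you must name the apex of the Lagrangian correspondence and check that it really is Lagrangian. The paper takes the apex to be $U$ itself, mapping to $a^*0_\E^\symp \simeq \Tot(\fib(a))$ and to $b^*0_\F^\symp \simeq \Tot(\fib(b))$ via the zero sections (equivalently via $0_\sfD : U \to \sfD$). The isotropy datum is the equivalence $\beta_E|_D \simeq \beta_F|_D$; the non-degeneracy is exactly the statement that
\[
\xymatrix{
\TT_{U/\sfD} \ar[r] \ar[d] & \TT_{U/\F} \simeq \LL_{U/\F}[d] \ar[d] \\
\TT_{U/\E} \simeq \LL_{U/\E}[d] \ar[r] & \LL_{U/\sfD}[d]
}
\]
is cartesian, which is the shifted dual of the maximal isotropic square~\eqref{Eq:MaxIsoCorr}. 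You never make this identification, and without it there is no Lagrangian correspondence to feed into the adjunction.

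Second, once you have this Lagrangian correspondence, Remark~\ref{Rem:bexLagCorFib} gives an \emph{\'etale} symplecto-morphism $0_\E^\symp \to a_*b^*0_\F^\symp$ in $\Symp_{\E,d}^{q_E}$, not immediately an equivalence. The paper finishes by observing (via the moment-map description) that the target $a_*b^*0_\F^\symp$ is the total space of a perfect complex over $U$, so an \'etale map from $U$ to it (over $\E$) must be an equivalence. Your paragraph~2 is groping towards this, but it would be cleaner to quote Proposition~\ref{Prop:UnivMoment} directly: the moment map $\mu : \Tot(\fib(b)) \to \T^*_{\sfD/\E}[d+1]$ is linear over $\sfD$, so its zero locus is again a linear stack over $U$, and an \'etale map between linear stacks over $U$ that hits the zero section is an equivalence.
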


\begin{proof}
Note that the commutative diagram
\[\xymatrix{
U \ar[r]^{0_\F^\symp} \ar[rd]|{0_\D} \ar[d]_{0_\E^\symp} & \F \\
\E & \sfD \ar[u]_b \ar[l]_a
}\]
is a {\em dual} version of Lagrangian correspondences in the sense that
\[\xymatrix{
\TT_{U/\sfD} \ar[r] \ar[d] \cart & \TT_{U/\F} \simeq \LL_{U/\F}[d] \ar[d] \\
\TT_{U/\E} \simeq \LL_{U/\E}[d] \ar[r] & \LL_{U/\sfD}[d]
}\]
is cartesian.
Equivalently, we have a $\q_D$-locked Lagrangian correspondence (over $\sfD$)
\begin{equation}\label{Eq:LagCorrSympZeroSections}
\xymatrix{
&U \ar[ld] \ar[rd]& \\
a^* 0_\E^\symp && b^*0_\F^\symp.
}
\end{equation}
As observed in Remark \ref{Rem:bexLagCorFib}, this is also equivalent to an {\'e}tale symplecto-morphism
\[0_\E^\symp \to a_*b^* 0_\F^\symp \textin \Symp_{\E,d}^{q_E}.\]
By the moment map description of symplectic pushforwards (Proposition \ref{Prop:UnivMoment}), we can observe that the underlying derived stack of $a_*b^* 0_\F^\symp$ is a total space of a perfect complex over $U$.
Hence the above {\'e}tale map is an equivalence as desired.
\end{proof}

\begin{proof}[Proof of Proposition \ref{Prop:ChangeofSymmCplx}]
Form a fiber diagram
\[\xymatrix{
V \ar[r]^t \ar[d]^{i} \cart & \sfD \ar[r]^b \ar[d]^a & \F \\ 
U \ar[r]^s \ar[d]^p & \E \\
B
}\]
and let $s_F:\O_V \to F|_V$ be the section induced by $V\xrightarrow{t}\sfD \xrightarrow{b} \F$.
Then we have 
\begin{align*}
\Zero^{\symp}_{U/B}(E,s):=p_*s^*0_\E^\symp &\simeq p_*s^*a_*b^*0_{\F}^\symp\\
&\simeq p_* i_*t^*b^*0_\F^\symp   \simeq p_*i_*(s_F)^*(0_{F|_V}^\symp)=:\Zero^\symp_{V/B}(F|_V,s_F)
\end{align*}
in $\Symp_{B,d}^w$ by Lemma \ref{Lem:FunctZeroSection}.
\end{proof}

We end this subsection with a technical lifting lemma that will be used later in \S\ref{ss:SPT}.
In the affine case, we can conversely lift locked forms on zero loci to symmetric forms on perfect complexes.

\begin{lemma}[Lifting]\label{Lem:LiftingLockedformsZeroLoci}
Let $E$ be a perfect complex on a derived stack $U$ with a section $s:\O_U \to E$.
If $U$ is cohomologically affine\footnote{We say that a derived stack $U$ is {\em cohomologically affine} if $\Gamma(U,-) : \QCoh_U \to \QCoh_{\C}$ preserves connective objects.
If $U$ is a quasi-separated derived Artin stack with affine diagonal, this condition is equivalent to $U_\cl$ being cohomologically affine in the sense of \cite[Def.~3.1]{Alp}, see \cite[Rem.~3.5]{Alp}. \label{footnote:cohaffine}}
 and $E$ is of tor-amplitude $\geq a$ (for $a \geq0$), then for any $d\leq -pa-p$, the map
\begin{equation}\label{Eq:LiftingLemma}
\mathscr{S}^{p}(\E,p+d) \xrightarrow{\scZ} \sA^{p,\lc}(U/\E,d) \xrightarrow{s^*} \sA^{p,\lc}(\Zero(s)/U,d) 
\end{equation}
is surjective (on $\pi_0$).
\end{lemma}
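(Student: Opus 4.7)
The plan is to convert the surjectivity question into a cohomological statement by combining the deformation-space description of locked forms (Proposition \ref{Prop:DRviaDefSp}) with the explicit computation $\D_{U/\E} \simeq \E \times \AA^1$ of Lemma \ref{Lem:D_U/E}, and then to dispatch it using the tor-amplitude hypothesis. First, I would identify $\sA^{p,\lc}(U/\E,d)$ with weight-$p$ equivariant functions of cohomological degree $p+d$ on $\D_{U/\E}$, splitting as $\bigoplus_{k \geq 0} \Map_{\QCoh_U}(\O_U, \Sym^{p+k}(E^\vee)[p+d])$, with $\scZ$ appearing as the inclusion of the $k=0$ summand $\mathscr{S}^p(\E, p+d)$. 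Under this splitting, the composite $s^* \circ \scZ$ at the underlying-$p$-form level corresponds to the natural pullback $H^{p+d}(U, \Sym^p(E^\vee)) \to H^{p+d}(\Zero(s), \Sym^p(E^\vee|_{\Zero(s)}))$ along $\Zero(s) \hookrightarrow U$.

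Next, for the target $\sA^{p,\lc}(\Zero(s)/U, d) = \Map_{\QCoh_U}(\O_U, \Fil^p\DR(\Zero(s)/U)[p+d])$, I would work with the Hodge filtration whose associated graded pieces are $\Gr^j \DR(\Zero(s)/U) \simeq (\Zero(s) \to U)_*\Sym^j(E^\vee|_{\Zero(s)})$ for $j \geq p$. Since $E$ has tor-amplitude $\geq a$, each $\Sym^j(E^\vee|_{\Zero(s)})$ sits in cohomological degrees $\leq -ja$; and cohomological affineness of $U$ passes to $\Zero(s)$ along the derived closed immersion, yielding $H^n(\Zero(s), \Sym^j(E^\vee|_{\Zero(s)})) = 0$ whenever $n > -ja$. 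Iterating the long exact sequences attached to the fiber sequences $\Fil^{p+k+1} \to \Fil^{p+k} \to \Gr^{p+k}$ under the hypothesis $d \leq -pa - p$, I would argue that the underlying-$p$-form map $\pi_0 \sA^{p,\lc}(\Zero(s)/U, d) \to \pi_0 \sA^p(\Zero(s)/U, d)$ becomes an isomorphism on $\pi_0$, reducing the lemma to the surjectivity of the restriction $H^{p+d}(U, \Sym^p(E^\vee)) \to H^{p+d}(\Zero(s), \Sym^p(E^\vee|_{\Zero(s)}))$.

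Finally, this restriction surjectivity would follow from the fiber sequence
\[\Sym^p(E^\vee) \otimes \I_{\Zero(s)} \to \Sym^p(E^\vee) \to (\Zero(s) \to U)_*\Sym^p(E^\vee|_{\Zero(s)})\]
via vanishing of the obstruction $H^{p+d+1}(U, \Sym^p(E^\vee) \otimes \I_{\Zero(s)})$, which one verifies using the Koszul-type resolution of $\I_{\Zero(s)}$ built from the section $s: \O_U \to E$ together with the same tor-amplitude and cohomological-affineness input. The hard part will be the careful arithmetic of cohomological degrees across the iterated filtration tower and the Koszul resolution, confirming that $d \leq -pa - p$ is sharp enough for every relevant obstruction group to vanish simultaneously; the bound is precisely calibrated so that each layer $\Gr^{p+k}$ contributes trivially to $H^{p+d}$ for $k \geq 1$ while the layer $\Gr^p$ is exactly captured by the image of $\mathscr{S}^p(\E, p+d)$.
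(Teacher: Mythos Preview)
Your first step is fine and matches the paper: the identification of $\sA^{p,\lc}(U/\E,d)$ via $\D_{U/\E}\simeq\E\times\AA^1$ and of $\scZ$ as the $k=0$ summand is exactly the content of the paper's equation for $\Fil^w_\Kz\Fil^p_\Hd\DR_\pi(U/\E)$.

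The gap is in your second step. The claim that $\pi_0\sA^{p,\lc}(\Zero(s)/U,d)\to\pi_0\sA^{p}(\Zero(s)/U,d)$ is an isomorphism, obtained by ``iterating the long exact sequences'' through the Hodge tower, fails on two counts. First, the Hodge filtration on the \emph{non-completed} de Rham complex $\DR(\Zero(s)/U)$ is not complete: $\Fil^0=\O_U$ whereas $\Fil^0\hDR$ is the derived completion of $\O_U$ along $\Zero(s)$, and these differ. So even if every $\Gr^{p+k}$ contributed trivially, you could not conclude anything about $\Fil^{p+1}$ from an infinite tower of cofiber sequences. Second, the degree bounds themselves do not hold when $a=0$: each $\Gr^{p+k}\simeq(\Zero(s)\to U)_*\Sym^{p+k}(E^\vee|_{\Zero(s)})$ then sits in degree~$0$, and with $p+d\leq 0$ there is no vanishing. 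The case $a=0$ is not a corner case---it is exactly the one invoked in the proof of Corollary~\ref{Cor:ELS} for $d=-2$ (where $E$ is an honest vector bundle). Your step~(3) inherits the same problem: the obstruction $H^{p+d+1}(U,\Sym^p(E^\vee)\otimes\I_{\Zero(s)})$ need not vanish once $d<-pa-p$ and when $a=0$, since $\I_{\Zero(s)}$ has nontrivial $H^0$.

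The paper's remedy is to introduce a \emph{second} filtration, the Koszul filtration, on $\DR(\Zero(s)/U)$, obtained by deforming $s$ to $Ts$ over $\AA^1/\GG_m$. Its decisive feature is $\Fil^w_\Kz\Fil^p_\Hd=0$ for $w+p>0$, so $\Fil^{-p}_\Kz\Fil^p_\Hd=\Gr^{-p}_\Kz\Fil^p_\Hd$ is a single piece which the paper identifies with $\Sym^p(E^\vee)$ on $U$ (this is the domain of your map). The passage from $\Fil^{-p}_\Kz\Fil^p_\Hd$ to the full $\Fil^p_\Hd=\Fil^{-\infty}_\Kz\Fil^p_\Hd$ is then a \emph{colimit} over $w<-p$, and connectivity is preserved under colimits---this is what replaces your non-convergent limit. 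Moreover, the bigraded pieces $\Gr^w_\Kz\Gr^q_\Hd$ carry an extra factor $\Lambda^{-w-q}(E^\vee)[-w-q]$, whose shift provides additional connectivity even when $a=0$; this is precisely what your Hodge-only bookkeeping cannot see. In effect, you applied the Koszul idea only at the level of $\Gr^p$ (step~3), whereas the argument requires applying it to all of $\Fil^p_\Hd$ at once.
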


We will use the {\em Koszul filtration} on the zero locus.\footnote{If $U$ is smooth affine and $E$ is a vector bundle, then the Koszul filtration on $\Gr^0\DR(\Zero(s)/U) \simeq (\Zero(s) \to U)_*\O_{\Zero(s)}$ is the filtration given by the stupid truncations of the Kuszul cdga representative.}
For any perfect complex $E$ on a derived stack $U$ with a section $s:\O_U \to E$, we consider the fiber square
\begin{equation}\label{Eq:LiftingLemma3}
\xymatrix{
\Zero(Ts) \ar[r] \ar[d] \cart & U\times \AA^1 \ar[d]^{Ts} \\
U\times \AA^1 \ar[r]^-{0} & \E\times \AA^1,
}\end{equation}
where $T\in \Gamma(\AA^1,\O_{\AA^1})$ is the coordinate function.
Note that the fiber square \eqref{Eq:LiftingLemma3} is $\GG_m$-equivariant with the weight $1$ actions on $\E$ and $\AA^1$.
Hence the Hodge-filtered de Rham complex $\DR(\Zero(s)/U)$ has an additional ($\Z_{\leq0}$-indexed) 
filtration; we have
\[\Fil_\Kz\DR(\Zero(s)/U):=\DR^{\GG_m}(\Zero(Ts)/U\times\AA^1) \in \QCAlg_{U\times\AA^1/\GG_m}^{\fil} \simeq \QCAlg_U^{\fil,\fil}\]
such that the underlying/associated graded algebra is:
\begin{equation}\label{Eq:LiftingLemma6}
\Fil^{-\infty}_{\Kz}\DR(\Zero(s)/U) \simeq \DR(\Zero(s)/U),\quad \Gr_\Kz \DR(\Zero(s)/U) \simeq \DR^{\GG_m}(\E[-1]/U),
\end{equation}
by the base change \cite[Prop.~3.10]{BFN} (when $E\dual$ is connective).

\begin{proof}[Proof of Lemma \ref{Lem:LiftingLockedformsZeroLoci}]	
We will use the following properties of the Koszul filtration: 
\begin{align}
\Fil_\Kz^w\Fil_\Hd^p \DR(\Zero(s)/U) &\simeq 0 \quad\text{for $p+w>0$}, \label{Eq:LL2}\\ 
\Gr^w_\Kz\Gr^p_\Hd \DR(\Zero(s)/U) &\simeq \Sym^w(E\dual)\otimes \Lambda^{-w-p}(E\dual)[-w-p], \label{Eq:LL3}
\end{align}
where $\Fil_{\Hd}$ is the Hodge filtration on $\DR$ (defined in \S\ref{ss:DR}).
Indeed, by the definition of deformation spaces in \eqref{Eq:DefSp},
we have a canonical equivalence
\[\D_{\E[-1]/U}/\GG_m \simeq \Tot(E[-1]^\triv) \textin \dSt_{U\times\AA^1/\GG_m} ,\]
where $(-)^\triv:=0_* :\QCoh_{U}^\gr \to \QCAlg_{U}^\fil$ is the pushforward along the zero section $0:U\times B\GG_m \xrightarrow{} U\times \AA^1/\GG_m$.
By the right equivalence in \eqref{Eq:LiftingLemma6}, Proposition \ref{Prop:DRviaDefSp}, and Lemma \ref{Lem:FtnsLinear}, 
we have a canonical equivalence
\begin{equation}\label{Eq:2}
\Gr_\Kz^w \DR(\Zero(s)/U) \simeq \Sym^{-w}(E\dual(-1)^\triv) \textin \QCoh_U^\fil.
\end{equation}
\begin{itemize}
\item The equivalence \eqref{Eq:LL3} follows from the associated graded parts of \eqref{Eq:2}.
\item The equivalence \eqref{Eq:LL2} follows from the filtration-amplitude of \eqref{Eq:2}---more precisely, $\Fil^p\Sym^{-w}(E\dual(-1)^\triv)\simeq 0$ for $p>-w$. 
\end{itemize}

Consider the commutative square
\begin{equation}\label{Eq:LiftingLemma2}
\xymatrix{
 \Fil^{-p}_{\Kz}\Fil^p_{\Hd}\DR_{\pi}(U/\E) \ar[r] \ar[d]^-{} \ar@{-->}[rd] & \Fil^p_{\Hd}\DR_{\pi}(U/\E) \ar[d]^-{}\\
 \Fil^{-p}_{\Kz}\Fil^p_{\Hd}\DR(\Zero(s)/U) \ar[r]^-{} & \Fil^p_{\Hd}\DR(\Zero(s)/U),
}\end{equation}
induced by \eqref{Eq:LiftingLemma3}, where $\Fil_\Kz\DR_\pi(U/\E):=\Gamma_U^{\fil,\fil}\DR^{\GG_m}(U\times\AA^1/\E\times\AA^1) \in \QCAlg_U^{\fil,\fil}$.
\begin{itemize}
\item We first observe that the upper horizontal arrow in \eqref{Eq:LiftingLemma2} induces the map $\scZ$ in \eqref{Eq:LiftingLemma}.
More specifically, the two filtrations on $\DR_\pi(U/\E)$ split 
(i.e. are pullbacks of graded complexes along $U\times \AA^1/\GG_m \to U\times B\GG_m$)
so that
\begin{equation}\label{Eq:LL1}
\Fil^w_\Kz\Fil^p_\Hd\DR_\pi(U/E)\simeq \bigoplus_{p\leq i \leq -w}\Sym^i(E\dual),
\end{equation}
and the map $\scZ$ is $\Map_{\QCoh_U}(\O_U,(-)[p+d])$ of the upper horizontal arrow.
\item We then claim that the left vertical arrow in \eqref{Eq:LiftingLemma2} is an equivalence.
Indeed, by \eqref{Eq:LL1} and \eqref{Eq:LL2}, the left vertical arrow is equivalent to
\[ \Gr^{-p}_{\Kz}\Gr^p_{\Hd}\left(\DR_{\pi}(U/\E) \to \DR(\Zero(s)/U)\right).\]
This is an equivalence since it is the weight $(-p)$ piece of the map
\begin{align*}
&\Gr^p_{\Hd}\left(\DR^{\GG_m}_{\pi}(U/\E) \to \DR^{\GG_m}(\E[-1]/U)\right)\\
&\simeq \Sym^p(E\dual(1))\otimes \Big(\O_U \to \Sym(E\dual(1)[1])\Big).
\end{align*}
\item We next show that the lower horizontal arrow in \eqref{Eq:LiftingLemma2} is $(pa)$-connective.
Indeed, by the left equivalence in \eqref{Eq:LiftingLemma6}, it suffices to show that
\[\Gr^{w}_{\Kz}\Fil^p_{\Hd}\DR(\Zero(s)/U) \text{ is $(pa+1)$-connective for $w<-p$}.\]
By \eqref{Eq:LL2}, it suffices to show that
\[\qquad \Gr^{w}_{\Kz}\Gr^q_{\Hd}\DR(\Zero(s)/U) \text{ is $(pa+1)$-connective for $w<-p$, $q\geq p$}.\]
This follows from \eqref{Eq:LL3} since $-qa+w+q\leq -pa-1$.
\end{itemize}
Combining the above results, the map \eqref{Eq:LiftingLemma} is $\Map_{\QCoh_U}(\O_U,(-)[p+d])$ of the diagonal arrow in \eqref{Eq:LiftingLemma2}, which is $(pa)$-connective.
This completes the proof.
\end{proof}

\subsection{Symplectic quotients}\label{ss:SQ}
In this subsection, we construct {\em symplectic quotients} via symplectic pushforwards.
The locked versions of symplectic actions are already {\em Hamiltonian} 
so that the symplectic quotients can be constructed without any additional structures.
Various basic properties of symplectic quotients follow immediately from the functoriality of symplectic pushforwards.

Throughout this subsection, fix the base derived stack $B$ and $w \in \sA^0(B,d+2)$.

We first fix the notion of (locked) {\em symplectic actions}.
We say that $G$ is a {\em group stack} if it is a group object in the overcategory $\dSt_B$ (in the sense of \cite[Def.~7.2.2.1]{LurHTT}).
Recall that giving a {\em group action} of $G$ on a derived stack $M\in \dSt_B$ is equivalent to give a fiber square of derived stacks
\[\xymatrix{
M \ar[r] \ar[d] \cart & B \ar[d]^{\sigma}\\
M/G \ar[r] & BG,
}\]
where $BG$ is the classifying stack. 
Given a $w$-locked symplectic fibration $M \to B$,
we say that a $G$-action on $M$ is a {\em $w$-locked symplectic action} if it is equipped with:
\[M/G\in \Symp_{BG,d}^{w|_{BG}},\and (BG \xrightarrow{\sigma} B)^*(M/G) \simeq M \in \Symp_{B,d}^w.\]


\begin{definition}[Symplectic quotients]\label{Def:SQ}
Let $M\to B$ be a $w$-locked symplectic fibration together with a $w$-locked symplectic action of a smooth 
group stack $G\to B$.
We define the {\em symplectic quotient} as:
\[M/\!/G:=(BG \xrightarrow{\pi} B)_*(M/G) \in \Symp_{B,d}^w.\]
\end{definition}

The moment map description of symplectic pushforwards (Proposition \ref{Prop:UnivMoment}) says that the symplectic quotient $M/\!/G$ is the Lagrangian intersection
\[\xymatrix{
M/\!/G \simeq 
\mu^{-1}(0)/G 
\ar[r] \ar[d] \ar@{}[rd]|-{\Box}  & M/G \ar[d]^{\mu/G} \\
BG \ar[r]^-{0}& \T^*_{BG/B}[d+1] 
\simeq
\mathfrak{g}\dual[d]/G,
}\]
where $\mathfrak{g}:=\T_{G/B}|_{\id}$ is the tangent complex of $G$ at the identity section,
and the moment map is identified to a $G$-equivariant map $\mu:M \to \mathfrak{g}\dual[d]$.
Moreover, 
we have a canonical Lagrangian correspondence
\[\xymatrix{
&\mu^{-1}(0)\ar[ld]\ar[rd]&\\
M/\!/G\simeq \mu^{-1}(0)/G && M,
}\]
given by the pullback $\sigma^*$ of the counit map $\pi^*\pi_*(M/G) \dashrightarrow M/G$. 

The symplectic quotients are stable under change of groups.

\begin{proposition}[Change of groups]\label{Prop:Changeofgroups}
Let $M$ be a $w$-locked symplectic fibration with a $w$-locked symplectic action of a smooth group stack $G$.
Let $G \to H$ be a group homomorphism of smooth group stacks over $B$.
Then there exists an induced $w$-locked symplectic fibration $M_{G/H}$ with a $w$-locked symplectic action of $H$ such that
\[M/\!/G \simeq (M_{G/H})/\!/H \textin \Symp_{B,d}^w.\]
Moreover, if $G \to H$ is a smooth, then
$M_{G/H} \simeq M/\!/K$
for $K:=\ker(G\to H)$.
\end{proposition}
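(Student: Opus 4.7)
The plan is to construct $M_{G/H}$ via the factorization $BG \xrightarrow{q} BH \xrightarrow{\pi_H} B$ of the structure map $\pi_G: BG \to B$, and then read off both claims from the general functoriality and base change properties (Lemma \ref{Lem:SP_Functoriality}) applied to this tower.

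First, by Theorem \ref{Thm:SP}, the pushforward $q_*(M/G)$ defines an object of $\Symp_{BH,d}^{w|_{BH}}$, which is precisely an $H$-equivariant $w$-locked symplectic fibration. I set
\[M_{G/H} := (\sigma_H : B \to BH)^* q_*(M/G) \in \Symp_{B,d}^w,\]
where $\sigma_H$ is the atlas of $BH$; tautologically, the quotient stack is $M_{G/H}/H \simeq q_*(M/G)$, so $M_{G/H}$ carries an induced $w$-locked symplectic $H$-action in the sense of \S\ref{ss:SQ}.

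For the first claim, I would apply Lemma \ref{Lem:SP_Functoriality}(2) to the composition $\pi_G = \pi_H \circ q$, obtaining $\pi_{G*} \simeq \pi_{H*} \circ q_*$ in $\Symp_{B,d}^w$. Plugging in $M/G$ and unwinding Definition \ref{Def:SQ} yields
\[M/\!/G := \pi_{G*}(M/G) \simeq \pi_{H*}(q_*(M/G)) \simeq \pi_{H*}(M_{G/H}/H) =: (M_{G/H})/\!/H,\]
which is the desired equivalence in $\Symp_{B,d}^w$.

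For the moreover part, assume $G \to H$ is smooth with kernel $K := G \times_H B$, which is itself a smooth group stack over $B$. Then the fiber sequence $K \to G \to H$ of group stacks yields the cartesian square
\[\xymatrix{
BK \ar[r]^{p'} \ar[d]^{q'} \cart & BG \ar[d]^{q}\\
B \ar[r]^{\sigma_H} & BH,
}\]
to which I apply the base change part of Lemma \ref{Lem:SP_Functoriality}(1):
\[M_{G/H} = \sigma_H^* q_*(M/G) \simeq q'_* (p')^*(M/G).\]
The pullback $(p')^*(M/G) \in \Symp_{BK,d}^{w|_{BK}}$ is identified with $M/K$, because $B \to BG$ factors through $BK$, so $M = \sigma_H \times_{BG} (M/G) = B \times_{BK} ((p')^*(M/G))$ realizes $(p')^*(M/G)$ as the $K$-quotient of $M$. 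Hence $q'_*(p')^*(M/G) \simeq (BK \to B)_*(M/K) = M/\!/K$ by Definition \ref{Def:SQ}.

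The main obstacle will be verifying that the $H$-action on $M_{G/H}$ obtained from the abstract construction $\sigma_H^* q_*(M/G)$ coincides, under the identification $M_{G/H} \simeq M/\!/K$, with the residual $H = G/K$-action on the Hamiltonian reduction $M/\!/K$. This is essentially the compatibility of locked symplectic actions with iterated base change $B \to BK \to BH$, and should follow from naturality of the base change and functoriality isomorphisms in Lemma \ref{Lem:SP_Functoriality}, together with the fact that the canonical tower $B \to BK \to BG$ refines $B \to BG$ in the $2$-category of group actions; the rest is bookkeeping of adjunctions.
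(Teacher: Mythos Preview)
Your proposal is correct and follows essentially the same route as the paper: define $M_{G/H}:=\sigma_H^*q_*(M/G)$, use Lemma~\ref{Lem:SP_Functoriality}(2) for the first claim, and Lemma~\ref{Lem:SP_Functoriality}(1) applied to the cartesian square $BK \to BG \to BH \leftarrow B$ for the moreover part. Your final paragraph about the compatibility of $H$-actions is not needed for the proposition as stated, since the equivalence $M_{G/H}\simeq M/\!/K$ is only asserted in $\Symp_{B,d}^w$, not in the $H$-equivariant category; the paper does not address it either.
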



\begin{proof} 
We define
$M_{G/H}:=(B\to BH)^*(BG \to BH)_*(M/G).$
Then 
we have
\[M_{G/H}/\!/H:=(BH \to B)_*(BG \to BH)_*(M/G) \simeq (BG \to B)_*(M/G)=:M/\!/G\]
by the functoriality of symplectic pushforwards (Lemma \ref{Lem:SP_Functoriality}).

If $G \to H$ is smooth, then $K$
is also smooth,
and the canonical fiber square
\[\xymatrix{
BK \ar[r] \ar[d] \cart & BG \ar[d]^{} \\
B \ar[r] & BH,
}\]
of the classifying stacks gives us the desired equivalence
\begin{align*}
M_{G/H}:=(B\to BH)^*(BG \to BH)_*(M/G) &\simeq (BK \to B)_*(BK \to BG)^*(M/G)\\
	&\simeq (BK \to B)_*(M/K) =:M/\!/K,
\end{align*}
since symplectic pushforwards commute with pullbacks (Lemma \ref{Lem:SP_Functoriality}).
\end{proof}

The symplectic quotients are compatible with twisted cotangent bundles/symplectic zero loci in the previous sections since they are all symplectic pushforwards.
We use the following notations when $U$ is equipped with an action of a group stack $G$:
\begin{itemize}
\item A locked $1$-form $\alpha \in \sA^{p,\lc}(U/B,d)$ is a {\em $G$-invariant} if it is equipped with $(\alpha/G) \in \sA^{p,\lc}((U/G)/BG,d)$ and an equivalence $(\alpha/G)|_{U/B} \simeq \alpha$.
\item A symmetric complex $E$ on $U$ is {\em $G$-equivariant} if it is equipped with a symmetric complex $E/G$ on $U/G$ and an equivalence $(E/G)|_{U} \simeq E$.
\item A section $s:\O_U \to E$ of a $G$-equivariant complex $E$ is {\em $G$-invariant} if it is equipped with a section $s/G : \O_{U/G} \to E/G$ and an equivalence $(s/G)|_U\simeq s$.
\item A {\em $G$-equivalence} $h: f \simeq g$ of $G$-invariant functions $f, g : U \to \AA^1[d+2]$ is an equivalence $h/G:f/G\simeq g/G : U/G \to \AA^1[d+2]$.
\end{itemize}

\begin{proposition}[Compatibility]\label{Prop:Quot_of_TwCot}
Let $U$ be a finitely presented derived stack over $B$ together with an action of a smooth group stack $G$.

\begin{enumerate}
\item Let $\alpha$ be a $G$-invariant $w$-locked $(d+1)$-shifted $1$-form on $U$.
Then 
$\T^*_{U/B,\alpha}[d]$ has a canonical $w$-locked symplectic $G$-action such that
\[\T^*_{U/B,\alpha}[d]/\!/G \simeq \T^*_{(U/G)/B,(\alpha/G)}[d] \in \Symp_{B,d}^w.\]
\item Let $E$ be a $G$-equivariant $(d+2)$-shifted symmetric complex on $U$ and $s$ be a $G$-invariant section with a $G$-invariant equivalence $s^2 \simeq w|_{U}$.
Then 
$\Zero^{\symp}_{U/B}(E,s)$ 
has a canonical $w$-locked symplectic $G$-action such that
\[\Zero^{\symp}_{U/B}(E,s)/\!/G \simeq \Zero^{\symp}_{(U/G)/B}((E/G),(s/G)) \in \Symp_{B,d}^w.\]
\end{enumerate}
\end{proposition}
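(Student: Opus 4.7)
The plan is to observe that both twisted cotangent bundles and symplectic zero loci are defined as symplectic pushforwards, and that the symplectic quotient $M/\!/G:=(BG\to B)_*(M/G)$ is itself a symplectic pushforward. Thus both sides of each equivalence are iterated symplectic pushforwards, and the desired identifications should follow formally from the base change and composition properties collected in Lemma \ref{Lem:SP_Functoriality}, applied to the factorizations $U\to U/G\to BG$ and $B\to BG$ together with the Cartesian square relating them.

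For part (1), I would first use that the $G$-invariance of $\alpha$ provides $\alpha/G\in\sA^{1,\lc}((U/G)/BG,d+1)^{w|_{BG}}$, hence an object $(U/G)_{(\alpha/G)_{/BG}}\in\Symp_{U/G,d}^{w|_{U/G}}$, and its pushforward $\T^*_{(U/G)/BG,(\alpha/G)_{/BG}}[d]\in\Symp_{BG,d}^{w|_{BG}}$. Applying the base change in Lemma \ref{Lem:SP_Functoriality}(1) to the Cartesian square with corners $U,U/G,B,BG$ yields a canonical equivalence
\[(B\to BG)^*\,\T^*_{(U/G)/BG,(\alpha/G)_{/BG}}[d]\;\simeq\;\T^*_{U/B,\alpha}[d]\textin\Symp_{B,d}^{w},\]
which by definition encodes a $w$-locked symplectic $G$-action on $\T^*_{U/B,\alpha}[d]$ whose quotient is $\T^*_{(U/G)/BG,(\alpha/G)_{/BG}}[d]$. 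Then the composition property (Lemma \ref{Lem:SP_Functoriality}(2)) along $U/G\to BG\to B$ gives
\[\T^*_{U/B,\alpha}[d]/\!/G\;\simeq\;(BG\to B)_*(U/G\to BG)_*((U/G)_{(\alpha/G)_{/BG}})\;\simeq\;\T^*_{(U/G)/B,\alpha/G}[d].\]

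Part (2) follows the same template. The $G$-equivariance of $(E,s,s^2\simeq w|_U)$ descends the data to $(E/G,s/G)$ over $U/G$ with $(s/G)^2\simeq w|_{U/G}$, and the total space $\E$ becomes $G$-equivariant with quotient $\E/G\simeq\Tot(E/G)$ over $U/G$. The construction $\scZ$ of Proposition \ref{Prop:LFZero}, being purely functorial in the pair $(U,E)$, gives $0_{E/G}^\symp\in\Symp_{\E/G,d}^{q_{E/G}}$ whose pullback along $\E\to\E/G$ is canonically identified with $0_E^\symp$. Base change (Lemma \ref{Lem:SP_Functoriality}(1)) along $B\to BG$ then yields
\[(B\to BG)^*\,\Zero^\symp_{(U/G)/BG}(E/G,s/G)\;\simeq\;\Zero^\symp_{U/B}(E,s),\]
which supplies the $w$-locked symplectic $G$-action, and composition (Lemma \ref{Lem:SP_Functoriality}(2)) along $U/G\to BG\to B$ closes the argument. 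The main obstacle is not computational but coherential: assembling the $G$-invariance data on $\alpha$ and on $(E,s,s^2\simeq w|_U)$ into genuinely $\infty$-categorical descent data along $U\to U/G$ that induces the postulated equivalences, and (for part (2)) verifying the naturality of $\scZ$ with respect to base change in $U$, which is where $G$-equivariance of the symmetric complex actually intervenes.
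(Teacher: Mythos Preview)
Your proposal is correct and matches the paper's approach exactly: the paper's proof is the single sentence ``Both (1) and (2) follow immediately from Lemma \ref{Lem:SP_Functoriality},'' and your writeup is precisely the unpacking of that sentence via base change along the square $U,U/G,B,BG$ and composition along $U/G\to BG\to B$. Your final ``main obstacle'' about coherence is a non-issue here: in the paper's conventions, $G$-invariance of $\alpha$ \emph{is defined} as the datum of $\alpha/G\in\sA^{1,\lc}((U/G)/BG,d)$ together with $(\alpha/G)|_{U/B}\simeq\alpha$, and likewise $G$-equivariance of $(E,s,s^2\simeq w|_U)$ is defined as the descended data over $U/G$, so there is nothing further to assemble.
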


\begin{proof}
Both (1) and (2) follow immediately from Lemma \ref{Lem:SP_Functoriality}. 
\end{proof}

In particular, for any $G$-invariant function $v: U \to \AA^1[d+1]$, we have:
\[\Crit_{U/B}(v)/\!/G \simeq \Crit_{(U/G)/B}(v/G) \in \Symp_{B,d}^0,\]
by Proposition \ref{Prop:Quot_of_TwCot}(1) for $w\simeq 0$.

\section{Local structure theorems}\label{Sec:LST}
In this section, we prove our main application (Theorem \ref{Thm_B:LST}): the {\em local structure theorems} for symplectic fibrations. 
We provide two versions:
\begin{enumerate}
\item {\'e}tale local structure theorem for $1$-stacks 
(\S\ref{ss:SPT});
\item smooth local structure theorem for higher stacks (\S\ref{ss:SSC}).
\end{enumerate}

Throughout this section, all derived Artin stacks are assumed to be {\em of finite type}, that is, the classical truncations are of finite type over $\C$.
In \S\ref{ss:SPT}, all derived $1$-Artin stacks are assumed to be {\em quasi-separated}, that is, the diagonals are quasi-compact.
The {\em stabilizers} of derived $1$-Artin stacks are the stabilizers of the classical truncations.

\subsection{Symplectic pushforward towers}\label{ss:SPT}

In this subsection, we provide an inductive local description of symplectic fibrations via towers of symplectic pushforwards.
The {\'e}tale local structure theorem follows by analyzing the last terms in the towers.



\begin{theorem}[Symplectic pushforward towers]\label{Thm:SPT}
Let $g:M \to B$ be a $d$-shifted $w$-locked symplectic fibration for $w\in \sA^0(B,d+2)$.
Assume that $M$ is a derived $1$-Artin stack with affine stabilizers, $B$ is a derived algebraic space, 
and $d\leq0$.
Given a point $m\in M(\C)$ with linearly reductive stabilizer,
there exist a sequence 
\begin{equation}\label{Eq:SPT}
\xymatrix@C-.7pc{
M_{(-d+1)} \ar[r] \ar[d]^{}& M_{(-d)} \ar[r]\ar[d]^{} & M_{(-d-1)} \ar[r]\ar[d]^{} & \cdots \ar[r] & M_{\left(\lceil \frac {-d-1}{2} \rceil +1\right)} \ar[r] \ar[d]^{} & M_{\left(\lceil \frac {-d-1}{2} \rceil \right)} \ar[d]^{}  \\
M_{(-2)} & M_{(-1)} \ar[l] &  M_{(0)} \ar[l] & \cdots \ar[l] & M_{\left(\lfloor \frac {-d-1}{2} \rfloor -1 \right)} \ar[l] & M_{\left(\lfloor \frac {-d-1}{2} \rfloor \right) }\ar[l] 
}
\end{equation}
of pointed derived $1$-Artin stacks $M_{(\bullet)}$ of finite presentation over $B$ 
such that 
\begin{enumerate}
\item [(C1)] $\TT_{M_{(k)}/M_{(k-1)}}$ are of tor-amplitude $[k,k]$, and $M_{(-2)}:=B$,
\item [(C2)] the vertical arrows $M_{(k)} \to M_{(-d-1-k)}$
are locked symplectic fibrations whose underlying functions are $w|_{M_{(-d-1-k)}}$, 
\item [(C3)] the square are symplectic pushforward squares (Remark \ref{Rem:bexLagCorFib}), that is,
\[M_{(k)} \cong \left(M_{(-d-k)} \to M_{(-d-1-k)}\right)_*\left(M_{(k-1)}\right),\]
\end{enumerate}
and 
a pointed {\'e}tale morphism 
\begin{equation}\label{Eq:SPT2}
M_{(-d+1)} \to (M,m)
\end{equation}
that preserves the $w$-locked symplectic forms and the stabilizers at the base points.
\end{theorem}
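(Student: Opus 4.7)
The plan follows the strategy outlined in the introduction: produce a local presentation of $M$ as a quotient stack $L/G$ via \cite{AHR}, construct a minimal $G$-equivariant Postnikov-type tower on the derived affine scheme $L$, and then bend this linear tower into the shape of \eqref{Eq:SPT} while inductively lifting the $w$-locked symplectic structure using Lemma \ref{Lem:LiftingLockedformsZeroLoci}.

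First, I would combine the \'etale local structure theorem of Alper--Hall--Rydh \cite{AHR} for the classical truncation $M_\cl$ with the derived deformation theory controlled by $\LL_{M/B}$ to obtain a pointed \'etale morphism $(L/G,u)\to(M,m)$ over $B$, where $L$ is a derived affine scheme of finite presentation over $B$ carrying an action of the linearly reductive group $G := \underline{\mathrm{Aut}}_M(m)$, inducing an equivalence of stabilizers at the base points. Since $g$ is $d$-shifted symplectic with $d\leq 0$, $\LL_{L/B}$ has tor-amplitude contained in $[d,1]$. Next, the equivariant refinement of \cite[Thm.~7.4.3.18]{LurHA} produces a $G$-equivariant sequence
\[L =: L_{(-d+1)} \hookrightarrow L_{(-d)} \hookrightarrow \cdots \hookrightarrow L_{(0)} \xrightarrow{\mathrm{smooth}} L_{(-1)} := B,\]
in which each $L_{(k+1)} \hookrightarrow L_{(k)}$ is the derived zero locus of a $G$-equivariant section $s_k$ of a $G$-equivariant $(-k)$-shifted vector bundle $E_k$ on $L_{(k)}$, and $L_{(0)} \to B$ is smooth; linear reductivity of $G$ lets every obstruction and lifting step in Lurie's construction be carried out equivariantly by averaging. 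Following \cite[Thm.~4.1]{BBJ}, I choose the tower to be \emph{minimal} at $u$, so that each $E_k$ is of minimal rank and each $s_k$ vanishes to order $\geq 2$ at $u$; this forces $\LL_{L_{(k)}/B}$ to have tor-amplitude in $[-k,1]$ and hence gives the bound on $E_k$ needed for Lemma \ref{Lem:LiftingLockedformsZeroLoci}.

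Now set $M_{(k)} := L_{(k)}/G$ for $-1 \leq k \leq -d+1$ and $M_{(-2)} := B$, with the bottom-left arrow $M_{(-1)} \to M_{(-2)}$ the composition of the smooth $G$-equivariant map $L_{(0)}/G \to BG \times_\C B$ with the projection to $B$. The top row of \eqref{Eq:SPT} consists of the quotients of the zero-locus embeddings, and the vertical arrows are the induced composites. The core of the argument is the descending induction on the horizontal index $k$, starting from $k = -d+1$ with the given structure on $g$, proving that $M_{(k)} \to M_{(-d-1-k)}$ carries a canonical $w|_{M_{(-d-1-k)}}$-locked symplectic form making the square at step $k$ a symplectic pushforward square. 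The inductive step applies Lemma \ref{Lem:LiftingLockedformsZeroLoci} on the quotient stack $M_{(-d-k)}$, which is cohomologically affine by reductivity (Footnote \ref{footnote:cohaffine}), with $E_{-d-k}/G$ playing the role of the perfect complex whose tor-amplitude bound fits the degree hypothesis of the lemma for $p=2$. This lifts the inherited locked $2$-form on the zero locus $M_{(k)}$ to a symmetric $2$-form on $E_{-d-k}/G$, exhibiting $M_{(k)}$ as the symplectic zero locus of $s_{-d-k}/G$ and $M_{(k-1)} \to M_{(-d-k)}$ as the corresponding locked symplectic fibration, via Proposition \ref{Prop:LFZero} and the moment-map description in Proposition \ref{Prop:UnivMoment}.

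The main obstacle is the simultaneous control of $G$-equivariance, minimality, and non-degeneracy across the induction. Minimality of the Postnikov tower is essential to obtain the tor-amplitude hypothesis of Lemma \ref{Lem:LiftingLockedformsZeroLoci} at every step; reductivity of $G$ keeps everything equivariant; and non-degeneracy of each lifted symmetric form --- the requirement that the induced map $E_{-d-k} \to E_{-d-k}^\vee[d+2]$ be an equivalence after lifting --- must be deduced inductively from the non-degeneracy at the previous step using the fiber-square characterisation of symplectic pushforward squares (Remark \ref{Rem:bexLagCorFib}). Verifying this compatibility through the Koszul-filtered lifting produced by Lemma \ref{Lem:LiftingLockedformsZeroLoci} is the technical crux of the proof.
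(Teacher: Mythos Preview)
Your overall strategy matches the paper's: use \cite{AHR} to get a quotient presentation, build a minimal $G$-equivariant zero-locus tower (the paper's Lemma~\ref{Lem:ZLT}), and lift the locked form inductively. The gap is in the inductive lifting mechanism.

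You invoke Lemma~\ref{Lem:LiftingLockedformsZeroLoci} at each inductive step, but that lemma does not do what you need. It lifts a locked $p$-form on $\Zero(s)/U$ to a \emph{symmetric} $p$-form on a bundle $E$ over $U$, where $\Zero(s)\hookrightarrow U$ is the zero locus of a section of $E$. In the inductive step you have a form on $M_{(k)}$ relative to $M_{(-d-k)}$ and need to lift it to $M_{(k-1)}$ relative to $M_{(-d-k)}$ along the closed embedding $M_{(k)}\hookrightarrow M_{(k-1)}$; here $M_{(k)}$ is the zero locus of a section living over $M_{(k-1)}$, \emph{not} over $M_{(-d-k)}$. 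Your sentence ``exhibiting $M_{(k)}$ as the symplectic zero locus of $s_{-d-k}/G$'' reflects this index confusion: the zero locus of $s_{(-d-k)}$ is $M_{(-d-k+1)}$, not $M_{(k)}$. Moreover the tor-amplitude hypothesis of Lemma~\ref{Lem:LiftingLockedformsZeroLoci} (for $p=2$: $d\le -2a-2$) fails badly for the bundles $E_{(k-1)}[-(k-1)]$ with $k$ in the relevant range. The paper instead proves a dedicated lifting result, Lemma~\ref{Lem:LiftingForms}, which controls the surjectivity of $i^*:\sA^{2,\lc}(N/U,d)\to\sA^{2,\lc}(M/U,d)$ for a closed embedding $i:M\hookrightarrow N$ over $U$ via amplitude bounds on $\I_{N/U}$ and $\I_{M/N}$; the check $\max(a+b,2b)\le d+1$ is exactly what the zero-locus tower provides. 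Lemma~\ref{Lem:LiftingLockedformsZeroLoci} is used only once, in the proof of Corollary~\ref{Cor:ELS}, to analyze the \emph{final} vertical arrow when $d$ is even.

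Your treatment of non-degeneracy is also incomplete. You hope to extract it from ``compatibility through the Koszul-filtered lifting,'' but the lifted form on $M_{(k-1)}\to M_{(-d-k)}$ produced by Lemma~\ref{Lem:LiftingForms} is a priori only a locked $2$-form, not symplectic. The paper isolates this as a separate step, Lemma~\ref{Lem:LiftingNondeg}: the minimality condition~\eqref{footnote:minimaldimension} gives splittings of $\TT_{M_{(k)}/B}|_m$, and a disjoint-amplitude check forces the adjoint of the lifted form to be block-diagonal at $m$, from which non-degeneracy at the next level is read off. This matrix argument is what actually makes the induction close, and it does not fall out of the lifting lemma you cite.
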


Consequently, symplectic fibrations are {\'e}tale-locally the symplectic pushforwards of symplectic fibrations that appear in the last vertical arrows $M_{\left(\lceil \frac {-d-1}{2} \rceil \right)} \to M_{\left(\lfloor \frac {-d-1}{2} \rfloor \right) }$ 
in \eqref{Eq:SPT}.
We will observe that those last terms are locally the zero loci of sections of orthogonal/symplectic bundles or are the identity maps, depending on the parity of $d$.
This will imply that symplectic fibrations are locally the symplectic zero loci (in \S\ref{ss:SZ}) or the twisted cotangent bundles (in \S\ref{ss:TwCot}).

\begin{corollary}[{\'E}tale local structure]\label{Cor:ELS}
Let $g:M \to B$ be a $d$-shifted $w$-locked symplectic fibration for $w\in \sA^0(B,d+2)$.
Assume that $M$ is a derived $1$-Artin stack with affine stabilizers, $B$ is a  derived algebraic space, and $d<0$. 
Let $m\in M(\C)$ be a point with linearly reductive stabilizer.
\begin{enumerate}
\item If $d\equiv 2 \in \Z/4$ (resp. $d\equiv 0 \in \Z/4$), then there exist
\begin{itemize}
\item a derived $1$-Artin stack $U$ of finite presentation over $B$ with affine stabilizers such that $\LL_{U/B}$ is of tor-amplitude $\geq \frac d2+1$,
\item a point $u\in U(\C)$ whose stabilizer group is linearly reductive,
\item an orthogonal (resp. symplectic) bundle $E$ over $U$, 
\item a section $s:\O_U \to E[\tfrac d2 +1]$ with  $s^2 \simeq w|_{U}$ and $s(u)\simeq 0$,
\item and 
a pointed {\'e}tale morphism
\begin{equation}\label{Eq:ELSeven}
\left(\SZ_{U/B}\left(E\left[\tfrac d2+1\right],s\right),u\right) \to \left(M,m\right)
\end{equation}
that preserves the $w$-locked symplectic forms and the stabilizers at $u$.
\end{itemize}
\item If $d$ is odd, then there exist
\begin{itemize}
\item a derived $1$-Artin stack $V$ of finite presentation over $B$ with affine stabilizers such that $\LL_{V/B}$ is of tor-amplitude $\geq \frac d2$,
\item a point $v\in V(\C)$ whose stabilizer group is linearly reductive,
\item a $w$-locked $1$-form $\alpha \in \sA^{1,\lc}(V/B,d+1)^w$ with $\alpha^1(v)\simeq 0$,\footnote{Here $\alpha^1(v) : \C \to \LL_{V/B}|_v[d+1]$ is the pullback of $\alpha^1: \O_V \to \LL_{V/B}[d+1]$ by $v: \Spec(\C) \to V$.}
\item 
and a pointed {\'e}tale morphism 
\begin{equation}\label{Eq:ELSodd}
\left(\T^*_{V/B,\alpha}[d],v\right) \to \left(M,m\right)
\end{equation}
that preserves the $w$-locked symplectic forms and the stabilizers at $v$.
\end{itemize}
\end{enumerate}
\end{corollary}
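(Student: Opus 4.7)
The strategy is to apply Theorem~\ref{Thm:SPT} to $(M,g,m)$, yielding the pushforward tower~\eqref{Eq:SPT} together with a pointed étale morphism $M_{(-d+1)}\to(M,m)$ that preserves the locked symplectic structure and the stabilizer. Iterating the pushforward squares (C3) through the composition rule of Lemma~\ref{Lem:SP_Functoriality}(2) realizes $M_{(-d+1)}$ as the symplectic pushforward all the way down to $B$ of the last vertical arrow
\[
M_{(\lceil(-d-1)/2\rceil)}\longrightarrow M_{(\lfloor(-d-1)/2\rfloor)}.
\]
The corollary will then follow by identifying this final arrow with one of the two basic local models from Section~\ref{Sec:Examples}, depending on the parity of $d$.

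If $d=-(2\ell+1)$ is odd, then $\lceil(-d-1)/2\rceil=\lfloor(-d-1)/2\rfloor=\ell$, so the last arrow is the identity of $V:=M_{(\ell)}$. The canonical equivalence~\eqref{Eq:LSFidentity} converts its $w|_V$-locked symplectic form into a $w$-locked $1$-form $\alpha\in\sA^{1,\lc}(V/B,d+1)^w$, and Definition~\ref{Def:TwCot} then identifies the pushforward along $V\to B$ with the twisted cotangent $\T^*_{V/B,\alpha}[d]$, producing~\eqref{Eq:ELSodd}. If $d=-2\ell$ is even, the last arrow is a closed immersion $M_{(\ell)}\hookrightarrow U:=M_{(\ell-1)}$ whose relative cotangent complex has tor-amplitude $[-\ell,-\ell]$. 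Since the tower construction realizes $U$ as a quotient of a cohomologically affine scheme by the reductive stabilizer $G$, after choosing local data of minimal rank at the base point, this immersion can be presented as $\Zero(s)$ for a $G$-invariant section $s:\O_U\to E[\tfrac{d}{2}+1]$ of a shifted vector bundle $E$ with $s(u)\simeq 0$. The tor-amplitude hypotheses of Lemma~\ref{Lem:LiftingLockedformsZeroLoci} are satisfied at the borderline $d=-pa-p$ with $p=2$ and $a=\ell-1$, so the locked $2$-form on $M_{(\ell)}/U$ lifts to a symmetric $2$-form $\beta$ on $E[\tfrac{d}{2}+1]$ whose underlying function pulls back along $s$ to $s^2\simeq w|_U$. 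Non-degeneracy of the locked form at $u$, together with the Koszul sign rule applied to the shift $\tfrac{d}{2}+1$, forces $\beta$ to restrict (after Zariski shrinking around $u$) to an orthogonal structure on $E$ when $d\equiv 2\pmod 4$ and to a symplectic structure when $d\equiv 0\pmod 4$; Definition~\ref{Def:SympZero} then identifies $M_{(\ell)}$ with $\SZ_{U/B}(E[\tfrac{d}{2}+1],s)$ in $\Symp_{B,d}^w$, yielding~\eqref{Eq:ELSeven}.

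The main obstacle will be the lifting step in the even case: one has to verify that the borderline tor-amplitude inequality in Lemma~\ref{Lem:LiftingLockedformsZeroLoci} is genuinely sharp at $d=-pa-p$, that the lift can be chosen $G$-equivariantly so that the symmetric pairing on $E$ really descends to an orthogonal or symplectic structure on the quotient, and that non-degeneracy of the locked form on $M_{(\ell)}$ propagates to non-degeneracy of $\beta$ on all of $U$ (rather than only pointwise at $u$, where it holds automatically). The remaining assertions---the tor-amplitude bounds on $\LL_{V/B}$ or $\LL_{U/B}$, the base-point vanishings $\alpha^1(v)\simeq 0$ or $s(u)\simeq 0$, and stabilizer preservation by the resulting étale morphism---are immediate from condition (C1) of the tower together with the properties of the étale map produced by Theorem~\ref{Thm:SPT}.
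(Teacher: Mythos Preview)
Your proposal is correct and follows essentially the same route as the paper: invoke the symplectic pushforward tower of Theorem~\ref{Thm:SPT} (with the zero-locus presentation from Lemma~\ref{Lem:ZLT}), then identify the last vertical arrow with either the identity map (odd case, via~\eqref{Eq:LSFidentity} and Definition~\ref{Def:TwCot}) or a zero locus whose locked form lifts to a symmetric form via Lemma~\ref{Lem:LiftingLockedformsZeroLoci} (even case, via Definition~\ref{Def:SympZero}), and finally shrink to force non-degeneracy. The only adjustment is that your ``main obstacle'' paragraph overstates the difficulty: the borderline case $d=-pa-p$ of Lemma~\ref{Lem:LiftingLockedformsZeroLoci} is already covered by its hypothesis $d\le -pa-p$; the $G$-equivariance concern is moot because the lifting lemma is applied directly to the cohomologically affine quotient stack $U=M_{(\ell-1)}$ rather than to a cover; and propagating non-degeneracy of $\beta$ from $u$ to a neighborhood is exactly the Zariski shrinking you (and the paper) already invoke.
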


Corollary \ref{Cor:ELS} is equivalent to Theorem \ref{Thm_B:LST} stated in the introduction.

\begin{remark}[Symplectic quotient presentations]
Note that derived Artin stacks are {\'e}tale locally the quotient stacks of derived affine schemes by linearly reductive groups near points with linearly reductive stabilizers by \cite[Thm.~1.1]{AHR} (and \cite[Thm.~1.13]{AHHR}).
In the situation of Corollary \ref{Cor:ELS}, if we write $U \simeq U'/G$ and $V \simeq V'/G$, then the {\'e}tale symplectic charts in \eqref{Eq:ELSeven} and \eqref{Eq:ELSodd} can be presented as:
\[ \SZ_{U/B}\left(E,s\right) \simeq \SZ_{U'/B}\left(E',s'\right)/\!/G , \quad
\T^*_{V/B,\alpha}[d] \simeq \T^*_{V'/B,\alpha'}[d]/\!/G,\]
for induced $G$-equivariant bundle $E'$, section $s'$, $1$-form $\alpha'$,
since symplectic quotients are compatible with symplectic zero loci and twisted cotangents (Proposition \ref{Prop:Quot_of_TwCot}). 
\end{remark}

We can apply Corollary \ref{Cor:ELS} to stacks with good moduli spaces.

\begin{remark}[Good moduli]\label{Rem:goodmoduli}
In the situation of Corollary \ref{Cor:ELS}.
if $M_\cl$ has affine diagonal and good moduli space \cite[Def.~4.1]{Alp}, 
we can cover $M$ by the {\'e}tale charts \eqref{Eq:ELSeven} or \eqref{Eq:ELSodd}.
This follows from \cite[Prop.~12.14]{Alp} and \cite[Prop.~3.2, Prop.~4.13]{AHR}.
\end{remark}


Since the symplectic pushforward towers (Theorem \ref{Thm:SPT}) still exist for $d=0$, we also have an {\'e}tale local structure theorem for $0$-shifted symplectic stacks.

\begin{remark}[$0$-shifted case]\label{Rem:0-symp}
In the situation of Theorem \ref{Thm:SPT}, if $d=0$,
then there exist a derived affine scheme $U$ smooth over $B$ with an action of a linearly reductive group $G$, a $G$-invariant $w$-locked symplectic form, a $G$-fixed point $u$ that lies in the zero locus of the moment map, and a pointed {\'e}tale symplecto-morphism
\[(U/\!/G,u) \to (M,m)\]
that preserves the stabilizers at $u$.
This is a locked version of an analogous result for $0$-shifted non-degenerate $2$-forms (without closing structures) in \cite[Thm.~4.2.3]{Hal}.

Unlike the negatively-shifted cases, many $0$-shifted symplectic forms are not exact (nor locked) even in the absolute case.
Nevertheless, since the restrictions of the $0$-symplectic forms to the residual gerbes of closed points are always exact,
if we can lift the results in this paper to formal derived stacks with perfect cotangent complexes, we will obtain a formal local structure theorem for $0$-shifted symplectic Artin stacks with good moduli.
The author plan to investigate this in a future work.
\end{remark}


For schemes, the local structure theorem can be strengthen as follows:

\begin{remark}[Schemes]\label{Rem:ZLS}
In the situation of Corollary \ref{Cor:ELS}, assume that $M$, $B$ are derived schemes. 
Then $U$, $V$  can be arranged to be derived schemes and \eqref{Eq:ELSeven}, \eqref{Eq:ELSodd} can be arranged to be Zariski open embeddings.
Moreover, if $d\equiv 2\in \Z/4$ (resp. $d\equiv 0\in \Z/4$),
the orthogonal (resp. symplectic) bundle $E$ in Corollary \ref{Cor:ELS}(1) has a maximal isotropic (resp. Lagrangian) subbundle $F\subseteq E$, {\'e}tale (resp. Zariski) locally.
Hence the functoriality of symplectic zero loci (Proposition \ref{Prop:ChangeofSymmCplx}) gives us one additional term in the symplectic pushforward tower \eqref{Eq:SPT},
\[\xymatrix{
M_{\left(\lceil \frac {-d-1}{2} \rceil \right)} \ar@{}[r]|-{\simeq} \ar[d] &  \SZ_{U/U}(E[\frac d2 +1],s) \ar[r] \ar[d]^{}  & \Zero_{V/V}^\symp(F^\perp/F[\frac d2+1],s_2) \ar[d]\\
 M_{\left(\lfloor \frac {-d-1}{2} \rfloor \right) }  \ar@{}[r]|-{\simeq} & U & V:=\Zero(s_1), \ar[l]
}\]
where $s_1 :\O_U \to E\to F\dual$ and $s_2 : \O_{V} \to F^{\perp} \to F^\perp/F$ are the induced sections.
Hence if $d\equiv 2\in \Z/4$ and $\rank(\TT_{M/B})$ is even (resp. $d\equiv 0\in \Z/4$), then the result in Corollary \ref{Cor:ELS}(2) holds by Proposition \ref{Prop:ChangeofSymmCplx} (see the formula \eqref{Eq:SZbecometwcot}).
\end{remark}

Corollary \ref{Cor:ELS} recovers the Lagrangian neighborhood theorem \cite{JS} since we have an equivalence between Lagrangians and symplectic fibrations (Corollary \ref{Cor:LagFact}).

\begin{remark}[Lagrangian neighborhoods]\label{Rem:LNT}
Let $L \to M$ be a $d$-shifted Lagrangian of derived schemes (over $\Spec(\C)$).
Assume that $d<0$.
Then, {\'e}tale-locally, we have
\[(L \to M)\simeq \begin{cases}
\SZ_{U/B}(E[\tfrac {d+1}{2}],s) \to \Crit_B(w)	& \text{if $d$ is odd}\\
\Crit_{V/B}(v)  \to \Crit_B(w) & \text{if $d$ is even},
\end{cases}\]
where $U,V,B$ are derived affine schemes such that $\LL_B$ is of tor-amplitude $\geq \frac d2$,
$\LL_{U/B}$ is of tor-amplitude $\geq \frac {d+1}{2}$,
$\LL_{V/B}$ is of tor-amplitude $\geq \frac {d-1}{2}$,
$w: B \to \AA^1[d+1]$, $v:V \to \AA^1[d]$ are shifted functions,
$E$ is an orthogonal (resp. symplectic) bundle over $U$ if $d\equiv 3 \in \Z/4$ (resp. $d\equiv 1\in \Z/4$),
$s :\O_U \to E[\tfrac {d+1}{2}]$ is an isotropic section,
and the Lagrangians are given as the Lagrangian factors (in Corollary \ref{Cor:LagFact}).

Firstly, we may write $M\simeq \Crit_B(w)$.
Indeed, the $d$-symplectic form on $M$ is exact since $d<0$ (by \cite[Prop.~5.6(a)]{BBJ}).
Hence we can apply Corollary \ref{Cor:ELS} to $M$.
When $d$ is odd, $M$ is locally a critical locus;
when $d$ is even, $\dim(\TT_{M})$ is even since $M$ has a Lagrangian, 
and hence $M$ is locally a critical locus (by Remark \ref{Rem:ZLS}).

Secondly, the Lagrangian $L$ can be arrange to be exact. 
Indeed, the obstruction to exactness of the Lagrangian is measured by an element $o \in \sA^{\DR}(L,d+1)$.
If $d\leq -2$, then $\sA^{\DR}(L,d+1)$ is contractible and the obstruction vanishes.
If $d=-1$, then the obstruction $o \in \sA^{\DR}(L,d+1)\simeq \C$ is a constant function (if we assume that $L$ is connected) and we can replace $\Crit_B(w)$ by $\Crit_B(w-o)$.
Then $L \to \Crit_B(w) \to B$ is a $w$-locked symplectic fibration and we can apply Corollary \ref{Cor:ELS}.
\end{remark}


We now prove Theorem \ref{Thm:SPT}.
The proof can be divided into three parts.

\subsubsection*{Part I: Zero locus towers}

Firstly, we form a sequence of derived stacks $M_{(\bullet)}$ via the zero loci of sections of shifted vector bundles, which in particular satisfies the condition (C1).
Based on the {\'e}tale local structure theorem for $1$-Artin stacks \cite{AHR,AHHR}, this is an equivariant version of the local structure theorem of derived affine schemes of finite presentation in \cite[Thm.~7.4.3.18]{LurHA} and \cite[Thm.~4.1]{BBJ}.

\begin{lemma}[Zero locus towers]\label{Lem:ZLT}
Let $M$ be a finitely presented derived $1$-Artin stack with affine stabilizers over a derived algebraic space $B$.
Given a point $m\in M(\C)$ with linearly reductive stabilizer $G$,\footnote{Unlike in Theorem \ref{Thm_B:LST}, here $G$ is used as a group over $\Spec(\C)$, not a group scheme over $B$.}
there exist a sequence 
\[ M_{(-d+1)} \hookrightarrow M_{(-d)} \hookrightarrow  \cdots \hookrightarrow M_{(1)} \hookrightarrow M_{(0)} \xrightarrow{} M_{(-1)} \to M_{(-2)},\]
of pointed derived Artin stacks such that
\begin{itemize}
\item $M_{(-2)}:=B$, 
 $M_{(-1)}:=BG\times B$,
\item $M_{(0)} \to  M_{(-1)}$ is a smooth affine morphism of dimension $h^0(\TT_{M/B}|_{m})$,
\item 
$M_{(k+1)} :=\Zero(s_{(k)})$ for each $k\geq 0$,  where $E_{(k)}$ is a vector bundle on $M_{(k)}$ of rank $ h^{k+1}(\TT_{M/B}|_{m})$ and $s_{(k)} :\O_{M_{(k)}} \to E_{(k)}[-k]$ is a section, 
\end{itemize}
and a pointed {\'e}tale  morphism 
$M_{(-d+1)}\to (M,m)$
that preserves the stabilizers at the base points.

\end{lemma}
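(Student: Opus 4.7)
The plan is to combine the \'etale local structure theorem for $1$-Artin stacks with an equivariant version of Lurie's presentation of finitely presented derived affine schemes as iterated zero loci. First, applying \cite{AHR} together with its derived refinement \cite{AHHR} produces a pointed $G$-equivariant \'etale morphism $([L/G],\bar l)\to(M,m)$ over $B$, where $L$ is a finitely presented derived affine scheme over $B$ with a $G$-action and $l\in L(\C)$ is a $G$-fixed point above $\bar l$. This reduces the problem to building a $G$-equivariant tower of derived affine $B$-schemes $B=L_{(-1)}\leftarrow L_{(0)}\hookleftarrow L_{(1)}\hookleftarrow\cdots\hookleftarrow L_{(-d+1)}$ with a $G$-equivariant \'etale map $L_{(-d+1)}\to L$ near $l$; quotienting by $G$ then yields $M_{(k)}:=[L_{(k)}/G]$ for $k\geq -1$, with $M_{(-1)}=BG\times B$, and $M_{(-2)}:=B$ is set by hand.

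For the tower I carry out Lurie's inductive argument \cite[Thm.~7.4.3.18]{LurHA} $G$-equivariantly. Put $V_0:=h^0(\TT_{L/B}|_l)$, a finite-dimensional $G$-representation since $G$ fixes $l$, and let $L_{(0)}$ be the trivial affine $B$-bundle $V_0\times B$ with the induced linear $G$-action, together with a $G$-equivariant $B$-morphism $L_{(0)}\to L$ inducing the identity on $h^0(\TT_{-/B}|_l)$; such a lift exists because linear reductivity of $G$ makes $(-)^G$ exact on $G$-representations, so equivariant lifts through surjections of infinitesimal deformation modules are unobstructed. Inductively, given $L_{(k)}\to L$ with $\LL_{L/L_{(k)}}|_l$ in tor-amplitude $\leq -(k+1)$, take $E_{(k)}$ to be the trivial $G$-equivariant bundle on $L_{(k)}$ whose fiber at the base point is the $G$-representation $h^{k+1}(\TT_{L/L_{(k)}}|_l)$---of dimension $h^{k+1}(\TT_{M/B}|_m)$, since $L\to M$ is \'etale at $l$ and the $\mathfrak g[1]$ summand of $\TT_{[L/G]/B}|_m$ contributes only in degree $-1$. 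Choose a $G$-equivariant section $s_{(k)}:\O_{L_{(k)}}\to E_{(k)}[-k]$ realizing the lowest obstruction class (a $G$-equivariant cell attachment), set $L_{(k+1)}:=\Zero(s_{(k)})$, and use the cofiber sequence
\[ E_{(k)}\dual[k+1]|_{L_{(k+1)}} \longrightarrow \LL_{L_{(k)}/B}|_{L_{(k+1)}} \longrightarrow \LL_{L_{(k+1)}/B} \]
to see that $\LL_{L/L_{(k+1)}}|_l$ drops in amplitude by one. After $-d+1$ iterations, $L_{(-d+1)}\to L$ becomes \'etale at $l$; passing to a $G$-stable affine \'etale neighborhood produces the required tower.

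The principal technical difficulty is the $G$-equivariant inductive step carried out with minimal ranks: at each stage one must realize an obstruction class by a $G$-equivariant global section of a trivial $G$-equivariant bundle on $L_{(k)}$ whose rank equals $h^{k+1}(\TT_{M/B}|_m)$, rather than some larger lift. Linear reductivity of $G$ is essential here---it renders $(-)^G$ exact on $G$-representations---and, combined with the rigidity at the $G$-fixed point $l$, permits the equivariant selection of the bundles and sections with precisely the stated ranks. A secondary care is that the final \'etale shrinking must be taken within a $G$-stable affine neighborhood of $l$, which is guaranteed by the equivariant affine approximation underlying \cite{AHR, AHHR}.
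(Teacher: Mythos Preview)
There is a genuine gap in your initial step. You claim $L_{(0)}=V_0\times B$ admits a $G$-equivariant morphism $L_{(0)}\to L$ inducing the identity on $h^0(\TT_{-/B}|_l)$, but such a map need not exist: take $B=\Spec\C$, $G$ trivial, $L=\Spec(\C[x]/(x^2))$; then $V_0\cong\C$ and $L_{(0)}=\AA^1$, yet every morphism $\AA^1\to L$ is constant (any $\C$-algebra map $\C[x]/(x^2)\to\C[t]$ must send $x\mapsto 0$). Your own later notation $\LL_{L/L_{(k)}}$ and $\TT_{L/L_{(k)}}$ in fact presupposes the opposite direction, a map $L\to L_{(k)}$, and this is what both Lurie's cell-attachment theorem and the paper actually construct: a closed embedding $L\hookrightarrow L_{(k)}$ for each $k$, with $L\hookrightarrow L_{(-d+1)}$ eventually \'etale and hence an equivalence after shrinking.

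Even with the direction corrected, producing the initial embedding $L\hookrightarrow L_{(0)}$ with $L_{(0)}$ smooth affine over $B$ of the \emph{minimal} relative dimension $h^0(\TT_{M/B}|_m)$ is not a one-line lifting argument. The paper first embeds $L_\cl$ $G$-equivariantly into a possibly large smooth affine $G$-scheme $W$ via \cite[p.~25]{MFK}, lifts this to a derived closed embedding $L\hookrightarrow U:=W\times B$ using the infinitesimal lifting property of $W/G$ along the Postnikov tower of $L$ (cohomological affineness of $L_\cl/G$ is used here), and only then cuts $U$ down to minimal dimension by a $G$-invariant section of a $G$-equivariant trivial bundle. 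Linear reductivity enters precisely at this cutting step---to split off a $G$-stable complement to $H^0(\LL_{L/B}|_l)$ inside $H^0(\LL_{U/B}|_l)$ and to lift it $G$-equivariantly through the connective map $\I_{L/U}\to H^{-1}(\LL_{L/U}|_l)$---not to manufacture a map from affine space into $L$.
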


Since the proof of Lemma \ref{Lem:ZLT} is completely analogous to \cite[Thm.~4.1]{BBJ} using the reductivity of $G$, 
we postpone the proof to the end of this subsection.

\subsubsection*{Part II: Lifting locked $2$-forms}

Secondly, we lift locked $2$-forms on $M_{(k)} \to M_{(-d-1-k)}$ to $M_{(k-1)} \to M_{(-d-k)}$, inductively.
In particular, we will have presymplectic versions of (C2) and (C3).
This follows from the following general lifting criteria. 

\smallskip

Denote by $\I_{M/B}^p:=\Fil^p\DR(M/B) \in \QCoh_B$ for any $g:M\to B$ in $\dSt$.

\begin{lemma}[Lifting locked $2$-forms]\label{Lem:LiftingForms}
Consider a commutative triangle
\[\xymatrix{
M \ar[r]^{i} \ar[rd] & N \ar[d]^h \\
& U
}\]
of derived stacks where $i$, $h$ are affine and $U$ is cohomologically affine (footnote \ref{footnote:cohaffine}).
Assume that $\I_{N/U}$ is of amplitude $\leq a$ and $\I_{M/N}$ is of amplitude $\leq b$.
Then we have
\[\max(a+b,2b) \leq d+1 \implies i^*:\sA^{2,\lc }(N/U,d) \xrightarrow{}  \sA^{2,\lc}(M/U,d)\text{ is surjective}.
\]
\end{lemma}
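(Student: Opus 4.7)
The plan is to bound the cofiber $C := \cof(\I^2_{N/U} \to \I^2_{M/U}) \in \QCoh_U$ from below in cohomological degree, and then conclude via the long exact sequence. Applying $\map(\O_U, -[d+2])$ to the cofiber sequence $\I^2_{N/U} \to \I^2_{M/U} \to C$ and using the $t$-exactness of $\Gamma_U$ (from cohomological affineness of $U$), the surjectivity of $i^*$ on $\pi_0$ reduces to showing $C \in \QCoh_U^{\geq d+3}$, i.e.\ $H^i(C) = 0$ for $i \leq d + 2$.

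To bound $C$, I will decompose it using the Hodge filtration on $\DR(-/U)$, which is the source of the factor of $2$ appearing in the condition $\max(a+b,2b) \leq d+1$. The Hodge filtrations on $\I^2_{N/U}$ and $\I^2_{M/U}$ induce a compatible filtration on $C$ whose associated graded at level $p \geq 2$ is the cofiber of
\[
\Gr^p\DR(N/U) = h_*\Lambda^p\LL_{N/U}[-p] \longrightarrow g_*\Lambda^p\LL_{M/U}[-p] = \Gr^p\DR(M/U),
\]
where $g := h \circ i$. I factor this map through the intermediate object $g_*i^*\Lambda^p\LL_{N/U}[-p]$. The first factor's cofiber, which I call the ``change-of-pushforward'' contribution, is identified via the projection formula and the equivalence $\cof(\O_N\to i_*\O_M) \simeq \I^1_{M/N}[1]$ as $h_*(\Lambda^p\LL_{N/U} \otimes_{\O_N} \I^1_{M/N})[1-p]$. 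The second factor's cofiber carries the Koszul-type filtration on $\Lambda^p\LL_{M/U}$ induced by the cotangent cofiber sequence $i^*\LL_{N/U}\to\LL_{M/U}\to\LL_{M/N}$, with graded pieces $h_*(\Lambda^{p-q}\LL_{N/U} \otimes_{\O_N} i_*\Lambda^q\LL_{M/N})[-p]$ for $1 \leq q \leq p$.

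Next I apply the amplitude hypotheses: the bound on $\I_{N/U}$ controls the connectivity of the Hodge graded pieces $\Gr^j\DR(N/U)$ in terms of $a$, and similarly for $\Gr^k\DR(M/N)$ in terms of $b$; tensor products then inherit the sum of these bounds. A case analysis on $(p,q)$ shows that the least connective pieces sit at Hodge level $p = 2$: the ``pure $M/N$'' piece ($q = 2$, $j = p-q = 0$) is controlled by $2b$, the mixed piece ($q = 1$, $j = 1$) by $a+b$, and the change-of-pushforward contribution is at least as connective as $a+b$. All pieces at Hodge level $p \geq 3$ are strictly more connective due to the extra shift $[-p]$. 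Consequently $C \in \QCoh_U^{\geq d+3}$ whenever $\max(a+b, 2b) \leq d+1$, which is the hypothesis.

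The main technical obstacle is organizing the bifiltration on $C$ cleanly so that graded-piece bounds genuinely propagate to a bound on $C$ itself. The cleanest route is to realize this bifiltration as a doubly filtered version of $\DR(M/U)$ associated to the factorization $M \to N \to U$, in the spirit of the Koszul filtration used in the proof of Lemma \ref{Lem:LiftingLockedformsZeroLoci}; exhaustiveness and completeness of this bifiltration are what license the passage from graded-piece bounds to the desired bound on $C$.
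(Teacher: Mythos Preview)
There is a slip in your first paragraph: the condition you state, $C\in\QCoh_U^{\geq d+3}$ (i.e.\ $H^i(C)=0$ for $i\leq d+2$), points in the wrong direction and is actually false here, since $C$ is built from highly connective pieces like $\I^2_{M/N}$. What you need, and what your subsequent connectivity estimates are in fact aimed at, is that $C$ has amplitude $\leq d+1$ (i.e.\ $H^i(C)=0$ for $i\geq d+2$); this is what kills $H^{d+2}(\Gamma(U,C))$ via the right $t$-exactness of $\Gamma_U$ coming from cohomological affineness. So the framing is off, though your actual bounds go the right way.

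More substantively, the paper's route is shorter and avoids the convergence issue you flag. Rather than passing to Hodge-graded pieces at all, it proves directly that
\[
\cof\bigl(\I^2_{N/U}\to\I^2_{M/U}\bigr)\ \simeq\ h_*\fib\bigl(\I^2_{M/N}\to \LL_{N/U}\otimes_{\O_N}\I_{M/N}\bigr),
\]
using only the transitivity sequences $\I_{N/U}\to\I_{M/U}\to h_*\I_{M/N}$ and $i^*\LL_{N/U}\to\LL_{M/U}\to\LL_{M/N}$ together with $\I^2=\fib(\I\to\LL[-1])$. This expresses $C$ as an extension of exactly two objects, of amplitude $\leq 2b$ and $\leq a+b-1$, and the lemma follows immediately with no bifiltration and no completeness argument. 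Your decomposition, by contrast, produces many additional pieces involving $\Lambda^{j}\LL_{N/U}$ for $j\geq 2$ (both the change-of-pushforward term and the Koszul terms with $p-q\geq 2$). These extra pieces do cancel---that cancellation \emph{is} the paper's identity---but bounding them individually does not work in general: your claim that ``all pieces at Hodge level $p\geq 3$ are strictly more connective'' needs $a\leq 0$, and fails for instance when $\I_{N/U}$ has nonzero $H^1$ (e.g.\ when $h$ is a smooth affine map of positive relative dimension rather than a closed immersion, which does occur in the application). So the graded-piece approach, as written, does not establish the lemma in the stated generality.
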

\begin{proof} 
We first observe that there exists a canonical equivalence
\begin{equation}\label{Eq:LiftForms}
\cof(\I_{N/U}^2 \to \I_{M/U}^2) \simeq h_*\fib (\I_{M/N}^2 \to \LL_{N/U} \otimes_N \I_{M/N}) \textin \QCoh_U.
\end{equation}
This can be shown by the two canonical cofiber sequences $\I_{N/U} \to \I_{M/U} \to h_*\I_{M/N}$ and $i^*\LL_{N/U} \to \LL_{M/U} \to \LL_{M/N}$ since $\LL_{-/-}\simeq \cof(\I^2_{-/-} \to \I_{-/-})[1]$.

To show the desired surjectivity, it suffices to show that  \eqref{Eq:LiftForms} is of amplitude $\leq d+1$ since $U$ is cohomologically affine.
\begin{itemize}
\item Since $\I_{M/N}$ is of amplitude $\leq b$ and $i:M \to N$ is affine, $\I_{M/N}^2$ is of amplitude $\leq 2b\leq d+1$, by \cite[Cor.~7.4.3.6]{LurHA}.
\item Since $\I_{N/U}$ is of amplitude $\leq a$ and $h:N \to U$ is affine, $\LL_{N/U}$ is of amplitude $\leq a-1$, 
and thus $ \LL_{N/U} \otimes_N \I_{M/N}$ is of amplitude $\leq a+b-1 \leq d$.
\end{itemize}
Since $h$ is cohomologically affine, \eqref{Eq:LiftForms} is of amplitude $\leq d+1$.
\end{proof}

\subsubsection*{Part III: Lifting non-degneracy}

Finally, we obtain the non-degeneracy of the lifted $2$-forms from the {\em minimal dimension condition} of $M_{(\bullet)}$ at $m$, that is,
\begin{equation}\label{footnote:minimaldimension}
\dim H^k (\TT_{M/B}|_m) =
\begin{cases}
\dim(G)	 & \text{if } k =-1,\\
\dim(M_{0}/M_{(-1)})	 & \text{if } k =0, \\
\rank(E_{(k-1)})	 & \text{if } k \geq 1 .
\end{cases}
\end{equation}
Equivalently, the maps $\TT_{M_{(a)}/M_{(c)}}|_m \to \TT_{M_{(b)}/M_{(c)}}|_m$ have sections for all $a\geq b \geq c$.
There is also a general result on an equivalence of non-degeneracy along ``isotropic correspondence fibrations'' (cf. Definition \ref{Def:LagCorrFib}).

\begin{lemma}[Lifting non-degeneracy]\label{Lem:LiftingNondeg}
Consider a commutative diagram
\[\xymatrix{
M \ar[r]^r \ar[d]^g & N \ar[d]^h \\
B & U \ar[l]_{p}
}\]
of derived Artin stacks with finitely presented morphisms.
Let $m\in M(\C)$ with
\begin{enumerate}
\item [(A1)] $\TT_{M/B}|_m \to \TT_{N/B}|_{m}$ and $\TT_{N/B}|_{m} \to \TT_{U/B}|_m$ admit sections;
\item [(A2)] $\TT_{U/B}|_m$ and $(\TT_{U/B}|_m \oplus \TT_{N/U}|_m)\dual[d]$ have no common amplitudes.
\end{enumerate}
Given $\theta_M \in \sA^{2,\cl }(M/B,d)$ and $\theta_N \in \sA^{2,\cl }(N/U,d)$ with $\eta:(\theta_M)_{/U} \cong \theta_N|_M$, we have:
\begin{align*}
 &g: M \to B \text{ is symplectic (in an open neighborhood of $m\in M$) } 
\\ 
&\iff  
\begin{cases}
h:N \to U \text{ is symplectic (in an open neighborhood of $r(m)\in N$) and}
\\
r:M \to N \text{ is a Lagrangian correspondence fibration over $p:U \to B$}	\\
\text{(in open neighborhoods of $m \in M$ and $r(m) \in N$).}
\end{cases}
\end{align*}
\end{lemma}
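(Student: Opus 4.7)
The plan is to reduce the iff to a pointwise statement at $m$ and then unravel it as a $3 \times 3$ block matrix computation. Non-degeneracy of $\theta_M$ for $g$, of $\theta_N$ for $h$ at $r(m)$, and cartesianness of the Lagrangian correspondence fibration square from Definition \ref{Def:LagCorrFib} are all equivalences (resp.\ cartesianness) of maps between perfect complexes on $M$ (resp.\ $N$), hence open conditions on the source stacks; so it suffices to verify them at the fiber over $m$. I will work entirely with the perfect complexes $T_U := \TT_{U/B}|_{p(r(m))}$, $T_N := \TT_{N/U}|_{r(m)}$, $T_r := \TT_{M/N}|_m$ over $\C$, together with their duals $L_U, L_N, L_r$.

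By (A1), the two fiber sequences $\TT_{M/N}|_m \to \TT_{M/U}|_m \to \TT_{N/U}|_{r(m)}$ and $\TT_{M/U}|_m \to \TT_{M/B}|_m \to \TT_{U/B}|_{p(r(m))}$ split, giving $\TT_{M/B}|_m \simeq T_r \oplus T_N \oplus T_U$ and dually $\LL_{M/B}|_m \simeq L_r \oplus L_N \oplus L_U$. In this decomposition, the map $\Psi_M := \theta_M^\flat|_m$ becomes a $3 \times 3$ matrix $A_{ij}: T_j \to L_i[d]$ which is graded self-adjoint (up to the sign dictated by the antisymmetry of shifted $2$-forms), i.e.\ $A_{ji} \simeq \pm A_{ij}^\vee[d]$. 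The compatibility $\eta$ pins down the upper-left $2 \times 2$ block to be $\diag(0, \Psi_N|_{r(m)})$ with $\Psi_N := \theta_N^\flat$: this block is the composition $\TT_{M/U}|_m \hookrightarrow \TT_{M/B}|_m \xrightarrow{\Psi_M} \LL_{M/B}[d]|_m \twoheadrightarrow \LL_{M/U}[d]|_m$, which by $\eta$ factors as $\TT_{M/U}|_m \twoheadrightarrow \TT_{N/U}|_{r(m)} \xrightarrow{\Psi_N} \LL_{N/U}[d]|_{r(m)} \hookrightarrow \LL_{M/U}[d]|_m$.

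The key step uses (A2): the amplitude assumption implies $\pi_0 \Map(T_U, L_U[d]) = 0$ and $\pi_0 \Map(T_U, L_N[d]) = 0$, since $T_U$ shares no amplitudes with $T_U^\vee[d]$ nor with $T_N^\vee[d]$. This forces $A_{UU} \simeq 0$ and $A_{NU} \simeq 0$; by self-adjointness also $A_{UN} \simeq 0$. Hence $\Psi_M$ reduces to the block form
\[
\begin{pmatrix} 0 & 0 & A_{rU} \\ 0 & \Psi_N & 0 \\ A_{Ur} & 0 & 0 \end{pmatrix},
\]
with $A_{Ur} \simeq \pm A_{rU}^\vee[d]$. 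This matrix is an equivalence if and only if both $\Psi_N$ and $A_{rU} : T_U \to L_r[d]$ are equivalences. A short cofiber computation identifies the equivalence of $A_{rU}$ with cartesianness at $m$ of the Lagrangian correspondence fibration square: assuming $\Psi_N$ is an equivalence, the cofiber of the right vertical arrow $\TT_{N/U}|_{r(m)} \to \LL_{M/U}[d]|_m$ is $L_r[d]$, and the induced map from the cofiber $\TT_{U/B}|_{p(r(m))}$ of the left vertical arrow to this cofiber is precisely $A_{rU}$.

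The main obstacle is the bookkeeping: one must fix the self-adjointness convention for shifted $2$-forms, verify via the explicit description of $(\theta_M)_{/U}$ through the projection $\LL_{M/B} \to \LL_{M/U}$ that $\eta$ pins down the upper-left block as claimed, and argue that (A2) indeed makes the relevant mapping spaces $\pi_0$-vanishing (not merely nullhomotopic on one specific morphism). Once those reductions are in hand, the conclusion is purely linear algebra about invertibility of a block matrix, which delivers both directions of the iff simultaneously; openness of equivalence for perfect complexes then transports the conclusion from the fiber at $m$ to open neighborhoods of $m$ in $M$ and of $r(m)$ in $N$.
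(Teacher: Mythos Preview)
Your proposal is correct and follows essentially the same approach as the paper's proof: reduce to the fiber at $m$ by openness of non-degeneracy, split $\TT_{M/B}|_m$ using (A1), use $\eta$ together with the amplitude hypothesis (A2) to force the block-diagonal form of $\theta_M^\#|_m$, and read off the equivalence from invertibility of the resulting $3\times 3$ block matrix. The paper presents the same matrix (with the $T_N$ and $T_r$ blocks permuted) and the same conclusion that $\theta_M|_m^\#$ is an equivalence iff $\theta_N|_m^\#$ and the off-diagonal map $\gamma:\TT_{U/B}|_m\to\LL_{M/N}[d]|_m$ are, the latter being equivalent to cartesianness of the Lagrangian correspondence fibration square; your cofiber identification of $A_{rU}$ with this cartesianness is slightly more explicit than the paper's one-line assertion, but the content is identical.
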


\begin{proof} 
Note that the non-degeneracy of $2$-forms is an open condition since the supports of perfect complexes are closed.

Consider the commutative diagram induced by the adjoints of $2$-forms,
\[\xymatrix{
\TT_{M/U} \ar[r]^{} \ar[d] \ar@{}[rrd]|{\eta^{\#}}& \TT_{M/B} \ar[r]^-{\theta_M^{\#}} & \LL_{M/B}[d] \ar[d]\\
\TT_{N/U}|_M \ar[r]^-{\theta_N|_M^{\#}}  &\LL_{N/U}|_M[d] 
 \ar[r] & \LL_{M/U}[d].
}\]	
By the assumption (A1), after choosing the sections, we have 
decompositions
\[\TT_{M/B}|_m \simeq \TT_{M/U}|_m \oplus \TT_{U/B}|_m \simeq \TT_{N/U}|_m \oplus \TT_{M/N}|_m \oplus \TT_{U/B}|_m.\]
By the assumption (A2), the map  $\theta_M|_{m}^{\#}$
can be represented by a matrix of the form
\[\theta_M|_m^{\#} \cong \begin{pmatrix}
\theta_N|_m^{\#} & 0 & 0 \\
0 & 0 & \gamma\\
0 & \gamma\dual[d] & 0
\end{pmatrix}\]
for some map $\gamma : \TT_{U/B}|_m \to \LL_{M/N}[d]|_m$,
under the above decomposition. 
Then
\begin{align*}
\text{$\theta_M|_m^{\#}$ is an equivalence} 
&\iff \text{ $\theta_N|_m^{\#}$, $\gamma$ are equivalences}\\
&\iff \text{ $\theta_N|_m^{\#}$ is an equivalence }\& \text{ $\eta|_m^{\#}$ is cartesian}.
\end{align*}
Therefore, we have the desired equivalence of the non-degeneracy.
\end{proof}

Now Theorem \ref{Thm:SPT} follows immediately from the above lemmas (Lemma \ref{Lem:ZLT}, Lemma \ref{Lem:LiftingForms}, Lemma \ref{Lem:LiftingNondeg}).
We just have to check the amplitudes.

\begin{proof}[Proof of Theorem \ref{Thm:SPT}]

It suffices to check that the commutative diagram
\[\xymatrix{
M_{(k)} \ar[r] \ar[d] & M_{(k-1)} \ar[d] \\
M_{(-d-1-k)} & M_{(-d-k)} \ar[l]
}\]	
satisfies the assumptions in Lemma \ref{Lem:LiftingForms} and Lemma \ref{Lem:LiftingNondeg} for $k \geq \lceil \frac {-d+1}{2} \rceil+1$.

Firstly, $\I_{M_{(k+1)}/M_{(k)}} \simeq \I_{M_{(k)}/E_{(k)}[-k]}|_{M_{(k)}}$ is of amplitude $\leq -k$. Hence,
\begin{itemize}
\item $\I_{M_{(k-1)}/M_{(-d-k)}}$ is of amplitude $\leq d+k$,
\item $\I_{M_{(k)}/M_{(k-1)}}$ is of amplitude $\leq -k+1$.
\end{itemize}
Since $(d+k) + (-k+1) = d+1$ and $2(-k+1) \leq d-1$, we can apply Lemma \ref{Lem:LiftingForms}. 

Secondly, the minimal dimension condition \eqref{footnote:minimaldimension} gives us the splittings in (A1) of Lemma \ref{Lem:LiftingNondeg}. Moreover, we have
\begin{itemize}
\item $\TT_{M_{(-d-k)}/M_{(-d-1-k)}}|_m$ is of amplitude $[d+k,d+k]$,
\item $\TT_{M_{(k-1)}/M_{(-d-k)}}|_m$ is of amplitude $[1-k,d+k-1]$.
\end{itemize}
Since $-k<-k+1<k+d-1<k+d$, we have (A2) and can apply Lemma \ref{Lem:LiftingNondeg}.

As observed in Remark \ref{Rem:bexLagCorFib}, we have {\'e}tale symplecto-morphisms
\[M_{(k)} \xrightarrow{} \left(M_{(-d-k)} \to M_{(-d-1-k)}\right)_*\left(M_{(k-1)}\right).\]
Since these maps are closed embeddings by the moment map description (Proposition \ref{Prop:UnivMoment}), they are equivalences.
\end{proof}

The {\'e}tale local structure theorem follows from the lifting lemma (Lemma \ref{Lem:LiftingLockedformsZeroLoci}).

\begin{proof}[Proof of Corollary \ref{Cor:ELS}]
Form a symplectic pushforward tower $M_{(\bullet)}$ (Theorem \ref{Thm:SPT}).
As in Lemma \ref{Lem:ZLT}, we may further assume that $M_{(-1)} \to M_{(-2)}$ is the classifying stack of a linearly reductive group, $M_{(0)} \to M_{(-1)}$ is smooth affine, and the maps $M_{(k+1)} \to M_{(k)}$ for $k\geq 0$ are the zero loci of sections $s_{(k)}: M_{(k)} \to E_{(k)}[-k]$ where $E_{(k)}$ are vector bundles over $M_{(k)}$. 
In particular, all $M_{(k)}$ are cohomologically affine.

(1) If $d=2k$ for an integer $k<0$, let $U:=M_{\left(-k-1\right) }$, $E:=E_{\left(-k-1\right) }$, $s:=s_{\left(-k-1\right) }$.
The $w|_U$-locked symplectic form on the last vertical arrow $M_{\left(-k\right)} \to  U$ in \eqref{Eq:SPT} lifts to a $(d+2)$-shifted symmetric form on $E$ via the lifting lemma (Lemma \ref{Lem:LiftingLockedformsZeroLoci}) since $d \leq -2(-k-1 )-2 = d$.
After shrinking $U$, we may assume that the symmetric form is non-degenerate and thus $E$ is an orthogonal (resp. symplectic) bundle when $k$ is odd (resp. even).
Consequently, $M_{(-k)} \simeq \SZ_{U/U}(E[-k+1],s) \in \Symp_{U,d}^{w|_U}$ and
\[ M_{(-d+1)} \simeq \SZ_{U/B}(E[-k+1],s) \textin \Symp_{B,d}^w,\]
by the definition of symplectic zero loci (Definition \ref{Def:SympZero}).


(2) If $d=2k+1$ for an integer $k<0$, let $V:= M_{\left(-k\right)}$.
The $w|_V$-locked symplectic form on the last vertical arrow $\id_V :V \to V$ in \eqref{Eq:SPT} corresponds to a locked $1$-form $\alpha \in \sA^{1,\lc}(V/B,d+1)$ via the canonical equivalence \eqref{Eq:LSFidentity}. Hence we have
\[ M_{(-d+1)} \simeq \T^*_{V/B,\alpha} \textin \Symp_{B,d}^w,\]
by the definition of twisted cotangent bundles (Definition \ref{Def:TwCot}).
\end{proof}


We finally prove Lemma \ref{Lem:ZLT} to complete the proof of Theorem \ref{Thm:SPT}.

\begin{proof}[Proof of Lemma \ref{Lem:ZLT}]
We may assume that $B$ is a derived affine scheme.
By  \cite[Thm.~1.13]{AHHR}, $M$ has a {\em quotient stack presentation}, i.e.,
there is a derived affine scheme $L$ with a $G$-action, a $G$-fixed point $l \in L(\C)$, and an {\'e}tale morphism
\[N:=(L/G,l) \to (M,m),\]
that preserves the stabilizers at $l$.
We will inductively construct $M_{(\bullet)}$ that factors
\[N:=L/G \longrightarrow M_{(-1)}:= B\times BG.\]
Note that $ H^{-1}(\TT_{N/B}|_l) \simeq H^{-1}(\TT_{M_{(-1)}/B}|_{l})$ since $N \to BG$ has a section. 

%

\medskip
\noindent{\em Step 1: Closed embedding into smooth stack.}
We claim that there is a derived affine scheme $U$, smooth of dimension $h^0(\TT_{M/B}|_m)$ over $B$, with a $G$-action, and a $G$-equivariant closed embedding
$L \hookrightarrow U$ over $B$.
Then we will have a closed embedding
\[N:=L/G \hookrightarrow M_{(0)}:= U/G\quad \text{over $M_{(-1)}:= B\times BG$}\]
such that $ H^{0}(\TT_{N/B}|_l)\simeq H^{0}(\TT_{M_{(0)}/B}|_{l})$ (and $ H^{-1}(\TT_{N/B}|_l)\simeq H^{-1}(\TT_{M_{(0)}/B}|_{l})$).

We first consider the existence of $U$.
Indeed, there exists a $G$-equivariant closed embedding $L_\cl \hookrightarrow W $ into a smooth affine scheme $W$ with a $G$-action, by the lemma in \cite[p.~25]{MFK}.
This map $L_\cl \hookrightarrow W $ 
can be lifted to a $G$-equivariant closed embedding $L \hookrightarrow W$ 
by the infinitesimal lifting property of the smooth stack $W/G$ 
since $L$ is the colimit of its Postnikov truncations (in the category of derived Artin stacks) by \cite[Prop.~5.4.5]{LurDAG} 
and $L_\cl/G$ is cohomologically affine. 
Then we consider the induced closed embedding $L \hookrightarrow U:=W\times B$.

To find a minimal $U$, we will cut out $U$ by a $G$-invariant section $s$ of a $G$-equivariant vector bundle $E$ on $U$.
Indeed, consider the right exact sequence of $G$-representations
\[\xymatrix{
H^{-1}(\LL_{L/U}|_{l}) \ar[r]& H^0(\LL_{U/B}|_{l}) \ar[r] & H^0(\LL_{L/B}|_{l}) \ar[r] & 0.
}\]
Let $K:=\Ker( H^0(\LL_{U/B}|_{l}) \to H^0(\LL_{L/B}|_{l}))$.
The inclusion $K \hookrightarrow H^0(\LL_{U/B}|_{l})$ can be lifted to a $G$-equivariant map $a:K \to H^{-1}(\LL_{L/U}|_{l})$ since $G$ is reductive.
Note that
\begin{equation}\label{Eq:ZLT1}
\I_{L/U} \to \LL_{L/U}[-1] \to \LL_{L/U}|_{l}[-1] \to  H^{-1}(\LL_{L/U}|_{l}) \textin \QCoh_U^G \simeq \QCoh_{U/G}
\end{equation}
has a connective fiber since $\I_{l/L}$ and $\I_{L/U}$ are connective (and thus $\I_{L/U} \to \LL_{L/U}[-1]$ is also connective by \cite[Cor.~7.4.3.6]{LurHA}).\footnote{By abuse of notation, we are using the same letters to denote the pushforwards of quasi-coherent sheaves along the closed embeddings $l: \Spec(\C) \to L$ and $L \hookrightarrow U$.}
Hence $ K \otimes \O_U \to K \xrightarrow{a} H^{-1}(\LL_{L/U}|_{l})$ can be lifted to a map $b:  K \otimes \O_U \to \I_{L/U}$.
Let $E:=K\dual\otimes \O_U$ and $s:\O_U \to E$ be the dual of $E\dual \xrightarrow{b} \I_{L/U}\xrightarrow{} \O_U$.
Then $L \hookrightarrow U$ factors through the zero locus $\Zero(s)$ which is smooth of minimal dimension near $l$ 
since
$ H^{-1}(\LL_{\Zero(s)/B}|_{l})\simeq 0$ and $H^{0}(\LL_{\Zero(s)/B}|_{l}) \simeq  H^0(\LL_{L/B}|_{l})$. 
Then the claim follows after replacing $U$ with an open neighborhood of $l$ in $\Zero(s)$. (By \cite[Cor.~1.2]{MFK}, a $G$-invariant open neighborhood can be chosen to be affine after shrinking.)


\medskip
\noindent{\em Step 2. Classically equivalent embedding into quasi-smooth stack.}
We then claimed that there is a $G$-invariant section $s_{(0)}$ of a $G$-equivariant vector bundle $E_{(0)}\to U$ of rank $h^1(\TT_{M/B}|_m)$ and a $G$-equivariant closed embedding $L \hookrightarrow V:=\Zero(s_{(0)})$ over $U$ whose classical truncation is an isomorphism.
Then we will have a closed embedding
\[N:=L/G \hookrightarrow M_{(1)}:= V/G\quad \text{over $M_{(0)}:= U/G$}\]
such that $N_\cl \simeq (M_{(1)})_\cl$ and
 $ H^{i}(\TT_{N/B}|_l)\simeq H^{i}(\TT_{M_{(1)}/B}|_{l})$ for $i\leq1$.

The construction is similar to the last paragraph in Step 1. 
Since \eqref{Eq:ZLT1} is connective, the map $E\dual_{(0)}:=H^{-1}(\LL_{L/U}|_l)\otimes \O_U \to H^{-1}(\LL_{L/U}|_l)$ can be lifted to a map $E_{(0)}\dual \to \I_{L/U}$, which gives $s_{(0)}\dual:E_{(0)}\dual \to \I_{L/U} \to \O_U$ such that $L \hookrightarrow U$ factors through $V:=\Zero(s)$.
Since $E_{(0)}|_{U_\cl}\dual \to E_{(0)}|_{M_\cl}\dual \to \I_{M_\cl/U_\cl}/\I_{M_\cl/U_\cl}^2$ is surjective, by Nakayama lemma $E_{(0)}|_{U_\cl}\dual \to \I_{M_\cl/U_\cl}$ is also surjective after shrinking, and hence $L_\cl \simeq V_\cl$.

\medskip
\noindent{\em Step 3: Inductive construction.}
We finally construct $M_{(k+1)}$ from $M_{(k)}$, inductively, for $k\geq 1$.
More precisely, we will find a vector bundle $E_{(k)}$ of rank $h^{k+1}(\TT_{M/B}|_m)$ on $M_{(k)}$, a section $s_{(k)} :\O_{M(k)} \to E_{(k)}[-k]$, and a closed embedding
\[N \hookrightarrow M_{(k+1)}:=\Zero(s_{(k)})\quad \text{over $M_{(k)}$}\]
such that $N_\cl \simeq (M_{(k+1)})_\cl$ and $ H^{i}(\TT_{N/B}|_l)\simeq H^{i}(\TT_{M_{(k+1)}/B}|_{l})$ for $i\leq k+1$. Then 
$N \to M_{(-d+1)}$ is {\'e}tale at $l$, and hence is an equivalence after shrinking.

The construction is similar to Step 2. We first show that the composition
\[\I_{N/M_{(k)}}[-k] \to \LL_{N/M_{(k)}}[-k-1] \to \LL_{N/M_{(k)}}|_{n}[-k-1] \to  H^{-k-1}(\LL_{N/M_{(k)}}|_{n})\]
is connective, where $n:BG \hookrightarrow N:=L/G$ is the closed embedding given by $l\in L(\C)$.
Since $\LL_{N/M_{(k)}}$ is $(k+1)$-connective and $N_\cl \simeq (M_{(k)})_\cl$ by the induction hypothesis, $\I_{N/M_{(k)}}$ is $k$-connective by \cite[Cor.~7.4.3.2]{LurHA}, and 
thus $\I_{L/U} \to \LL_{L/U}[-1]$ is $2k$-connective by \cite[Cor.~7.4.3.6]{LurHA}).
Moreover, $\I_{n/N}$ is also connective and hence the claim follows.
Let $E_{(k)}:= (M_{(k)} \to BG)^* (H^{-k-1}(\LL_{N/M_{(k)}}|_{n}))\dual$.
Then the map $E_{(k)}\dual \to H^{-k-1}(\LL_{N/M_{(k)}}|_{n})$ can be lifted to a map $E_{(k)}\dual \to \I_{N/M_{(k)}}[-k]$. Hence the composition $E_{(k)}\dual \to \I_{N/M_{(k)}}[-k] \to \O_{M(k)}[-k]$ induces the desired section.
\end{proof}

\subsection{Smooth symplectic charts}\label{ss:SSC}

In this subsection, we show that symplectic fibrations of (higher) stacks admit smooth symplectic charts induced by symplectic fibrations of schemes.

We say that $C:V\dashrightarrow M$ is a ($w$-locked) {\em smooth symplectic cover} of a ($w$-locked) symplectic fibration $g:M \to B$ 
if it is a ($w$-locked) Lagrangian correspondence
\[ \xymatrix{
& C \ar@{_{(}->}[ld]_s \ar@{->>}[rd]^t & \\
V && M,
}\]
such that $t:C \to M$ is smooth surjective and $s_\cl:C_\cl\to V_\cl$ is an equivalence.

\begin{proposition}[Smooth symplectic charts]\label{Prop:SttoSch}
Let $g:M \to B$ be a $w$-locked symplectic fibration of derived Artin stacks for $w\in \sA^{0}(B,d+2)$.
Assume that $d<0$. 
Then there exist a smooth morphism $p:U \to B$, a $w|_U$-locked symplectic fibration of derived schemes $h:N \to U$, and a $w$-locked smooth symplectic cover
\[C: p_*N \dashrightarrow M \textin \Symp^{w}_{B,d}.\]
\end{proposition}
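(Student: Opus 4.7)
My plan is to reduce to the case where $B$ is a derived scheme via base change, pick a smooth atlas of $M$ by a derived scheme, and then realize the atlas as a smooth symplectic cover using the symplectic pushforward constructions of the previous sections, in parallel with the \'etale local structure theorem.

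For the reduction, I would use the base-change compatibility of symplectic pushforwards from Lemma \ref{Lem:SP_Functoriality}. Choosing a smooth surjective atlas $p_0 \colon B' \to B$ from a derived scheme $B'$ and pulling back the $w$-locked symplectic fibration $M \to B$ to obtain $M' \to B'$, a smooth symplectic cover of $M'$ over $B'$ will yield one of $M$ over $B$ by composing the Weil restrictions. So I may assume $B$ is a derived scheme.

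With $B$ a derived scheme, pick a smooth surjective morphism $r \colon V \to M$ from a derived scheme $V$, which exists since $M$ is a derived Artin stack. Working locally on $V$, embed $V$ as a closed subscheme of a smooth derived scheme $p \colon U \to B$. In analogy with the \'etale case (Corollary \ref{Cor:ELS}), I would construct the symplectic fibration $h \colon N \to U$ either as a symplectic zero locus $\SZ_{U/B}(E,s)$ when $d$ is even (for a lifted $(d+2)$-shifted symmetric complex $E$ on $U$ with a section $s$ satisfying $s^2 \simeq w|_U$), or as a twisted cotangent bundle $\T^*_{V/B,\alpha}[d]$ when $d$ is odd (for a suitable smooth $V \to B$ and a $w$-locked $1$-form $\alpha$). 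The lift of the pullback form $r^*\theta_M$ to a symmetric form on $E$ (or, equivalently, to a locked $1$-form) would use Lemma \ref{Lem:LiftingLockedformsZeroLoci}, and the compatibility of classical truncations of symplectic pushforwards would show $(p_*N)_\cl \simeq V_\cl$, mapping smoothly surjectively to $M$ via $r$; this provides the required smooth symplectic cover $C \colon p_*N \dashrightarrow M$ in $\Symp_{B,d}^w$.

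The main obstacle will be verifying non-degeneracy of the lifted $2$-form (via Lemma \ref{Lem:LiftingNondeg}) and checking that the resulting correspondence is genuinely a $w$-locked Lagrangian correspondence, not merely a presymplectic one. Unlike the \'etale case where the covering map identifies derived structures, the smooth surjection $V \to M$ has a nontrivial relative cotangent complex $\LL_{V/M}$ (a vector bundle in degree zero), which must be absorbed into the symplectic structure of $N \to U$ through the correct choice of embedding $V \hookrightarrow U$. This is also where the conclusion of the proposition differs from Corollary \ref{Cor:ELS}: twisted cotangent bundles can also be used in certain even-shifted cases, since orthogonal or symplectic bundles over derived schemes admit maximal isotropic or Lagrangian subbundles locally (Remark \ref{Rem:ZLS}), and then Proposition \ref{Prop:ChangeofSymmCplx} converts the symplectic zero locus into a twisted cotangent. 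The global gluing of the local smooth covers should be mediated by the Lagrangian correspondence structure in $\Symp_{B,d}^w$.
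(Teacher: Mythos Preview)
Your overall architecture---reduce to a schematic base, choose a smooth atlas of $M$, then lift the pulled-back form to obtain a symplectic fibration of schemes---matches the paper's, but you are missing the key technical ingredient: the \emph{minimality} of the chart. You propose to take an arbitrary smooth surjection $r\colon V\to M$, but an arbitrary chart will not do: without the condition $H^i(\TT_{V/B}|_v)\simeq H^i(\TT_{M/B}|_{r(v)})$ for all $i\ge 0$, the lifted $2$-form on the tower piece need not be non-degenerate, and the induced correspondence need not be Lagrangian. The paper spends a separate lemma (Lemma~\ref{Lem:MinChart}) cutting an arbitrary smooth chart down to one of minimal dimension at each point by zeroing out a section of a vector bundle; this is exactly what makes the amplitude hypotheses (A1), (A2) in the non-degeneracy lemma hold. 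Your proposal acknowledges the ``main obstacle'' of verifying non-degeneracy but does not supply the mechanism (minimality) that overcomes it.

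A second, related issue: you invoke Lemma~\ref{Lem:LiftingNondeg}, but that lemma is set up for a commutative square $M\to N$ over $B\leftarrow U$ (the Lagrangian correspondence fibration shape). Here the geometry is a \emph{roof} $N\leftarrow L\rightarrow M$ over a single base, and the paper formulates a distinct lemma (Lemma~\ref{Lem:LiftingNondeg2}) tailored to this shape, with retract/section hypotheses in place of the splitting hypotheses of Lemma~\ref{Lem:LiftingNondeg}. Finally, your plan to build $N$ directly as a symplectic zero locus or twisted cotangent over an intermediate smooth $U$ is really the content of Corollary~\ref{Cor:SLS}, not of the proposition itself; the paper's proof of the proposition is more direct, taking $U$ to be the schematic atlas of $B$ and $N:=L_{(-d)}$ from the zero locus tower of the minimal chart $L$, with the form lifted via Lemma~\ref{Lem:LiftingForms}.
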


Immediately, we obtain the smooth local structure theorem for symplectic fibrations of stacks since the symplectic pushforwards are functorial.

\begin{corollary}[Smooth local structure]\label{Cor:SLS}
Let $g:M \to B$ be a $w$-locked symplectic fibration of derived Artin stacks for $w\in \sA^0(B,d+2)$. 
Assume that $d<0$.
\begin{enumerate}
\item If $d\equiv 2 \in \Z/4$ (resp. $d\equiv 0 \in \Z/4$), 
then there exist 
\begin{itemize}
\item a finitely presented morphism $p:U \to B$ from a derived scheme $U$ such that $\LL_{U/B}$ is of tor-amplitude $\geq \frac d2 +1$,
\item an orthogonal (resp. symplectic) bundle $E$ on $U$, 
\item a section $s: \O_U \to E [\tfrac d2+1]$ with $s^2 \cong w|_U$, 
\item and a $w$-locked smooth symplectic cover
\[C:\SZ_{U/B}(E,s) \dashrightarrow M.\]
\end{itemize}

\item If $d\not\equiv2 \in \Z/4$ or ($d\equiv 2 \in \Z/4$ and $\rank(\TT_{M/B})$ is even), then there exist 
\begin{itemize}
\item  a finitely presented morphism $q:V \to B$ from a derived scheme $V$ such that $\LL_{V/B}$ is of tor-amplitude $\geq \frac d2$, 
\item a locked $1$-form $\alpha \in \sA^{1,\lc }(V/B,d+1)^w$, 
\item and a $w$-locked smooth symplectic cover
\[C: \T^*_{V/B,\alpha}[d] \dashrightarrow M.\]
\end{itemize}
\end{enumerate}
\end{corollary}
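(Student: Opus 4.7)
The plan is to combine Proposition \ref{Prop:SttoSch}, which reduces any locked symplectic fibration of derived Artin stacks to the pushforward of a symplectic fibration of derived schemes, with the scheme-level {\'e}tale local structure theorem (Corollary \ref{Cor:ELS} together with Remark \ref{Rem:ZLS}), using the functoriality of symplectic pushforwards (Lemma \ref{Lem:SP_Functoriality}) to transfer the scheme-level local model across the pushforward.

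First I invoke Proposition \ref{Prop:SttoSch} to obtain a smooth morphism $p:U' \to B$, a $w|_{U'}$-locked $d$-shifted symplectic fibration $h:N \to U'$ with $N$ a derived scheme, and a $w$-locked smooth symplectic cover $C_0 : p_*N \dashrightarrow M$ in $\Symp_{B,d}^w$. After base-changing $N$ along a smooth atlas from a derived scheme, I may assume $U'$ itself is a derived scheme.

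Next I apply the scheme-level local structure theorem to $h:N \to U'$. In case (1), Remark \ref{Rem:ZLS} produces a Zariski open cover $\{N_i \hookrightarrow N\}$ with $N_i \simeq \SZ_{U_i/U'}(E_i[\tfrac{d}{2}+1], s_i)$ for orthogonal (resp.\ symplectic) bundles $E_i$ on derived schemes $U_i$ fp over $U'$ with $\LL_{U_i/U'}$ of tor-amplitude $\geq \tfrac{d}{2}+1$. In case (2) it produces an {\'e}tale cover by $\T^*_{V_j/U',\alpha_j}[d]$, via Corollary \ref{Cor:ELS}(2) when $d$ is odd and via Remark \ref{Rem:ZLS} in the remaining subcases. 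I let $\tilde N$ be the disjoint union of these local models; then $\tilde N \to N$ is an {\'e}tale surjection of locked symplectic fibrations which I regard as a smooth symplectic cover $C_1 : \tilde N \dashrightarrow N$ in $\Symp_{U',d}^{w|_{U'}}$ via the graph span $\tilde N \xleftarrow{\id} \tilde N \to N$.

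Finally I pushforward and compose. By the moment-map formula $p_*X \simeq X \times_{\T^*_{U'/B}[d+1]} U'$ (Proposition \ref{Prop:UnivMoment}), applying $p_*$ base-changes the span $C_1$ to $p_*\tilde N \xleftarrow{\id} p_*\tilde N \to p_*N$ where the right leg is the base change of the {\'e}tale surjection $\tilde N \to N$, hence is itself {\'e}tale surjective. Thus $p_* C_1 : p_*\tilde N \dashrightarrow p_* N$ is a smooth symplectic cover in $\Symp_{B,d}^w$. By Lemma \ref{Lem:SP_Functoriality}(2) the underlying stack identifies as
\[p_*\tilde N \simeq \SZ_{\tilde U/B}(\tilde E[\tfrac{d}{2}+1], \tilde s) \qquad\text{or}\qquad p_*\tilde N \simeq \T^*_{\tilde V/B, \tilde\alpha}[d],\]
with the required tor-amplitude of $\LL_{\tilde U/B}$ (resp.\ $\LL_{\tilde V/B}$) following from that of $\LL_{\tilde U/U'}$ (resp.\ $\LL_{\tilde V/U'}$) together with smoothness of $U' \to B$ via the cotangent cofiber sequence. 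Composing $p_*C_1$ with $C_0$ in $\Symp_{B,d}^w$ yields the required smooth symplectic cover of $M$, because composition of smooth symplectic covers is such a cover: the fiber product of spans has one leg a classical equivalence (base change of classical equivalence along smooth surjective) and the other leg smooth surjective (composition of smooth surjectives). The main technical point is thus the stability of ``classical equivalence'' and ``smooth surjective'' under base change along smooth maps, which simultaneously ensures that $p_*$ preserves smooth symplectic covers and that such covers compose.
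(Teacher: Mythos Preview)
Your proof is correct and follows essentially the same approach as the paper's own (very brief) proof: use Proposition \ref{Prop:SttoSch} to reduce to a symplectic fibration of derived schemes, apply the scheme-level local structure from Remark \ref{Rem:ZLS}, and transport back via functoriality of pushforwards (Lemma \ref{Lem:SP_Functoriality}). Two small remarks: the base $U'$ produced by Proposition \ref{Prop:SttoSch} is already a derived scheme (this is implicit in the phrase ``symplectic fibration of derived schemes'' and in how the proof of that proposition uses Lemma \ref{Lem:RedBase}), so your extra base-change step is unnecessary; and the identification of locked $1$-forms $\sA^{1,\lc}(V/U',d+1)^{w|_{U'}} \simeq \sA^{1,\lc}(V/B,d+1)^{w}$ needed to rewrite $p_*(\T^*_{V/U',\alpha}[d])$ as $\T^*_{V/B,\tilde\alpha}[d]$ is exactly the parenthetical remark in the proof of Proposition \ref{Prop:SttoSch}.
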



We now prove Proposition \ref{Prop:SttoSch}.
We first reduce the base stack $B$ to a scheme.

\begin{lemma}[Base reduction]\label{Lem:RedBase}

Let $g:M \to B$ be a $w$-locked symplectic fibration for $w\in \sA^{0}(B,d+2)$ and $d<0$.
Let $p: U \to B$ be a smooth morphism.
Given a $w|_U$-locked smooth symplectic cover $C : N \dashrightarrow p^*M$,
the adjoint\footnote{The symplectic pushforward $p_*$ is also a left adjoint of the pullback $p^*$. 
}
\[C^\# : p_*(N) \dashrightarrow M \textin \Symp^w_{B,d}\]
is also a $w$-locked smooth symplectic cover.\end{lemma}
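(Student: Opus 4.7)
The plan is to verify the two defining conditions of a smooth symplectic cover for the adjoint correspondence $C^\#$: that its structure map to $M$ is smooth surjective, and that its structure map to $p_*N$ is an equivalence on classical truncations. I will first make $C^\#$ explicit, then check the smoothness condition, and finally address the classical equivalence, which will be the subtlest point.

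To make $C^\#$ explicit, I would invoke the equivalence $p^*M \times_E N \simeq M \times_B p_*N$ with $E := \T^*_{U/B}[d+1]$, established in the proof of (P3) in Theorem \ref{Thm:SP}. Under this identification the underlying derived stack of $C^\#$ coincides with that of $C$, and the two structure maps are $t^\# := \pi_M \circ t \colon C \to p^*M \to M$ and $s^\# \colon C \to p_*N$. Here $s^\#$ arises from the moment map description of $p_*N$ (Proposition \ref{Prop:UnivMoment}): the moment $\mu_{p^*M} \colon p^*M \to E$ factors through the zero section $U \hookrightarrow E$ because $p^*M$ is pulled back from $B$, and the Lagrangian condition on $C$ therefore forces $\mu_N \circ s \colon C \to N \to E$ to factor through $U$ as well, yielding the lift $s^\# \colon C \to N \times_E U = p_*N$. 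Smoothness and surjectivity of $t^\#$ then follow immediately: $t$ is smooth surjective by hypothesis, while $p^*M = M \times_B U \to M$ is smooth as a base change of $p$ and surjective because $p$ is.

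The main obstacle will be the third step: showing that $s^\# \colon C \to p_*N$ induces an equivalence on classical truncations. I would use the factorization $s = \pi_N \circ s^\#$, where $\pi_N \colon p_*N \to N$ is the projection from the Lagrangian intersection. Since $s_\cl$ is an equivalence by hypothesis, it suffices to show that $\pi_N$ is itself a classical equivalence. The Lagrangian structure already provides a homotopy making $\mu_N \circ s$ factor through $U \hookrightarrow E$; combined with $s_\cl$ being an isomorphism, this forces the classical moment $(\mu_N)_\cl \colon N_\cl \to E_\cl$ to factor through $U_\cl$. The classical truncation $(p_*N)_\cl = (N \times_E U)_\cl$ then reduces to $N_\cl$, using that $U \hookrightarrow E$ is a closed embedding whose classical self-intersection $U_\cl \times_{E_\cl} U_\cl$ equals $U_\cl$, and the potential derived discrepancy of the fiber product vanishes by a standard amplitude argument exploiting the assumption $d < 0$ (so that $E = \T^*_{U/B}[d+1]$ lives in negative shift). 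Combining these classical identifications yields the required equivalence for $s^\#$.
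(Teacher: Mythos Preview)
Your argument is correct and follows the same approach as the paper: identify $C^\# \simeq C$ via the moment-map description of $p_*N$, deduce smoothness of $t^\#$ from that of $t$ and $p$, and show $(s^\#)_\cl$ is an isomorphism using that $0: U \to E$ is a closed embedding with classically trivial self-intersection. One small remark: the ``amplitude argument'' you invoke for the ``derived discrepancy'' is unnecessary, since classical truncation always commutes with fiber products---once you know $(\mu_N)_\cl$ factors through $U_\cl$ you immediately get $(N \times_E U)_\cl = N_\cl \times_{E_\cl} U_\cl = N_\cl \times_{U_\cl}(U_\cl \times_{E_\cl} U_\cl) = N_\cl$.
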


\begin{proof}
Note that we have a canonical commutative diagram (see \S\ref{ss:preSP})
\[\xymatrix{
C^\#\simeq C \ar[r]_-{s^\#} \ar[d]^{t} \ar@/^.4cm/[rr]^{s} \ar@/_.6cm/[dd]_{t^\#} & p_*N \ar[r] \ar[d] \cart & N \ar[d] \\
p^*M \ar[r] \ar[d] \cart & U \ar[r]^-{0} \ar[d]^{p} & \T^*_{U/B}[d+1] \\
M \ar[r] & B.
}\]	
Since $t:C \to M$ and $p:U \to B$ are smooth, so is $t^\# : C^\# \to M$.
On the other hand, the zero section $0 : U \to \T^*_{U/B}[d+1]$ is a closed embedding since $p: U \to B$ is smooth and $d<0$.
Since the classical truncations of $s : C \to N$ and the diagonal of $0 : U \to \T^*_{U/B}[d+1]$ are isomorphisms, so is $s^\#:C^\# \to p_*N$.
\end{proof}

Consequently, we may assume that the base $B$ is a derived affine scheme.
Then our strategy to reduce $M$ is as follows:
Choose a smooth surjection $L \to M$ from a derived affine scheme $L$ and form the zero locus tower $L_{(\bullet)}$ for $L$ (in Lemma \ref{Lem:ZLT}),
\[L := L_{(-e)} \hookrightarrow L_{(-e-1)}\hookrightarrow \cdots \hookrightarrow L_{(-d)} \hookrightarrow \cdots 
L_{(1)} \hookrightarrow L_{(0)} \xrightarrow{\mathrm{sm}} L_{(-1)}:=B.\]
Then we can pullback the locked symplectic form on $M$ to $L$, and lift to $L_{(-d)}$ by the lifting lemma (Lemma \ref{Lem:LiftingForms}).
Hence if we choose $L \to M$ to be of {\em minimal dimension}, then we will get the non-degeneracy of the lifted locked $2$-form on $L_{(-d)}$.

\begin{lemma}[Charts of minimal dimension]\label{Lem:MinChart}
Let $M$ be a finitely presented derived Artin stack over a derived affine scheme $B$.
For any point $m\in M(\C)$,
there exists a pointed smooth morphism
\[t:(L,l) \to (M,m)\]
such that $L$ is a derived affine scheme and $H^i(\TT_{L/B}|_{l}) \simeq H^i(\TT_{M/B}|_m)$  for all $i\geq0$.
%
\end{lemma}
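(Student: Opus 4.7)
The plan is to start with any smooth chart by a derived affine scheme, analyze the tangent defect, and cut it down by the correct number of functions to produce a minimal chart.

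First I take a smooth morphism $t': L' \to M$ from a derived affine scheme $L'$ of finite presentation over $B$, with a point $l' \in L'(\C)$ lying over $m$; such a chart exists since $M$ is derived Artin and finitely presented over the affine base $B$ (by iterating smooth atlases and passing to an affine open, in the same spirit as in the proof of Lemma \ref{Lem:ZLT}). Smoothness says $\LL_{L'/M}$ is a vector bundle in degree $0$, of rank $d'$ at $l'$. The fiber sequence $\TT_{L'/M}|_{l'} \to \TT_{L'/B}|_{l'} \to \TT_{M/B}|_m$ then immediately yields $H^i(\TT_{L'/B}|_{l'}) \simeq H^i(\TT_{M/B}|_m)$ for all $i \geq 1$, while in degree $0$ it produces the exact sequence
\[
H^{-1}(\TT_{M/B}|_m) \xrightarrow{\partial} H^0(\TT_{L'/M}|_{l'}) \xrightarrow{a} H^0(\TT_{L'/B}|_{l'}) \twoheadrightarrow H^0(\TT_{M/B}|_m).
\]
The whole problem thus reduces to arranging $\partial$ to be surjective, equivalently $a = 0$.

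Next I cut $L'$ in the fiber direction. Set $k := \dim \Image(\partial)$ and $r := d' - k$, and fix a complement $V \subseteq H^0(\TT_{L'/M}|_{l'})$ of $\Image(\partial)$ of dimension $r$. Its dual $V^\vee \subseteq H^0(\LL_{L'/M}|_{l'})$ is the annihilator of $\Image(\partial)$. The dual cotangent long exact sequence for $L' \to M \to B$ identifies the image of $H^0(\LL_{L'/B}|_{l'}) \to H^0(\LL_{L'/M}|_{l'})$ precisely with this annihilator, so a basis of $V^\vee$ lifts to functions $f_1, \ldots, f_r \in \pi_0\O_{L'}$ vanishing at $l'$ and with $(df_i)|_{l'}$ linearly independent in $H^0(\LL_{L'/M}|_{l'})$. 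I define
\[
L := \Zero(f_1,\ldots,f_r) \subseteq L',
\]
a derived closed affine subscheme with base point $l := l'$.

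Finally I verify smoothness and the tangent isomorphisms. The fiber sequence $\LL_{L'/M}|_l \to \LL_{L/M}|_l \to \LL_{L/L'}|_l \simeq \O_l^{\oplus r}[1]$ together with the linear independence of the $df_i|_l$ shows that $\LL_{L/M}|_l$ is a rank-$k$ vector bundle in degree $0$. Since smoothness is an open condition, after replacing $L'$ by a Zariski affine neighborhood of $l'$ the morphism $L \to M$ becomes smooth of relative dimension $k$. Rerunning the degree-$0$ exact sequence now with $L$ in place of $L'$, the new $H^0(\TT_{L/M}|_l) = \Image(\partial)$ and the new boundary map is surjective by construction, yielding $H^0(\TT_{L/B}|_l) \simeq H^0(\TT_{M/B}|_m)$; the isomorphisms in degrees $i \geq 1$ are inherited from those for $L'$.

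The main technical hurdle is the very first step: producing a smooth chart by a derived \emph{affine} scheme for a higher $n$-Artin stack $M$, together with guaranteeing that the derived vanishing locus $\Zero(f_1,\ldots,f_r)$ is smooth over $M$ in the derived rather than merely classical sense. The first is handled by iterated smooth atlases (any derived Artin stack of finite presentation admits an affine smooth chart on some open neighborhood of $m$), and the second follows from the cotangent-complex analysis above applied to the Koszul-regular closed immersion $L \hookrightarrow L'$.
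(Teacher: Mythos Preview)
Your proof is correct and follows essentially the same strategy as the paper's: start from an arbitrary smooth affine chart, measure the excess in the degree-$0$ tangent via the boundary map $\partial$, cut down by the appropriate number of functions, and verify smoothness and minimality via the cotangent fiber sequence. The only cosmetic difference is that you organize the cut on the tangent side (choosing a complement $V$ to $\Image(\partial)$ and dualizing), whereas the paper works directly on the cotangent side by choosing a complement $K$ of $H^0(\LL_{M/B}|_m)$ inside $H^0(\LL_{L/B}|_l)$ and lifting $K$ to $\I_{l/L}$ via an explicit connectivity argument; your assertion that a basis of $V^\vee$ lifts to functions in $\pi_0\O_{L'}$ is exactly that connectivity step, and it goes through because $\LL_B$ and $\I_{l'/L'}$ are connective.
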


\begin{proof}
Choose a pointed smooth morphism $(L,l) \to (M,m)$ from a derived affine scheme $L$ and $l\in L(\C)$.
Then we have an exact sequence of vector spaces
\[\xymatrix{
0 \ar[r] & H^0(\LL_{M/B}|_m)) \ar[r] & H^0(\LL_{L/B}|_l) \ar[r]^-{a} & H^0(\LL_{L/M}|_l) \ar[r] & H^1(\LL_{M/B}|_m) \ar[r] & 0
}\]
and equivalences $H^i(\LL_{M/B}|_m)) \simeq H^i(\LL_{L/B}|_l)$ for $i>0$.
We will cut out $L$ by a section of a vector bundle to make the map $a:H^0(\LL_{L/B}|_l)\to H^0(\LL_{L/M}|_l)$ vanish.\footnote{The argument is similar to the third paragraph of Step 1 of the proof of Lemma \ref{Lem:ZLT}.}
Choose a subspace $K \subseteq H^0(\LL_{L/B}|_l)$ such that $H^0(\LL_{M/B}|_m)\to H^0(\LL_{L/B}|_l) \twoheadrightarrow H^0(\LL_{L/B}|_l)/K $
is an isomorphism.
Observe that the composition
\[\I_{l/L} \to \LL_{l/L}[-1] \to \LL_{L/B}|_l \to H^0(\LL_{L/B}|_l) \textin \QCoh_L\]
is connective, since $l : \Spec(\C) \to L$, $\Spec(\C) \xrightarrow{l} L \to B$ are closed embeddings and $L \to B$ is affine. Hence $E\dual:=K\otimes \O_L \to K\hookrightarrow H^0(\LL_{L/B}|_l)$ can be lifted to a map $E\dual\to \I_{l/L}$,
which is equivalent to a section $s:\O_L \to E$ such that the point $l:\Spec(\C) \to L$ lies in $\Zero(s)$.
Then 
$H^{-1}(\LL_{\Zero(s)/B}|_l)=0$ since $K\hookrightarrow H^0(\LL_{L/B}|_l)$
and the map $H^{0}(\LL_{\Zero(s)/B}|_l) \to H^0(\LL_{\Zero(s)/M}|_l)$ is zero since  $H^0(\LL_{M/B}|_m)\simeq H^0(\LL_{L/B}|_l)/K$.
Hence $\Zero(s) \to B$ is smooth of minimal dimension at $l$.
Replace $L$ with $\Zero(s)$.
\end{proof}

\begin{lemma}[Lifting non-degeneracy II]\label{Lem:LiftingNondeg2}
Consider a correspondence
\[\xymatrix{
& L\ar[ld]_s \ar[rd]^t & \\
N && M
}\]
of finitely presented derived Artin stacks over a derived stack $B$.
Let $l\in L(\C)$ with
\begin{enumerate}
\item [(A1)] $\TT_{L/B}|_{l} \to \TT_{M/B}|_{t(l)}$ has a retract, $\TT_{L/B}|_l \to \TT_{N/B}|_{s(l)}$ has a section,
\item [(A2)] $\TT_{L/M}|_l[1]$ and $(\TT_{L/M}|_{l}[1]\oplus \TT_{N/B}|_{s(l)})\dual[d]$ have no common amplitudes.
\end{enumerate}
Given $\theta_M \in \sA^{2,\cl }(M/B,d)$ and $\theta_N \in \sA^{2,\cl }(N/B,d)$ with $\eta:\theta_N|_{L} \cong \theta_M|_L$, we have:
\begin{align*}
 &M \text{ is symplectic (in an open neighborhood of $m\in M$) } 
\\ 
&\iff  
\begin{cases}
$N$ \text{ is symplectic (in an open neighborhood of  $s(l)\in N$)}
\\
L : N \dashrightarrow M \text{ is a Lagrangian correspondence}\\
\text{(in an open neighborhood of  $l \in L$)}
\end{cases}
\end{align*}
\end{lemma}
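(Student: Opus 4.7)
The plan is to mirror the argument of Lemma~\ref{Lem:LiftingNondeg}, adapted to the correspondence setting. Since non-degeneracy of a closed $2$-form is an open condition, it suffices to verify the equivalence pointwise at $l \in L(\C)$; the statement will then reduce to a block-matrix computation on $\TT_{L/B}|_l$. I abbreviate $T_L := \TT_{L/B}|_l$, $T_N := \TT_{N/B}|_{s(l)}$, and $T_M := \TT_{M/B}|_{t(l)}$.

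First, I would use (A1) to split $T_L$: the retract of $T_L \to T_M$ yields $T_L \simeq T_M \oplus \TT_{L/M}|_l[1]$, and the section of $T_L \to T_N$ identifies $T_N$ as a summand of $T_L$ refining this decomposition. The pulled-back adjoint $\theta_L^{\#} : T_L \to \LL_{L/B}|_l[d]$, equal both to $t^{*}\theta_M^{\#}$ and to $s^{*}\theta_N^{\#}$ via $\eta$, then expresses $\theta_M^{\#}|_{t(l)}$ as a block matrix whose diagonal pieces are $\theta_N^{\#}|_{s(l)}$ and a map $\gamma : \TT_{L/M}|_l[1] \to \LL_{L/N}|_l[d-1]$ encoding the cartesian-square condition \eqref{Eq:LagCorr} defining a Lagrangian correspondence. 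The off-diagonal entries, which are maps between $\TT_{L/M}|_l[1]$ and $\LL_{L/M}|_l[d-1] \oplus \LL_{N/B}|_{s(l)}[d]$, will vanish by (A2), since complexes with disjoint amplitudes admit no nonzero maps.

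From this block form, $\theta_M^{\#}|_{t(l)}$ is an equivalence if and only if $\theta_N^{\#}|_{s(l)}$ is an equivalence and $\gamma$ is an equivalence; the latter is precisely the Lagrangian correspondence condition at $l$ read through the identification $\LL_{L/M} \simeq \TT_{L/N}\dual[d-1]$ induced by Lagrangianness. Openness of non-degeneracy then promotes the pointwise equivalence to the open-neighborhood statement. The hardest step will be coordinating the two splittings of $T_L$ provided by (A1): unlike Lemma~\ref{Lem:LiftingNondeg}, where the three tangent complexes sit in a single tower $M \to N \to U \to B$ admitting a canonical triple splitting at $m$, here the two decompositions arise from asymmetric sides of the correspondence and must be glued using $\eta : \theta_N|_L \simeq \theta_M|_L$ so that $\theta_M^{\#}$ presents in a form where (A2) can kill the cross terms.
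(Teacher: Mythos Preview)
Your approach is exactly the paper's: the proof is omitted there as ``analogous to the proof of Lemma~\ref{Lem:LiftingNondeg}'', and you have correctly identified that adaptation as a pointwise block-matrix analysis of $\theta_M^{\#}|_{t(l)}$ using the splittings from (A1) and the amplitude hypothesis (A2) to kill cross terms. One small slip to fix when you write it out: the retract of $T_L \to T_M$ splits $T_M \simeq T_L \oplus \TT_{L/M}|_l[1]$ (not $T_L$), and combined with $T_L \simeq T_N \oplus \TT_{L/N}|_l$ this gives a three-term decomposition of $T_M$ on which $\theta_M^{\#}|_{t(l)}$ takes the form $\begin{psmallmatrix} \theta_N^{\#}|_{s(l)} & 0 & 0 \\ 0 & 0 & \gamma \\ 0 & \gamma^{\vee}[d] & 0 \end{psmallmatrix}$ with $\gamma : \TT_{L/M}|_l[1] \to \LL_{L/N}|_l[d]$, so $\gamma$ is an off-diagonal block (not a diagonal one) and its target is shifted by $d$, not $d-1$.
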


\begin{proof}
We omit the proof since it is analogous to the proof of Lemma \ref{Lem:LiftingNondeg}.
\end{proof}

We now have all the ingredients to prove Proposition \ref{Prop:SttoSch} (and Corollary \ref{Cor:SLS}).

\begin{proof}[Proof of Proposition \ref{Prop:SttoSch}]
By Lemma \ref{Lem:RedBase}, we may assume that the $B$ is a derived scheme 
(since $\sA^{1,\lc }(V/B,d+1)^w \xrightarrow{\simeq}\sA^{1,\lc }(V/U,d+1)^{w|_U}$ for any $V \to U \to B$).
Choose a minimal chart $L \to M$ as in Lemma \ref{Lem:MinChart} and a minimal zero locus tower $L_{(\bullet)}$ as in Lemma \ref{Lem:ZLT}.
Then we can apply Lemma \ref{Lem:LiftingNondeg2} for $L_{(-d)} \leftarrow L \rightarrow M$ since the assumptions (A1), (A2) follow from the minimal dimension conditions. 
\end{proof}

\begin{proof}[Proof of Corollary \ref{Cor:SLS}]
From the smooth symplectic covers in Proposition \ref{Prop:SttoSch} and the functoriality of symplectic pushforwards (Lemma \ref{Lem:SP_Functoriality}), the smooth local structure theorem for stacks (Corollary \ref{Cor:SLS}) can be reduced to the Zariski/{\'e}tale local structure theorem for scheme (Remark \ref{Rem:ZLS}).
\end{proof}

\section{Virtual Lagrangian cycles}\label{Sec:VLC}

In this section, we study an application to enumerative geometry (Theorem \ref{Thm_C:VLC}): 
the {\em virtual Lagrangian cycles} for $(-2)$-symplectic fibrations.
We show that they are {\em uniquely} determined by natural functorial properties (\S\ref{ss:VLC}).
The key ingredient is the {\em symplectic deformation} of a symplectic fibration to the normal bundle (\S\ref{ss:SD}).

\subsection{Symplectic deformations}\label{ss:SD}

In this subsection, we construct symplectic deformations of locked symplectic fibrations to the zero sections of normal bundles (Corollary \ref{Cor:SD}).
More generally, we construct deformations of locked forms to the zero sections (Proposition \ref{Prop:DefLF}) using the double deformation spaces (Lemma \ref{Lem:DD}).

Note that a finitely presented morphism of derived stacks $g:M \to B$ has a canonical deformation to the zero section of the normal bundle $0:M \to \T_{M/B}[1]$;
we have a canonical map $G:M\times \AA^1 \to \D_{M/B}$ whose general/special fiber is:
\[(G:M\times \AA^1 \to \D_{M/B})\times_{\AA^1}\{\zeta\}\simeq 
\begin{cases}
g:M \to B  & \text{if }\zeta\neq0, \\
0:M \to \T_{M/B}[1] &\text{if } \zeta=0.
\end{cases}
\]
Moreover, any locked form on a general fiber induces a locked form on the special fiber; we have a canonical specialization map
\[\Sp : \sA^{p,\lc}(M/B,d) \xrightarrow{(-)^p} \sA^{p}(M/B,d)\simeq \scS^{p}(\T_{M/B}[1],p+d) \xrightarrow{\scZ}\sA^{p,\lc}(M/\T_{M/B}[1],d),\]
where the map $\scZ$ is given as in Proposition \ref{Prop:LFZero}.
We show that this specialization map can be given ``continuously'' as follows:

\begin{proposition}[Deformations of locked forms]\label{Prop:DefLF}
Let $g:M \to B$ be a finitely presented 
morphism of derived stacks.
Then there exists a canonical map
\[\Def : \sA^{p,\lc}(M/B,d) \to \sA^{p,\lc}(M\times\AA^1/\D_{M/B},d) \]
whose general/special fiber is:
\[(\bullet \xrightarrow{\zeta} \AA^1)^* \circ \mathrm{Def} \simeq\begin{cases}
\zeta^{-p }\cdot(-) : \sA^{p,\lc}(M/B,d) \to  \sA^{p,\lc}(M/B,d) & \text{if }\zeta\neq0, \\
\Sp  :  \sA^{p,\lc}(M/B,d) \to  \sA^{p,\lc}(M/\T_{M/B}[1],d)	 & \text{if }\zeta=0.
\end{cases}\]
\end{proposition}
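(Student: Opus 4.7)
The plan is to transport the construction to the world of deformation spaces via Proposition \ref{Prop:DRviaDefSp}, and then produce $\Def$ by pulling back weight $p$ functions along a natural map between deformation spaces. Specifically, Proposition \ref{Prop:DRviaDefSp} identifies
\[\sA^{p,\lc}(M/B,d) \simeq \Map^{\GG_m}(\D_{M/B},\AA^1(p)[p+d])\]
and similarly
\[\sA^{p,\lc}(M\times\AA^1/\D_{M/B},d) \simeq \Map^{\GG_m}(\D_{M\times\AA^1/\D_{M/B}},\AA^1(p)[p+d]),\]
where the two $\GG_m$'s are the scaling actions on the outer respectively inner deformation parameters. Therefore it suffices to construct a $\GG_m$-equivariant morphism
\[\Phi : \D_{M\times\AA^1/\D_{M/B}} \longrightarrow \D_{M/B}\]
over $\AA^1 \to \mathrm{pt}$ with an appropriate weight-matching property, and then set $\Def(\omega):=\omega\circ\Phi$.

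The map $\Phi$ will be obtained from the doubling description in Lemma \ref{Lem:DD}. Since the canonical map $G : M\times\AA^1 \to \D_{M/B}$ is itself $\GG_m$-equivariant (for the diagonal action on the source and the intrinsic $\GG_m$-action on the target), functoriality of the deformation construction $\D_{(-)/(-)}$ together with the doubling identification of Lemma \ref{Lem:DD} (which describes $\D_{M\times\AA^1/\D_{M/B}}$ as a derived stack carrying two commuting $\GG_m$-actions together with a natural map to $\D_{M/B}$) yields the desired equivariant $\Phi$. The weight conventions can be arranged so that a weight $p$ function on $\D_{M/B}$ pulls back to a weight $p$ function for the outer $\GG_m$.

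Finally, I would verify the two fiber computations. Restricting along $\zeta \in \GG_m \subset \AA^1$, both sides become $\D_{M/B}$ and $\Phi|_\zeta$ is the action by $\zeta$ for the inner $\GG_m$, so pullback of a weight $p$ function $\omega$ is $\zeta^{-p}\omega$, matching the first case. Restricting along $\zeta=0$, the source becomes $\D_{M/\T_{M/B}[1]}$ (since the special fiber of $M\times\AA^1 \to \D_{M/B}$ is the zero section $0 : M \to \T_{M/B}[1]$), and Lemma \ref{Lem:D_U/E} identifies this with $\T_{M/B}[1]\times\AA^1$; the special fiber of $\Phi$ factors through the projection $\T_{M/B}[1]\times\AA^1 \to \T_{M/B}[1] \hookrightarrow \D_{M/B}$. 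Under Lemma \ref{Lem:FtnsLinear}, pullback of a weight $p$ function along this composite extracts the symmetric $p$-form component of $\omega$ from $\D_{M/B}$ and lifts it via $\scZ$ to a locked form on $M/\T_{M/B}[1]$, which is precisely $\Sp(\omega)$ by the definition of $\Sp$ in terms of $(-)^p$ and $\scZ$.

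The main obstacle will be pinning down the precise form of Lemma \ref{Lem:DD} and verifying that the induced map $\Phi$ carries the correct $\GG_m$-equivariance data. In particular, one must reconcile the inner and outer $\GG_m$-weights, together with the convention on $\AA^1(p)$, so that the generic fiber genuinely acts as scaling by $\zeta^{-p}$ (rather than $\zeta^p$ or a shifted variant) and the special fiber exactly reproduces $\scZ$ applied to the underlying $p$-form. Once this bookkeeping is in place the rest of the argument is essentially formal, reducing the deformation of locked forms to the functoriality of deformation spaces and the identification of equivariant functions on abelian cones.
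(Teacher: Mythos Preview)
Your proposal is correct and follows essentially the same approach as the paper: define $\Def$ as pullback of weight $p$ equivariant functions along the map $\scD:\DD_{M/B}=\D_{M\times\AA^1/\D_{M/B}}\to \D_{M/B}$ supplied by Lemma \ref{Lem:DD}, using Proposition \ref{Prop:DRviaDefSp} to translate between locked forms and equivariant functions. The paper's verification of the special fiber is phrased slightly differently (it notes that the special fiber of the square \eqref{Eq:3} in the proof of Lemma \ref{Lem:DD} coincides with the square \eqref{Eq:D_U/E} for $E=\T_{M/B}[1]$), but this is exactly your computation via Lemma \ref{Lem:D_U/E}.
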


As a special case, symplectic deformations can be obtained.

\begin{corollary}[Symplectic deformations]\label{Cor:SD}
Let $g:M \to B$ be a $d$-shifted locked symplectic fibration with $\theta\in \sA^{2,\lc}(M/B,d)$.
If $g$ has quasi-affine diagonal, then
\[\left(G: M \times \AA^1 \to \D_{M/B},\, \Def(\theta) \in \sA^{2,\lc}(M\times\AA^1/\D_{M/B},d)\right)\]
is also a $d$-shifted locked symplectic fibration.
\end{corollary}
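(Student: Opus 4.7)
The $2$-form $\Def(\theta) \in \sA^{2,\lc}(M\times\AA^1/\D_{M/B},d)$ is already produced by Proposition \ref{Prop:DefLF}, so the plan is to verify the two remaining defining properties of a locked symplectic fibration: (i) geometricity and finite presentation of the map $G: M\times\AA^1 \to \D_{M/B}$, and (ii) non-degeneracy of $\Def(\theta)$.

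For (i), since $g$ has quasi-affine diagonal, the deformation space $\D_{M/B}$ is a derived Artin stack by the theory of derived deformations to the normal bundle recalled in \S\ref{ss:DefSp}. The morphism $G$ is a map over $\AA^1$; its restriction over $\GG_m \subset \AA^1$ is $g \times \id_{\GG_m}$, which is finitely presented by assumption, and its restriction over $\{0\}$ is the zero section $0_M:M\to\T_{M/B}[1]$, which is finitely presented as a closed immersion into the total space of a perfect complex over $M$. I would deduce geometricity and finite presentation of $G$ from these two fiberwise facts; the quasi-affine diagonal hypothesis enters to ensure that both $\D_{M/B}$ and $G$ stay in the geometric setting globally, not merely fiberwise over $\AA^1$.

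For (ii), the key observation is that non-degeneracy is a pointwise condition on the source: the map $\TT_{M\times\AA^1/\D_{M/B}} \to \LL_{M\times\AA^1/\D_{M/B}}[d]$ induced by $\Def(\theta)$ is an equivalence on all of $M\times\AA^1$ iff it is an equivalence at every point $(m,\zeta)$. By base change along $\{\zeta\}\hookrightarrow \AA^1$, this tangent-complex map at $(m,\zeta)$ coincides with the analogous map for the fiber $G_\zeta$ at $m$. Proposition \ref{Prop:DefLF} describes the fibers of $(G,\Def(\theta))$: at $\zeta\neq 0$ the form is $\zeta^{-2}\theta$ on $g:M\to B$, which is non-degenerate since $\theta$ is; at $\zeta=0$ the form is $\Sp(\theta)=\scZ(\theta^2)$ on the zero section $0:M\to\T_{M/B}[1]$. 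Under the canonical equivalence $\LL_{M/\T_{M/B}[1]}\simeq \LL_{M/B}$ arising from the fact that $0$ is a section of the projection $\T_{M/B}[1]\to M$, the underlying $2$-form of $\scZ(\theta^2)$ corresponds via Proposition \ref{Prop:LFZero}(1) to the symmetric form on $\T_{M/B}[1]$ adjoint to the equivalence $\TT_{M/B}\xrightarrow{\theta^\#}\LL_{M/B}[d]$; so non-degeneracy of $\theta$ transports through each of these identifications to yield non-degeneracy of $\Sp(\theta)$, and hence of $\Def(\theta)$ at $\zeta=0$.

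The main obstacle is step (i). The fiber structure of $G$ over $\AA^1$ is transparent and, combined with the explicit formulas of Proposition \ref{Prop:DefLF}, yields non-degeneracy in (ii) essentially for free. By contrast, upgrading the fiberwise finite presentation to a geometric statement about the total morphism $G$ requires a careful use of the quasi-affine diagonal hypothesis and some engagement with the mapping-stack construction of $\D_{M/B}$ in \eqref{Eq:DefSp}.
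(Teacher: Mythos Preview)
Your proposal is correct and matches the paper's approach: geometricity of $\D_{M/B}$ comes from the representability result of \cite{HP} under the quasi-affine diagonal hypothesis, and non-degeneracy is checked fiberwise over $\AA^1$ via Propositions~\ref{Prop:DefLF} and~\ref{Prop:LFZero}. For step (i) you can simplify: once $\D_{M/B}\to B\times\AA^1$ is a derived Artin stack locally of finite presentation, finite presentation of $G$ follows directly from the 2-out-of-3 property for lfp morphisms (since $M\times\AA^1\to B\times\AA^1$ is already lfp), so no fiberwise argument is needed there.
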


\begin{proof}
The geometricity of the deformation space $\D_{M/B}$ follows from \cite[Thm.~5.1.1]{HP}.\footnote{We refer to \cite{HKR} for the geometricity of $\D_{M/B}$ without the quasi-affine diagonal assumption.}
The non-degeneracy of the deformed $2$-form $\Def(\theta)$ follows from Proposition \ref{Prop:DefLF} (and Proposition \ref{Prop:LFZero}) since it can be checked fiberwise.
\end{proof}

Since the locked forms can be viewed as functions on the deformation spaces (Proposition \ref{Prop:DRviaDefSp}),
Proposition \ref{Prop:DefLF} can be lifted to a geometric statement involving the {\em double deformation space},
\[\DD_{M/B}:=\D_{M\times\AA^1/\D_{M/B}}.\]
We observe that the double deformation space $\DD_{M/B}$ is the {\em doubling}
of the ordinary deformation space $\D_{M/B}$ under the {\em multiplication map},\footnote{Note that the pushfoward along $B\times\AA^1/\GG_m\times\AA^1/\GG_m \to B\times\AA^1/\GG_m$ is equivalent to the diagonal functor $(-)^{\diag}:\QCAlg_B^{\fil,\fil} \to \QCAlg_B^{\fil}$ where $\Fil^pC^\diag:=\Fil^p_I\Fil^p_{II}C$, and the pullback is equivalent to the doubling $(-)^{\doub}:\QCAlg_B^{\fil} \to \QCAlg_B^{\fil,\fil}$ where $\Fil^p_I\Fil^q_{II}C^{\doub}:=\Fil^{\max(p,q)}C$.} 
\[\mu :B\times \AA^1 \times \AA^1 \to B\times \AA^1 : (b,x,y)\mapsto (b,xy).\]

\begin{lemma}[Double deformation spaces]\label{Lem:DD}
Given a geometric morphism of derived stacks $g:M \to B$, we have a canonical fiber square of derived stacks
\begin{equation}\label{Eq:DD}
\xymatrix{
 \DD_{M/B} \ar@{.>}[r]^{\scD} \ar[d] \cart & \D_{M/B} \ar[d] \\
B\times\AA^1\times\AA^1 \ar[r]^-{\mu} & B\times\AA^1.
}\end{equation}
Moreover, this diagram is $\GG_m\times\GG_m$-equivariant with the canonical actions in the left two objects and the induced actions on the right two objects via the multiplication map $\mu:\GG_m\times\GG_m \xrightarrow{}\GG_m$.
\end{lemma}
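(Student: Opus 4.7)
The plan is to establish the fiber square by comparing $T$-points for $T\in\dAff$ and then upgrade to the $\GG_m\times\GG_m$-equivariant statement via a weight check.

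Unwinding the left-hand side, a $T$-point of $\DD_{M/B}=\D_{M\times\AA^1/\D_{M/B}}$ consists, by two applications of the universal property in \eqref{Eq:DefSp}, of the following data: (i) a $T$-point $f:T\to\D_{M/B}$, equivalent to a triple $(b,x,h)$ with $b:T\to B$, $x:T\to\AA^1$, and $h:T_{x=0}\to M$ over $b|_{T_{x=0}}$, where $T_{x=0}:=T\times_{x,\AA^1,0}\Spec\C$; (ii) a second coordinate $y:T\to\AA^1$; and (iii) a morphism $T_{y=0}\to M\times\AA^1$ lifting the restriction $f|_{T_{y=0}}$ along $G:M\times\AA^1\to\D_{M/B}$. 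Since $G$ is the canonical deformation morphism, whose associated $\D_{M/B}$-data is $(g\circ\pr_M,\pr_{\AA^1},\id_M)$, this lift unpacks into a morphism $m:T_{y=0}\to M$ over $b|_{T_{y=0}}$ together with an equivalence $m|_{T_{x=y=0}}\simeq h|_{T_{x=y=0}}$.

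Unwinding the right-hand side, a $T$-point of $\D_{M/B}\times_{\mu,B\times\AA^1}(B\times\AA^1\times\AA^1)$ is a triple $(b,x,y)$ together with a $T$-point of $\D_{M/B}$ whose underlying morphism to $B\times\AA^1$ is $(b,xy)$; this is a single morphism $k:T_{xy=0}\to M$ over $b|_{T_{xy=0}}$. To identify the two sides, the essential technical input is the derived pushout
\[T_{xy=0}\simeq T_{x=0}\sqcup_{T_{x=y=0}}T_{y=0}\]
in derived $T$-schemes, which is local along $(x,y):T\to\AA^1\times\AA^1$; on $\AA^2=\Spec\C[x,y]$ it reduces to the algebra equality $\C[x,y]/(xy)\simeq \C[y]\times_\C\C[x]$, combined with the Koszul vanishing $\mathrm{Tor}_1^{\C[x,y]}(\C[x],\C[y])=0$, which guarantees that the derived intersection $T_{x=y=0}$ agrees with the classical one. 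Under this pushout identification, a morphism $k:T_{xy=0}\to M$ is precisely the pair $(h,m)$ equipped with the coherence produced in step (iii), which yields the fiber square at the level of functors of points, hence as derived stacks.

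For $\GG_m\times\GG_m$-equivariance, the two factors act on the coordinates $x$ and $y$ independently on both sides, and the pushout identification is manifestly equivariant for these scalings. The induced action on the $B\times\AA^1$-target under the bottom arrow $\mu$ is $(\lambda_1,\lambda_2)\cdot(b,t)=(b,\lambda_1\lambda_2 t)$, which is precisely the action on $\D_{M/B}$ obtained by restricting its canonical weight-$1$ action along the group multiplication $\GG_m\times\GG_m\to\GG_m$. The main obstacle I anticipate is the bookkeeping in step (iii)---carefully tracking the coherence data generated by the iterated universal property and matching it with the pushout coherence---rather than the derived pushout lemma itself, which is a standard Koszul calculation.
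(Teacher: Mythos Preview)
Your proposal is correct and follows essentially the same approach as the paper's proof. Both arguments hinge on the derived pushout identification
\[
\{xy=0\}\;\simeq\;\{x=0\}\amalg_{\{0\}}\{y=0\}
\]
in derived schemes (your $T_{xy=0}\simeq T_{x=0}\sqcup_{T_{x=y=0}}T_{y=0}$, the paper's pushout square for $B\times(\AA^1\times 0\cup 0\times\AA^1)$, citing \cite[Thm.~5.6.4]{LurDAG}); the only difference is that you unwind $T$-points while the paper works directly with the internal mapping stacks $\uMap_{B\times\AA^2}(-,M\times\AA^2)$, assembling the same data into the fiber square \eqref{Eq:3} before invoking the pushout.
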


\begin{proof}
Note that the projection $p:\DD_{M/B}:=\D_{M\times\AA^1/\D_{M/B}} \to \D_{M/B} \times\AA^1$ fits into a fiber square (analogous to \eqref{Eq:D_U/E} in Lemma \ref{Lem:D_U/E}),
\begin{equation}\label{Eq:3}
\xymatrix{
\uMap_{\D_{M/B}\times \AA^1}(\D_{M/B}\times 0,M \times \AA^2) \ar[r]^-p \ar[d] \cart & \uMap_{B\times\AA^2}(B\times 0\times \AA^1,M \times \AA^2) \ar[d] \\
\uMap_{B\times\AA^2}(B\times \AA^1 \times 0,M \times \AA^2) \ar[r] & \uMap_{B\times\AA^2}(B\times \AA^1\times 0, \D_{M/B}\times \AA^1),
}\end{equation}
where
$\uMap_{B\times\AA^2}(B\times \AA^1\times 0, \D_{M/B}\times \AA^1)  \simeq \uMap_{B\times\AA^2}(B\times 0, M\times \AA^2)$.
Since 
\[\xymatrix{
B\times (\AA^1\times 0\cup 0\times\AA^1) & B\times0\times\AA^1 \ar[l]\\
B\times\AA^1\times 0 \ar[u] & B \times 0 \ar[l] \ar[u]
}\]
is a {\em pushout square} in the category of derived Artin stacks \cite[Thm.~5.6.4]{LurDAG}, we have canonical equivalences
\[\DD_{M/B} \simeq \uMap_{B\times\AA^2}(B\times (\AA^1\times 0\cup 0\times\AA^1),M\times \AA^2) \simeq \mu^*\uMap_{B\times\AA^1}(B\times0,M\times\AA^1).\]
It completed the proof.
\end{proof}


\begin{proof}[Proof of Proposition \ref{Prop:DefLF}]
Consider the ``second-coordinate'' $\GG_m$-action on $\DD_{M/B}$, i.e., $\GG_m$-action induced by $\uMap_{\D_{M/B}\times\AA^1/\GG_m}(\D_{M/B}\times B\GG_m, M \times \AA^1 \times \AA^1/\GG_m).$
Let
\[\Def:=\scD^* : \sA^{p,\lc}(M/B,d) \to \sA^{p,\lc}(M\times\AA^1/\D_{M/B},d)\]
be the pullback of weight $p$ functions along the map $\scD:\DD_{M/B}\to\D_{M/B}$ in \eqref{Eq:DD}.
Here locked forms are identified to equivariant functions via Proposition \ref{Prop:DRviaDefSp}.
Since the special fiber of the above fiber square \eqref{Eq:3} is the fiber square \eqref{Eq:D_U/E} in Proposition \ref{Prop:LFZero} for $\T_{M/B}[1]$, we have $0^* \circ \Def \simeq \Sp$ as desired.
\end{proof}


\begin{remark}[Functoriality]\label{Rem:SDFunct}
If $L : M \dashrightarrow N$ is a $w$-locked Lagrangian correspondence of $w$-locked symplectic fibrations $M\to B$ and $N\to B$, then
\[\xymatrix@R-.5pc{
& L \times\AA^1 \ar[ld] \ar[rd] \ar[d] &\\
M \times \AA^1 \ar[d] & \D_{L/B} \ar[ld] \ar[rd]& N \times \AA^1 \ar[d] \\
\D_{M/B} && \D_{N/B}
}\]
is a {\em relative version} of a Lagrangian correspondence, i.e. the induced correspondence
\[L\times \AA^1 : (M\times\AA^1)\times_{\D_{M/B}}\D_{L/B} \dashrightarrow (N\times\AA^1)\times_{\D_{N/B}}\D_{L/B}\]
is a Lagrangian correspondence of symplectic fibrations over $\D_{L/B}$.
\end{remark}

\subsection{Characteristic properties}\label{ss:VLC}

In this subsection, we show that the virtual Lagrangian cycles for $(-2)$-symplectic fibrations are the unique bivariant classes over the exact loci satisfying natural functorial properties.
The symplectic deformations (in \S\ref{ss:SD}) allows us to reduce the statement to the symplectic zero loci (in \S\ref{ss:SZ}).

To construct virtual Lagrangian cycles from $(-2)$-symplectic fibrations, we additionally need {\em orientations}.\footnote{We follow the sign conventions in \cite{KM,OT}; especially, for a perfect complex $E$ on $M$, we use the pairing $\det(E)\otimes \det(E\dual) \simeq \O_M$ in \cite[Eq.~(8), (57)]{OT} for the choice of $\det(E\dual)\simeq \det(E)\dual$.}

\begin{itemize}
\item An {\em orientation} of a $(-2)$-shifted locked symplectic fibration $g:M \to B$ is an equivalence $\ori_M:\O_M \simeq \det(\TT_{M/B}[1])$ such that
\[\qquad\ori^2_M:=(\O_M \xrightarrow[\ori_M]{\simeq}\det(\TT_{M/B}[1]) \xrightarrow[\theta_M]{\simeq} \det(\LL_{M/B}[-1])\xrightarrow[\ori_M\dual]{\simeq}\O_M) \simeq (-1)^{m(m-1)/2}\] 
where $m:=\rank(\TT_{M/B})$. 
This is an analog of \cite[Def.~2.1]{OT}.
\item We say that a Lagrangian correspondence $L:M \dashrightarrow N$ of oriented $(-2)$-shifted locked symplectic fibrations over $B$ is {\em oriented} if
\[\qquad\ori_M\cdot\ori_N:=\left(
\begin{matrix}
\O_L \xrightarrow[\ori_M\otimes \ori_N]{\simeq}\det(\TT_{M/B}[1]|_L)\otimes \det(\TT_{N/B}[1]|_L) \\
\qquad\qquad\xrightarrow[\eqref{Eq:LagCorr}]{\simeq} \det(\TT_{L/B}[1])\otimes\det(\LL_{L/B}[-1])\xrightarrow[\OT]{\simeq}\O_L
\end{matrix}
\right) \simeq (-\sqrt{-1})^l,\]
where $l:=\rank(\TT_{L/B})$.
This is an analog of \cite[Eq.~(18)]{OT}.
\end{itemize}
Note that the symplectic pushforwards and pullbacks of oriented symplectic fibrations (via Theorem \ref{Thm:SP}) carry induced orientations.
Indeed, there are obvious choices for the pullbacks;
for pushforwards $p_*N\to B$ of $N \to U$, we use the orientations that make the counit Lagrangian correspondences $p^*p_*N \dashrightarrow N$ oriented.


We use the Chow groups in \cite{Kre};
for any derived $1$-Artin stack $M$, denote by $A_*(M):=A_*(M_\cl,\Q)$ the Chow group of its classical truncation $M_\cl$ with rational coefficients.
Thus we consider the following technical assumptions.\footnote{If we use the motivic Borel-Moore homology in \cite{Khan} as our intersection theory and the representability result in \cite{HKR}, then the assumptions (A2) and (A3) can be removed (using the arguments in \cite[Ap.~B]{BKP}).
It is desirable to remove the assumption (A1) as well, but the author does not know how to do so.}

\begin{assumption}\label{Assumption:VLC}
Let $g:M \to B$ be a morphism of derived stacks satisfying:
\begin{enumerate}
\item [(A1)] $M_\cl$ is the quotient stack of a quasi-projective scheme by a linear action of a linear algebraic group;
\item [(A2)] $B_\cl$ is an $1$-Artin stack of finite type with affine stabilizers;
\item [(A3)] $g_\cl:M_\cl \to B_\cl$ is of Deligne-Mumford type (i.e. $\LL_{M/B}$ is connective).
\end{enumerate}
\end{assumption}

\begin{theorem}[Virtual Lagrangian cycles]\label{Thm:VLC}

Let $g:M \to B$ be an oriented locked $(-2)$-symplectic fibration satisfying Assumption \ref{Assumption:VLC} and $w:B \to \AA^1$ be the underlying function.
Then there exists a canonical map
\[[M/B]^\lag : A_*(\Zero(w)) \to A_{*+\frac12\rank(\TT_{M/B})}(M),\] 
satisfying the following properties:
\begin{enumerate}
\item [(P1)] (Base change I: bivariance)
Consider a pullback square
\[\xymatrix{
p^*M \ar[r]^-{p_M} \ar[d] \cart & M \ar[d]^{g}\\ 
U \ar[r]^-{p} & B
}\]
and denote by $p_Z:  \Zero(w|_U) \to \Zero(w)$ the restriction of $p:U \to B$.
\begin{enumerate}
\item If $p_\cl$ is projective, then we have
\[ [M/B]^\lag \circ (p_Z)_* = (p_M)_* \circ [p^*M/U]^\lag.\]
\item If $p$ is quasi-smooth (and $p_\cl$ is quasi-projective), then we have
\[p_M^! \circ [M/B]^\lag =[p^*M/U]^\lag \circ p_Z^!, \]
where $(-)^!$ denotes the quasi-smooth (virtual) pullbacks \cite{BF,Man}.
\end{enumerate}
\item [(P2)]  (Base change II) Consider a pushforward square (Remark \ref{Rem:bexLagCorFib})
\[\xymatrix{
p_*N  \ar[r]^-{r} \ar[d]  & N\ar[d]^{h}\\ 
B & \ar[l]_-{p} U
}\]
for an oriented $w|_U$-locked symplectic fibration $h:N \to U$ satisfying Assumption \ref{Assumption:VLC}.
If $p$ is smooth (and $p_\cl$ is quasi-projective), then $r_\cl$ is an equivalence and we have
\[[p_*N/B]^\lag =[N/U]^\lag \circ p_Z^!.\]
\item [(P3)] (Functoriality) Consider an oriented Lagrangian correspondence
\[\xymatrix{
& C\ar[ld]_g \ar[rd]^f \\
M && L
}\]
of oriented locked $(-2)$-symplectic fibrations $M,L \to B$ satisfying Assumption \ref{Assumption:VLC}.
If $f$ is quasi-smooth and $g_\cl$ is an isomorphism, then we have
\[[M/B]^\lag = f^! \circ [L/B]^\lag.\]
\end{enumerate}
Moreover, the maps $[M/B]^\lag$ are uniquely determined by the above properties.
\end{theorem}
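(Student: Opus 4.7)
My plan is to reduce both existence and uniqueness to the tautological case of the symplectic zero section $0^\symp_\E : U \to \E := \Tot(E)$ inside an orthogonal bundle $E$, where the localized square root Euler class $\sqrt{e}(E|_\E, \tau) : A_*(Q\E) \to A_*(U)$ of \cite{OT} plays the role of $[U/\E]^\lag$ and where the three characteristic properties (P1)--(P3) are already established in \cite{KP}. The two reduction mechanisms will be the symplectic deformation of Corollary \ref{Cor:SD} (deforming $g: M \to B$ to the zero section $0_M: M \to \T_{M/B}[1]$) and the local structure theorem of Corollary \ref{Cor:ELS}(1) specialized to $d=-2$ (trivializing the $(-2)$-shifted symmetric complex $\T_{M/B}[1]$ by a genuine orthogonal bundle in an étale neighborhood), combined with the functoriality of symplectic zero sections (Lemma \ref{Lem:FunctZeroSection}).

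For uniqueness, given two assignments $[-]_1, [-]_2$ satisfying (P1)--(P3), I plan to proceed in three steps. First, Corollary \ref{Cor:SD} yields an oriented locked symplectic fibration $G: M \times \AA^1 \to \D_{M/B}$ whose restriction at $\zeta = 1$ is $(g,\theta)$ and whose restriction at $\zeta = 0$ is the zero section of the $(-2)$-shifted symmetric complex $\T_{M/B}[1]$. Both fiber inclusions are regular closed embeddings of codimension one, so applying (P1)(b) to both inclusions and invoking the Fulton--MacPherson specialization principle for the $\AA^1$-family $\D_{M/B}$ reduces the comparison of $[M/B]^\lag_1$ and $[M/B]^\lag_2$ to the analogous comparison for the zero section $0_M$. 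Second, Corollary \ref{Cor:ELS}(1) supplies an étale cover $U \to M$ and an orthogonal bundle $E$ on $U$ with $\T_{M/B}[1]|_U \simeq E$, while Lemma \ref{Lem:FunctZeroSection} produces an oriented Lagrangian correspondence between $0_M|_U$ and the tautological $0^\symp_\E$, so (P3) reduces the problem further to $[U/\E]^\lag$. Third, in this tautological situation the required uniqueness is exactly the content of \cite{KP}. Étale descent of rational Chow groups, accessed via (P1)(a) for the effective epimorphism $U \to M$, then patches local agreement into global agreement.

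For existence, I would run the above reduction in reverse. Locally on $M$ I take an étale chart $\SZ_{U/B}(E, s) \to M$ from Corollary \ref{Cor:ELS}(1) (simplified by Remark \ref{Rem:ZLS} in the scheme case and by the smooth cover of Proposition \ref{Prop:SttoSch} in the stacky case) and declare the local virtual Lagrangian cycle to be the Oh--Thomas class $\sqrt{e}(E, s)$ of \cite{OT}. Any two such charts admit a common étale refinement on which the two square root Euler classes must coincide by \cite{KP}, yielding a well-defined global bivariant class. Properties (P1)--(P3) are then verified by transporting each through the same local presentation; compatibility of symplectic deformations and symplectic pushforwards with base change (Lemma \ref{Lem:SP_Functoriality} and Remark \ref{Rem:SDFunct}) reduces each clause to a corresponding statement for $\sqrt{e}$.

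The main difficulty I anticipate is the interplay between the deformation-to-the-normal-bundle argument and the non-exactness of $w$. The underlying function $[\Def(\theta)]$ on $\D_{M/B}$ restricts to $w$ at $\zeta = 1$ and to the quadratic form $\scZ([\theta^2])$ at $\zeta = 0$, so the source of the deformed virtual Lagrangian cycle is $\Zero([\Def(\theta)])$, a family of supports interpolating between $\Zero(w)$ and $Q\T_{M/B}[1]$; verifying that the specialization principle applies to this non-trivial family without any global exactness assumption on $w$ is the technical heart of the matter, and is precisely what makes restriction of the source to $\Zero(w)$ essential rather than cosmetic. A secondary bookkeeping difficulty is tracking the orientations and the Oh--Thomas sign normalizations $(-\sqrt{-1})^l$ consistently through the symplectic deformation and through the replacement of $\T_{M/B}[1]$ by a concrete orthogonal bundle, since each reduction step requires the corresponding Lagrangian correspondence to be oriented in the precise sense used above.
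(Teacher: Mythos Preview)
Your three-step strategy for uniqueness (deform to the normal bundle via Corollary~\ref{Cor:SD}, replace the symmetric complex by an orthogonal bundle, invoke the local uniqueness of $\sqrt{e}$ from \cite{KP}) is exactly the paper's strategy, and your identification of the non-trivial family of supports $\Zero([\Def(\theta)])$ as the technical heart is accurate. However, your execution of Step~2 differs from the paper's in a way that creates a gap.

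The paper does \emph{not} use the {\'e}tale local structure theorem (Corollary~\ref{Cor:ELS}) to pass from the symmetric complex $E:=\TT_{M/B}[1]$ to an orthogonal bundle. Instead it chooses a \emph{global} symmetric resolution: a maximal isotropic correspondence $D : E \dashrightarrow F$ with $D \to E$ smooth surjective and $D \hookrightarrow F$ a closed embedding into an orthogonal bundle $F$, which exists precisely because Assumption~\ref{Assumption:VLC}(A1) forces $M_\cl$ to have the resolution property. Lemma~\ref{Lem:FunctZeroSection} then yields a symplectic pushforward square and a pullback square relating $0^\symp_E$, $b^*0^\symp_F$, and $0^\symp_F$, and the paper applies (P2) together with (P1) (not (P3) directly) to obtain
\[
[0^\symp_E]^\lag \;=\; [0^\symp_F]^\lag \circ (QD \to QF)_* \circ (QD \to QE)^*.
\]
The same formula, precomposed with the localized specialization $\sp^\loc_{M/B}$, is then taken as the \emph{definition} of $[M/B]^\lag$; properties (P1)--(P3) are verified globally by citing \cite{OT,Park}, with no gluing required.

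Your {\'e}tale approach runs into trouble at the descent step. Property (P1)(a) concerns projective base changes $p:U\to B$ of the \emph{base}, not covers of $M$, and an {\'e}tale cover of $M$ is in general neither projective nor expressible as such a base change; so ``{\'e}tale descent of rational Chow groups, accessed via (P1)(a)'' does not literally apply. Moreover, Corollary~\ref{Cor:ELS}(1) does not directly give $\TT_{M/B}[1]|_U \simeq E$ for an orthogonal bundle $E$ on an {\'e}tale chart; it gives a local model for $M \to B$, which is a different statement. What you actually need in Step~2 is a symmetric resolution of the complex $\TT_{M/B}[1]$, and Assumption~(A1) lets you take one globally, eliminating the need for any descent argument at all. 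The same remark applies to your existence argument: rather than gluing local Oh--Thomas classes (which would require controlling Chow-theoretic descent along non-proper {\'e}tale covers), the paper writes down a single global formula.
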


We note that the base change property (P2) follows from the other properties.

\begin{remark}[Base change II is superfluous]\label{Rem:P2}
In the situation of (P2), 
we have the counit Lagrangian correspondence
\[\xymatrix{
& p_*N\ar[ld]_s \ar[rd]^t \\
p^*p_*N && N,
}\]
such that $s:p_*N \to p^*p_*N$ is quasi-smooth and  $t_\cl:(p_*N)_\cl\to N_\cl$ is an isomorphism.
Hence the functoriality (P3) gives us
\[[N/U]^\lag= (p_*N \to p^*p_*N)^! \circ [p^*p_*N/U]^\lag.\]
By the bivariance $(P2)$, the compositions with the smooth pullback $p_Z^!$ give us
\begin{align*}
[N/U]^\lag \circ p_Z^!
&= (p_*N \to p^*p_*N)^! \circ [p^*p_*N/U]^\lag \circ  p_Z^! \\
&=(p_*N \to p^*p_*N)^! \circ (p^*p_*N \xrightarrow{} p_*N)^!\circ [p_*N/B]^\lag = [p_*N/B]^\lag.
\end{align*}
\end{remark}

It is necessary to consider the {\em exact loci} $\Zero(w)$,\footnote{Recall from \S\ref{ss:DR} that the underlying function $w: B \to \AA^1$ of a locked $(-2)$-symplectic fibration $g:M \to B$ is the obstruction to exactness. Hence giving exact structures on the base changes $p^*M \to U$ along $p:U \to B$ is equivalent to giving factorizations $U \to \Zero(w)$ of $p:U \to B$.} instead of $B$, in Theorem \ref{Thm:VLC}.

\begin{remark}[Counterexample without exactness]\label{Rem:ExactNecessary}
Consider a special orthogonal bundle $E=F \oplus F\dual$ over a scheme $U$ and the symplectic zero section (\S\ref{ss:SZ})
\[(0_E^\symp : U \to E) \in \Symp_{E,-2}^{\q_E}.\]
If Theorem \ref{Thm:VLC} holds with $\Zero(w)$ replaced by $B$, then we should have a map
$[U/E]^\lag : A_*(E) \to A_{*-r}(U)$ satisfying the following equation:
\begin{equation}\label{Eq:4}
e(E) \circ [U/E]^\lag \circ \pi_E^! = e(F)  : A_*(U) \to A_{*-\rank(F)}(U),
\end{equation}
where $e(E)$ is the top Chern of $E$ and $\pi_E^!$ is the smooth pullback of the projection $\pi_E:E \to U$.
Indeed, we have a fiber square and a Lagrangian correspondence
\[\xymatrix{
E[-1] \ar[r] \ar[d] \cart & U \ar[d]^{0_E} \\
U \ar[r]^{0_E} & E,
}\qquad\qquad
\xymatrix{
& F[-1] \ar[ld] \ar[rd] & \\
E[-1] && U.
}
\]
By the bivariance (P1)(b), we have $e(E) \circ [U/E]^\lag = [E[-1]^\lag/U] \circ 0_E^!$;
by the functoriality (P3), we have $[E[-1]^\lag/U] = e(F)$.
The compositions with $\pi_E^!$ give us the claimed formula \eqref{Eq:4}.
However, there is a simple counterexample to this formula \eqref{Eq:4}:
if $U=\PP^1$ and $F=\O_{\PP^1}(1)$,
then $e(E)=0$, but $e(F)\neq 0$.
\end{remark}

We discuss how Theorem \ref{Thm:VLC} is related to its classical shadow in \cite{Park}.

\begin{remark}[Symmetric obstruction theories]\label{Rem:SOT}
Given a $(-2)$-symplectic fibration $g:M \to B$, the canonical map
\[\LL_{M/B}|_{M_\cl} \to \LL_{M_\cl/B_\cl} \to  \LL_{M_\cl/B_\cl} ^{\geq-1}\]
has $2$-connective cofiber by \cite[Cor.~7.4.3.2]{LurHA}.
Equivalently, $H^i$ are bijective for $i\geq 0$ and is surjective for $i=-1$.
The above map is called a {\em symmetric obstruction theory} in \cite[Def.~1.9]{Park} (when $M$ and $B$ are derived schemes).

If the symplectic form on $g:M \to B$ admits an exact structure, then the intrinsic normal cone $\fC_{M/B}:=\fC_{M_\cl/B_\cl}$ in \cite{BF,AP} is {\em isotropic} (cf. \cite[Def.~1.10]{Park}), that is,
\[(\fC_{M/B}  \hookrightarrow \T_{M/B}[1] \xrightarrow{\q} \AA^1) \simeq 0,\]
where $\q$ is the quadratic function induced by the underlying $2$-form.
This follows from Proposition \ref{Prop:DRviaDefSp} since $\fC_{M/B}$ is the flat limit inside the classical deformation space \cite{Ful,Kre,AP}.
This isotropic condition is needed for the construction of $[M/B]^\lag : A_*(B) \to A_*(M)$ in \cite{Park}.


A Lagrangian correspondence $C:M \dashrightarrow L$ gives rise to a commutative diagram
\[\xymatrix{
\TT_{C/B}[2] \ar[r] \ar[d] \cart & \TT_{L/B}[2]|_C \simeq \LL_{L/B}|_C \ar[r] \ar[d] & \LL_{C/M} \ar@{=}[d]\\
\TT_{M/B}[2]|_C \simeq \LL_{M/B}|_C \ar[r] & \LL_{C/B} \ar[r] & \LL_{C/M}.
}\]
When $C \to L$ is quasi-smooth and $C_\cl \simeq L_\cl$, the above morphism of cofiber sequences gives us the compatibility condition in \cite[Def.~2.1]{Park}, needed for the functoriality.
\end{remark}

Theorem \ref{Thm:VLC} is a relative version of the virtual cycles for Donaldson-Thomas theory of Calabi-Yau $4$-folds introduced in \cite{BJ,OT}.\footnote{We will follow the construction of virtual Lagrangian cycles in \cite{Park} which is obviously equivalent to \cite{OT} in the absolute setting. However, the comparison of \cite{OT} and \cite{BJ} is not obvious; it is shown in \cite{OT2} through a beautiful but complicated proof. Thus it is desirable to prove the comparison of \cite{OT} with \cite{BJ} (and also \cite{Pri2}) via the uniqueness in Theorem \ref{Thm:VLC}.}

\begin{remark}[Deformation invariance issue]\label{Rem:BJ/OT}
Already in \cite{BJ,OT}, relative versions of virtual Lagrangian cycles are presented;
in the enumerative geometry perspective, the deformation invariance of the virtual Lagrangian cycles are stated.
However, there are some mistakes.
As we observed in Remark \ref{Rem:ExactNecessary}, without assuming the exactness of symplectic forms, it is not possible to define functorial bivariant classes over the entire bases of $(-2)$-symplectic fibrations.

More specifically, \cite{BJ,OT} used an imprecise form of relative Darboux theorem.
By Corollary \ref{Cor:ELS}, locked\footnote{All $(-2)$-symplectic fibrations are locked, formally locally on the bases by Proposition \ref{Prop:Locking}.} $(-2)$-symplectic fibrations are locally the (symplectic) zero loci of sections of orthogonal bundles that are {\em not necessarily isotropic}.
However, in \cite[Thm.~3.22]{BJ} and \cite[pp.~35-36]{OT}, these sections are assumed to be isotropic, which is equivalent to assume that the symplectic forms are exact.
Unlike the absolute case (i.e. $B=\Spec(\C)$), there are many non-exact $(-2)$-symplectic forms in the relative setting (e.g. zero sections of orthogonal bundles in \S\ref{ss:SZ}).

Theorem \ref{Thm:VLC} (and Remark \ref{Rem:ExactNecessary}) says that the virtual Lagrangian cycles are deformation invariant along the exact loci rather than along the entire bases.
\end{remark}


Before proving Theorem \ref{Thm:VLC}, we first consider the {\em local model}.
Let $E$ be an orthogonal bundle over a classical Artin stack $U$.
The zero locus $\Zero(s)$ of a section $s: U \to E$ has a canonical $(-2)$-symplectic form over $U$ (Definition \ref{Def:SympZero}); we have
\[\SZ(E,s):=\SZ_{U/U}(E,s) \in \Symp_{U,-2}^{s^2}.\]
Moreover, given an isotropic subbundle $K \subseteq E$ with $s \cdot K=0$, 
we have induced sections $s_1\in \Gamma(U,K^\perp)$ and $s_2 \in \Gamma(U,K^\perp/K)$ with a Lagrangian correspondence
\begin{equation}\label{Eq:5.1-LagCorrSZ}
\Zero(K^\perp,s_1) : \SZ(E,s) \dashrightarrow \SZ(K^\perp/K,s_2),
\end{equation}
such that the classical truncation of $\Zero(K^\perp,s_1) \to \SZ(E,s)$ is an equivalence and $\Zero(K^\perp,s_1) \to \SZ(K^\perp/K,s_2)$ is quasi-smooth.
Indeed, we have a maximal isotropic correspondence (in the sense of \eqref{Eq:MaxIsoCorr} in \S\ref{ss:SZ})
\[K^\perp : E \dashrightarrow K^\perp/K,\]
and the claimed Lagrangian correspondence \eqref{Eq:5.1-LagCorrSZ} can by obtained by pulling back the Lagrangian correspondence \eqref{Eq:LagCorrSympZeroSections} in Lemma \ref{Lem:FunctZeroSection} by the section $s_1:U \to K^{\perp}$.

\begin{proposition}[Local model]\label{Prop:VLC-Local}
Let $U$ be 
the quotient stack of a quasi-projective scheme by a linear action of a linear algebraic group,
$E$ be a special orthogonal bundle over $U$,
and $s\in \Gamma(U,E)$ be a section.
Then there exists a canonical map
\[\sqrt{e}(E,s) : A_*(\Zero(s^2)) \to A_{*-\frac12\rank{E}}(\Zero(s))\]
satisfying the following properties:
\begin{enumerate}
\item (Bivariance) $\sqrt{e}(E,s) $ commutes with projective pushforwards and (quasi-projective) lci pullbacks (along base changes of $U$).
\item (Reduction formula) If $K \subseteq E $ is an isotropic subbundle with $s\cdot K=0$, then
\[\sqrt{e}(E,s) = (\Zero(K^{\perp},s_1) \to \Zero(K^{\perp}/K,s_2))^!\circ \sqrt{e}(K^\perp/K,s_2). 
\]
\end{enumerate}
Moreover, the maps $\sqrt{e}(E,s)$ are uniquely determined by the above properties.
\end{proposition}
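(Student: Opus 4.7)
The plan is to define $\sqrt{e}(E,s)$ as the refined square root Euler class of Oh--Thomas \cite{OT}, whose bivariance, functoriality, and uniqueness in the absolute setting are established in \cite{KP}; the extension to the quotient-stack setting of Assumption \ref{Assumption:VLC}(A1) is handled by equivariant intersection theory \cite{EG, Kre}. Concretely, starting from the Edidin--Graham class $\sqrt{e} \in A^{r}(BSO(2r))$ with $r := \frac12 \rank(E)$, the section $s$ together with the identity $s^2 \in \Gamma(U,\O_U)$ provides a cosection-type localization to a bivariant class in $A^{r}(\Zero(s) \to \Zero(s^2))$, which we interpret as the desired map.

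Property (1) (bivariance) is built into the construction, since refined Gysin pullbacks and proper pushforwards \cite{Kre, Man} are bivariant and the Edidin--Graham class is a characteristic class pulled back from $BSO(2r)$. Property (2) (reduction) follows from the multiplicativity $\sqrt{e}(E) = e(K\dual) \cdot \sqrt{e}(K^\perp/K)$ on $BSO(2r)$ associated to an isotropic decomposition; localizing this identity via $s$ and using the condition $s \cdot K = 0$ to identify the first factor with the virtual pullback $(\Zero(K^\perp, s_1) \to \Zero(K^\perp/K, s_2))^!$ of the Lagrangian correspondence \eqref{Eq:5.1-LagCorrSZ} yields the reduction formula.

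For uniqueness, I would induct on $r$. The base case $r=0$ forces $\sqrt{e}(0,0) = \id$ by bivariance. For the inductive step, I first use bivariance (P1) to pass to an étale cover of $U$; since special orthogonal bundles are étale-trivial, we may assume $E$ admits a nowhere-vanishing isotropic line $K \cong \O_U$. On $\Zero(s^2)$, where $s$ is fiberwise isotropic, an étale further localization lets us arrange $K$ so that $s \cdot K = 0$ (taking $K$ to be an isotropic extension of the line spanned by $s$ on the locus where $s \neq 0$, and arbitrary otherwise). The reduction formula then expresses $\sqrt{e}(E,s)$ in terms of $\sqrt{e}(K^\perp/K, s_2)$ on a bundle of rank $2r-2$, which is determined by the inductive hypothesis.

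The hard part will be showing that the étale-local reduction is independent of the chosen isotropic line $K$, so that the étale-locally defined classes glue. This is handled by a deformation-to-the-normal-cone argument as in \cite{KP}: the space of valid isotropic lines $K$ with $s \cdot K = 0$ forms a connected variety (generically an affine bundle over the relevant base), along which the virtual pullbacks in the reduction formula are shown to be constant by the homotopy invariance of bivariant classes \cite{Kre}. Combined with the bivariance of $\sqrt{e}(E,s)$ under étale covers, this yields the desired uniqueness.
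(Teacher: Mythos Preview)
Your construction and verification of properties (1) and (2) via \cite{OT,KP} are fine; the paper does the same. The uniqueness argument, however, has a genuine gap.

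You propose to pass to an \'etale cover of $U$ to trivialize $E$ (or to find an isotropic line $K$ with $s\cdot K=0$). Bivariance lets you \emph{pull back} along such a cover, but to conclude uniqueness on $U$ from uniqueness on the cover you would need some form of \'etale descent for Chow groups, which is not available. Your proposed fix---arguing that the space of valid isotropic lines is connected and invoking homotopy invariance---does not address this: even if the class is well-defined on each piece of a cover, there is no mechanism to glue or to push it back down. The same objection applies to your plan to ``\'etale further localize'' according to whether $s$ vanishes.

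The paper's uniqueness argument avoids \'etale covers entirely and works only with maps for which Chow groups do have the required exactness properties. First, Totaro approximation reduces from the quotient stack to a quasi-projective scheme (homotopy invariance). Second, to reduce to $s=0$: after restricting to $\Zero(s^2)$ one may assume $s$ is isotropic, then \emph{blow up} $U$ along $\Zero(s)_\cl$; on the blowup $V$ the line bundle $\O_V(D)$ is an isotropic subbundle of $E|_V$ containing $s|_V$, so the reduction formula applies, and the blowup exact sequence $A_*(D)\to A_*(V)\oplus A_*(\Zero(s))\to A_*(U)\to 0$ (a projective pushforward statement) transports uniqueness back to $U$. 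Third, to find a maximal isotropic subbundle, pull back along the \emph{isotropic flag variety} $F\to U$, which is a tower of smooth quadric bundles and in particular proper surjective; Kimura's proper codescent $A_*(F\times_U F)\to A_*(F)\to A_*(U)\to 0$ then determines the class on $U$. With a maximal isotropic $K$ and $s=0$, the reduction formula forces $\sqrt{e}(E,0)=e(K)$.
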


The map $\sqrt{e}(E,s)$ is constructed in \cite{OT} and the bivariance and reduction formula are shown in \cite[Lem.~4.4, Lem.~4.5]{KP}.
Although the uniqueness follows from the construction in \cite[Def.~4.1]{KP},
we provide a proof here for reader's convenience.

\begin{proof}[Proof of the uniqueness part of Proposition \ref{Prop:VLC-Local}]

We may assume that $U$ is a quasi-projective scheme, using Totaro's approximations of classifying stacks \cite{Tot,EG2}.
Indeed, write $U=[P/G]$ for a quasi-projective scheme $P$ with a linear action of a linear algebraic group $G$.
For each integer $k$, we can find a $G$-representation $V$ with a $G$-invariant open subscheme $W \subseteq V$ whose complement has codimension $\geq k$ and $W/G$ is quasi-projective.
By the homotopy property of Chow groups \cite[Cor.~2.5.7]{Kre}, we can replace the stack $U$ with the scheme $(P\times W)/G$.

It suffices to consider the case when $s=0$.
Indeed, we may assume that $s$ is {\em isotropic}, i.e. $s^2=0$, after replacing $U$ with $\Zero(s^2)$.
Let $V$ be the (classical) blow up of $U$ along $\Zero(s)_\cl$ and $D\subseteq V$ be the exceptional divisor.
By the blowup sequence 
\[\xymatrix{A_*(D) \ar[r] & A_*(V)\oplus A_*(\Zero(s)) \ar[r] & A_*(U) \ar[r] & 0},\]
in \cite[Ex.~1.8.1]{Ful}, it suffices to consider $V$ (and $\Zero(s)$).
Note that $\O_V(D)$ is an isotropic subbundle of $E|_V$ which contains $s|_V$ (see \cite[Lem.~2.1]{KP}).
By the reduction formula, we can replace $E|_V$ with $\O_V(D)^{\perp}/\O_V(D)$ and assume that $s=0$.

It suffices to consider the case when $E$ has a {\em maximal} isotropic subbundle, i.e. an isotropic subbundle of rank $r={\rank(E)}/{2}$.
Indeed, let $F:=(T\to U) \mapsto \{F_1 \subseteq F_2 \subseteq \cdots F_{r-1}\subseteq E|_T\}$ be the maximal isotropic flag variety, where the pullback $E|_F$ has a maximal isotropic subbundle $K$ (see \cite[\S6]{EG}).
The projection  $F \to U$ is a composition of smooth quadric bundles; in particular, it is proper surjective and we have a proper codescent sequence \cite[Prop.~1.3(2)]{Kimura},
\[\xymatrix{
A_*(F\times_U F) \ar[r] & A_*(F) \ar[r] & A_*(U) \ar[r]& 0.
}\]
Replace $U$ with $F$ and assume that $E$ has a maximal isotropic subbundle $K$.

Finally, the reduction formula implies:
\[\sqrt{e}(E,s) = (K[-1]\to U)^! : A_*(U) \to A_{*-r}(U),\] 
where $(K[-1]\to U)^!=e(K)$ is the top Chern class of $K$.
\end{proof}

We now prove Theorem \ref{Thm:VLC}.
We first consider the {\em uniqueness} part.
Assume that we are given an assignment
\[\begin{Bmatrix}
\text{oriented $w$-locked $(-2)$-symplectic fibrations} \\
\text{$g:M \to B$ satisfying Assumption \ref{Assumption:VLC}}
\end{Bmatrix} \to \left\{[M/B]^\lag : A_*(\Zero(w)) \to A_*(M)\right\}\]
satisfying the properties (P1), (P2), (P3) in Theorem \ref{Thm:VLC}.

\subsubsection*{Step 1: Deformations to normal bundles}

We use symplectic deformations (Corollary \ref{Cor:SD}) to replace general symplectic fibrations with the zero sections of symmetric complexes.
Observe that a $\GG_m$-equivariant function $W: \D_{M/B}\to \AA^1$ (equivalently, a $(-p)$-shifted locked $p$-form) gives rise to a {\em localized} specialization map
\[\sp^{\loc}_{M/B} : A_*(\Zero_B(W_1) ) \to A_*(\Zero_{\T_{M/B}[1]}(W_0)), \]
where $W_0 :\T_{M/B}[1]$ (resp. $W_1: B \to \AA^1$ ) is the restriction of $W$ to the special (resp. general) fiber.
The construction is completely analogous to the ordinary specialization map in \cite[\S5.2]{Ful}; 
define $\sp^{\loc}_{M/B}$ as the unique map that fits into 
\[\xymatrix{
A_{*+1}(\Zero_{\T_{M/B}[1]}(W_0)) \ar[r] \ar[rd]_{c_1(\O)=0} & A_{*+1}(\Zero_{\D_{M/B}}(W)) 
\ar[r] \ar[d]^{0^!} & A_{*+1}(\Zero_B(W_1) \times \GG_m) \ar[r] \ar[ld] & 0  \\
& A_*(\Zero_{\T_{M/B}[1]}(W_0))   & \ar@{-->}[l]_-{\sp^\loc_{M/B}} A_*(\Zero_B(W_1)) \ar[u]_{\pr_1^*} ,
}\]
where the upper row is the excision sequence \cite[Prop.~2.3.6]{Kre}
and $0^!$ is the Gysin map for the divisor $\Zero_{\T_{M/B}[1]}(W_0) \subseteq \Zero_{\D_{M/B}}(W)$.\footnote{The deformation space $\D_{M/B}$ is a derived $1$-Artin stack by \cite[Thm.~5.1.1]{HP}. Indeed, since $M$ is global quotient and $B$ is $1$-Artin, the diagonal of $M$ is affine and the diagonal of $B$ has separated diagonal, and hence the diagonal of $g:M \to B$ separated. Since $g$ is of Deligne-Mumford type, its diagonal is quasi-finite and hence quasi-affine by the Zariski main theorem \cite[Thm.~6.15]{Knu}.}


\begin{lemma}[Deformations to normal bundles]\label{Lem:VLC-SympDef}
For an oriented $w$-locked $(-2)$-symplectic fibrations $g:M \to B$ satisfying Assumption \ref{Assumption:VLC}, we have
\begin{equation}\label{Eq:VLC-1}
[M/B]^\lag = [M/\T_{M/B}[1]]^\lag \circ \sp^\loc_{M/B} : A_*(\Zero(w)) \to A_*(M).
\end{equation}
\end{lemma}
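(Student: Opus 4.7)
First I would invoke Corollary~\ref{Cor:SD} to promote $(g,\theta)$ to the oriented $w$-locked $(-2)$-shifted symplectic fibration
\[G : M \times \AA^1 \longrightarrow \D_{M/B}, \qquad \Def(\theta) \in \sA^{2,\lc}(M\times \AA^1/\D_{M/B},-2),\]
with orientation inherited from $\ori_M$ via the natural equivalence $\TT_{M\times\AA^1/\D_{M/B}}\simeq \pr_1^*\TT_{M/B}$. By Proposition~\ref{Prop:DefLF}, the underlying function $W:\D_{M/B}\to\AA^1$ restricts to the rescaled function $\zeta^{-2}w$ on the general fibre $B\times\GG_m$ (in particular to $w$ itself at $\zeta=1$) and to the quadratic form $W_0$ on the special fibre $\T_{M/B}[1]$. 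Assumption~\ref{Assumption:VLC} transfers to $G$: the base $\D_{M/B}$ is a quasi-separated $1$-Artin stack of finite type with affine stabilizers by the footnote attached to the definition of $\sp^\loc_{M/B}$, while $M\times\AA^1$ inherits the global quotient presentation from $M$. Hence the virtual Lagrangian cycle $[M\times\AA^1/\D_{M/B}]^\lag$ on $A_*(\Zero_{\D_{M/B}}(W))$ is defined.

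Next I would feed this cycle into the bivariance property~(P1)(b) at the two fibres $\{1\},\{0\}\subset\AA^1$. The derived-flat projection $\D_{M/B}\to\AA^1$ produces quasi-smooth closed embeddings $i_1:B\hookrightarrow\D_{M/B}$ and $i_0:\T_{M/B}[1]\hookrightarrow\D_{M/B}$, whose base changes along $G$ are precisely the divisorial embeddings $1_M,0_M:M\hookrightarrow M\times\AA^1$. Applying~(P1)(b) to the two resulting Cartesian squares yields the operator identities
\begin{align*}
 (1_M)^!\circ [M\times\AA^1/\D_{M/B}]^\lag &= [M/B]^\lag \circ (i_1^Z)^!, \\
 (0_M)^!\circ [M\times\AA^1/\D_{M/B}]^\lag &= [M/\T_{M/B}[1]]^\lag \circ (i_0^Z)^!,
\end{align*}
where $i_1^Z, i_0^Z$ are the induced Gysin maps on the zero loci $\Zero_B(w)\hookrightarrow\Zero_{\D_{M/B}}(W)$ and $\Zero_{\T_{M/B}[1]}(W_0)\hookrightarrow\Zero_{\D_{M/B}}(W)$. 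By $\AA^1$-homotopy invariance (rational equivalence of the divisors $M\times\{0\}$ and $M\times\{1\}$ inside $M\times\AA^1$), we have $(1_M)^!=(0_M)^!$ on $A_*(M\times\AA^1)$, so
\[[M/B]^\lag\circ(i_1^Z)^! \;=\; [M/\T_{M/B}[1]]^\lag\circ(i_0^Z)^!\]
as operators on $A_*(\Zero_{\D_{M/B}}(W))$.

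Finally I would close the argument by a short chase through the defining diagram of $\sp^\loc_{M/B}$. Given $\alpha\in A_*(\Zero_B(w))$, choose any lift $\tilde\alpha\in A_{*+1}(\Zero_{\D_{M/B}}(W))$ of $\pr_1^*\alpha\in A_{*+1}(\Zero_B(w)\times\GG_m)$ under the excision surjection. Since $\{1\}\in\GG_m$ lies in the open locus, $(i_1^Z)^!\tilde\alpha$ equals the Gysin-at-$1$ of $\pr_1^*\alpha$, which is $\alpha$; on the other hand, $(i_0^Z)^!\tilde\alpha = \sp^\loc_{M/B}(\alpha)$ is exactly the definition of $\sp^\loc_{M/B}$. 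Substituting into the operator identity above yields~\eqref{Eq:VLC-1}.

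The main obstacle I anticipate is handling the orientation and scaling bookkeeping: one must confirm that $\Def(\theta)$ restricted at $\zeta=1$ is $\theta$ with the same orientation, so that~(P1)(b) at $i_1$ genuinely produces the cycle $[M/B]^\lag$ under consideration, not a rescaled variant. This is an immediate consequence of Proposition~\ref{Prop:DefLF}, but needs to be stated explicitly. The remaining technical point---geometricity of $\D_{M/B}$ as needed for~(P1)(b)---is covered by the geometricity footnote attached to the definition of $\sp^\loc_{M/B}$.
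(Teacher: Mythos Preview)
Your proposal is correct and follows essentially the same approach as the paper: both arguments deform via Corollary~\ref{Cor:SD} to obtain $[M\times\AA^1/\D_{M/B}]^\lag$, apply bivariance~(P1)(b) at the fibres over $0$ and $1$, invoke $\AA^1$-homotopy invariance $(0_M)^!=(1_M)^!$ on $A_*(M\times\AA^1)$, and then use that $(i_1^Z)^!$ is surjective with $(i_0^Z)^! = \sp^\loc_{M/B}\circ(i_1^Z)^!$. Your explicit lift via excision is exactly how the paper's surjectivity statement is unpacked, and your attention to the orientation and $\zeta=1$ normalization is a welcome clarification that the paper leaves implicit.
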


\begin{proof}
By Corollary \ref{Cor:SD}, the map $M\times\AA^1 \to \D_{M/B}$ has a canonical locked symplectic form
and hence we have a map
\[[M\times\AA^1/\D_{M/B}]^\lag : A_*(\Zero_{ \D_{M/B}}(W)) \to A_*(M\times \AA^1),\]
where $W:\D_{M/B} \to \AA^1$ is the weight $2$ function induced by the locked symplectic form on $g:M \to B$. By the bivariance (P1), we have 
\[[M/B]^\lag \circ 1^! = [M/\T_{M/B}[1]]^\lag \circ 0^! : A_*(\Zero_{ \D_{M/B}}(W)) \to A_*(M)\]
where the two Gysin maps $0^!=1^!:A_*(M\times \AA^1)\to A_*(M)$ are the same by the homotopy property of Chow groups \cite[Cor.~2.5.7]{Kre}. 
Since $1^! : A_{*+1}(\Zero_{\D_{M/B}}(W)) \to A_*(\Zero_B(W_1))$ is surjective and $0^! = \sp_{M/B}^\loc\circ 1^!$, we have the desired equality \eqref{Eq:VLC-1}.
\end{proof}

Consequently, it suffices to prove the uniqueness for the symplectic zero sections $0_E^\symp :M \to \E$ of symmetric complexes $E$ of tor-amplitude $[-1,1]$.

\subsubsection*{Step 2: Reductions to orthogonal bundles}

We use the functoriality of symplectic zero sections (Lemma \ref{Lem:FunctZeroSection}) to replace symmetric complexes with orthogonal bundles.
Note that a symmetric complex $E$ of tor-amplitude $[-1,1]$ 
on a classical $1$-Artin stack $M$ with the resolution property\footnote{The quotient stacks of quasi-projective schemes by linear actions of linear algebraic groups have the {\em resolution property} by \cite[Lem.~2.6]{Tho}. More precisely, all perfect complexes are equivalent to bounded chain complexes of vector bundles.}
admits a {\em symmetric resolution} (cf.~\cite[Prop.~1.3]{Park}), i.e.,
we can find an orthogonal bundle $F$, a vector bundle $D$, and a maximal isotropic correspondence (in the sense of \eqref{Eq:MaxIsoCorr} in \S\ref{ss:SZ})
\[D: E \dashrightarrow F\]
such that $a:D \to E$ is smooth surjective and $b:D \to F$ is a closed embedding.
See \cite[Prop.~4.1]{OT} for the existence of a symmetric resolution.

\begin{lemma}[Reduction to orthogonal bundles]\label{Lem:VLC-Reduction}
Under the above notations, 
\begin{equation}\label{Eq:VLC-2}
[0_E^\symp]^\lag = [0_F^\symp]^\lag \circ (QD \xrightarrow{Qb} QF)_* \circ (QD \xrightarrow{Qa} QE)^* : A_* (QE) \to A_*(M),
\end{equation}
where $QE$, $QD$, $QF$ are the zero loci of the quadratic functions on $E$, $D$, $F$. 
\end{lemma}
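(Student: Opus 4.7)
The plan is to combine Lemma~\ref{Lem:FunctZeroSection}, which identifies $0_\E^\symp$ as a symplectic pushforward-pullback of $0_\F^\symp$, with the base change properties (P1)(a) and (P2) of Theorem~\ref{Thm:VLC}, in order to translate the symplectic identification into the desired cycle-level equality.

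First, I will invoke Lemma~\ref{Lem:FunctZeroSection} to write
\[ 0_\E^\symp \simeq (\sfD \xrightarrow{a}\E)_*(\sfD \xrightarrow{b}\F)^*(0_\F^\symp) \textin \Symp_{\E,-2}^{\q_E}. \]
Observe that the maximal isotropic structure on $D:E\dashrightarrow F$ gives $\q_E\circ a \simeq \q_D \simeq \q_F\circ b$, so $QD \simeq \sfD \times_\E QE \simeq \sfD \times_\F QF$ and the induced maps $Qa,Qb$ inherit smoothness and closed-embedding structure from $a,b$ respectively. I will then apply (P2) to the symplectic pushforward square for the smooth surjection $a:\sfD \to \E$---where $\sfD$ and $\E$ satisfy Assumption~\ref{Assumption:VLC} as vector-bundle total spaces over $M$---to obtain
\[ [0_\E^\symp]^\lag = [b^*0_\F^\symp/\sfD]^\lag \circ (Qa)^*, \]
using $(Qa)^! = (Qa)^*$ for the smooth pullback.

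Next, I will apply (P1)(a) to the pullback square for the closed embedding $b:\sfD\to\F$, which is quasi-smooth (as $D\subset F$ is a regular subbundle) and projective on classical truncations, producing
\[ [0_\F^\symp]^\lag \circ (Qb)_* = (b_M)_* \circ [b^*0_\F^\symp/\sfD]^\lag, \]
where $b_M: M\times_\F \sfD \to M$ is the projection. The key observation will be that $M$ sits in $\F$ as the zero section and $\sfD\subset\F$ is a subbundle containing the zero section; their classical intersection is therefore the zero section of $\sfD$, which coincides with $M$. Hence $(M\times_\F\sfD)_\cl \simeq M$ and $(b_M)_*$ acts as the identity on $A_*(M)$. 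Combining the two displayed equalities then yields the required formula.

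The main obstacle is verifying this classical identification $(M\times_\F\sfD)_\cl \simeq M$ and checking the standard hypotheses of (P1)(a) and (P2) in our setting (quasi-smoothness of $b$, projectivity of $b_\cl$, smoothness of $a$, and Assumption~\ref{Assumption:VLC} for the total spaces). Once these are in hand, the reduction is a purely formal manipulation of the pushforward/pullback structure built into $[-]^\lag$.
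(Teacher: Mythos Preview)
Your proposal is correct and follows essentially the same route as the paper: invoke Lemma~\ref{Lem:FunctZeroSection} to realize $0_\E^\symp$ as $a_*b^*0_\F^\symp$, apply (P2) along the smooth $a$ to pick up $(Qa)^*$, then apply (P1)(a) along the closed embedding $b$ to pick up $(Qb)_*$, using that the classical truncation of $b^*0_\F^\symp \to M$ is an equivalence so the resulting pushforward is the identity. One small remark: you list ``quasi-smoothness of $b$'' among the hypotheses to check, but (P1)(a) only requires $b_\cl$ projective, which you already note.
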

\begin{proof}
By Lemma \ref{Lem:FunctZeroSection}, we can form a commutative diagram
\[\xymatrix{
0_E^\symp  \simeq a_*b^* 0_F^\symp \ar[r] \ar[d]  & b^*0_F^\symp \ar[r] \ar[d] & 0_F^\symp \ar[d]^{} 
\\ 
E & \ar[l]_a D  \ar[r]^b & F
}\]
where the left square is a symplectic pushforward square (Remark \ref{Rem:bexLagCorFib}) and the right square is a pullback square.
Note that the classical truncations of the upper arrows are equivalences.
By the base change (P2) and the bivariance (P1), we have
\[[0_E^\symp]^\lag = [b^*0_F^\symp]^\lag \circ Qa^* = [0_F^\symp]^\lag \circ Qb_* \circ Qa^*,\]
as desired. 
\end{proof}

Consequently, it suffices to prove the uniqueness for the symplectic zero sections $0_F^\symp:M \to F$ of orthogonal bundles $F$.
Observe that the symplectic zero section can be regarded as the symplectic zero loci of the tautological section $\tau \in \Gamma(F,F|_F)$,
\[0_F^\symp \simeq \SZ_{F/F}(F|_{F},\tau) \simeq \Symp_{F,-2}^{\q_F}.\]
By the local model (Proposition \ref{Prop:VLC-Local}), we have 
\begin{equation}\label{Eq:VLC-3}
[0_F^\symp]^\lag= \sqrt{e}(F|_F,\tau) : A_*(QF) \to A_*(M),
\end{equation}
where $\sqrt{e}(F|_F,\tau):=\sqrt{e}(F|_{F_\cl},\tau|_{F_\cl})$.

Conversely, the virtual Lagrangian cycles can be constructed by combining the formulas \eqref{Eq:VLC-1}, \eqref{Eq:VLC-2}, \eqref{Eq:VLC-3}:
\begin{equation}\label{Eq:VLC-Construction}
[M/B]^\lag:=\sqrt{e}(F|_F,\tau) \circ (QD \to QF)_* \circ (QD \to QE)^* \circ \sp^\loc_{M/B}.
\end{equation}

\begin{proof}[Proof of Theorem \ref{Thm:VLC}]
Define the virtual Lagrangian cycles as \eqref{Eq:VLC-Construction};
it is independent of the choice of $D,F$ by (the arguments in) \cite[\S4.2]{OT}. 
The bivariance (P1) is shown in \cite[Prop.~1.15, Rem.~2.5]{Park} 
(only the lci pullbacks are considered in \cite{Park}, but the same argument works for the quasi-smooth pullbacks), 
the functoriality (P3) is shown in \cite[Thm.~2.2, Thm.~A.4]{Park}, and the base change (P2) follows from the functoriality (P3) as explained in Remark \ref{Rem:P2}.
The uniqueness follows from Lemma \ref{Lem:VLC-SympDef}, Lemma \ref{Lem:VLC-Reduction}, Proposition \ref{Prop:VLC-Local}, as explained above.
\end{proof}

\section{Moduli of perfect complexes}\label{Sec:Perf}

In this section, we present our main example (Theorem \ref{Thm_D:Perf}): {\em moduli of perfect complexes} for families of Calabi-Yau $4$-folds.
We first provide a general criteria for locking closed forms via {\em underlying formal functions} (\S\ref{ss:Obs}).
We then apply it to {\em mapping stacks} and show that the canonical symplectic forms are locked on components where ``topological types'' are fixed (\S\ref{ss:Map}).



\subsection{Locking closed forms}\label{ss:Obs}

In this subsection, we explain how to lift closed forms to locked forms by reducing it to formal neighborhoods of the bases.

Throughout this subsection, our base stack $B$ is a classical Artin stack of finite type over $\C$,
and all derived Artin stacks are assumed to be of finite type over $B$.

\begin{proposition}[Underlying formal functions]\label{Prop:Locking}
Let $M$ be a derived Artin stack over $B$.
For integers $d \leq -p$, we have a canonical fiber square
\[\xymatrix{
\sA^{p,\lc}(M/B,d) \ar[r]^-{[-]} \ar[d]^{\widehat{(-)}} \cart & \sA^0(B,p+d) \ar[d]\\
\sA^{p,\cl}(M/B,d) \ar@{.>}[r]^-{\langle-\rangle} & \prod_{b\in B(\C)}\prod_{\pi_0(M_b)} \sA^0(\widehat{B}_b,p+d)
}\]	
for some dotted arrow $\langle-\rangle$,
where $\widehat{B}_b$ is the formal completion of $B$ at $b$, and $\pi_0(M_b)$ is the set of connected components of the fiber $M_b$ of $M \to B$ over $b$.\footnote{Here the formal completion is defined as: $\widehat{B}_b:=\varinjlim_{(A,0) \to (B,b)} A$, where $A$ are local Artinian schemes. The functor $\pi_0 :  \dSt \to \Set$ is the left adjoint of the functor of constant derived stacks.}
\end{proposition}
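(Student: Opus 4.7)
The plan is to build on the right-hand cartesian square of \eqref{Eq:ex/cl/DR}, which already identifies $\sA^{p,\lc}(M/B,d)$ with the fiber product $\sA^{p,\cl}(M/B,d)\times_{\sA^{\DR}(M/B,p+d)}\sA^0(B,p+d)$. It therefore suffices to produce the dotted arrow $\langle-\rangle$ as a factorization of $\sA^{p,\cl}(M/B,d)\to\sA^{\DR}(M/B,p+d)$ through $\prod_{b,c}\sA^0(\widehat{B}_b,p+d)$, compatible with the lower-right arrow from $\sA^0(B,p+d)$, and then to check that replacing $\sA^{\DR}(M/B,p+d)$ by the product yields the same fiber product. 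The latter follows once the natural comparison $\sA^{\DR}(M/B,p+d)\to\prod_{b,c}\sA^0(\widehat{B}_b,p+d)$ is shown to be a monomorphism of spaces in the range $d\leq -p$.

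To construct $\langle-\rangle$ and analyze the target, I would work formally-locally on $B$: for each $\C$-point $b$, base change along $\widehat{B}_b\to B$ and compose
\[\sA^{p,\cl}(M/B,d)\longrightarrow\sA^{p,\cl}(M_{\widehat{B}_b}/\widehat{B}_b,d)\longrightarrow\sA^{\DR}(M_{\widehat{B}_b}/\widehat{B}_b,p+d).\]
The key computation is that for $d\leq -p$ the target decomposes as
\[\sA^{\DR}(M_{\widehat{B}_b}/\widehat{B}_b,p+d)\simeq\prod_{c\in\pi_0(M_b)}\sA^0(\widehat{B}_b,p+d).\]
To establish this, invoke Bhatt's comparison (cited in Remark \ref{Rem:ClDefSp}) to replace $\hDR$ by the infinitesimal de Rham complex $\hDR^{\inf}$, and then use the infinitesimal-flatness fact that over a pro-Artinian base the relative infinitesimal cohomology of a classical family trivializes, so that $H^*_{\DR,\inf}(M_{\widehat{B}_b}/\widehat{B}_b)\simeq H^*_{\DR,\inf}(M_b/\C)\otimes_\C\widehat{\O}_{B,b}$. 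For the classical Artin stack $M_b$ of finite type, $H^k_{\DR,\inf}(M_b/\C)$ vanishes in negative degrees and equals $\prod_{c\in\pi_0(M_b)}\C$ in degree zero; together with $\widehat{\O}_{B,b}\simeq\sA^0(\widehat{B}_b,0)$ this yields the decomposition (and for $p+d<0$ both sides are contractible by the $\Gr^q$-amplitude bounds $\Lambda^q\LL_{M/B}[-q]\in\QCoh_M^{\geq q}$). Assembling over all $b$ defines $\langle-\rangle$.

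It then remains to verify that the new square is cartesian. This reduces to the comparison $\sA^{\DR}(M/B,p+d)\to\prod_{b,c}\sA^0(\widehat{B}_b,p+d)$ being a monomorphism of spaces in the range $d\leq -p$. By the same $\Gr^q$-amplitude argument applied to $M\to B$ globally, $\sA^{\DR}(M/B,p+d)$ concentrates in degree $p+d$ of the coherent sheaf on $B$ cut out by the ``fibrewise-locally-constant'' subsheaf of $g_*\O_M$, and classical global sections of such a coherent sheaf on a finite-type Artin stack inject into the product of formal completions at $\C$-points by Krull's intersection theorem.

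The hardest step will be the formal trivialization of $H^*_{\DR,\inf}(M_{\widehat{B}_b}/\widehat{B}_b)$ when $M$ is a higher Artin stack and $g$ is not proper or smooth: the classical Gauss--Manin argument does not apply directly, so one must run it through the derived infinitesimal framework and verify that the Hodge-filtered spectral sequence converges and degenerates in non-positive degrees over a pro-Artinian base, and that the resulting $\pi_0$-bookkeeping of the fibers is compatible with the product over all formal completions of $B$.
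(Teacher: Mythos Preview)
Your overall strategy---use the fiber square \eqref{Eq:ex/cl/DR}, replace $\sA^{\DR}(M/B,p+d)$ by the product of formal stalks, and check that the comparison map is a monomorphism---is exactly the paper's strategy. The paper packages the replacement step as a separate lemma (Lemma~\ref{Lem:A^DR}) with three parts: $\sA^{\DR}(M/B,0)$ is discrete; it injects into the inverse limit over Artinian thickenings; and over a local Artinian base it decomposes as $\prod_{\pi_0(M)}\sA^0(A,0)$. Your three ingredients correspond to these three parts.

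Where your proposal diverges is in the justifications. The claim that $\Lambda^q\LL_{M/B}[-q]\in\QCoh_M^{\geq q}$, and hence that $\Gr^q\hDR(M/B)$ contributes nothing in negative degrees, fails already for derived schemes (where $\LL_{M/B}$ can sit in arbitrarily negative degrees) and certainly for Artin stacks (where $\LL_{M/B}$ also has positive-degree parts). So the amplitude argument cannot establish discreteness or the vanishing for $p+d<0$. The paper instead reduces by smooth descent and nil-invariance \eqref{Eq:Nilinvariance} to a closed embedding of classical affine schemes, and then invokes Carlsson's comparison \eqref{Eq:Carlsson}, $\Fil^0\hDR(M/B)\simeq\widehat{\O}_{B,M}$, which is manifestly discrete. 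For the decomposition over an Artinian base, the paper does not appeal to a Gauss--Manin or ``infinitesimal-flatness'' principle but rather uses nil-invariance to replace $\hDR(M/A)$ by $\hDR(M\times A/A)\simeq\hDR(M)\otimes_\C\Gamma(A,\O_A)$ (a finite Kunneth), and then the comparison with singular cohomology to get $H^0_{\DR}(M)\simeq\prod_{\pi_0(M)}\C$. This sidesteps entirely the convergence and degeneration issues you flag in your last paragraph.
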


We can interpret Proposition \ref{Prop:Locking} as follows:
\begin{itemize}
\item If $d<-p$, then $d$-shifted closed $p$-forms have unique locking structures.
\item If $d=-p$, then $d$-shifted closed $p$-forms have locking structures if and only if the underlying formal functions converge to global functions on the bases.
Moreover, such locking structures are unique when they exist.
\end{itemize}

Recall from \S\ref{ss:DR} that locked forms are equivalent to closed forms whose associated de Rham forms come from functions on the bases; see the fiber square \eqref{Eq:ex/cl/DR}.
Thus Proposition \ref{Prop:Locking} can be deduced by analyzing non-positively shifted de Rham forms.

\begin{lemma}[$0$-shifted de Rham Forms]\label{Lem:A^DR}
Let $M$ be a derived Artin stack over $B$.
\begin{enumerate}
\item The space $\sA^{\DR}(M/B,0)$ is discrete.
\item The restriction map 
\begin{equation}\label{Eq:InfDetection}
\sA^{\DR}(M/B,0) \hookrightarrow \varprojlim_{A \to B} \sA^{\DR}(M_A/A,0) \text{ is injective},
\end{equation} 
where $A \to B$ are maps from local Artinian schemes $A$.
\item If $B$ is a local Artinian scheme, then we have a canonical equivalence
\begin{equation}\label{Eq:LocArt}
\sA^{\DR}(M/A,0)\xleftarrow{\simeq} \prod_{\pi_0(M)} \sA^0(A,0).
\end{equation}
\end{enumerate}
\end{lemma}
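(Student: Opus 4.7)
The plan is to reduce all three claims to well-known properties of classical algebraic de Rham cohomology via Bhatt's comparison between the (completed) derived de Rham complex and the Hartshorne algebraic de Rham complex \cite{Bhatt,Har}, as already signalled in Remark \ref{Rem:ClDefSp}. Writing $f:M\to B$, unwind the definition to
\[\pi_i\,\sA^{\DR}(M/B,0)\simeq H^{-i}\mathbb{R}\Gamma\bigl(B,\Fil^0\hDR(M/B)\bigr),\]
which after Bhatt's equivalence is $H^{-i}$ of the relative algebraic de Rham cohomology $\mathbb{R}\Gamma_{\DR}(M/B)$.

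For (1) I would simply observe that algebraic de Rham cohomology is concentrated in non-negative cohomological degrees, so $\pi_i$ vanishes for $i>0$. The reduction of Bhatt's theorem to the case when $B$ is a derived affine scheme (so that $M/B$ can be resolved by smooth-affine objects) is handled by the descent construction of $\DR$ in \S\ref{ss:DR} together with the fact that the pushforward along $B\to \Spec(\mathbb{C})$ preserves the relevant connectivity.

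For (3) I would produce the tautological evaluation map
\[\prod_{\pi_0(M)}\sA^0(A,0)\longrightarrow \sA^{\DR}(M/A,0)\]
sending a tuple $(a_c)$ to the locally constant de Rham $0$-form with value $a_c\in\Gamma(A,\O_A)$ on the component $c$. By (1) it suffices to verify this is a bijection on $\pi_0$, i.e.\ to compute $H^0_{\DR}(M/A)$. Since $A$ is Artinian local with residue field $\mathbb{C}$, the closed embedding $M_\red\hookrightarrow M$ of the reduced special fiber is a nilpotent thickening; invariance of algebraic de Rham cohomology under nilpotent thickenings identifies $H^0_{\DR}(M/A)\simeq H^0_{\DR}(M_\red/\mathbb{C})\otimes_{\mathbb{C}}A$, and $H^0_{\DR}(M_\red/\mathbb{C})\simeq \prod_{\pi_0(M_\red)}\mathbb{C}=\prod_{\pi_0(M)}\mathbb{C}$ is the classical computation of degree-zero de Rham cohomology by connected components.

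For (2), (1) makes both sides discrete, so the claim is a statement about $\pi_0$. Via Bhatt, $\pi_0\sA^{\DR}(M/B,0)\simeq \Gamma(B,\mathcal{H})$ where $\mathcal{H}:=\mathbb{R}^0 f_*\hDR(M/B)$ is a quasi-coherent $\O_B$-module. By (3) applied to all Artinian quotients of $\widehat{\O}_{B,b}$, the formal completion $\mathcal{H}^{\wedge}_b$ is the free $\widehat{\O}_{B,b}$-module on $\pi_0(M_b)$. Since $B_\cl$ is a finite-type Artin stack over $\mathbb{C}$, hence Jacobson, a section of $\mathcal{H}$ whose completion at every closed point vanishes is zero, by a Krull-intersection argument applied smooth-locally on $B$. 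The main obstacle is precisely this last separation statement: it is immediate when $\mathcal{H}$ is coherent, but in general one needs that the rank function $b\mapsto |\pi_0(M_b)|$ is locally constant on a dense open locus of $B_\cl$, so that $\mathcal{H}$ restricts to a locally free sheaf there, and then Noetherian induction on the complement finishes the argument. Making this stratification precise for a derived Artin stack $M$ over a classical Artin stack $B$, while avoiding any properness/smoothness hypotheses on $f$, is the delicate part of the proof.
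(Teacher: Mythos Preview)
Your arguments for (1) and (3) are essentially those of the paper: reduce via Bhatt's comparison and nil-invariance to classical inputs, and compute $H^0$ by connected components. For (3) the paper phrases the reduction as a K\"unneth isomorphism $\hDR(M/A)\simeq \hDR(M)\otimes_\C \Gamma(A,\O_A)$ and then invokes the comparison with singular cohomology, which is the same computation you sketch.

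For (2), however, the ``delicate part'' you flag is a genuine gap in your approach, and the paper avoids it entirely by a different reduction. You try to control the quasi-coherent sheaf $\mathcal{H}=\mathbb{R}^0f_*\hDR(M/B)$ on $B$ via its formal completions, which forces you into a stratification argument governed by $b\mapsto |\pi_0(M_b)|$; this function is in general badly behaved (e.g.\ for non-proper $f$), and making the Noetherian induction work without further hypotheses is not clear. The paper instead reduces to the case where $M\hookrightarrow B$ is a \emph{closed embedding of classical affine schemes}. After smooth descent and nil-invariance, one chooses a factorization $M\hookrightarrow U\to B$ with $M\hookrightarrow U$ closed and $U\to B$ smooth; a base-change diagram shows that the limit over Artinian points $A\to B$ is refined by the limit over Artinian points $A\to U$, so one may replace $B$ by $U$. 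In the closed-embedding case, Carlsson's identification \cite{Carl} gives
\[
\Fil^0\hDR(M/B)\;\simeq\;\varprojlim_{p}\,\O_B/\I_{M/B}^p,
\]
and the Krull intersection theorem immediately yields the injection into $\prod_{b}\varprojlim_{p,k}\O_B/(\I_{M/B}^p+\I_{b/B}^k)$. No coherence of $\mathcal{H}$ and no stratification of $B$ is needed. I recommend you replace your sheaf-theoretic argument for (2) by this reduction.
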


Assuming Lemma \ref{Lem:A^DR}, Proposition \ref{Prop:Locking} follows immediately.
\begin{proof}[Proof of Proposition \ref{Prop:Locking}]
Define the underlying formal functions $\langle-\rangle$ as:
\[
\sA^{p,\cl}(M/B,-p) 
\to \sA^{\DR}(M/B,0) 
\hookrightarrow \varprojlim_{A\to B} \sA^{\DR}(M_A/A,0) 
\xleftarrow{\simeq} \varprojlim_{A\to B}\prod_{\pi_0(M_A)} \sA^0(A,0).\]
Then the fiber square in Proposition \ref{Prop:Locking} follows from the fiber square in \eqref{Eq:ex/cl/DR}.
\end{proof}



We finally prove Lemma \ref{Lem:A^DR} to complete the proof of Proposition \ref{Prop:Locking}.
Lemma \ref{Lem:A^DR} is a consequence of comparison theorem of derived de Rham cohomology and classical de Rham cohomology \cite{Har}, shown in \cite{Bhatt}.\footnote{Under the Hochschild-Kostant-Rosenberg isomorphism, such comparison can also be shown by the results on periodic cyclic homology in \cite{FT,Goo1,Goo2}.} 
Without recalling the definition of classical de Rham cohomology, we can rephrase the comparison theorem by dividing it into the following three pieces:
\begin{itemize}
\item (Descent) Given a commutative triangle of derived affine schemes
\begin{equation}\label{Eq:Fact}
\xymatrix{
& U \ar[d]\\
M \ar[ru] \ar[r] & B,}
\end{equation}
we have a canonical equivalence
\begin{equation}\label{Eq:Descent}
\hDR(M/B) \xrightarrow{\simeq} \Tot \left(\hDR(M/\Cech_*(U/B)\right) \textin \QCAlg^\fil_B.
\end{equation}
This follows from \cite[Cor.~2.7]{Bhatt} 
(see also \cite[Lem.~B.1.1]{CPTVV}).
\item (Nil-invariance) Given a morphism of derived Artin stacks $M \to B$, we have
\begin{equation}\label{Eq:Nilinvariance}
\Fil^0\hDR(M/B) \xrightarrow{\simeq} \Fil^0\hDR(M_\red/B) \textin \QCAlg_B. 
\end{equation}
Indeed, by applying the descent \eqref{Eq:Descent} twice, we may assume that $M \hookrightarrow B$ is a closed embedding of derived affine schemes such that $\I_{M/B}$ is $1$-connective.
Then $\Fil^0\hDR(M/B) \xrightarrow{\simeq} \Fil^0\hDR(M_\red/B) \simeq \O_B$ by \cite[Thm.~8.8]{Qui70}.


\item (Comparison) Given a closed embedding of classical schemes $M \hookrightarrow B$, 
\begin{equation}\label{Eq:Carlsson}
F^0\hDR(M/B) \simeq \widehat{\O}_{B,M}:=\varprojlim_{p\to \infty} {\O_B}/{\I_{M/B}^p}.\footnote{Unlike in \S\ref{ss:SPT}, here the ideal $\I^p_{M/B}\subseteq \O_B$ is considered classically.}
\end{equation}
This is \cite[Thm.~4.4]{Carl}, based on the descent \eqref{Eq:Descent} and the nil-invariance \eqref{Eq:Nilinvariance}. 
\end{itemize}
The completed derived de Rham complexes $\Fil^0\hDR(M/B)$ are uniquely determined by the smooth descent and the above three properties.

\begin{proof}[Proof of Lemma \ref{Lem:A^DR}]
(1) 
Note that the limits of discrete spaces are discrete.
Using the smooth descent of $\hDR$ and the nil-invariance \eqref{Eq:Nilinvariance}, we may assume that $M$ and $B$ are classical affine schemes.
By the descent \eqref{Eq:Descent}, we may assume that $M \hookrightarrow B$ is a closed embedding.
Then the statement follows from the comparison \eqref{Eq:Carlsson}.

(2) Using the smooth descent of $\DR$ and the nil-invariance \eqref{Eq:Nilinvariance}, we may assume that $M$ and $B$ are classical affine schemes.
Consider a factorization \eqref{Eq:Fact} with a closed embedding $M \hookrightarrow U$ and a smooth morphism $U \to B$.
For any morphism $A \to U$ from a local Artinian scheme $A$, we can form a fiber diagram
\[\xymatrix{
M\times_U A \ar[r] \ar[d] \cart & M \times_B A \ar[r] \ar[d] \cart & A \ar[d]\\
M \ar[r] & M\times_B U \ar[r] \ar[d] \cart & U \ar[d] \\
& M \ar[r] & B.
}\]
Hence replacing $B$ with $U$, we may assume that $M \hookrightarrow B$ is a closed embedding.

By the Krull intersection theorem, the canonical map
\[\varprojlim_{p \to \infty} \frac{\O_B}{\I^p_{M/B}} \to \prod_{b\in B(\C)}\varprojlim_{p \to \infty}\varprojlim_{k \to \infty}\frac{\O_B}{\I^p_{M/B} + \I_{b/B}^k} \]
is injective.
Thus the restriction map in \eqref{Eq:InfDetection} is also injective by the comparison \eqref{Eq:Carlsson}.

(3) We first construct the canonical map in \eqref{Eq:LocArt}.
Indeed, there is a canonical map \[\sA^0(A,0) \to \sA^{\DR}_A[0] \textin \dPSt_A\]
of derived prestacks, where $\sA^0(A,0)$ is regarded as a constant prestack.
Then the induced map of $M$-points of the stackification of the above map is the claimed map.
Since we have a functorial map, we may assume that $M$ is a connected affine scheme.

Note that we have a canonical equivalence of filtered algebras
\[\hDR(M/A) \xleftarrow{\simeq} \hDR(M\times A/A) \xleftarrow{\simeq} \hDR(M) \otimes_C \Gamma(A,\O_A).\]
Indeed, the first restriction map is an equivalence by the nil-invariance \eqref{Eq:Nilinvariance},
and the second Kunneth map is an equivalence since the affine morphism $M \to \Spec(\C)$ has the base change property \cite[Prop.~3.10]{BFN} and $\Gamma(A,\O_A)$ is a finite-dimensional $\C$-algebra.
Consequently, it suffices to prove the statement for $A=\Spec(\C)$.

Then the derived de Rham cohomology is just the singular cohomology
\[H^*\Fil^0\hDR(M) \simeq H_{\mathrm{Sing}}^*(M^\mathrm{an},\C),\]
of the underlying analytic space $M^{\mathrm{an}}$, by the \cite[Thm.~4.10]{Bhatt} and \cite[IV.~1.1]{Har}.
Since $M$ is connected, $M^\mathrm{an}$ is also connected, and hence $H_{\mathrm{Sing}}^0(M^\mathrm{an},\C)\simeq \C$.
\end{proof}

\begin{remark}[Absolute case]\label{Rem:Absolute}
If $B=\Spec(\C)$ and $d<0$, then a $d$-shifted closed $p$-form on a derived Artin stack $M$ is exact, as stated in \cite[Cor.~5.3]{Toen} with a sketch proof.
Moreover, 
if $d\leq -p$, then there exists a unique exact structure;
if $d=-p+1$, then there exists a canonical exact structure; 
this is shown in \cite[Prop.~3.2]{KPS}.
\end{remark}

\subsection{Mapping stacks}\label{ss:Map}


In this subsection, we construct locked symplectic forms on mapping stacks with Calabi-Yau sources and symplectic targets.

Let $f:X \to B$ be an $n$-dimensional {\em Calabi-Yau} morphism of derived stacks.
More precisely, $f$ is of finite presentation, universally $f_*:\QCoh_X \to \QCoh_B$ preserves colimits and perfect complexes, and the {\em Calabi-Yau structure} is an equivalence
\[\Omega_{X/B} : \O_X \xrightarrow{\simeq} f^\dag\O_B[-n],\]
where $f^\dag$ is the right adjoint of $f_*$.\footnote{This is the relative version of $\O$-oriented $\O$-compact derived stack in \cite[Def.~2.1, 2.4]{PTVV}.}
For a smooth projective morphism of schemes, Calabi-Yau structures are equivalent to trivializations of the canonical line bundle.
%
%

The main object in this subsection is the mapping stack
\[M:=\uMap_B(X,Y) \to B,\]
for a derived stack $Y$ 
over $B$.
Recall \cite{PTVV} that closed (e.g. symplectic) forms on the target $Y$ give rise to closed (resp. symplectic) forms on the mapping stack $M$.
An analogous construction also works for locked forms;
we have an {\em integration map}
\[\int_{M\times_BX/M,\Omega} (-) : \DR(M\times_B X/B) \to \DR(M/B)[-n] \textin \QCoh_B^\fil,\]
defined as the composition:
\begin{multline*}
\DR(M\times_B X/B) \xleftarrow[\simeq]{\textrm{Kunneth}} \DR(M/B)\otimes \DR(X/B) \xrightarrow{\id \otimes \Gr^0}\DR(M/B)\otimes f_*(\O_X)\\	
\xrightarrow{\Omega} \DR(M/B) \otimes f_* f^\dag \O_B[-n] \xrightarrow{f_*\dashv f^\dag} \DR(M/B)[-n],
\end{multline*}
where the Kunneth formula can be shown by the base change and the projection formula for $f_*$ in \cite[Prop.~3.10, Rem.~3.11]{BFN}.\footnote{It suffices to consider the Kunneth formula of the associated graded algebras since $(\Gr,\Fil^{-\infty})$ is conservative and $\Fil^{-\infty}\DR(-/B)\simeq \O_B$. We may assume that $M$ is affine by descent since $f_*\Lambda^p\LL_{X/B}$ is perfect. Then the Kunneth map is an equivalence by the arguments in \cite[Lem.~A.7]{BKP}.}
Consequently, we have 
\[\int_{X_M/M,\Omega}^{\star} \ev^*(-):\sA^{p,\star }(Y,d) \to \sA^{p,\star}(M,d-n) \for \star \in \{\lc,\cl,\DR,\emptyset\},\]
where $\ev : X_M:=M\times_B X \to Y$ is the evaluation map.


Even when a $d$-shifted closed $p$-form on the target 
is {\em not} locked, if $d=n-p$, 
we can still find a canonical locking structure on the induced closed form on the mapping stack, 
formally locally on the base (by Proposition \ref{Prop:Locking}).
We compute its underlying function to determine when it glues globally.

\begin{proposition}[Mapping stacks -- Local] \label{Prop:Map}
Let $f:X \to B$ be an $n$-dimensional Calabi-Yau morphism of derived stacks with $\Omega:\O_X \to f^{\dag}\O_B[n]$.
Let $Y$ be a derived stack over $B$ with a closed form $\alpha \in \sA^{p,\cl}(Y/B,n-p)$.
Let $M:=\uMap_B(X,Y)$.
If $B$ is local Artinian with closed point $b\in B(\C)$,
then the underlying (formal) function
\begin{equation}\label{Eq:6.2.1}
\left\langle\int^{\cl}_{X_M/M,\Omega} \ev^* (\alpha) \right\rangle \in \prod_{\pi_0(M)} \sA^0(B,0)
\end{equation}
is equivalent to the locally constant function
\[ (m:X_b \to Y)\in M(\C) \mapsto  \int^{\DR}_{X/B,\Omega}\widetilde{m^*[\alpha]} \in \sA^{\DR}(B/B,0)\simeq \sA^0(B,0),\]
where $X_b:=X\times_{B} \{b\}$ and $\widetilde{(-)}:= ((-)|_{X_b})^{-1}: \sA^{\DR}(X_b/B,n) \simeq  \sA^{\DR}(X/B,n)$.
\end{proposition}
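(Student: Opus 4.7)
The plan is to unwind the underlying formal function via Proposition~\ref{Prop:Locking} and Lemma~\ref{Lem:A^DR}, reducing the statement to a computation of a de Rham class; then to evaluate this class at a $\C$-point of $M$ using base change of the integration map. The three ingredients are (i) the equivalence $\sA^{\DR}(M/B,0)\simeq\prod_{\pi_0(M_b)}\sA^0(B,0)$ for $B$ local Artinian (Lemma~\ref{Lem:A^DR}(3)), together with the identification $\pi_0(M)=\pi_0(M_b)$ from nil-invariance \eqref{Eq:Nilinvariance}; (ii) the fact that the integration map is defined in $\QCAlg_B^{\fil}$, hence commutes with the inclusion $\Fil^p\hookrightarrow\Fil^0$; and (iii) base change of the integration under pullback of the base.

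Concretely, by Proposition~\ref{Prop:Locking} the underlying formal function factors as
\begin{equation*}
\sA^{p,\cl}(M/B,-p)\xrightarrow{\widehat{(-)}}\sA^{\DR}(M/B,0)\simeq\prod_{\pi_0(M_b)}\sA^0(B,0),
\end{equation*}
so it suffices to compute $\int_{X_M/M,\Omega}^{\DR}\ev^{*}[\alpha]\in\sA^{\DR}(M/B,0)$ and to extract its component values. The value at $[m]$ represented by $m:X_b\to Y$ is recovered by pulling back along $m:\Spec(\C)\to M$, using the equivalence $\sA^{\DR}(\Spec(\C)/B,0)\simeq\sA^0(B,0)$ which follows from the comparison \eqref{Eq:Carlsson} and $\widehat{\O}_{B,b}=\O_B$ (since $B$ is Artinian). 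Base change along the pullback square with corners $X_b,\,\Spec(\C),\,X_M,\,M$ converts this pullback into $\int_{X_b/\Spec(\C),\Omega_b}^{\DR}m^{*}[\alpha]$, where $\Omega_b$ is the restricted Calabi--Yau structure, and we have used $\ev\circ(m\times_B\id_{X_b})=m$. A second application of base change, along $b:\Spec(\C)\to B$, together with the nil-invariance equivalence $(-)|_{X_b}:\sA^{\DR}(X/B,n)\xrightarrow{\simeq}\sA^{\DR}(X_b/B,n)$, identifies this value with $\int_{X/B,\Omega}^{\DR}\widetilde{m^{*}[\alpha]}$, which is precisely the claimed formula. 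The local constancy of the resulting function on $M(\C)$ is then automatic, as it is by construction the image of an element of $\prod_{\pi_0(M_b)}\sA^0(B,0)$ under evaluation on components.

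The main technical point will be the base change for the integration map $\int_{X/B,\Omega}$ itself. This should reduce to base change and the projection formula for $f^{\dag}$ (equivalently, for $f_{*}$ via the duality pairing), combined with the naturality of the Kunneth equivalence $\DR(M\times_B X/B)\simeq \DR(M/B)\otimes\DR(X/B)$ in $\QCAlg_B^{\fil}$ and its compatibility with the Hodge filtration. The delicate piece is to track the Calabi--Yau datum $\Omega:\O_X\xrightarrow{\simeq}f^{\dag}\O_B[-n]$ through the base change, producing the restricted Calabi--Yau structure $\Omega_b$ on $f_b:X_b\to\Spec(\C)$ that enters the computation; once this bookkeeping is in place, the two compatibility squares of integration maps fit together and the proof concludes.
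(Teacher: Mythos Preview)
Your proposal is correct and follows essentially the same approach as the paper: both reduce to evaluating at points $m\in M(\C)$ via the equivalence of Lemma~\ref{Lem:A^DR}(3), then use functoriality (equivalently, base change) of the integration map to compare both sides after restriction to $\sA^{\DR}(\{b\}/B,0)$. The paper's proof is more terse---it simply invokes that $\int_{X_M/M,\Omega}$ is ``functorial on $M$'' rather than spelling out the two base-change squares and the tracking of $\Omega$ as you do---but the content is the same.
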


\begin{proof} 
%
It suffices to compute the restriction of the function \eqref{Eq:6.2.1} to each point $m \in M(\C)$.
Since $\int_{X_M/M,\Omega}(-) : \DR(X_M/B) \to \DR(M/B)$ is functorial on $M$, we have:
\begin{align*}
\left[\int_{X_M/M,\Omega}^{\cl}{\ev^*(\alpha)}\right] \mapsto  \int_{X_0/0,\Omega}^{\DR} m^*[\alpha] &\under \sA^{\DR}(M/B,0) \xrightarrow{m^*} \sA^{\DR}(\{b\}/B,0),\\
\int^{\DR}_{X/B,\Omega}\widetilde{m^*[\alpha]} \mapsto \int_{X_b/\{b\},\Omega}^{\DR} m^*[\alpha] &\under \sA^{\DR}(B/B,0) \xrightarrow[\simeq]{b^*} \sA^{\DR}(\{b\}/B,0).
\end{align*}
This completes the proof.
\end{proof}

We now specialize the situation to a smooth projective morphism of classical schemes $f:X \to B$.
The space of {\em horizontal} de Rham cohomology classes is:
\[\xymatrix{
H_{\DR}^*(X/B)^\nabla \ar[r] \ar@{^{(}->}[d] \cart & \varprojlim_{A \to B}  H^*_{\DR}(X_A) \ar@{^{(}->}[d] \\
H_{\DR}^*(X/B) \ar[r] & \varprojlim_{A \to B} H^*_{\DR}(X_A/A),
}\]
where $H^k_{\DR}(-/-):=\pi_0\sA^{\DR}(-/-,k)$ and the limits are taken over maps $A \to B$ from local Artinian schemes $A$.\footnote{This is equivalent to the definition of horizontal sections in \cite[Rem.~3.9]{Blo}.}
We may view a horizontal class $v\in H_{\DR}^*(X/B)^\nabla$ as a ``locally constant'' family of de Rham cohomology classes:
\[b \in B(\C) \mapsto v_b:=v|_{X_b/\{b\}} \in H^*_{\DR}(X_b:=X\times_B\{b\}).\]


\begin{corollary}[Mapping stacks -- Global]\label{Cor:Map}
Let $f:X \to B$ be a smooth projective morphism of classical schemes with a $n$-dimensional Calabi-Yau structure $\Omega$ and
$(h:Y \to B,\theta_{/B})$ be an $(n-2)$-symplectic fibration for $\theta \in \sA^{2,\cl}(Y,n-2)$.
Given a horizontal class $v\in H_{\DR}^{n}(X/B)^\nabla$, consider the open substack 
\[\uMap_B(X,Y)_v:= \{ (X_b \xrightarrow{m} Y_b, b\in B(\C)) : m^*[\theta_b] = v_b \in H_{\DR}^{n}(X_b)\} \subseteq \uMap_B(X,Y),\]
where $Y_b:=Y\times_B\{b\}$ and $\theta_b:=\theta|_{Y_b}$.
Then we have a locked symplectic fibration
\[\uMap_B(X,Y)_v \in \Symp_{B,-2}^{W} \where W:=\int_{X/B,\Omega}^{\DR} v \in H^0_{\DR}(B/B)\simeq \sA^0(B,0).\]
\end{corollary}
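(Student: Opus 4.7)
The plan is to combine three ingredients already available in the paper: the integration map constructed just above the statement, the locking criterion in Proposition \ref{Prop:Locking}, and the formal-local computation of underlying functions in Proposition \ref{Prop:Map}. Writing $M := \uMap_B(X,Y)$, I would first observe that the integration map provides a closed form $\theta_M := \int^{\cl}_{X_M/M,\Omega}\ev^*(\theta_{/B}) \in \sA^{2,\cl}(M/B,-2)$, and that the standard AKSZ argument (the relative version of \cite[Thm.~2.5]{PTVV}, which follows from the same Serre-duality pairing used to define the integration map) guarantees that $\theta_M$ is non-degenerate. The subset $M_v := \uMap_B(X,Y)_v$ is an open substack since the condition $m^*[\theta_b]=v_b$ is locally constant in the family, so $\theta_M|_{M_v}$ is a $(-2)$-shifted symplectic form on $M_v \to B$.

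Next I would apply Proposition \ref{Prop:Locking} with $d=-p=-2$: its canonical fiber square reduces the existence and uniqueness of a $W$-locking structure on $\theta_M|_{M_v}$ to verifying that its underlying formal function $\langle\theta_M|_{M_v}\rangle \in \prod_{b\in B(\C)}\prod_{\pi_0((M_v)_b)}\sA^0(\widehat{B}_b,0)$ coincides with the image of $W\in \sA^0(B,0)$ under the restriction map. The whole question therefore reduces to a computation of $\langle\theta_M|_{M_v}\rangle$ component-by-component over each formal neighborhood $\widehat{B}_b$.

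The substantive step, and the one I expect to require the most care, is this formal-local computation. Pulling back everything to $\widehat{B}_b$ and invoking Proposition \ref{Prop:Map}, the value of $\langle\theta_M|_{M_v}\rangle$ on the component containing a point $m:X_b \to Y_b$ equals the constant $\int^{\DR}_{X/\widehat{B}_b,\Omega}\widetilde{m^*[\theta_b]}$. On $M_v$ we have $m^*[\theta_b]=v_b$ by the very definition of $M_v$, and horizontality of $v$ is precisely the statement that $v|_{\widehat{B}_b}$ corresponds to $\widetilde{v_b}$ under the nil-invariant identification $\sA^{\DR}(X_b/\widehat{B}_b,n)\simeq \sA^{\DR}(X/\widehat{B}_b,n)$. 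Hence the underlying formal function is constantly $\int^{\DR}_{X/\widehat{B}_b,\Omega}v|_{\widehat{B}_b}=W|_{\widehat{B}_b}$, which is exactly the restriction of the global function $W$. Proposition \ref{Prop:Locking} then produces the desired $W$-locking structure, and combined with the non-degeneracy already established this yields $M_v \in \Symp^W_{B,-2}$. The main obstacle is essentially bookkeeping: carefully tracking the identifications (nil-invariance, Gauss-Manin horizontality, and the Kunneth formula used in the integration map) across the different bases $B$, $\widehat{B}_b$, and the Artinian thickenings used to define horizontality.
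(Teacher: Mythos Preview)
Your proposal is correct and follows the same route as the paper: reduce via Proposition~\ref{Prop:Locking} to a local Artinian base, apply Proposition~\ref{Prop:Map}, and identify the resulting class with $v$ using horizontality. The one place your bookkeeping needs tightening is that Proposition~\ref{Prop:Map} outputs $\widetilde{m^*[\theta_{/B}]}$ (a class in $\sA^{\DR}(X/B,n)$), not $\widetilde{m^*[\theta_b]}$; to conclude this equals $v$ you need that \emph{both} classes are horizontal, and horizontality on the $\theta$ side is precisely where the hypothesis $\theta\in\sA^{2,\cl}(Y,n-2)$ (absolute, not merely relative to $B$) is used---exactly as in the paper's own argument.
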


We first clarify the statement of Corollary \ref{Cor:Map}.
\begin{itemize}
\item The mapping stack $\uMap_B(X,Y)$ is a derived Artin stack of finite presentation by Lurie's representability theorem \cite{LurDAG}, see \cite[Cor.~3.3]{Toen}.
\item We have $\uMap_B(X,Y)_v\subseteq \uMap_B(X,Y)$ as an open substack since $f:X \to B$ is topologically locally trivial by the Ehresmann theorem.
\item The discrete space $\sA^0(B,0)$ is identified to the set $\pi_0\sA^0(B,0)=H^0(B,\O_B)$.
\end{itemize}

\begin{proof}[Proof of Corollary \ref{Cor:Map}]
By Proposition \ref{Prop:Locking}, we may assume that the base $B$ is local Artinian.
By Proposition \ref{Prop:Map}, it suffices to prove the equality
\[\widetilde{m^*[\theta_{/B}]} = v  \textin \pi_0\sA^{\DR}(X/B,n) = H^n_{\DR}(X/B),\]
for any $(m:X_b \to Y)\in \uMap_B(X,Y)_v(\C)$, where $b\in B(\C)$ is the closed point. 
These two de Rham classes are both horizontal, i.e. elements of
\[\Image\left(H^n_{\DR}(X) \xrightarrow{(-)_{/B}} H^n_\DR(X/B)\right) = H^n_\DR(X/B)^\nabla \textin H^*_\DR(X/B).\]
Since $(-)|_{X_b}:H^n_\DR(X/B)^\nabla \xrightarrow{\simeq} H^n_{\DR}(X_b)$ is an equivalence, the equality
\[\widetilde{m^*[\theta_{/B}]}|_{X_b} = m_b^*[\theta_b] = v_b=v|_{X_b} \textin H^n_{\DR}(X_b)\]
completes the proof.
\end{proof}

We finally consider the moduli of perfect complexes in Theorem \ref{Thm_D:Perf}.

\begin{example}[Moduli of perfect complexes]\label{Ex:Perf}
Let $f:X \to B$ be a smooth projective Calabi-Yau morphism of classical schemes of dimension $4$.
Let $v_*\in H^{2*}_{\DR}(X/B)^{\nabla}$. 
Let $\Perf$ be the stack of perfect complexes \cite{ToVa} which is $2$-shifted symplectic \cite[Thm.~2.12]{PTVV}.\footnote{The stack $\Perf$ is not geometric, but is {\em locally geometric} (i.e. is the union of open substacks that are geometric). The notion of shifted symplectic forms extends to locally geometric stacks.}
Let $\Perf(X/B):=\Map_B(X,\Perf_B)$ 
and
\[\Perf(X/B,v):=\{\text{perfect complexes $E$ on $X_b$ such that $\ch_*(E)=(v_*)_b\in H^{2*}_{\DR}(X_b)$}\}\]
be the open substack with fixed topological Chern character.
Then we have
\[\Perf(X/B,v) \in \Symp_{B,-2}^{W}, \quad\text{where}\quad W:=\int_{X/B} v_2\cup \Omega.\]
Moreover, the underlying function $W$ measures the $(0,4)$-Hodge piece of $v_2$, that is,
\[W=0 \iff v_2 \in \Fil^1H_{\DR}^4(X/B).\]

If $f: X \to B$ is smooth projective Calabi-Yau morphism of dimension $n \geq 5$, then $\Perf(X/B) \to B$ is a $0$-locked $(2-n)$-symplectic fibration by Proposition \ref{Prop:Locking}.
\end{example}

In Example \ref{Ex:Perf}, 
if the base $B$ is {\em reduced} (and $\Perf(X/B,v)$ is non-empty), 
then a horizontal class $v_p\in H_{\DR}^{2p}(X/B)^\nabla$ is a {\em Hodge class} (i.e. lie in $\Fil^p H_{\DR}^{2p}(X/B)$) by the global invariance cycle theorem \cite[Thm.~4.1.1]{Del71} (see \cite[Prop.~11.3.5]{CS}).
In particular, the symplectic form is exact.
Consequently, we have the deformation invariance of the Donaldson-Thomas invariants for Calabi-Yau $4$-folds \cite{CL,BJ,OT}.

\begin{remark}[Deformation invariance]\label{Rem:DefInv}
Let $f:X \to B$ be a smooth projective Calabi-Yau morphism of classical schemes of dimension $4$ and $v_*\in H^{2*}_{\DR}(X/B)^{\nabla}$.
Consider an open substack
\[M\subseteq \Perf(X/B,v)_{\O_X}:=\Perf(X/B,v)\times_{\det,\Pic(X/B),\O_X}B\] 
where $\det:\Perf(X/B) \to \Pic(X/B)$ is the determinant map in \cite{STV}.
Assume that $M$ is a Deligne-Mumford stack (and is a quotient stack of a quasi-projective scheme by a linear action of a linear algebraic group).
Given an orientation of $M \to B$, Theorem \ref{Thm:VLC} gives us a bivariant class
\[[M/B]^\lag : A_*(B) \to A_*(M),\]
since $v_2$ is a Hodge class over $B_\red$ by the invariant cycle theorem \cite{Del71} and $A_*(B)=A_*(B_\red)$.
When $M \to B$ is proper, for any bivariant class $\psi \in A^*(M)$, the function
\[b \in B(\C) \mapsto \int_{[M_b]^\lag} \psi|_{M_b} \in \C \quad\text{is locally constant.}\]

\end{remark}

However, if we only fix the Hilbert polynomials, instead of the Chern characters, then we can have non-exact symplectic fibrations over reduced bases.

\begin{remark}[Non-exact symplectic moduli spaces]\label{Rem:nonexactmoduli}
Consider the open subscheme $B\subseteq |\O_{\PP^5}(6)|$ consists of smooth sextic hypersurfaces in $\PP^5$.
Let $X \to B$ be the universal family.
Consider the Hilbert scheme of planes
\[I(X/B):=\{(S\subseteq X_b,b\in B) : P_S(t) = (t+2)(t+1)/2\}.\]
By \cite[Thm.~1.4]{BKP}, we have an open embedding
$I(X/B)\hookrightarrow (\Perf(X/B)_{\O_X})_{\cl}.$
Hence there is a $(-2)$-shifted symplectic fibration
\[\qquad (RI(X/B) \to B) \in \Symp_{B,-2}\quad\text{such that}\quad RI(X/B)_\cl =I(X/B).\]
In this case, the above $(-2)$-symplectic form is {\em not} exact.
Indeed, in a contractible analytic neighborhood $D\subseteq B$ of $b \in B$, the Chern character $\ch_2(\O_S) \in H^4_{\DR}(X_b)$ of $S\in I(X/B)$ lifts to a horizontal section $\ch_2 \in H^4_{\DR}(X_D/D)^\nabla \simeq H^4_{\DR}(X_b)$
and the function of $(0,4)$-Hodge pieces 
\[W: d \in D \mapsto (\ch_2)_d^{(0,4)} \in H^4_{\DR}(X_d)/F^1_{\Hd}H^4_{\DR}(X_d) \simeq \C\]
is non-zero since $\dim \TT_{\Crit_D(W),b} = \dim \TT_{D,b}-19$ by \cite[Cor.~4.28]{BKP}.
The underlying formal function of the symplectic fibration $RI(X/B) \to B$ (in Proposition \ref{Prop:Locking}) is the restriction of the above function $W$ to the formal neighborhood of $b\in B$ (by Corollary \ref{Cor:Map}) which is non-zero.
Therefore, the symplectic form is not exact.
\end{remark}

\end{document}